\def\lsimeq{\rotatebox[origin=c]{-90}{$\simeq$}}
\newcommand{\CC}{\mathbb{C}}
\newcommand{\ZZ}{\mathbb{Z}}
\newcommand{\PP}{\mathbb{P}}
\newcommand{\QQ}{\mathbb{Q}}
\newcommand{\bF}{\mathbf{F}}
\newcommand{\RR}{\mathbb{R}}
\newcommand{\Sym}{\mathfrak{S}}
\newcommand{\EEE}{\mathscr{E}}
\newcommand{\MMM}{{\mathscr{M}}}
\newcommand{\OOO}{{\mathscr{O}}}
\newcommand{\LLL}{{\mathscr{L}}}
\newcommand{\III}{{\mathscr{I}}}
\newcommand{\JJJ}{{\mathcal{J}}}
\newcommand{\Hom}{{\operatorname{Hom}}}
\newcommand{\rk}{\operatorname{rk}}
\newcommand{\Sing}{\operatorname{Sing}}
\newcommand{\Cl}{\operatorname{Cl}}
\newcommand{\Pic}{\operatorname{Pic}}
\newcommand{\Aut}{\operatorname{Aut}}
\newcommand{\Bs}{\operatorname{Bs}}
\newcommand{\Cr}{\operatorname{Cr}}
\newcommand{\Fix}{\operatorname{Fix}}
\newcommand{\mult}{\operatorname{mult}}
\newcommand{\PGL}{{\operatorname{PGL}}}
\newcommand{\PSL}{{\operatorname{PSL}}}
\newcommand{\SL}{{\operatorname{SL}}}
\newcommand{\Sp}{{\operatorname{Sp}}}
\newcommand{\PSp}{{\operatorname{PSp}}}
\newcommand{\GL}{{\operatorname{GL}}}
\newcommand{\z}{{\operatorname{z}}}
\newcommand{\g}{{\operatorname{g}}}
\newcommand{\dd}{{\operatorname{d}}}
\newcommand{\Spec}{{\operatorname{Spec}}}
\newcommand{\Am}{{\operatorname{Am}}}
\newcommand{\qsim}{\mathbin{\sim_{\scriptscriptstyle{\QQ}}}}
\newcommand{\Alt}{{\mathfrak{A}}}
\newcommand{\mumu}{{\boldsymbol{\mu}}}
\newcommand{\xref}[1]{{\rm\ref{#1}}}
\renewcommand{\emptyset}{\varnothing}
\theoremstyle{plain}
\newtheorem{theorem}[subsection]{Theorem}
\newtheorem{lemma}[subsection]{Lemma}
\newtheorem{proposition}[subsection]{Proposition}
\newtheorem{stheorem}[equation]{Theorem}
\newtheorem{corollary}[subsection]{Corollary}
\newtheorem{scorollary}[equation]{Corollary}
\newtheorem*{claim*}{Claim}
\newtheorem{sclaim}[equation]{Claim}
\newtheorem{slemma}[equation]{Lemma}
\newtheorem{sproposition}[equation]{Proposition}
\theoremstyle{definition}
\newtheorem{question}[subsection]{Question}
\newtheorem{definition}[subsection]{Definition}
\newtheorem*{definition*}{Definition}
\newtheorem{assumption}[subsection]{Assumption}
\newtheorem*{notation*}{Notation}
\newtheorem{example}[subsection]{Example}
\newtheorem{remark}[subsection]{Remark}
\newtheorem{sremark}[equation]{Remarks}
\newcounter{NN}
\newcounter{NO}
\author{J\'er\'emy Blanc}
\address{J\'er\'emy Blanc, Departement Mathematik und Informatik, Universit\"at Basel, Spiegelgasse 1, 4051 Basel, Switzerland}
\email{jeremy.blanc@unibas.ch}
\author{Ivan Cheltsov}
\address{Ivan Cheltsov, School of Mathematics, The University of Edinburgh, Edinburgh EH9 3JZ, UK
 \newline\indent
 National Research University Higher School of Economics, Moscow, Russia}
\email{I.Cheltsov@ed.ac.uk}
\author{Alexander Duncan}
\address{Alexander Duncan, Department of Mathematics, University of South Carolina, Columbia, SC 29208, USA.}
\email{duncan@math.sc.edu}
\author{Yuri Prokhorov}
\address{Yuri Prokhorov, Steklov Mathematical Institute of Russian Academy of Sciences, Moscow, Russia
 \newline\indent
 National Research University Higher School of Economics, Moscow, Russia
 \newline\indent
 Department of Algebra, Moscow State University, Russia}
\email{prokhoro@mi.ras.ru}
\title{Finite quasisimple groups acting on rationally connected threefolds}
\begin{document}

\begin{abstract}
We show that the only finite quasi-simple non-abelian groups that can faithfully act on rationally connected threefolds are
the following groups: $\Alt_5$, $\PSL_2(\bF_7)$, $\Alt_6$, $\SL_2(\bF_8)$, $\Alt_7$, $\PSp_4(\bF_3)$,
$\SL_2(\bF_{7})$, $2.\Alt_5$, $2.\Alt_6$, $3.\Alt_6$ or $6.\Alt_6$.
All of these groups with a possible exception of $2.\Alt_6$ and $6.\Alt_6$ indeed act on some rationally connected threefolds.
\end{abstract}
\maketitle
\tableofcontents
\section{Introduction}

The complex projective plane $\PP^2$ and projective space $\PP^3$ are among the most basic objects of geometry.
They provide motivation for the study of two exceptionally complicated objects,
the groups~$\Cr_2(\CC)$ and $\Cr_3(\CC)$ of their birational transformations,
known as the \emph{plane Cremona group} and the \emph{space Cremona group}, respectively.
The group $\Cr_2(\CC)$ has been studied intensively over the last two centuries, and many facts about it were established.
The structure of the group $\Cr_3(\CC)$ is much more complicated and mysterious.
It still resists most attempts to study its global structure.

One approach to studying Cremona groups is by means of their finite subgroups.
An almost complete classification of finite subgroups of the plane Cremona group $\Cr_2(\CC)$ was obtained Dolgachev and Iskovskikh in \cite{DolgachevIskovskikh} (see also \cite{Blanc2011,Tsygankov2011e,Tsygankov2013} for further developments).
For example, this classification implies that there are exactly three isomorphism classes of non-abelian simple finite subgroups of $\Cr_2(\CC)$, namely those of $\Alt_5$, $\PSL_2(\bF_7)$ and $\Alt_6$.

Recent achievements in three-dimensional birational geometry allowed Prokhorov to prove

\begin{theorem}[{\cite[Theorem~1.1]{Prokhorov2012}}]
\label{theorem:Prokhorov-1}
There are exactly six isomorphism classes of non-abelian simple finite subgroups of $\Cr_3(\CC)$, given by
\[\Alt_5,\ \PSL_2(\bF_7),\ \Alt_6,\ \SL_2(\bF_8),\ \Alt_7,\ \text{and }\ \PSp_4(\bF_3).\]
\end{theorem}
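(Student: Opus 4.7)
My plan combines the $G$-equivariant minimal model program with the classification of finite subgroups of $\Cr_2(\CC)$ and of Fano threefolds, finishing with a realization step. First I would regularize: every finite subgroup $G \subset \Cr_3(\CC)$ is conjugate to a biregular action on some smooth projective rational threefold $Y$ (by resolving indeterminacy and taking a $G$-equivariant resolution of singularities). Running a $G$-equivariant MMP on $Y$ yields a $G$-Mori fibre space $\pi\colon X \to S$, where $X$ has terminal $\QQ$-factorial singularities and relative $G$-invariant Picard rank one. The base $S$ has dimension $0$, $1$, or $2$, and I would treat each case in turn.

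If $\dim S = 2$ then $\pi$ is a $G$-conic bundle over a rational surface $S$. Since $G$ is simple and non-abelian, the kernel of the induced map $G \to \Aut(S)$ is either trivial or all of $G$; in the former case $G \hookrightarrow \Cr_2(\CC)$, and the Dolgachev--Iskovskikh classification forces $G \in \{\Alt_5, \PSL_2(\bF_7), \Alt_6\}$, while in the latter $G$ embeds in $\PGL_2(\CC(S))$ and hence $G = \Alt_5$. If $\dim S = 1$ then $S \cong \PP^1$ and $\pi$ is a del Pezzo fibration; the same kernel dichotomy yields $G = \Alt_5$ or an action of $G$ on a del Pezzo surface over $\CC(t)$, which again reduces to a Dolgachev--Iskovskikh-type classification in dimension two. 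The decisive case is $\dim S = 0$, where $X$ is a terminal $GQ$-Fano threefold. By boundedness of terminal Fano threefolds only finitely many families arise, and for each family I would constrain the admissible simple subgroups of $\Aut(X)$ by (i) the $G$-module structure of $H^0(X, -mK_X)$ for small $m$, using character theory and low-dimensional irreducible representations of candidate simple groups; (ii) Reid's basket together with orbifold Riemann--Roch, giving divisibility and parity constraints on $|G|$; and (iii) holomorphic Lefschetz fixed-point computations at $G$-fixed points. These combined constraints single out $\PSp_4(\bF_3)$, $\SL_2(\bF_8)$ and $\Alt_7$ in addition to $\Alt_5$, $\PSL_2(\bF_7)$, $\Alt_6$.

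Finally I would exhibit each of the six groups as an actual subgroup of $\Cr_3(\CC)$: $\Alt_5$, $\PSL_2(\bF_7)$ and $\Alt_6$ act linearly on $\PP^2 \subset \PP^3$; $\PSp_4(\bF_3)$ acts on the classical Burkhardt quartic threefold, known to be rational; $\Alt_7$ acts on a rational $\Alt_7$-equivariant Fano threefold built from its small-dimensional irreducible representations; and $\SL_2(\bF_8)$ acts on a suitable rational Fano threefold constructed from its projective representation theory over $\CC$.

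I expect the Fano case $\dim S = 0$ to be by far the hardest step. The conic bundle and del Pezzo fibration cases collapse cleanly onto the two-dimensional classification through the kernel argument, but the Fano case admits no such shortcut: one must cross-reference the detailed classification of terminal Fano threefolds with the representation theory of each candidate simple group, and delicate numerical arguments (Riemann--Roch with basket contributions, Lefschetz fixed-point formulas, bounds from the anticanonical degree) are required to exclude the many a priori possibilities and to pin down exactly the three ``new'' groups beyond the two-dimensional list.
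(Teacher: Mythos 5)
Your overall pipeline (regularize, run the $G$-equivariant MMP, split according to $\dim S$, collapse the positive-dimensional bases onto the Dolgachev--Iskovskikh classification via the kernel dichotomy, and attack the $G\QQ$-Fano case with representation-theoretic and Riemann--Roch constraints) is exactly the strategy behind \cite[Theorem~1.1]{Prokhorov2012}; note that the present paper does not reprove this statement but quotes it and explains how it follows from Theorem~\ref{theorem:Prokhorov-2}. There is, however, a genuine gap at the end of your Fano case: the equivariant classification does \emph{not} single out only $\PSp_4(\bF_3)$, $\SL_2(\bF_8)$ and $\Alt_7$ beyond the planar groups. It also produces $\PSL_2(\bF_{11})$, which acts faithfully on the Klein cubic threefold in $\PP^4$ and on a smooth prime Fano threefold of genus $8$ (cases (vi) and (vii) of Theorem~\ref{theorem:Prokhorov-2}); both are legitimate $G$-Mori fibre spaces over a point with $\rk\Cl(X)^G=1$, so no amount of character theory, basket bookkeeping or Lefschetz computation can eliminate them at that stage. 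They must be discarded because they are \emph{non-rational}: the Klein cubic by Clemens--Griffiths, and the genus-$8$ threefold because it is birational to that cubic. Rationality is invisible to the numerical constraints you list and cannot be imposed family-by-family, so this non-rationality input is an indispensable extra step; omitting it would yield a list of seven groups rather than six. This is precisely why the paper inserts the remark that every smooth cubic threefold is non-rational before deducing Theorem~\ref{theorem:Prokhorov-1} from Theorem~\ref{theorem:Prokhorov-2}.

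Two smaller caveats. First, ``boundedness of terminal Fano threefolds'' gives finiteness of deformation families but no usable list in the non-Gorenstein setting, so the Fano step cannot literally proceed ``for each family''; the actual argument first controls the non-Gorenstein points via orbifold Riemann--Roch and the Bogomolov--Miyaoka inequality and then passes to anticanonical models with Gorenstein canonical singularities. Second, in the realization step the action of $\SL_2(\bF_8)$ lives on a prime Fano threefold of genus $7$, and placing $\SL_2(\bF_8)$ inside $\Cr_3(\CC)$ requires the (known, but non-trivial) rationality of that threefold -- another point where rationality, not just rational connectedness, has to be checked explicitly.
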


This classification became possible thanks to the general observation that a birational action of a finite group $G$ on projective space
can be regularized, that is, replaced by a regular action of this group on some more complicated rational threefold.
Thus, instead of studying finite subgroups in the space Cremona group,
one can consider a more general (and perhaps more natural) problem:
\begin{question}
What are the isomorphism classes of finite groups acting faithfully on rationally connected threefolds?
\end{question}
The classical technique to study this (hard) problem goes as follows.
Let $X$ be a rationally connected threefold faithfully acted on by a finite group~$G$.
Taking the $G$-equivariant resolution of singularities and
applying the $G$-equivariant Minimal Model Program, we can replace $X$ by a $G$-Mori fibre space.
Thus, we may assume that $X$ has terminal singularities,
every $G$-invariant Weil divisor on $X$ is $\QQ$-Cartier,
and there exists a $G$-equivariant surjective morphism
\begin{equation*}
\phi\colon X\to Z
\end{equation*}
whose general fibres are Fano varieties,
and the morphism $\phi$ is minimal in the following sense: $\rk\Pic(X/Z)^G=1$.
If $Z$ is a point, then $X$ is a Fano threefold, so that we say that $X$ is a \emph{$G\QQ$-Fano threefold}.
Similarly, if $Z=\PP^1$, then $X$ is fibred in del Pezzo surfaces, and we say that $\phi$ is a \emph{$G$-del Pezzo fibration}.
Finally, if $Z$ is a rational surface, then the general geometric fibre of $\phi$ is $\PP^1$, and $\phi$ is said to be a $G$-conic bundle.
In this case, we may assume that both $X$ and $Z$ are smooth due to a recent result of Avilov \cite{Avilov}. A priori, the threefold $X$ can be non-rational.
However, if $X$ is rational, then any birational map $X\dasharrow\PP^3$ induces an embedding
\begin{equation*}
G\hookrightarrow\Cr_3\big(\CC\big).
\end{equation*}
Vice versa, every finite subgroup of $\Cr_3(\CC)$ arises in this way.
Thus, keeping in mind that every smooth cubic threefold is non-rational,
we see that Theorem~\ref{theorem:Prokhorov-1} follows from the following (more explicit) result:

\begin{theorem}[{\cite[Theorem~1.5]{Prokhorov2012}}]
\label{theorem:Prokhorov-2}
Let $X$ be a Fano threefold with terminal singularities, and let $G$ be a finite non-abelian simple subgroup in $\Aut(X)$
such that \mbox{$\rk\Cl(X)^G=1$}.
Suppose also that $G$ is not isomorphic to $\Alt_5$, $\PSL_2(\bF_7)$ or $\Alt_6$.
Then the following possibilities hold:
\begin{enumerate}
\item $G\simeq\Alt_7$, and $X$ is the unique smooth intersection of a quadric and a cubic in $\PP^5$ that admits a
faithful action of the group $\Alt_7$;

\item $G\simeq\Alt_7$, and $X$ is $\PP^3$;

\item $G\simeq\PSp_4(\bF_3)$, and $X$ is $\PP^3$;

\item $G\simeq\PSp_4(\bF_3)$, and $X$ is the Burkhardt quartic in $\PP^4$;

\item $G\simeq\SL_2(\bF_8)$, and $X$ is the unique smooth Fano threefold of Picard rank $1$ and genus $7$
that admits a faithful action of the group $\SL_2(\bF_8)$;

\item $G\simeq\PSL_2(\bF_{11})$, and $X$ is the Klein cubic threefold in $\PP^4$;

\item $G\simeq\PSL_2(\bF_{11})$, and $X$ is the unique smooth Fano threefold of Picard rank $1$ and genus $8$
that admits a faithful action of the group $\PSL_2(\bF_{11})$ $($which
is non-equivariantly birational to the Klein cubic threefold$)$.
\end{enumerate}
\end{theorem}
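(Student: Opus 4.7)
The plan is to combine boundedness of terminal Fano threefolds with representation-theoretic restrictions coming from the classification of finite simple groups, and then to analyze the remaining possibilities via the Mori-style classification of $G\QQ$-Fano threefolds.

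First, I would exploit the condition $\rk\Cl(X)^G=1$ to linearize the problem. Since $-K_X$ generates the $G$-invariant part of $\Cl(X)\otimes\QQ$, for each $m\ge 1$ the space $H^0(X,-mK_X)$ carries a faithful linear action of a central extension of $G$. By boundedness of terminal $\QQ$-Fano threefolds there is a uniform explicit bound on $\dim H^0(X,-mK_X)$ for small $m$, hence $G$ admits a projective representation of bounded dimension. Inspection of the ATLAS of finite simple groups, using that the minimal faithful projective degree grows rapidly with $|G|$, then shrinks the candidate list to $\Alt_7$, $\PSp_4(\bF_3)$, $\SL_2(\bF_8)$, $\PSL_2(\bF_{11})$ together with a handful of additional small simple groups that must be excluded in the subsequent analysis.

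Second, I would split into the Gorenstein case (where $-K_X$ is Cartier) and the strictly non-Gorenstein case. In the Gorenstein case the anticanonical map places $X$ either as $\PP^3$ or inside the Iskovskikh--Mori--Mukai list of Fano threefolds of Picard rank one. For each candidate $G$ and each entry of that list, one then computes whether $G$ can lift to a faithful linear action on the ambient projective space preserving the defining equations of $X$; this uses character theory to confirm that a $G$-representation of the correct dimension exists, and $G$-invariance of the defining equations to cut down to the specific examples in the conclusion. This step yields $X=\PP^3$, the Burkhardt quartic, the $(2,3)$-intersection in $\PP^5$ with $\Alt_7$-action, the smooth prime Fano of genus $7$ with $\SL_2(\bF_8)$-action, the Klein cubic, and the smooth Fano of genus $8$ with $\PSL_2(\bF_{11})$-action. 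Uniqueness of $X$ given $G$ follows from the rigidity of the required $G$-equivariant projective embedding.

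Third, I would treat the non-Gorenstein case by analysing the basket of non-Gorenstein terminal points of $X$, which is $G$-invariant. Each orbit has length dividing $|G|$, and the singular Riemann--Roch formula of Reid links the basket to the numerical invariants $(-K_X)^3$, the Fano index, and the dimensions $\dim H^0(X,-mK_X)$. Combining these numerical constraints with the $G$-equivariant two-ray game, one shows that every admissible non-Gorenstein example either fails to admit a faithful $G$-action or is $G$-equivariantly birational to one of the Gorenstein varieties already identified. The main obstacle is precisely this non-Gorenstein analysis: the combinatorics of $G$-invariant baskets is delicate, and excluding the remaining candidate simple groups from the first step requires careful bookkeeping via singular Riemann--Roch together with the representation theory of each candidate group acting on the tangent representations at each cyclic quotient singularity.
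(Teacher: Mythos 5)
First, note that the paper does not prove this statement at all: Theorem~\xref{theorem:Prokhorov-2} is imported verbatim from \cite[Theorem~1.5]{Prokhorov2012} and used as a black box, so there is no internal proof to compare your attempt against; the relevant comparison is with Prokhorov's cited argument. Your outline does track the broad architecture of that argument (restrict the group via a faithful linear action on anticanonical spaces, then split into Gorenstein and non-Gorenstein cases), but as written it has gaps that are not merely matters of detail.

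The two most serious ones are these. First, your opening step leans on general boundedness of terminal $\QQ$-Fano threefolds to get ``a uniform explicit bound'' on $\dim H^0(X,-mK_X)$ and hence a bounded-degree faithful projective representation of $G$. General boundedness gives no usable effective constant here; the argument actually needs the sharp local-to-global inequality $0<-K_X\cdot c_2=24-\sum_i(r_i-1/r_i)$ together with orbifold Riemann--Roch to force $\dim|-K_X|$ (or $\dim|-2K_X|$) to be small, to control the basket, and to land $G$ in $\GL_n(\CC)$ for genuinely small $n$ --- this is exactly the mechanism the present paper reuses in Section~\xref{section:3A7} (Lemma~\xref{lemma:1frac2} and the surrounding computations), and it does not appear in your sketch. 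Second, your non-Gorenstein step concludes only that $X$ is $G$-equivariantly \emph{birational} to one of the Gorenstein models, whereas the theorem is a biregular classification: it asserts that $X$ itself is $\PP^3$, the Burkhardt quartic, etc. To close that gap you would either have to exclude non-Gorenstein $G\QQ$-Fano models outright for each of these groups, or invoke equivariant birational (super)rigidity of the target varieties --- which is itself a substantial theorem, proved in this paper only later and only for some of these pairs (Theorem~\xref{theorem:Fano-super-rigid}), and notably not for the $\SL_2(\bF_8)$ case. Similarly, the uniqueness assertions (``rigidity of the required $G$-equivariant projective embedding'') and the elimination of the ``handful of additional small simple groups'' are stated as goals rather than carried out, so the proposal is a plausible plan rather than a proof.
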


In this text, we extend the study of simple groups to quasi-simple groups.

\begin{definition}
\label{definition:quasi-simple}
A group is said to be \emph{quasi-simple} if it is perfect, that is, it equals its commutator subgroup,
and the quotient of the group by its center is a simple non-abelian group.
\end{definition}
Taking the quotient by the center, which is again a rationally connected threefold, it follows from Theorem~\ref{theorem:Prokhorov-2} that
the only finite quasi-simple non-simple group that can (possibly) faithfully act on rationally connected threefolds are $2.\Alt_5$ or
\begin{equation}
\label{eq:list}
\SL_2(\bF_7),\hspace{3pt} \SL_2(\bF_{11}),\hspace{3pt} \Sp_4(\bF_{3}),\hspace{3pt}
n.\Alt_6,\hspace{3pt} n.\Alt_7\ \text{with $n=2,3,6$.}
\end{equation}
As the group $2.\Alt_5$ is a subgroup of $\SL_2(\CC)$, there are many ways to embed it into $\Cr_2(\CC)$ (see \cite{Tsygankov2013}), and hence in $\Cr_3(\CC)$. However, none of the groups of \eqref{eq:list} embedds in $\Cr_2(\CC)$ (see Theorem~\ref{th:Cr2simple}). Some of them indeed act on rationally connected threefolds. The goal of this paper is to prove the following result:
\begin{theorem}
\label{theorem:main}
Every finite quasi-simple non-simple group that faithfully acts on a rationally connected threefold is isomorphic to one of the following groups
\[\SL_2(\bF_{7}),\ 2.\Alt_5,\ 2.\Alt_6,\ 3.\Alt_6,\ \text{and }\ 6.\Alt_6.\]
Moreover, the groups $2.\Alt_5$ and $3.\Alt_6$ act faithfully on rational threefolds, and the group $\SL_2(\bF_{7})$ acts faithfully on rationally connected threefolds.
\end{theorem}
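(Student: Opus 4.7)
The proof follows the strategy outlined in the introduction. Given a faithful action of a finite quasi-simple non-simple group $G$ on a rationally connected threefold, one passes to a $G$-equivariant resolution of singularities and runs the $G$-equivariant Minimal Model Program to produce a $G$-Mori fibre space $\phi\colon X\to Z$ with $\rk\Pic(X/Z)^G=1$. The quotient $X/Z(G)$ remains rationally connected with a faithful action of $G/Z(G)$, so Theorem~\ref{theorem:Prokhorov-2} restricts $G$ to the list \eqref{eq:list}. The work then splits into \emph{(a)} excluding $\SL_2(\bF_{11})$, $\Sp_4(\bF_3)$, $2.\Alt_7$, $3.\Alt_7$, and $6.\Alt_7$ in each of the three cases $Z=\mathrm{pt}$ ($X$ a $G\QQ$-Fano threefold), $Z=\PP^1$ ($\phi$ a $G$-del Pezzo fibration), and $\dim Z=2$ ($\phi$ a $G$-conic bundle); and \emph{(b)} exhibiting explicit faithful actions realising $2.\Alt_5$, $3.\Alt_6$, and $\SL_2(\bF_7)$.

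The two fibration cases of \emph{(a)} are handled cheaply. For a $G$-conic bundle, let $K$ be the kernel of $G\to\Aut(Z)$; quasi-simplicity forces $K\subseteq Z(G)$ or $K=G$. If $K=G$, then $G$ acts faithfully on the generic fibre $\PP^1_{\CC(Z)}$, but every finite subgroup of $\PGL_2$ over any field of characteristic zero is cyclic, dihedral, $\Alt_4$, $\Sym_4$, or $\Alt_5$, and none of the groups in our exclusion list fits. If $K\subsetneq G$, then the quotient $G/K$ embeds in $\Aut(Z)\subset\Cr_2(\CC)$, and Theorem~\ref{th:Cr2simple} rules out every group in our list together with all its non-trivial central quotients. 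A parallel argument disposes of the $G$-del Pezzo fibration case: either $G$ acts on a del Pezzo surface over $\CC(t)$, so that a general $\CC$-fibre is a rational surface carrying a faithful $G$-action, or $G/K$ acts faithfully on the base $\PP^1$; both possibilities reduce to Theorem~\ref{th:Cr2simple}.

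The main content of the proof is the $G\QQ$-Fano case. For each $G$ in the exclusion list, Theorem~\ref{theorem:Prokhorov-2} enumerates the candidate Fano models $Y=X/Z(G)$ for $G/Z(G)$: namely $\PP^3$ and the smooth $(2,3)$-intersection in $\PP^5$ for $\Alt_7$; $\PP^3$ and the Burkhardt quartic in $\PP^4$ for $\PSp_4(\bF_3)$; and the Klein cubic and the genus-$8$ Fano for $\PSL_2(\bF_{11})$. In each case the $G/Z(G)$-equivariant embedding $Y\hookrightarrow\PP^N$ arises from an explicit irreducible $G/Z(G)$-representation, and the question reduces to whether this representation lifts to a faithful $G$-representation of the same dimension with $Z(G)$ acting non-scalarly. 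A character-theoretic analysis based on the Schur multipliers (of orders $2$ or $6$) and ATLAS data for the covers $\Sp_4(\bF_3)$, $2.\PSL_2(\bF_{11})$, and $n.\Alt_7$ shows that no such lift exists. For example, the $5$-dim irreducible representation of $\PSp_4(\bF_3)$ cutting out the Burkhardt quartic is not a summand of any faithful $5$-dim representation of $\Sp_4(\bF_3)$, so the centre of $\Sp_4(\bF_3)$ would act trivially on the Burkhardt quartic, contradicting faithfulness. The remaining Fano models are handled analogously; this case-by-case representation-theoretic verification is the most substantial technical obstacle in the proof.

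For the constructive part, explicit equivariant linear models do the job. The group $2.\Alt_5\subset\SL_2(\CC)$ acts faithfully on $\PP^2$ through the reducible $3$-dim representation $V\oplus\mathbf{1}$, where $V$ is the standard faithful $2$-dim representation (the centre acts as $\diag(-1,-1,1)$, non-scalar), and hence on the rational threefold $\PP^2\times\PP^1$. The Valentiner construction furnishes a faithful $3$-dim representation of $3.\Alt_6$; combining it with a trivial summand realises $3.\Alt_6\hookrightarrow\PGL_4(\CC)$ and produces a faithful action on $\PP^3$. For $\SL_2(\bF_7)$, its faithful $4$-dim representation $V$ combined with a trivial summand yields an embedding $\SL_2(\bF_7)\hookrightarrow\PGL_5(\CC)$ and a faithful action on $\PP^4$; one then selects a suitable $\SL_2(\bF_7)$-invariant rationally connected threefold in $\PP^4$ (for instance, a smooth invariant cubic or quartic hypersurface) to conclude.
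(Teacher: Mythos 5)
Your reduction to a $G$-Mori fibre space, your treatment of the two fibration cases via Theorem~\ref{th:Cr2simple}, and your existence constructions are all consistent with the paper (one small slip: $\SL_2(\bF_7)$ has no invariant of degree $3$ on $V_4\oplus V_1$ other than the cube of the linear form, so there is no smooth invariant cubic in $\PP^4$; the paper uses the quartic $y^4=\phi_4$ and the sextic double solid). The genuine gap is in the $G\QQ$-Fano case, which is the heart of the matter. First, Theorem~\ref{theorem:Prokhorov-2} only tells you that $Y=X/\z(G)$ is $\bar G$-\emph{birational} to one of the listed models, and only after you have checked that the quotient still has canonical singularities (this requires the local analysis of Lemmas~\ref{lemma:quotients-involuion} and~\ref{lemma:quotients-index-2} and Claim~\ref{claim:c}). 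To replace ``birational'' by ``biregular'' the paper proves a $\bar G$-birational superrigidity statement for each candidate model (Theorem~\ref{theorem:Fano-super-rigid}), which occupies most of Section~\ref{section:G-rigid} and rests on Noether--Fano inequalities, Nadel vanishing and Kawamata subadjunction; your proposal omits this step entirely. Second, and more fundamentally, once $Y$ is pinned down, $X$ is a cyclic \emph{cover} of $Y$ with $\z(G)$ acting by deck transformations; it is not cut out by a lift of the linear representation defining $Y\hookrightarrow\PP^N$. Your sample argument --- that the centre of $\Sp_4(\bF_3)$ ``would act trivially on the Burkhardt quartic, contradicting faithfulness'' --- is a non sequitur: the centre is not supposed to act on $Y$ at all (compare Example~\xref{ex:sl27}\xref{ex:sl27:double6}, where $\SL_2(\bF_7)$ acts faithfully on a double cover of $\PP^3$ even though its centre acts trivially on $\PP^3$). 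The correct exclusion analyses the branch divisor: the Hurwitz formula $K_X=\pi^*(K_Y+\frac12B)$ forces the Fano index of $Y$ to be even, killing four of the six models, and in the remaining cases ($\PP^3$, the Klein cubic) the bound $\deg B\le 6$ contradicts the minimal degrees of invariants in Table~\ref{table}.

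A second omission: $3.\Alt_7$ cannot be handled by the quotient-by-centre strategy at all (the centre has order $3$, and the quotient singularity lemmas above are tailored to involutions), and the paper devotes the whole of Section~\ref{section:3A7} to it --- a direct geometric study of $3.\Alt_7$-Fano threefolds split into Gorenstein and non-Gorenstein cases, using base-point-freeness and very ampleness of $|-K_X|$, Mukai's theorem on K3 surfaces, the decomposition of $H^0(X,-K_X)$ into two six-dimensional irreducibles, and an analysis of the $\frac12(1,1,1)$ points via the orbifold Riemann--Roch and Bogomolov--Miyaoka inequalities. Your proposal folds $3.\Alt_7$ into the same ``character-theoretic lifting'' argument, which does not apply; as written, this case is simply not proved.
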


Unfortunately, we do not know whether the groups $2.\Alt_6$ and $6.\Alt_6$ can act on a rationally connected threefold or not (see Section~\ref{section:conic-bundles} for a discussion), and do not know if $\SL_2(\bF_{7})$ can act on a rational threefold.

We finish this introduction by giving examples that prove the existence part of Theorem~\ref{theorem:main} (we omit the case of $2.\Alt_5$, already explained above).

\begin{example}\label{ex:3A6}
Let $G=3.\Alt_6$ act on $V:=\CC^3$ and let $\phi(x_1,x_2,x_3)$
be the invariant of degree $6$ (unique up to scalar multiplication). Then we have the following induced actions:
\begin{enumerate}
\item
on $\PP^3=\PP(V\oplus \CC)$,
\item
on the hypersurface $X_6\subset \PP(1^3,2,2)$ given by
$\phi+ y_1^3+y_2^3=0$,
\item \label{ex:3A6-V1}
on the hypersurface $X_6\subset \PP(1^3,2,3)$ given by
$z^2+y^3+\phi=0$,
\item \label{ex:3A6-sextic-dc}
on the hypersurface $X_6\subset \PP(1^4,3)$ given by
$\phi+ x_4^6=y^2$,
\item
on $\mathbb{P}(\mathcal{O}_{\mathbb{P}^2}(d) \oplus \mathcal{O}_{\mathbb{P}^2})$ where $d\ge 1$ is not a multiple of $3$.
\end{enumerate}
\end{example}

\begin{example}\label{ex:sl27}
The group $\SL_2(\bF_{7})$ has an irreducible four-dimensional representation, which makes it then acts faithfully on $\PP^4$ and on $\PP(1,1,1,1,3)$.
\begin{enumerate}
\item
\label{ex:sl27:double6}
The weighted projective space $\PP(1,1,1,1,3)$ contains a $\SL_2(\bF_{7})$-invariant sextic hypersurface (see \cite{Edge47,MaSl73}).
This hypersurface, that we denote by $X$, is unique. In appropriate quasihomogeneous coordinates, the threefold $X$ is given by
\[\begin{array}{rcl}
w^2&=&8x^6-20x^3yzt-10x^2y^3z-10x^2yt^3-10x^2z^3t-10xy^3t^2\\
&&-10xy^2z^3-10xz^2t^3-y^5t-15y^2z^2t^2-yz^5-zt^5,
\end{array}\]
where $x$, $y$, $z$, $t$ are coordinates of weight $1$, and $w$ is a coordinate of weight $3$.
One can check that $X$ is smooth, so that $X$ is a smooth Fano threefold with $\Pic(X)=\ZZ\cdot K_X$ and $-K_X^3=2$.
Note that $X$ is non-rational (see \cite{Iskovskikh}).
\item\label{ex:sl27-quartic}
The group $\SL_2(\bF_7)$ also acts on the smooth quartic $X_4\subset \PP^4$ given by
$y^4=\phi_4$, where $\phi_4(x_1,x_2,x_3, x_4)$ is an invariant of degree $4$.
This variety is also non-rational \cite{IskovskikhManin}.
\end{enumerate}
\end{example}

\subsection*{Acknowledgements}
This research was supported through the programme ``Research in Pairs'' by the Mathematisches Forschungsinstitut Oberwolfach in 2018.
We would like to thank the institute for wonderful working conditions. J\'er\'emy Blanc was partially supported by the Swiss National Science Foundation Grant  ``Birational transformations of threefolds'' 200020\_178807.
Ivan Cheltsov was partially supported by the Royal Society grant No. 	IES\textbackslash R1\textbackslash 180205, and the Russian Academic Excellence Project 5-100.
Alexander Duncan was partially supported by National Security Agency grant H98230-16-1-0309.
Yuri Prokhorov was partially supported by the Russian Academic Excellence Project 5-100.
The authors would like to thank Josef Schicho for his help with Remark~\ref{remark:KE-cubic}.

\section{Preliminaries}
\label{section:preliminaries}

\subsection{Notation}
Throughout this paper the ground field is supposed to be the field of complex numbers $\CC$.
We employ the following standard notations used in the group theory:

\begin{itemize}
\item
$\mumu_n$ denotes the multiplicative group of order $n$ (in $\CC^*$),
\item
$\Alt_n$ denotes the alternating group of degree $n$,
\item
$\SL_n (\bF_q)$ (resp.~$\PSL_n (\bF_q)$) denotes the special linear group (resp.~projective special linear group) over
the finite field $\bF_q$,
\item
$\Sp_n (\bF_q)$ (resp.~$\PSp_n (\bF_q)$) denotes the symplectic group (resp.~projective symplectic group) over
the finite field $\bF_q$,
\item
$n.G$ denotes a non-split central extension of $G$ by $\mumu_n$,
\item
$\z(G)$ (resp.~$[G, G]$) denotes the center (resp.~the commutator subgroup) of a group $G$.
\end{itemize}
All simple groups are supposed to be non-cyclic.

\begin{lemma} \label{lem:Hurwitz}
Let $C$ be a smooth curve with a faithful action of a finite group $G$
of genus $g<\frac{1}{4}\lvert G\rvert$.
Then
\[
\frac{2g-2}{\lvert G\rvert} + 2 = \sum_{r} c_r\left(1-\frac{1}{r}\right)
\]
where $r$ varies over the orders of cyclic subgroups of $G$,
and $\{ c_r\}$ are non-negative integers.
\end{lemma}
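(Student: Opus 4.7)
My plan is to recognize the claimed identity as a direct consequence of the Riemann--Hurwitz formula applied to the quotient morphism $\pi\colon C \to C/G$, with the hypothesis $g < |G|/4$ forcing the quotient curve $C/G$ to have genus $0$.

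For any $P \in C$ the stabilizer $G_P$ acts faithfully on the tangent line $T_P C$, so $G_P$ is cyclic of some order $r_P$. Grouping ramification points into $G$-orbits, each branch point $Q \in C/G$ corresponds to an orbit of size $|G|/r_Q$, each of whose points contributes $r_Q - 1$ to the ramification divisor. Letting $g'$ denote the genus of $C/G$, the Riemann--Hurwitz formula
\[
2g - 2 \;=\; |G|(2g' - 2) + \sum_{Q} \frac{|G|}{r_Q}(r_Q - 1),
\]
after division by $|G|$ and rearrangement, becomes
\[
\frac{2g-2}{|G|} + 2 \;=\; 2g' + \sum_{Q}\left(1 - \frac{1}{r_Q}\right).
\]
Defining $c_r$ to be the number of branch points $Q$ with $r_Q = r$ recovers the statement of the lemma, provided $g' = 0$.

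To establish $g' = 0$, note that the hypothesis makes the left-hand side of the previous display strictly less than $5/2$. If $g' \geq 2$, the right-hand side is at least $4$, impossible; if $g' = 1$, every ramification term is at least $1/2$, forcing the sum to be empty, whence $g = 1$ and $\pi$ is an \'etale cover between elliptic curves with $G$ a subgroup of the translation group of $C$. In this degenerate situation the identity reduces to the purely numerical statement $2 = \sum c_r(1 - 1/r)$, which one verifies by a direct case analysis using the abelian structure of $G$ (and which does not arise in the applications of the lemma, where $G$ is non-abelian). The main obstacle is therefore the elementary but slightly delicate analysis pinning down $g'$; the rest is straightforward bookkeeping with Riemann--Hurwitz.
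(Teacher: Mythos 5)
Your argument follows the same route as the paper's proof: Riemann--Hurwitz for the quotient morphism $C \to C/G$, cyclic stabilizers grouped into $G$-orbits, and an elementary estimate forcing the quotient genus to vanish. You are in fact more careful than the paper on the last point: the paper simply asserts that ``$g_q = 0$ or else $g \ge \frac14\lvert G\rvert$'', which is only valid when some ramification is present, whereas you correctly isolate the one problematic configuration, namely $g'=1$ with empty branch locus, i.e.\ a free action with $g=1$. However, your disposal of that case is not right as stated. There the identity reduces to $2=\sum_r c_r\left(1-\frac1r\right)$, and this has \emph{no} solution in non-negative integers whenever every non-trivial cyclic subgroup of $G$ has order divisible only by primes $\ge 5$: each non-zero term is then $\ge \frac45$, so at most two terms contribute, and neither $\frac{r-1}{r}+\frac{r'-1}{r'}=2$ nor $c\cdot\frac{r-1}{r}=2$ is solvable (the latter forces $r\in\{2,3\}$). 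A concrete counterexample is $G\simeq\mumu_5$ acting by translations on an elliptic curve, where $g=1<\frac14\lvert G\rvert$. So the lemma as literally stated fails in this degenerate situation; the defect is shared by (indeed hidden in) the paper's own proof and is harmless in the applications, where $G$ is a large non-abelian group and $g\ge 2$, so that $g'=1$ would force ramification and hence $g>\frac14\lvert G\rvert$. But your parenthetical claim that the identity there ``one verifies by a direct case analysis'' should be withdrawn: the correct statement is that the case must be excluded by an additional hypothesis (e.g.\ $g\ge 2$, or $G$ non-abelian), not that the identity survives it.
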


\begin{proof}
This is a standard consequence of the Riemann-Hurwitz formula
for the quotient morphism $C \to C/G$:
\[
2g-2=\lvert G\rvert(2g_q-2)+\sum_p (e_p-1)
\]
where $p$ varies over the branch points,
$e_p$ are the ramification indices,
and $g_q$ is the genus of the quotient.
Recall that the stabilizers of all points must be cyclic,
so we get a contribution of the form $\lvert G\rvert(r-1)/r=e_p-1$ for each
$G$-orbit (free orbits contributing $0$).
Solving for $g$, we see that $g_q = 0$ or else $g \ge \frac{1}{4}\lvert G\rvert$.
\end{proof}

\begin{lemma}[see, e.\,g., {\cite[p.~98]{Cartan}}]
\label{lemma:stabilizer-faithful}
Let $X$ be an irreducible algebraic variety, let $P$ be a point in $X$,
and let $G$ be a finite group in $\Aut(X)$ that fixes the point $P$.
Then the natural linear action of $G$
on the Zariski tangent space $T_{P,X}$ is faithful.
\end{lemma}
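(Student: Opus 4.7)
The plan is to linearize the action of $G$ formally at $P$ and then deduce that an element acting trivially on $T_{P,X}$ must act trivially on the formal neighborhood of $P$, and hence on all of $X$. Let $\mathfrak{m} \subset \mathcal{O}_{X,P}$ denote the maximal ideal, write $\widehat{\mathcal{O}}_{X,P}$ and $\widehat{\mathfrak{m}}$ for their $\mathfrak{m}$-adic completions, and note that $G$ preserves $\mathfrak{m}$ and hence $\widehat{\mathfrak{m}}$. Since the action on $T_{P,X} = (\mathfrak{m}/\mathfrak{m}^2)^{\ast}$ is dual to the action on $\mathfrak{m}/\mathfrak{m}^2$, I work with the latter.

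The key step is to construct a $G$-equivariant $\CC$-linear section $s\colon \mathfrak{m}/\mathfrak{m}^2 \to \widehat{\mathfrak{m}}$ of the natural projection $\pi\colon \widehat{\mathfrak{m}} \twoheadrightarrow \mathfrak{m}/\mathfrak{m}^2$. Since $\mathfrak{m}/\mathfrak{m}^2$ is finite-dimensional and $\pi$ is surjective, a (not necessarily equivariant) $\CC$-linear section $s_0$ exists. As $|G|$ is finite and the ground field has characteristic zero, one then averages:
$$s(v) = \frac{1}{|G|} \sum_{h \in G} h^{-1} \cdot s_0(h \cdot v).$$
This map is manifestly $\CC$-linear and equivariant, and a short check using the $G$-equivariance of $\pi$ shows that $\pi \circ s = \mathrm{id}$.

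Now suppose $g \in G$ acts trivially on $T_{P,X}$, equivalently on $\mathfrak{m}/\mathfrak{m}^2$. Then for every $v \in \mathfrak{m}/\mathfrak{m}^2$ one has $g \cdot s(v) = s(g \cdot v) = s(v)$, so $s(\mathfrak{m}/\mathfrak{m}^2)$ consists entirely of $g$-fixed elements. Choose a basis of $\mathfrak{m}/\mathfrak{m}^2$ and let $y_1, \dots, y_n \in \widehat{\mathfrak{m}}$ be its image under $s$. These elements topologically generate $\widehat{\mathcal{O}}_{X,P}$ as a complete Noetherian local $\CC$-algebra (via the natural presentation as a quotient of $\CC[[y_1, \dots, y_n]]$), and since the action of $g$ is continuous and fixes each $y_i$, it acts as the identity on $\widehat{\mathcal{O}}_{X,P}$.

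Finally, the canonical map $\mathcal{O}_{X,P} \to \widehat{\mathcal{O}}_{X,P}$ is injective, so $g$ acts trivially on $\mathcal{O}_{X,P}$, and therefore on some nonempty Zariski-open neighborhood of $P$ in $X$. Since $X$ is irreducible, the fixed locus $\Fix(g)$ is a closed subset containing a nonempty open set, so $\Fix(g) = X$ and $g$ is the identity in $\Aut(X)$. The only real subtlety is the equivariant linearization of the second paragraph, which would fail if the characteristic divided $|G|$ but is straightforward here thanks to the Maschke-style averaging.
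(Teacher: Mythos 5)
The paper offers no proof of this lemma beyond the citation to Cartan, and your argument is precisely the standard one underlying that reference: formal linearization at the fixed point via the averaging trick, followed by the observation that an automorphism acting trivially on $\widehat{\OOO}_{X,P}$ acts trivially on $\OOO_{X,P}$ and hence (by reducedness, irreducibility and separatedness of $X$) is the identity. Your proof is correct and complete.
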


\begin{theorem}[{\cite{Blichfeldt}}]
\label{th:GL3}
Let $G\subset\GL_3(\CC)$ be a finite quasi-simple subgroup.
Then $G$ is isomorphic to one of the following groups:
\begin{equation*}
2.\Alt_5,\quad \Alt_5,\quad 3.\Alt_6, \quad \PSL_2 (\bF_7).
\end{equation*}
\end{theorem}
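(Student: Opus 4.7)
The plan is to reduce the question to the classification of finite non-abelian simple subgroups of $\PGL_3(\CC)$ and then decide, for each such simple group $S$, which perfect central extensions of $S$ admit a faithful $3$-dimensional complex representation.

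Since $G$ is quasi-simple it is perfect, so the character $\det\colon G\to\CC^*$ is trivial and one may assume $G\subseteq\SL_3(\CC)$. I would then split according to whether the tautological representation on $\CC^3$ is irreducible.

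In the reducible case $G$ preserves a subspace $W$ of dimension $1$ or $2$. The action of $G$ on any $1$-dimensional invariant subspace or quotient is through a character, which is trivial by perfectness; hence, after replacing $W$ by the complementary piece if needed, the action of $G$ on a $2$-dimensional invariant quotient (or subspace) gives a homomorphism $G\to\GL_2(\CC)$ whose kernel fixes both the $1$-dimensional piece and the $2$-dimensional piece. In a flag-adapted basis this kernel consists of unipotent upper-triangular matrices, and a finite subgroup of a unipotent group is trivial; thus $G$ embeds into $\GL_2(\CC)$, and then into $\SL_2(\CC)$ by perfectness. The standard classification of finite subgroups of $\SL_2(\CC)$ contains exactly one perfect group, the binary icosahedral group $2.\Alt_5$.

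In the irreducible case, Schur's lemma forces $Z(G)\subseteq Z(\SL_3(\CC))=\mumu_3$, so $|Z(G)|\in\{1,3\}$ and $\bar G:=G/Z(G)$ is a finite non-abelian simple subgroup of $\PGL_3(\CC)$. Here I would invoke the classical Blichfeldt classification of finite subgroups of $\PGL_3(\CC)$ — the main external input, and the hardest step of the argument — to conclude that $\bar G\in\{\Alt_5,\PSL_2(\bF_7),\Alt_6\}$. For each of these I would enumerate the perfect central extensions whose center has order $1$ or $3$, using that the Schur multipliers of $\Alt_5$, $\PSL_2(\bF_7)$, $\Alt_6$ are $\ZZ/2$, $\ZZ/2$, $\ZZ/6$ respectively. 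The candidates are $\Alt_5$, $\PSL_2(\bF_7)$, $\Alt_6$ and $3.\Alt_6$; a glance at the character tables rules out $\Alt_6$, which has no faithful complex representation of dimension less than $5$, while the other three do admit faithful irreducible $3$-dimensional representations. Together with the reducible case this gives exactly the four groups $\Alt_5$, $2.\Alt_5$, $3.\Alt_6$, $\PSL_2(\bF_7)$ of the statement.
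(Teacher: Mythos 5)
Your argument is correct and complete modulo the same external input the paper itself relies on: the paper states this theorem with only a citation to Blichfeldt and gives no proof, and your reduction --- perfectness forces $G\subseteq\SL_3(\CC)$, the reducible case lands in $\SL_2(\CC)$ and yields only the binary icosahedral group $2.\Alt_5$, and the irreducible case combines Blichfeldt's list of simple subgroups of $\PGL_3(\CC)$ (namely $\Alt_5$, $\Alt_6$, $\PSL_2(\bF_7)$) with Schur-multiplier bookkeeping constrained by $\z(G)\subseteq\mumu_3$ --- is a sound way to recover exactly the stated list. No gaps to report.
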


\begin{theorem}[{\cite{DolgachevIskovskikh}}]
\label{th:Cr2simple}
Let $G\subset \Cr_2(\CC)$ be a finite quasi-simple subgroup such that $G/\z(G)\not\simeq \Alt_5$.
Then $G$ is conjugate to one of the following actions:
\begin{enumerate}
\item
$\Alt_6$ acting on $\PP^2$,
\item
$\PSL_2(\bF_7)$ acting on $\PP^2$,
\item
$\PSL_2(\bF_7)$ acting on a unique del Pezzo surface of degree $2$.
\end{enumerate}
In particular, $G$ is simple.
\end{theorem}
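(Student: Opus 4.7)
The strategy is the classical $G$-equivariant minimal model program for rational surfaces. Given $G\subset\Cr_2(\CC)$, we regularize the action to obtain a smooth projective rational surface $X$ on which $G$ acts biregularly and faithfully, then run $G$-equivariant MMP to reach a $G$-Mori fiber space: either a del Pezzo surface $X$ with $\rk\Pic(X)^G=1$, or a $G$-equivariant conic bundle $\pi\colon X\to\PP^1$ with $\rk\Pic(X)^G=2$.

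The conic bundle case is eliminated by analyzing $G\to\Aut(\PP^1)=\PGL_2(\CC)$. Its kernel $K$ is normal in $G$, so by quasi-simplicity either $K=G$ or $K\subseteq\z(G)$. In the first case $G$ acts faithfully on the generic fiber, embedding into $\PGL_2(\CC(t))$, whose only quasi-simple finite subgroup is $\Alt_5$. In the second case the simple quotient $G/\z(G)$ is a quotient of $\operatorname{im}(G\to\PGL_2(\CC))$, and since $\Alt_5$ is the only finite non-abelian simple subgroup of $\PGL_2(\CC)$, we again obtain $G/\z(G)\simeq\Alt_5$. Both options contradict the hypothesis. The same argument applied to each projection handles del Pezzo degree $d=8$ with $X=\PP^1\times\PP^1$ (perfectness confines $G$ to the index-$2$ subgroup $\PGL_2\times\PGL_2$ of $\Aut(X)$), and degree $d=7$ is automatically non-minimal because the $(-1)$-curve meeting two other $(-1)$-curves is $G$-invariant. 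For the remaining intermediate degrees $d\in\{1,3,4,5,6\}$ we consult the tables of automorphism groups of smooth del Pezzo surfaces in \cite{DolgachevIskovskikh} and observe that the only non-abelian simple subgroups occurring are copies of $\Alt_5$, excluded by hypothesis.

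The two surviving cases are $d=9$ and $d=2$. For $d=9$ we have $X=\PP^2$ and $G\subset\PGL_3(\CC)$; lifting through $1\to\CC^*\to\GL_3\to\PGL_3\to 1$ yields a finite quasi-simple subgroup of $\GL_3(\CC)$, which by Theorem~\ref{th:GL3} is one of $2.\Alt_5$, $\Alt_5$, $3.\Alt_6$, or $\PSL_2(\bF_7)$; the hypothesis $G/\z(G)\not\simeq\Alt_5$ narrows this to $3.\Alt_6$ (whose $\mumu_3$-center acts as scalars, inducing an $\Alt_6$-action on $\PP^2$) and $\PSL_2(\bF_7)$, both simple. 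For $d=2$, the anticanonical morphism $X\to\PP^2$ is a double cover branched along a smooth plane quartic $B$, with $\Aut(X)=\Aut(B)\times\langle\tau\rangle$ for $\tau$ the Geiser involution; perfectness kills the $\langle\tau\rangle$-factor, so $G\subseteq\Aut(B)\subset\PGL_3(\CC)$ and the previous analysis applies. Now $\Alt_6$ cannot act faithfully on $B$ by the Hurwitz bound $|\Alt_6|=360>168=84(g(B)-1)$, so only $G=\PSL_2(\bF_7)$ acting on the Klein double cover survives, again simple.

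The main obstacle, and the place where we lean most on the literature, is the case-by-case inspection of automorphism groups for del Pezzo surfaces of small degree $d\in\{1,3,4\}$; this constitutes a substantial portion of \cite{DolgachevIskovskikh} which we cite rather than reproduce here.
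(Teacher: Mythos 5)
The paper offers no proof of Theorem~\ref{th:Cr2simple}: the statement is imported verbatim from \cite{DolgachevIskovskikh}, so there is no internal argument to compare yours against. Your sketch is a correct reconstruction of how that classification is obtained, and the route is the expected one: regularize, run the two-dimensional $G$-equivariant MMP, and analyse the resulting $G$-Mori fibre spaces. The parts you argue from first principles are sound. For conic bundles and for $\PP^1\times\PP^1$ you use that every proper normal subgroup of a quasi-simple group is central and that $\Alt_5$ is the only perfect finite subgroup of $\PGL_2(\CC)$ (and of $\PGL_2(\CC(t))$, which embeds into $\PGL_2(\CC)$); for $\PP^2$ you lift to a quasi-simple subgroup of $\GL_3(\CC)$ (via the commutator subgroup of the preimage in $\SL_3(\CC)$) and invoke Theorem~\ref{th:GL3}; for degree $2$ the splitting $\Aut(X)\simeq\mumu_2\times\Aut(B)$ plus perfectness puts $G$ inside $\Aut(B)\subset\PGL_3(\CC)$, and the Hurwitz bound on the genus-$3$ curve $B$ eliminates $\Alt_6$, leaving the Klein quartic case. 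Two caveats. First, for degrees $1,3,4,5,6$ you are quoting the tables of \cite{DolgachevIskovskikh}, so your argument is not more self-contained than the paper's citation; degrees $5$ and $6$ are in fact immediate ($\Aut$ is $\Sym_5$, respectively $(\CC^*)^2\rtimes D_{12}$), but degrees $1$, $3$, $4$ genuinely require the case analysis or an invariant-theoretic substitute. Second, your reduction pins down the possible minimal pairs $(X,G)$, which is the substance of the theorem, but the literal assertion that $G$ is \emph{conjugate} to one of the three listed actions also encodes a uniqueness statement (e.g.\ that the relevant embeddings into $\PGL_3(\CC)$ yield a single conjugacy class in $\Cr_2(\CC)$); that rigidity input again comes from \cite{DolgachevIskovskikh} and is not delivered by the MMP reduction alone.
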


\begin{lemma}\label{Lemma-P3-Q}
Let $G$ be a group isomorphic to one in the list \eqref{eq:list}.
\begin{enumerate}
\item \label{Lemma-P3-Q:1}
If $G\subset \Aut(\PP^3)$, then $G\simeq 3.\Alt_6$ and the action is induced
by the reducible representation $V=V_1\oplus V_3$ with $\dim V_1=1$, $\dim V_3=3$.
\item \label{Lemma-P3-Q:hyp}
If $G\subset \Aut(X)$, where $X=X_d\subset \PP^4$ is a an irreducible hypersurface of degree
$d\le 4$, then
$G\simeq \SL_2(7)$, $X$ is smooth quartic, and the action is induced
by the reducible representation $V=V_1\oplus V_4$ with $\dim V_1=1$, $\dim V_4=4$.
\item \label{Lemma-P3-Q:Q}
If $G\subset \Aut(\PP^5)$, then there exists no $G$-invariant
 quadric $Q\subset \PP^5$ of corank $\le 2$.
 \item \label{Lemma-P3-Q:Q1}
If $G\subset \Aut(\PP^5)$, then there exists no $G$-invariant
 irreducible complete intersection of two quadrics.
\end{enumerate}
\end{lemma}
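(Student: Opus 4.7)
The plan is to reduce every part of the lemma to a case-by-case study of the small-dimensional (projective) representations of each group $G$ in the list~\eqref{eq:list}, using their character tables (e.g.\ the \textsc{Atlas}) and Theorem~\ref{th:GL3}. An embedding $G\hookrightarrow\PGL_n(\CC)$ lifts to an $n$-dimensional linear representation $\rho$ of a finite central extension $\tilde G$ of $G$, and since each such $G$ is quasi-simple with explicit Schur multiplier, only finitely many covers $\tilde G$ need be considered.

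For~(i), if $\rho\colon\tilde G\to\GL_4(\CC)$ is irreducible, then by Schur's lemma $\z(\tilde G)$ acts by scalars, so the induced map $G\to\PGL_4(\CC)$ factors through the simple quotient $G/\z(G)$ and cannot be faithful for any of the non-simple groups in~\eqref{eq:list}. If $\rho$ is reducible, then since $G$ is perfect every $1$-dimensional summand is trivial, and no group in~\eqref{eq:list} admits a faithful $2$-dimensional representation (the only finite quasi-simple subgroup of $\GL_2(\CC)$ is $2.\Alt_5$, which is absent from the list). The only surviving decomposition is $\rho=V_1\oplus V_3$ with $V_3$ faithful of dimension $3$; by Theorem~\ref{th:GL3} this forces $G\simeq 3.\Alt_6$, and the action is then uniquely determined.

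For~(ii) the action of $G\subset\Aut(X)$ comes from a linear action on $\PP^4$, hence from a $5$-dimensional projective representation of $G$. For each admissible faithful such representation $V$, one computes the invariant spaces $(\operatorname{Sym}^d V^*)^G$ for $d\le 4$ from the character tables and checks when they cut out an irreducible hypersurface. All groups other than $\SL_2(\bF_7)$ are eliminated either because no faithful $5$-dimensional projective representation exists or because the low-degree invariants cut out only reducible or highly singular hypersurfaces. For $G\simeq\SL_2(\bF_7)$ with $V=V_1\oplus V_4$, the invariants vanish in degrees $2$ and $3$, while the degree-$4$ invariants are spanned by $y^4$ and the quartic $\phi_4$ of Example~\ref{ex:sl27-quartic}; the only irreducible member of the resulting pencil is the smooth quartic $y^4=\phi_4$.

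For~(iii) and~(iv), a $G$-invariant quadric $Q\subset\PP^5$ corresponds to a $\tilde G$-semi-invariant symmetric form $q$ on the underlying $6$-dimensional representation $V$. A quadric of corank $\le 2$ has rank $\ge 4$, so $V/\mathrm{rad}(q)$ is a self-dual orthogonal $\tilde G$-representation of dimension $\ge 4$. Running through the possible decompositions of $V$ for each $G$ in~\eqref{eq:list}, the faithful constituents are either symplectic or not self-dual, while trivial summands contribute only rank-one forms to $\operatorname{Sym}^2 V^*$; hence no invariant symmetric form of rank $\ge 4$ exists, proving~(iii). For~(iv), the pencil $\langle Q_1,Q_2\rangle$ is $G$-stable, and since the only perfect finite subgroup of $\SL_2(\CC)$ is $2.\Alt_5$, the induced $G$-action on the pencil $\PP^1$ is trivial. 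Thus every quadric in the pencil is $G$-invariant, so by~(iii) each has corank $\ge 3$; a short geometric argument on intersecting cones in $\PP^5$ with $2$-dimensional vertices then forces the base locus either to have the wrong dimension or to be reducible, contradicting the irreducibility of $X$. The main difficulty throughout is the systematic representation-theoretic bookkeeping over the groups in~\eqref{eq:list}; the geometric reductions themselves are essentially formal.
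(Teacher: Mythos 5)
Your treatment of parts \ref{Lemma-P3-Q:1} and \ref{Lemma-P3-Q:hyp} is essentially the paper's argument (the paper compresses it to a reference to Table~\ref{table}): non-triviality of $\z(G)$ forces the lifted representation to split, perfectness kills $1$- and $2$-dimensional constituents, and the conclusion is read off from Theorem~\ref{th:GL3} together with the low-degree invariants of the faithful $3$- or $4$-dimensional piece. Those parts are fine. The problem is in part \ref{Lemma-P3-Q:Q}: your sweep only accounts for invariant symmetric forms supported on a single constituent (self-duality of each faithful summand, rank-one contributions from trivial summands) and omits the cross terms $W^*\otimes (W')^*\subset S^2(W\oplus W')^*$, which contain an invariant vector exactly when $W'\simeq W^*$. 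This is not a removable oversight: $3.\Alt_6$ acts faithfully on $\PP^5=\PP(V_3\oplus V_3^*)$, where $V_3$ is a faithful irreducible $3$-dimensional representation (the generator of the centre acts as $\diag(\omega I_3,\omega^2 I_3)$, which is not scalar), and the natural pairing $q(v,\phi)=\phi(v)$, i.e.\ $\sum x_iy_i$ in dual bases, is an invariant quadratic form of rank $6$. So the conclusion ``no invariant symmetric form of rank $\ge 4$'' cannot be reached by your case analysis; this configuration shows that assertion \ref{Lemma-P3-Q:Q} itself requires the case $V\simeq V'\oplus(V')^*$ to be excluded or handled separately. (The paper's one-line justification via Table~\ref{table} shares this blind spot, since the table records only irreducible representations; note that \ref{Lemma-P3-Q:Q} is not actually invoked elsewhere in the paper, whereas \ref{Lemma-P3-Q:Q1} is.)

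The gap propagates into your part \ref{Lemma-P3-Q:Q1} in two ways. First, you derive ``every member of the invariant pencil has corank $\ge 3$'' from \ref{Lemma-P3-Q:Q}; in the $V_3\oplus V_3^*$ case that step is unavailable, and you must instead observe that the space of invariant quadrics there is $\Hom_{G}(V_3,V_3)\simeq\CC$, so there is no invariant pencil at all. Second, even granting corank $\ge 3$ throughout the pencil, the ``short geometric argument on intersecting cones with $2$-dimensional vertices'' is not formal: the complete intersection $\{x_0x_1=x_2^2\}\cap\{x_3x_4=x_5^2\}$ of two rank-$3$ quadrics with disjoint vertices is the join of two conics, hence an irreducible threefold. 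What saves the argument in the cases that actually occur is that all invariant quadrics lie in $S^2(V'')^*$ for the common trivial summand $V''$ (of dimension $2$ or $3$), so every member of the pencil is a cone with vertex containing $\PP(V')$, and the base locus is the cone over the intersection of two conics in $\PP(V'')$, which is visibly a union of linear spaces. You need to make that common-vertex structure explicit rather than appeal to corank alone.
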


\begin{proof}
The assertion \ref{Lemma-P3-Q:1} follows from Table~\ref{table}.

\ref{Lemma-P3-Q:hyp}
Regard $\PP^4$ as the projectivization of a vector space $V=\CC^5$ and consider a lifting $\tilde G\subset \SL(V)$,
where $\tilde G$ is quasi-simple.
Since $G$ is not simple, $z(\tilde G)$ is not a subgroup of scalar matrices, i.e.~there exists a non-trivial decomposition $V= V'\oplus V''$ of $\tilde G$-modules,
where $\dim V'>\dim V''$. Then $\dim V''\le 2$ and so the action of $\tilde G$ on $V''$ must be trivial and
on $V'$ it is faithful with $\dim V'=3$ or $4$.
Then $\tilde G$ has an invariant of degree $\le d$ on $V'$.
From Table~\ref{table}, the only possibility is $\tilde G\simeq \SL_2(7)$
and $d=4$.

\ref{Lemma-P3-Q:Q} and \ref{Lemma-P3-Q:Q1} follow from Table~\ref{table}.
\end{proof}

\begin{lemma}
\label{lemma:fixed-point}
Let $X$ be a threefold with terminal singularities and a faithful action
of a group $G$ from the list \eqref{eq:list}. Assume that $X$ has a $G$-fixed point $P$.
Then one of the following holds:
\begin{enumerate}
\item
$P\in X$ is smooth and
$G\simeq 3.\Alt_6$,
\item
$P\in X$ is of type $\frac12(1,1,1)$ and
$G\simeq 3.\Alt_6$.
\end{enumerate}
\end{lemma}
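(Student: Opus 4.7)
My plan is to study the faithful linear action of $G$ on the Zariski tangent space at $P$ guaranteed by Lemma~\ref{lemma:stabilizer-faithful}, and in the singular case to reduce to a low-dimensional linear representation problem via the index-one cover. If $P$ is smooth, then $G\hookrightarrow \GL(T_{P,X})\simeq\GL_3(\CC)$ and Theorem~\ref{th:GL3} forces $G\in\{2.\Alt_5,\Alt_5,3.\Alt_6,\PSL_2(\bF_7)\}$; the only group in this set appearing in the list \eqref{eq:list} is $3.\Alt_6$, which gives case (i).

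If $P$ is singular, I would invoke the Reid--Mori classification of terminal threefold singularities to obtain a cyclic index-one cover $\pi\colon \tilde X\to X$ of degree $r\ge 2$, \'etale in codimension one, such that $\tilde X$ is either smooth or has a compound Du Val (cDV) singularity at the unique preimage $\tilde P$ of $P$. The action of $G$ lifts to a faithful action of a central extension
\[
1\longrightarrow\muu_r\longrightarrow \tilde G\longrightarrow G\longrightarrow 1
\]
on $\tilde X$ fixing $\tilde P$, and by Lemma~\ref{lemma:stabilizer-faithful} the induced action of $\tilde G$ on $T_{\tilde P,\tilde X}$ is faithful. Since $G$ is perfect, the commutator subgroup $H:=[\tilde G,\tilde G]$ is a quasi-simple central extension of $G/\z(G)$ that surjects onto $G$. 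Suppose first that $\tilde P$ is smooth; then $H\subset\GL_3(\CC)$. Theorem~\ref{th:GL3} forces $H\in\{2.\Alt_5,\Alt_5,3.\Alt_6,\PSL_2(\bF_7)\}$, and matching simple quotients against \eqref{eq:list} rules out every case except $G/\z(G)\simeq\Alt_6$: for $G\simeq\SL_2(\bF_7)$ one would need $\PSL_2(\bF_7)\twoheadrightarrow \SL_2(\bF_7)$, which is impossible by order, and no other simple quotient from \eqref{eq:list} appears in Blichfeldt's list. Hence $H\simeq 3.\Alt_6$, and the only perfect quotients of $3.\Alt_6$ are $3.\Alt_6$ and $\Alt_6$, so $G\simeq 3.\Alt_6$. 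The $3$-dimensional representation of $3.\Alt_6$ is irreducible, so by Schur's lemma the central subgroup $\muu_r\subset \tilde G$ acts on $T_{\tilde P,\tilde X}$ by scalars; thus $P$ is a cyclic quotient singularity of type $\tfrac{1}{r}(1,1,1)$, and terminality forces $r=2$, giving case (ii).

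It remains to exclude the subcase where $\tilde P$ is a genuine cDV singularity. There $T_{\tilde P,\tilde X}\simeq \CC^4$ and $\tilde G\subset \GL_4(\CC)$ preserves, up to a character, the nonzero quadratic tangent form $f_2$ of the local defining equation $f=f_2+f_3+\dots$ of $\tilde X\subset\CC^4$; restricting to the perfect subgroup $H$ yields a genuine nonzero $H$-invariant quadric on $\CC^4$. Using Table~\ref{table}, I would enumerate the quasi-simple subgroups of $\GL_4(\CC)$ whose simple quotient appears in \eqref{eq:list}: the possibilities are $\SL_2(\bF_7)$, $\Sp_4(\bF_3)$, $2.\Alt_6$ and $2.\Alt_7$ acting irreducibly, and $3.\Alt_6$ acting reducibly as $V_3\oplus V_1$ (all other groups in \eqref{eq:list} have minimal faithful dimension greater than $4$). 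In each of the first four cases I expect the faithful $4$-dimensional representations to have Frobenius--Schur indicator $-1$ or $0$, hence to carry no invariant symmetric bilinear form; in the reducible case for $3.\Alt_6$, the only candidate invariant quadric is the rank-one form $w^2$ on the trivial summand, which combined with the lowest-degree $3.\Alt_6$-invariant $g_6$ on $V_3$ would force $f=w^2+g_6+\dots$, and a direct check should show that the resulting $\tilde X$ is not terminal. This cDV exclusion is the \emph{main obstacle} of the proof: the smooth and $\tfrac{1}{2}(1,1,1)$ conclusions follow almost formally from Blichfeldt, whereas the cDV subcase requires a careful Frobenius--Schur/character analysis against Table~\ref{table} together with a terminality check in the reducible $3.\Alt_6$ subcase.
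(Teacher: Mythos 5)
Your proposal is correct and follows the same skeleton as the paper's proof: Cartan's lemma plus Blichfeldt's theorem at a smooth fixed point, an analysis of the invariant equation $f=f_2+f_3+\cdots$ of a cDV point in $\CC^4$ in the Gorenstein singular case, and the index-one cover together with a quasi-simple lift in the non-Gorenstein case. Two sub-steps differ in a way worth recording. First, to produce a quasi-simple group acting on the cover you take $H=[\tilde G,\tilde G]$, which is perfect and surjects onto $G=[G,G]$; the paper instead shows that the preimage of $\z(G)$ is the whole center of $G^{\sharp}$ and extracts a quasi-simple subgroup from the resulting structure. Your route is shorter and equally valid. Second, in the subcase where the group acts irreducibly on $\CC^4$, the paper argues geometrically: the invariant quadric $\{f_2=0\}\subset\PP^3$ must be smooth (its singular locus would be a proper invariant linear subspace), hence is $\PP^1\times\PP^1$, forcing the simple quotient into $\PGL_2(\CC)$, which is absurd. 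You instead observe that the relevant faithful four-dimensional irreducibles of $\SL_2(\bF_7)$, $2.\Alt_6$, $2.\Alt_7$ and $\Sp_4(\bF_3)$ carry no invariant quadratic form, so $f_2=0$, contradicting the cDV condition; this is not merely an ``expectation'' but is confirmed by the empty degree-$2$ column of Table~\ref{table}, and it is if anything more direct than the paper's argument. Your treatment of the reducible case $V_3\oplus V_1$ for $3.\Alt_6$ matches the paper's: $f_2=w^2$, no invariant of degree $3$ outside the ideal $(w)$, first genuine invariant in degree $6$, so the general hyperplane section fails to be Du Val and the point is not terminal.

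One small misstatement: you assert that every singular terminal point admits an index-one cover of degree $r\geqslant 2$. A Gorenstein singular terminal point has index $1$, so its cover is trivial and the point itself is the cDV point. This does no damage, since your cDV analysis applies verbatim with $\tilde X=X$ and $H=G$, but the case division should read: either $P$ is Gorenstein singular (work with $G$ on $\CC^4$ directly) or $P$ has index $r\geqslant 2$ (pass to the cover, whose distinguished point is smooth or cDV).
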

\begin{proof}
First, consider the case where $P\in X$ is Gorenstein. The group $G$ faithfully acts on the tangent space
$T_{P,X}$. If $P\in X$ is smooth, then $\dim T_{P,X}=3$ and $G\simeq 3.\Alt_6$
by Theorem~\ref{th:GL3}. If $P\in X$ is singular, then
there exists an analytic equivariant embedding $(X,P) \subset (T,0)$, where $T\simeq \CC^4$
and the action on $T$ is linear.
Let $\phi(x_1,\dots,x_4)=0$ be the (invariant) equation of $X$ in $T$.
Write $\phi=\sum \phi_d$, where $\phi_d$ is homogeneous of degree $d$.
By the classification of terminal singularities~\cite{Reid-YPG1987},
we conclude $\phi_2\neq 0$.
If moreover $G \hookrightarrow \GL_4(\CC)=\GL(T)$ is irreducible, then
the group $G/\z(G)$ faithfully acts on $\PP(T)=\PP^3$.
In this case $\phi_2=0$ defines an invariant quadric $Q\subset \PP^3$
which must be smooth. Thus $Q\simeq \PP^1\times \PP^1$ and then the simple group $G/\z(G)$ embedds into $\PGL_2$; impossible for $G$ in the list \eqref{eq:list}.
Let $G \hookrightarrow \GL_4(\CC)=\GL(T)$ be reducible.
We have a decomposition $T=T'\oplus T''$, where $T'$ is irreducible faithful with $\dim T'<4$.
If $\dim T'=3$, then $G\simeq 3.\Alt_6$. Again $3.\Alt_6$
has no invariants of degree $\le 3$ on $T'$, so $\phi_2=x_4^2$ and $\phi_3=\lambda x_4^3$.
This contradicts the classification of terminal singularities.
Hence $\dim T'=2$ and $G\simeq 2.\Alt_5$.
Again we have a contradiction.

Consider the case where $(X,P)$ is a singularity of index $r>1$.
Let $\pi: (X^\sharp, P^\sharp)\to (X,P)$ be the index one cover and let
$G^\sharp\subset \Aut(X^\sharp, P^\sharp)$ be the natural lifting of $G$.
We have an exact sequence
\begin{equation*}
1 \longrightarrow\mumu_r \longrightarrow G^\sharp \overset{\nu}\longrightarrow G \longrightarrow 1.
\end{equation*}
Since $G$ is a quasi-simple group and $\Aut(\mumu_r)$ is abelian, this is a central extension.
Let $Z^\sharp\subset G^\sharp$ be the
preimage of $\z(G)$. Since $\z(G)$ is cyclic, $Z^\sharp$ is an abelian group with two generators
and one of these generators is of order 2 or 3. In this situation, the automorphism group
$\Aut(Z^\sharp)$ is solvable.
Hence $Z^\sharp$ coincides with the center of $G^\sharp$.
Thus either $G^\sharp=\mumu_r\times G$ or $Z^\sharp\simeq \mumu_6$ and $G/\z(G)\simeq \Alt_6$ or $\Alt_7$.
In both cases there exists a quasi-simple subgroup $G'\subset G^\sharp$ such that $\nu(G')=G$.
By the above $G'\simeq 3.\Alt_6$ and $P^\sharp\in X^\sharp$ is smooth.
Since the representation of $G'$ on $T_{P^\sharp, X^\sharp}$ is irreducible,
$r=2$.
\end{proof}

\subsection{Varieties of minimal degree}
\begin{stheorem}[F.~Enriques, see e.g.~{\cite{EisenbudHarris1987}}]
\label{th:min-degree}
Let $Y=Y_d\subset \PP^N$ be an irreducible subvariety of degree $d$ and dimension $n$
which is not contained in a hyperplane. Then
$d\ge N-n+1$ and the equality holds if and only if $Y$ is one of the following:
\begin{enumerate}
\item \label{th:Enriques-P3}
$Y=\PP^N$;
\item \label{th:Enriques-Q}
$Y=Y_2\subset \PP^N$, a smooth quadric;
\item \label{th:Enriques-scroll}
a rational scroll $\PP_{\PP^1} (\EEE)$, where $\EEE$ is an ample rank $n$ vector bundle on $\PP^1$,
embedded by the linear system $|\OOO(1)|$;
\item \label{th:Enriques-Veronese}
a Veronese surface $F_4\subset \PP^5$;
\item \label{th:Enriques-cone0}
a cone over one of the varieties from \xref{th:Enriques-Q},
\xref{th:Enriques-scroll} or \xref{th:Enriques-Veronese}.
\end{enumerate}
\end{stheorem}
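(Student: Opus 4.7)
The plan is to prove the degree inequality first by a general linear-section argument, then classify the equality case by induction on the dimension $n$.

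For the lower bound, I would cut $Y$ by a general linear subspace $L\subset \PP^N$ of codimension $n$. By Bertini and the definition of degree, $L\cap Y$ consists of exactly $d$ reduced points. The key claim is that for general $L$ these points span $L$: otherwise, as $L$ varies in its Grassmannian the spans would sweep out a subfamily of proper linear subspaces whose union still covers $Y$, forcing $Y$ into a hyperplane and contradicting non-degeneracy. Hence $d\geq \dim L+1=N-n+1$.

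For the classification of equality, I proceed by induction on $n$. The base $n=1$ is that a non-degenerate curve of degree $N$ in $\PP^N$ must be a rational normal curve; this follows from Castelnuovo's genus bound, which forces the arithmetic genus to vanish, and fits case \xref{th:Enriques-scroll} viewed with $\EEE$ a line bundle on $\PP^1$. For the induction step $n\geq 2$, take a general hyperplane section $Y':=Y\cap H$ in $H\cong \PP^{N-1}$; then $Y'$ has dimension $n-1$ and degree $d=(N-1)-(n-1)+1$. Split into two subcases. If $Y'$ spans $H$, then the inductive hypothesis identifies $Y'$ as one of the listed types, and one lifts the structure to $Y$: the ruling of a scroll $Y'$ extends through the pencil of hyperplane sections to endow $Y$ with a $\PP^{n-1}$-scroll structure over $\PP^1$, and a smooth quadric $Y'$ extends to a smooth quadric or a cone over a quadric, since the defining quadratic polynomial of $Y'$ lifts essentially uniquely to $\PP^N$. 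If on the other hand $Y'$ fails to span $H$, then $Y'\subset\PP^{N-2}$, and varying $H$ through a suitable linear pencil forces $Y$ to be a cone over its general linear section, reducing to a lower-dimensional instance of the classification.

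The main obstacle lies in the low-dimensional lifting, specifically the case $n=2$, where smooth surfaces of minimal degree must be separated into the rational scrolls and the Veronese surface $F_4\subset \PP^5$. This amounts to Del Pezzo's theorem, and one approaches it by analysing the canonical class $K_Y$ together with the embedding linear system $|\OOO_Y(1)|$: the adjunction formula and numerical constraints from $K_Y^2$ and $c_2(Y)$ force $\OOO_Y(1)$ to be either the tautological line bundle of a $\PP^1$-bundle over $\PP^1$, or the very ample bundle $\OOO_{\PP^2}(2)$ on $\PP^2$. Once this surface case is settled, the cone reduction together with the inductive lifting delivers the full list in all higher dimensions.
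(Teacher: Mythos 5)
The paper offers no proof of this statement: it is the classical Enriques--Del Pezzo--Bertini classification of varieties of minimal degree, quoted with a pointer to Eisenbud--Harris. So your proposal can only be judged on its own merits, and while its overall shape (degree bound via general linear sections, then induction on dimension with Del Pezzo's surface classification as the pivotal case) matches the standard treatment, the induction step contains a genuine structural error. You claim that the cones of case \xref{th:Enriques-cone0} arise from the branch where the general hyperplane section $Y'=Y\cap H$ fails to span $H$. But for an irreducible non-degenerate variety of positive dimension the general hyperplane section is \emph{always} non-degenerate (this is exactly the spanning lemma you need for the degree bound), so that branch is empty --- and yet cones genuinely occur in the classification. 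Concretely, the cone over a rational normal quartic curve, the Hirzebruch scroll of degree $4$ in $\PP^5$, and the Veronese surface $F_4\subset\PP^5$ all have the same general hyperplane section, namely a rational normal quartic in $\PP^4$. Hence the structure of $Y$ is not determined by that of $Y'$, and your ``lifting'' step cannot separate these three outcomes; the argument as written would wrongly conclude that every variety of minimal degree is smooth. The correct route is to first split off the cone case intrinsically (by analysing the vertex of $Y$, reducing a cone to the lower-dimensional variety it is a cone over), prove Del Pezzo's theorem for smooth surfaces, and then show for $n\ge 3$ that a smooth $Y$ whose hyperplane sections are Veronese surfaces does not exist --- the ``Veronese does not lift'' step, which your sketch omits entirely.

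Two smaller points. First, your justification of the spanning claim in the degree bound --- that the spans of the sections would ``sweep out proper linear subspaces whose union covers $Y$, forcing $Y$ into a hyperplane'' --- is not a valid inference: a family of proper linear subspaces can easily cover a non-degenerate variety (lines cover $\PP^2$). The claim itself is true, but it needs either a monodromy/uniform-position argument or a dimension count on the incidence variety of degenerate hyperplane sections. Second, in the base case $n=1$ Castelnuovo's bound does give genus $0$ for a non-degenerate curve of degree $N$ in $\PP^N$, and identifying it with a rational normal curve (the one-dimensional instance of case \xref{th:Enriques-scroll}) is fine; this part of your sketch is sound.
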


\subsection{Group actions on K3 surfaces}

\begin{stheorem}[{\cite{Mukai1988}}]
\label{th:actions-K3}
Let a finite quasi-simple group $G$ faithfully act on a K3 surface $S$.
Then $G\simeq \Alt_5$, $\Alt_6$ or $\PSL_2(\bF_7)$. In particular, $G$ is simple.
Moreover, if $G\simeq \Alt_6$ or $\PSL_2(\bF_7)$, then $\rk\Pic(S)=20$ and $\rk\Pic(S)^G=1$.
\end{stheorem}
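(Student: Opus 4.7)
The plan is to follow Mukai's approach by reducing to symplectic actions and then applying his classification. First, observe that any quasi-simple $G$ acting on the K3 surface $S$ must act \emph{symplectically}: the action on $H^0(S, \Omega^2_S) \cong \CC$ yields a character $\chi\colon G \to \CC^*$ with abelian image, which must be trivial since $G$ is perfect. Hence $G$ preserves the holomorphic symplectic form.

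Next, I would combine Nikulin's order bound with Mukai's classification. Nikulin's theorem asserts that every non-trivial symplectic automorphism of a K3 has order in $\{2,3,4,5,6,7,8\}$ and isolated fixed points whose number depends only on the order. This already rules out most quasi-simple candidates by inspection of element orders: $2.\Alt_5 \cong \SL_2(\bF_5)$ and $2.\Alt_6$ have elements of order $10$; $3.\Alt_6$ and $6.\Alt_6$ have elements of order $15$; $\SL_2(\bF_7)$, $\SL_2(\bF_8)$, $\SL_2(\bF_{11})$, $\Sp_4(\bF_3)$ have elements of orders $14, 9, 11, 12$; and $2.\Alt_7$, $3.\Alt_7$, $6.\Alt_7$ each have elements of order $\geq 10$. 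To exclude the remaining candidate $\Alt_7$ (whose element orders are all at most $7$), I would invoke Mukai's theorem: a finite group acts faithfully and symplectically on some K3 surface if and only if it embeds in the Mathieu group $M_{23}$ as a subgroup with at least five orbits on $\{1,\dots,24\}$; no embedding of $\Alt_7$ satisfies the orbit condition. This leaves exactly $\Alt_5$, $\Alt_6$, and $\PSL_2(\bF_7)$, all of which are simple.

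For the rank assertions with $G \in \{\Alt_6, \PSL_2(\bF_7)\}$, I would compute the character of $G$ on $H^2(S, \QQ) \cong \QQ^{22}$ via the Lefschetz fixed-point formula: $\operatorname{tr}(g^* \mid H^2(S, \CC)) = |\Fix(g)| - 2$ for any non-trivial symplectic $g$, with $|\Fix(g)|$ read off Nikulin's tables. Comparing with the character tables of $\Alt_6$ and $\PSL_2(\bF_7)$ yields $\dim H^2(S, \QQ)^G = 3$. Because $G$ preserves the decomposition $H^2(S, \QQ) = \Pic(S)_\QQ \oplus T(S)_\QQ$ and acts trivially on $H^{2,0} \oplus H^{0,2} \subset T(S)_\CC$, one has $\rk T(S) \geq 2$ and $\rk \Pic(S)^G \geq 1$ (take an average of an ample class over $G$), forcing $\rk T(S) = 2$ and $\rk \Pic(S)^G = 1$, and hence $\rk \Pic(S) = 20$. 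The main obstacle is Mukai's classification itself: the Nikulin order bound alone leaves $\Alt_7$ as a legitimate candidate, and its exclusion requires the full Niemeier-lattice / Mathieu-group analysis that Mukai developed.
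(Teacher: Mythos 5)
Your derivation is correct, and it rests on exactly the same foundation the paper uses: the paper gives no proof of this statement, citing it directly as Mukai's theorem, and your argument (perfectness forces a symplectic action, Nikulin's order bound eliminates all the non-simple quasi-simple candidates, Mukai's $M_{23}$-embedding criterion eliminates $\Alt_7$, and the Lefschetz/character computation gives $\dim H^2(S,\QQ)^G=3$, hence $\rk\Pic(S)=20$ and $\rk\Pic(S)^G=1$) is the standard reconstruction of how the statement follows from \cite{Mukai1988}. No gaps.
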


\begin{scorollary}
\label{cor:actions-K3}
Let $S$ be a K3 surface with at worst Du Val singularities.
Suppose $S$ admits a faithful action of a quasi-simple group $G$,
where $G\not \simeq \Alt_5$. Then $S$ is smooth.
\end{scorollary}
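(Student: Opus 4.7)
The plan is to lift the action to the minimal resolution and apply Mukai's theorem. Let $\pi \colon \widetilde S \to S$ denote the minimal desingularization. Since $S$ has only Du Val singularities, which are both canonical and rational, we have $K_{\widetilde S} = \pi^{*}K_{S} = 0$ and $h^{1}(\OOO_{\widetilde S}) = h^{1}(\OOO_{S}) = 0$, so $\widetilde S$ is a smooth projective K3 surface. By uniqueness of the minimal resolution of a normal surface, the action of $G$ on $S$ lifts canonically to a faithful action of $G$ on $\widetilde S$.

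Now apply Theorem~\ref{th:actions-K3} to $\widetilde S$. Since $G \not\simeq \Alt_5$, the only remaining possibilities are $G \simeq \Alt_6$ or $G \simeq \PSL_2(\bF_7)$, and in either case $\rk \Pic(\widetilde S)^{G} = 1$. Averaging any ample class on $\widetilde S$ over the finite group $G$ produces a $G$-invariant ample class $H$; hence $\Pic(\widetilde S)^{G} \otimes \QQ = \QQ \cdot H$, and every nonzero $G$-invariant divisor class on $\widetilde S$ has strictly positive self-intersection.

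Suppose for contradiction that $S$ is singular, and let $\Sigma \subset \widetilde S$ be the nonempty reduced exceptional divisor of $\pi$. Its components are $(-2)$-curves arranged in Du Val configurations over the singular points of $S$, so the intersection form restricted to the $\ZZ$-span of these components is negative definite. The class $[\Sigma]$ is $G$-invariant (as $\pi$ is $G$-equivariant), nonzero, and satisfies $\Sigma^{2} < 0$, which contradicts the conclusion of the previous paragraph. Therefore $S$ is smooth.

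The only step requiring attention is the passage to the minimal resolution: one must know that $\widetilde S$ is again a K3 surface and that the $G$-action lifts, but both facts are standard consequences of the rationality of Du Val singularities and the uniqueness of the minimal resolution. The remainder is a one-line numerical contradiction between the negative-definiteness of the exceptional intersection form and the rank-one $G$-invariant Picard lattice granted by Mukai's theorem.
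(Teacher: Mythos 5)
Your proof is correct, and since the paper states this corollary without any proof, your argument is precisely the intended deduction: lift to the minimal (crepant) resolution, invoke Theorem~\ref{th:actions-K3} to get $\rk\Pic(\widetilde S)^G=1$ generated by an ample class, and contradict this with the negative-definite $G$-invariant exceptional cycle. No gaps.
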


\subsection{Linearizations}

Let $X$ be a proper complex variety with a faithful action of a finite
group $G$. Let $\EEE$ be a vector bundle on $X$.
We say that $\EEE$ is $G$-invariant if there exist isomorphisms
$\phi_g : g^\ast \EEE \to \EEE$ for every $g \in G$.
We say that $\EEE$ is \emph{$G$-linearizable} if $\EEE$ is $G$-invariant and one
can select the isomorphisms $\{ \phi_g \}_{g \in G}$
such that $\phi_{gh} = \phi_h \circ h^\ast( \phi_g)$ for all $g,h \in G$.
Equivalently, this means that $G$ acts on the total space of $\EEE$
linearly on the fibers and the projection to $X$ is equivariant.
The particular choice of action on $\EEE$ is called a
\emph{linearization}.

\begin{proposition} \label{prop:canLinearizable}
If $X$ is a smooth $G$-variety, then the canonical line bundle has a
canonical linearization.
\end{proposition}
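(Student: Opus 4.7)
My plan is to exhibit a canonical linearization of $\omega_X$ built directly from the functoriality of Kähler differentials. For each $g\in G$, viewed as an automorphism $g\colon X\to X$, I use the natural pullback map on $1$-forms, $g^*(df)\mapsto d(f\comp g)$, to produce a canonical isomorphism
\[
\phi_g\colon g^*\Omega^1_X\xrightarrow{\;\sim\;}\Omega^1_X;
\]
this is an isomorphism of locally free sheaves precisely because $g$ itself is an automorphism of $X$. Taking the top exterior power over $\OOO_X$ then yields a canonical isomorphism $\phi_g\colon g^*\omega_X\xrightarrow{\sim}\omega_X$. The key point is that the construction is forced by the universal property of differentials and involves no auxiliary choice.

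The remaining step is to verify the cocycle relation $\phi_{gh}=\phi_h\comp h^*(\phi_g)$. Using the canonical identification $(gh)^*=h^*\comp g^*$ of pullback functors on sheaves, I reduce to checking the identity on local generators of the form $g^*(df)$. Both sides then send $h^*g^*(df)$ to $d(f\comp g\comp h)$, which is simply the chain rule; passing to the determinant line bundle preserves this identity since determinants of composed isomorphisms compose.

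I do not expect any serious obstacle here: the content of the proposition is essentially a restatement of the fact that $\Omega^1$ is a contravariant functor on the category of smooth varieties with isomorphisms, and restricting this functoriality to the subgroup $G\subset\Aut(X)$ automatically produces the claimed $G$-linearization of $\omega_X$. Canonicity is built in, since at no stage is a non-intrinsic trivialization chosen.
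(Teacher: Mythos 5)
Your proof is correct and is essentially the paper's argument in dual form: the paper linearizes the tangent bundle via $(x,t)\mapsto(g(x),dg(t))$ and passes to the canonical bundle, while you linearize $\Omega^1_X$ via pullback of forms and take the top exterior power. Your explicit verification of the cocycle relation $\phi_{gh}=\phi_h\comp h^*(\phi_g)$ via the chain rule is a welcome detail that the paper leaves implicit.
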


\begin{proof}
The points of the total space of the tangent bundle $TX$ are of the
form $(x,t)$ where $x \in X$ and $t \in T_xX$.
For $g \in G$, $g(x,t):=(g(x),dg(t))$ defines an action on $TX$ which is
linear on the fibers. Thus $TX$ is linearizable, and so is the
canonical bundle.
\end{proof}

\begin{lemma}[{\cite[Lemma~5.11]{DolgachevIskovskikh}}]
\label{lemma:DI}
Let $f: X\to Y$ be a double cover of smooth varieties whose branch
divisor $B\subset Y$ is given by an invertible sheaf $\LLL$ together with a section
$s_B\in H^0(Y, \LLL^{\otimes 2})$ $($see \cite{Wavrik1968}$)$ and let $\tau$ is the Galois involution.
Suppose a group $G$ acts on $Y$ leaving invariant $B$.
Then there exists a subgroup $G'\subset \Aut(X)$
fitting into the exact sequence
\begin{equation*}
1 \longrightarrow \langle\tau\rangle \longrightarrow G' \overset{\delta}{\longrightarrow} G \longrightarrow 1,
\end{equation*}
where $\delta$ is induced by $G'\to \Aut(Y)$. The sequence splits
if and only if
$\LLL$ admits a $G$-linearization and in the corresponding representation of $G$ in $H^0(Y, \LLL^{\otimes 2})$ the section $s_B$ is $G$-invariant.
\end{lemma}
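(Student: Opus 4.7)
The plan is to realize the double cover explicitly and translate lifts of $G$-automorphisms into data about $\LLL$ and $s_B$. Concretely, take $X=\Spec_Y(\OOO_Y\oplus \LLL^{-1})$ with the $\OOO_Y$-algebra structure on $\OOO_Y\oplus\LLL^{-1}$ encoded by the multiplication $\LLL^{-1}\otimes\LLL^{-1}\to\OOO_Y$ dual to $s_B$; the Galois involution $\tau$ then acts trivially on the first summand and by $-1$ on the second.

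Given $g\in G$ and an isomorphism $\psi\colon g^{\ast}\LLL\to\LLL$, the induced map on $\OOO_Y\oplus\LLL^{-1}$ is an algebra isomorphism, and hence gives a lift $\tilde g\in\Aut(X)$ covering $g$, if and only if $\psi^{\otimes 2}(g^{\ast}s_B)=s_B$ as sections of $\LLL^{\otimes 2}$. Since $g^{\ast}B=B$ as divisors, any isomorphism $g^{\ast}\LLL\to\LLL$ sends $g^{\ast}s_B$ to a nonzero scalar multiple of $s_B$, so after rescaling by a square root there are exactly two such $\psi$, differing by a sign; the resulting two lifts of $g$ differ by composition with $\tau$. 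The set $G'\subset\Aut(X)$ of all such lifts for all $g\in G$ is closed under composition, hence forms a subgroup fitting into the claimed exact sequence, with $\delta$ sending a lift to the $g$ it covers.

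For the splitting criterion, I would observe that a set-theoretic section $g\mapsto (g,\psi_g)$ of $\delta$ is a group homomorphism precisely when $\psi_{gh}=\psi_h\circ h^{\ast}\psi_g$, which is exactly the cocycle condition defining a $G$-linearization of $\LLL$. Given such a linearization, $G$ acts on $H^0(Y,\LLL^{\otimes 2})$ via $s\mapsto\psi_g^{\otimes 2}(g^{\ast}s)$, and the additional requirement that each $\psi_g$ actually lift to $X$ becomes $\psi_g^{\otimes 2}(g^{\ast}s_B)=s_B$ for all $g$, i.e.\ $s_B$ is fixed by this representation of $G$. Conversely, any splitting yields such a linearization and invariant section by construction. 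The main subtlety is purely bookkeeping: the twofold sign ambiguity in each individual lift is precisely what makes the obstruction to splitting a class in $H^2(G,\mumu_2)$, which vanishes exactly under the two stated conditions.
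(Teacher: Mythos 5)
Your argument is correct and is the standard one: realizing $X$ as $\Spec_Y(\OOO_Y\oplus\LLL^{-1})$ with multiplication given by $s_B$, identifying lifts of $g$ with isomorphisms $\psi\colon g^*\LLL\to\LLL$ satisfying $\psi^{\otimes 2}(g^*s_B)=s_B$, and reading off the splitting as the cocycle condition for a linearization fixing $s_B$. The paper gives no proof of its own (it cites Dolgachev--Iskovskikh, Lemma~5.11), and your construction is essentially the proof given there, so there is nothing to flag beyond the implicit use of properness of $Y$ to ensure the two-fold sign ambiguity in each $\psi$.
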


\section{Gorenstein Fano threefolds}

\subsection{Special Fano threefolds}

\begin{sproposition}
\label{Proposition-rho>1}
Let $G$ be a group from the list \eqref{eq:list} and let $X$ be a $G$-Fano threefold
\textup(with only terminal Gorenstein singularities\textup).
Then $\rk\Pic(X)=1$.
\end{sproposition}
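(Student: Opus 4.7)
The plan is to analyze the $G$-action on $N^1(X) := \Pic(X) \otimes \QQ$. By the $G$-Fano assumption, every $G$-invariant Weil divisor is $\QQ$-Cartier and $\rk \Cl(X)^G = 1$, so $N^1(X)^G$ is one-dimensional, spanned by $K_X$. Suppose for contradiction that $\rho(X) \geq 2$; by Maschke's theorem, choose a $G$-stable complement $V$ of $\QQ\cdot K_X$ in $N^1(X)$, so $V \neq 0$, $V^G = 0$, and $\dim V = \rho(X) - 1$.

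Since $G$ is quasi-simple, the kernel of $G \to \GL(V)$ is a proper normal subgroup of $G$, hence is contained in $\z(G)$. Thus $V$ is a non-trivial rational representation, with no trivial summand, of a quasi-simple quotient $G/K$ of $G$ sharing the same simple quotient $S := G/\z(G)$. Let $m(G/K)$ denote the minimum dimension of such a representation; then $\rho - 1 \geq m(G/K)$. Character-table inspection of the groups in \eqref{eq:list} gives, for example, $m(\PSL_2(\bF_{11})) = 10$ (the two complex $5$-dimensional representations are Galois-conjugate over $\QQ(\sqrt{-11})$ and combine over $\QQ$), $m(\Alt_6) = 5$, $m(\Alt_7) = 6$, $m(\PSL_2(\bF_7)) = 6$, while the minimum faithful $\QQ$-representations of the Schur covers are typically larger because their low-dimensional faithful complex representations come in Galois-conjugate pairs that combine over $\QQ$. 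Combined with the Mori--Mukai bound $\rho \leq 10$ on the Picard rank of a Gorenstein terminal Fano threefold, the "large" cases (such as $\SL_2(\bF_{11})$, $\Sp_4(\bF_3)$, $3.\Alt_7$, and $6.\Alt_7$) are dispatched outright.

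For the remaining cases, e.g.\ $3.\Alt_6$, $6.\Alt_6$, $\SL_2(\bF_7)$, $2.\Alt_6$ and $2.\Alt_7$, the rank argument alone is not sufficient. My plan here is to use effective divisor classes spanning $V \oplus \QQ\cdot K_X$ to produce a $G$-equivariant rational map from $X$ to the projectivization of a small representation of $G$ classified by Theorem~\ref{th:GL3}, and then to invoke Lemma~\ref{Lemma-P3-Q} (restricting $G$-invariant hypersurfaces and complete intersections of quadrics in $\PP^3$, $\PP^4$, $\PP^5$) together with Lemma~\ref{lemma:fixed-point} (restricting $G$-fixed points on $X$) to exclude the resulting configurations; each such exclusion yields either a second $G$-invariant divisor class not proportional to $K_X$, contradicting the $G$-Fano hypothesis, or an incompatibility with the Gorenstein terminal Fano structure. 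The main obstacle will be executing this geometric step cleanly for $G = 3.\Alt_6$ and $G = \SL_2(\bF_7)$, since for these groups $\dim V$ can be as small as $5$ or $6$ representation-theoretically, and one must exploit specific numerical features of the Gorenstein Fano geometry (Riemann--Roch for small multiples of $-K_X$, and the structure of the anticanonical map) to rule out the remaining configurations.
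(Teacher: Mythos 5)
There is a genuine gap. The paper's proof is two lines long precisely because it invokes the bound $\rk\Pic(X)\le 4$ for $G$-Fano threefolds with terminal Gorenstein singularities from \cite{Prokhorov-GFano-2} (``G-Fano threefolds, II''), after which the orthogonal complement $W=K_X^{\perp}$ in $\Pic(X)$ (with respect to the pairing $(D_1,D_2)=-K_X\cdot D_1\cdot D_2$) has rank at most $3$, satisfies $W^G=0$, and no group in the list \eqref{eq:list} admits a non-trivial rational representation of dimension $\le 3$ --- done. You instead use only the bound $\rho\le 10$ coming from smoothing and the classification of smooth Fano threefolds, which leaves $\dim V\le 9$, and this is too weak: the simple quotients $\Alt_6$, $\Alt_7$, $\PSL_2(\bF_7)$ and $\PSp_4(\bF_3)$ all have non-trivial rational representations of dimension $5$ or $6$. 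Your assertion that $\Sp_4(\bF_3)$, $3.\Alt_7$ and $6.\Alt_7$ are ``dispatched outright'' is therefore incorrect (e.g.\ $\PSp_4(\bF_3)$ is the rotation subgroup of $W(E_6)$ and has a $6$-dimensional rational representation, and $\Alt_7$ has its $6$-dimensional standard representation), so in fact only $\SL_2(\bF_{11})$ falls to your rank count.

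For everything else you offer only a ``plan'' --- producing equivariant rational maps to projectivized small representations and excluding configurations via Lemma~\ref{Lemma-P3-Q} and Lemma~\ref{lemma:fixed-point} --- without executing any of it; as written this is a statement of intent, not a proof, and it is far from clear it can be carried out, since an abstract $G$-subrepresentation of $N^1(X)$ does not readily yield a morphism to a projective space of the corresponding dimension. The fix is to replace the Mori--Mukai-type bound by the $G$-equivariant bound $\rk\Pic(X)\le 4$ of \cite{Prokhorov-GFano-2}, which is exactly what makes the representation-theoretic argument close in one step.
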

\begin{proof}
The lattice $\Pic(X)$ is equipped with non-degenerate $G$-invariant pairing $(D_1,D_2)=-K_X\cdot D_1\cdot D_1$.
According to \cite{Prokhorov-GFano-2} we have $\rk\Pic(X)\le 4$.
The group $G$ acts on the orthogonal complement $W=K_X^{\perp}$ of $K_X$ in $\Pic(X)$.
Since $X$ is a $G$-Fano, $W^G=0$. But groups as in \eqref{eq:list} have no rational
representations of dimension $\le 3$, a contradiction.
\end{proof}

Fano threefolds with Fano index $\iota(X)=2$ are also called \emph{del Pezzo threefolds}.
\begin{sproposition}
\label{Proposition-iota=2}
Let $X$ be a Fano threefold with
with at worst canonical Gorenstein singularities.
Assume that $\iota(X)=2$ and $(-\frac12 K_X)^3\le 4$.
Furthermore, assume that $\Aut(X)$ contains a subgroup $G$ from the list \eqref{eq:list}.
Then $(X,G)$ is as in Example~\xref{ex:3A6}\xref{ex:3A6-V1}.
\end{sproposition}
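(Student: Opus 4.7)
My plan is to first invoke Fujita's classification of del Pezzo threefolds with at worst canonical Gorenstein singularities: for $d := (-\tfrac12 K_X)^3 \in \{1,2,3,4\}$, the variety $X$ is, respectively, a sextic in $\PP(1,1,1,2,3)$, a quartic in $\PP(1,1,1,1,2)$ (equivalently a double cover of $\PP^3$ branched along a quartic), a cubic hypersurface in $\PP^4$, or an irreducible complete intersection of two quadrics in $\PP^5$. The cases $d=3$ and $d=4$ are then ruled out at once by Lemma~\ref{Lemma-P3-Q}\ref{Lemma-P3-Q:hyp} and Lemma~\ref{Lemma-P3-Q}\ref{Lemma-P3-Q:Q1} respectively.

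For $d=2$, the Galois involution $\tau$ of $\pi\colon X\to\PP^3$ is central in $\Aut(X)$, and $G$ induces a projective action on $\PP^3$ that preserves the branch quartic $B$. I would first show that $G$ acts faithfully on $V_x := H^0(X, -\tfrac12 K_X)\simeq\CC^4$: the $1$-dimensional complement of $\operatorname{Sym}^2 V_x$ in the weight-$2$ piece is $G$-trivial (since $G$ is perfect), so the coordinate $y$ can be chosen $G$-invariant, and any kernel of $G\to\GL(V_x)$ would then act trivially on $y$ as well, hence trivially on all of $X$. Thus $G\subset\GL_4$ is quasi-simple. If the image in $\PGL_4$ lies in the list \eqref{eq:list}, Lemma~\ref{Lemma-P3-Q}\ref{Lemma-P3-Q:1} forces $G\simeq 3.\Alt_6$ with $V_x=V_1\oplus V_3$; but $V_3$ has no nonzero invariants in degrees $1,\dots,5$, so the unique degree-$4$ invariant on $V_1\oplus V_3$ is $t^4$, making $B$ non-reduced and contradicting the canonical Gorenstein hypothesis. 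Otherwise $\mumu_2\subset\z(G)$ acts on $V_x$ as $-I$, and the simple image $G/\mumu_2$ in $\PGL_4$ is one of $\Alt_5,\PSL_2(\bF_7),\Alt_6,\PSp_4(\bF_3),\Alt_7$; for each such image, $\mumu_2$ acts trivially on every even-degree piece $\operatorname{Sym}^{2k}V_x$, so Lemma~\ref{lemma:DI} exhibits the lift $G'\subset\Aut(X)$ as the \emph{split} central extension $\mumu_2\times(G/\mumu_2)$, which contains no subgroup isomorphic to the non-split quasi-simple $G$ of the same order.

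For $d=1$, after a $G$-equivariant change of coordinates the sextic $X\subset\PP(1,1,1,2,3)$ can be written in the form $z^2=y^3+y\,A(x)+B(x)$ with $A$ and $B$ homogeneous $G$-invariant polynomials of degrees $4$ and $6$ on $V_x=H^0(X,-\tfrac12 K_X)\simeq\CC^3$. The same kernel argument yields that $G$ acts faithfully on $V_x$, so Theorem~\ref{th:GL3} restricts $G$ to $\{2.\Alt_5,\Alt_5,3.\Alt_6,\PSL_2(\bF_7)\}$; intersecting with the list \eqref{eq:list} and observing that $2.\Alt_5$ has no faithful $3$-dimensional representation (all such representations factor through $\Alt_5$) leaves only $G\simeq 3.\Alt_6$, acting on $V_x$ through its unique faithful $3$-dimensional representation. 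The invariants of this representation start in degree $6$ with the Valentiner sextic $\phi$, so $A=0$ and $B=\lambda\phi$ for some nonzero scalar $\lambda$; after rescaling $y$ and $z$ this puts $X$ into the form of Example~\ref{ex:3A6}\ref{ex:3A6-V1}.

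The most delicate step will be the $d=2$ case: eliminating the simple quotients $\Alt_5,\PSL_2(\bF_7),\Alt_6,\PSp_4(\bF_3),\Alt_7$ as images of $G$ in $\PGL_4$ rests crucially on the splitting criterion of Lemma~\ref{lemma:DI} together with the order-based observation that no non-split quasi-simple extension from the list \eqref{eq:list} embeds into the split product $\mumu_2\times(G/\mumu_2)$.
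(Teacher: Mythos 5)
Your proof is correct and follows essentially the same route as the paper's: the Fujita--Shin classification of del Pezzo threefolds by degree, Lemma~\ref{Lemma-P3-Q}\ref{Lemma-P3-Q:hyp} and \ref{Lemma-P3-Q}\ref{Lemma-P3-Q:Q1} for $d=3,4$, the invariant-theoretic analysis of the anticanonical ring for $d=1$, and for $d=2$ the same contradiction via Lemma~\ref{lemma:DI} (the paper merely narrows to $G\simeq\SL_2(\bF_7)$ using the invariant quartic before observing that the center acts trivially, whereas you run the splitting argument uniformly over all simple quotients). One harmless slip: $2.\Alt_5$ does admit faithful reducible $3$-dimensional representations (a faithful $2$-dimensional one plus the trivial one), but that clause is redundant since $2.\Alt_5$ is excluded from the list \eqref{eq:list} by hypothesis.
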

\begin{proof}
Let $A=-\frac12 K_X$ and $\dd(X):=A^3$.
Consider the possibilities for $\dd(X)$ case by case.
We use the classification of del Pezzo threefolds \cite{Fujita1990}, \cite{Shin1989}.

\par\medskip\noindent
{\bf Case $\dd(X)=1$.}
Here $\Bs|A|=\{P\}$ and $G$ faithfully acts on $T_{P,X}$.
Hence $G\simeq 3.\Alt_6$ by Theorem~\ref{th:GL3}.
Let $\tilde G$ be the universal central extension of $G$
(see, for example, \S{33}~of~\cite{Aschbacher}).
Then the action of $G$ on $X$ lifts to an
action of $\tilde G$ on $H^0(X,tA)$ for any $t$.
Hence $\tilde G$ acts on the graded algebra
\begin{equation*}
R(X,A):=\bigoplus_{t\ge 0} H^0(X,tA).
\end{equation*}
In our case $R(X,A)$ is generated by its elements $x_1,x_2,x_2,y,z$
with $\deg x_i=1$, $\deg y=2$, $\deg z=3$
and a unique relation of degree $6$.

There exists a natural isomorphism $H^0(X,A)\simeq T_{P,X}$.
The subspace
\[
S^2 (H^0(X, A))\subset H^0(X,-K_X)
\]
is invariant.
Since $\dim H^0(X,-K_X)=7$ and $\dim S^2 (H^0(X, A))=6$,
the element $y\in H^0(X,-K_X)$ can be taken to be a relative invariant
of $\tilde G$.
Similarly, the subspace of the $10$-dimensional space $H^0(X,3A)$
generated by $x_1,x_2,x_3,y$ is invariant and of codimension 1.
Hence the element $z\in H^0(X,3A)$ can be taken to be a relative
invariant of $\tilde G$.
Since the action on $x_1,x_2,x_3$ has no invariants of degree $<6$
and has a unique invariant $\phi_6$ of degree $6$,
$X\subset \PP(1^3,2,3)$ is given by the equation
\begin{equation*}
z^2+ y^3 +\phi_6
\end{equation*}
and so we are in the situation of Example~\ref{ex:3A6}\ref{ex:3A6-V1}.

\par\medskip\noindent
{\bf Case $\dd(X)=2$.}
In this case the map given by the linear system $|A|$
is a finite morphism $\Phi_{|A|}: X\to \PP^3$
of degree $2$ branched over a quartic $B\subset \PP^3$.
The group $G$ acts non-trivially on $\PP^3$. Therefore, $G/\z(G)$
is either $\PSL_2(7)$ or $\Alt_6$.
In the case $G/\z(G)=\Alt_6$, the group $G$ has no invariant quartic.
Hence $G/\z(G)=\PSL_2(7)$ and $G\simeq \SL_2(7)$.

Similar to the above considered case
$R(X,A)$ is generated by $x_1,\dots,x_4,y$ with $\deg x_i=1$, $\deg y=2$ with
a unique relation of degree $4$.
We may take $y$ to be a relative invariant for $\tilde G$.
Hence $X\subset \PP(1^4,2)$ can be given by the equation
\begin{equation*}
y^2 +y\phi_2+\phi_4=0
\end{equation*}
Since the action on $x_1,\dots,x_4$ has no invariants of degree $\le 2$,
$\phi_2=0$.
Now we see that $\z(G)$ acts trivially on $X$ (cf. Lemma~\ref{lemma:DI}), a contradiction.

\par\medskip\noindent{\bf Case $\dd(X)=3$.}
This case does not occur by Lemma~\ref{Lemma-P3-Q}\ref{Lemma-P3-Q:hyp}.

\par\medskip\noindent{\bf Case $\dd(X)=4$.}
In this case $X$ is an intersection of two quadrics $Q_1$ and $Q_2$ in
$\PP^5$,
which is impossible by Lemma~\ref{Lemma-P3-Q}\ref{Lemma-P3-Q:Q1}.
\end{proof}

\begin{scorollary}
\label{corollary:iota=2G}
Let $G$ be a group from the list \eqref{eq:list} and let $X$ be a $G$-Fano threefold with $\iota(X)=2$.
Then $X$ is as in Example~\xref{ex:3A6}\xref{ex:3A6-V1}.
\end{scorollary}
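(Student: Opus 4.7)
I will combine Proposition~\xref{Proposition-rho>1} with Proposition~\xref{Proposition-iota=2} after first bounding $\dd(X):=(-\tfrac12 K_X)^3$. Proposition~\xref{Proposition-rho>1} forces $\rho(X)=1$, and by the classification of Gorenstein canonical del Pezzo threefolds of index $2$ with Picard rank $1$ (Fujita--Iskovskikh, as recalled in \cite{Prokhorov2012}), this forces $\dd(X)\in\{1,2,3,4,5\}$. In particular $\dd(X)\ge 6$ does not arise.

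If $\dd(X)\le 4$, then Proposition~\xref{Proposition-iota=2} applies directly and identifies $(X,G)$ with Example~\xref{ex:3A6}\xref{ex:3A6-V1}, as required. So the only remaining case is $\dd(X)=5$.

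For $\dd(X)=5$, the classification further pins $X$ down to the smooth quintic del Pezzo threefold $V_5\subset\PP^6$, realized as a linear section of $\operatorname{Gr}(2,5)$; its automorphism group is classically known to be $\PGL_2(\CC)$ (Mukai--Umemura). The finite subgroups of $\PGL_2(\CC)$ are cyclic, dihedral, $\Alt_4$, $\Sym_4$, and $\Alt_5$, all of order at most $60$. Each $G$ in the list \eqref{eq:list} has order strictly greater than $60$ (equivalently, is non-solvable with simple quotient $G/\z(G)$ one of $\PSL_2(\bF_7)$, $\PSL_2(\bF_{11})$, $\PSp_4(\bF_3)$, $\Alt_6$, or $\Alt_7$, hence never $\Alt_5$), so no such $G$ embeds into $\PGL_2(\CC)$. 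This contradicts the existence of a faithful $G$-action on $V_5$, ruling out $\dd(X)=5$. The main obstacle is precisely this quintic case, which is not covered by Proposition~\xref{Proposition-iota=2}; rather than extending its anticanonical-ring computation, it is much cleaner to invoke the rigidity $\Aut(V_5)\simeq\PGL_2(\CC)$.
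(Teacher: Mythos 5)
Your proof is correct and follows essentially the same route as the paper: Proposition~\xref{Proposition-iota=2} disposes of $\dd(X)\le 4$, Proposition~\xref{Proposition-rho>1} gives $\rk\Pic(X)=1$, and the remaining case is the quintic $V_5$ with $\Aut(V_5)\simeq\PGL_2(\CC)$, which contains no non-solvable finite subgroup other than $\Alt_5$ (the paper cites \cite{Prokhorov-GFano-1} and \cite{CheltsovShramov2015CRC} for this rigidity rather than Mukai--Umemura, but the content is identical). One small caveat: your claim that the finite subgroups of $\PGL_2(\CC)$ are ``all of order at most $60$'' is literally false, since cyclic and dihedral subgroups have unbounded order; however, the parenthetical argument you give --- that every group in \eqref{eq:list} is non-solvable with central quotient different from $\Alt_5$ --- is the correct and sufficient one.
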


\begin{proof}
By Proposition~\ref{Proposition-iota=2} we may assume that $(-\frac12K_X)^3\ge 5$
and by Proposition~\ref{Proposition-rho>1} we have $\rk \Pic(X)=1$.
Hence by \cite[Theorem~1.7]{Prokhorov-GFano-1} and \cite[Proposition~7.1.10]{CheltsovShramov2015CRC} we have
 $(-\frac12K_X)^3= 5$, $X$ is smooth, and $\Aut(X)\simeq \PSL_2(\CC)$. On the other hand,
$\PSL_2(\CC)$ does not contain finite non-solvable groups different from $\Alt_5$, a contradiction.
\end{proof}

\begin{lemma}[cf. {\cite[Lemma~5.2]{Prokhorov2012}}]
\label{lemma:base-points}
Let $X$ be a Fano threefold with at worst
canonical Gorenstein singularities.
Assume that $\Aut(X)$ contains a subgroup $G$ as in the list \eqref{eq:list}.
Then the linear system $|-K_X |$ is base point free.
\end{lemma}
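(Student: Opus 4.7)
The plan is to argue by contradiction along the lines of \cite[Lemma~5.2]{Prokhorov2012} (which treats the simple case); the adaptation to the quasi-simple situation amounts to using Lemma~\ref{lemma:fixed-point} to constrain the possible $G$-fixed loci on $X$. Assume $\Bs|-K_X|\neq\emptyset$. By the classical analysis of anticanonical linear systems on canonical Gorenstein Fano threefolds (Shokurov's theorem on a general element of $|-K_X|$, together with the standard arguments recalled in \cite[Lemma~5.2]{Prokhorov2012}), the base locus $\Bs|-K_X|$ is either a single point or a smooth rational curve; it is $G$-invariant since $|-K_X|$ is canonical.

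Suppose first $\Bs|-K_X|=C\simeq\PP^1$. The $G$-action on $C$ yields a homomorphism $G\to\Aut(C)=\PGL_2(\CC)$. Since $G$ is perfect, this factors through $G/\z(G)$, which is a finite non-abelian simple group. The only such subgroup of $\PGL_2(\CC)$ is $\Alt_5$, and no group in the list \eqref{eq:list} has $G/\z(G)\simeq\Alt_5$, so the homomorphism is trivial and every point of $C$ is $G$-fixed. Taking any $P\in C$, Lemma~\ref{lemma:fixed-point} combined with the Gorenstein hypothesis (which excludes the non-Gorenstein singularity $\frac12(1,1,1)$) forces $G\simeq 3.\Alt_6$ with $P$ smooth on $X$. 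But then $T_PC\subset T_PX$ is a $G$-stable line in the irreducible faithful $3$-dimensional representation of $3.\Alt_6$ furnished by Theorem~\ref{th:GL3}, a contradiction.

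Suppose now $\Bs|-K_X|=\{P\}$. The same combination of Lemma~\ref{lemma:fixed-point} and the Gorenstein hypothesis forces $G\simeq 3.\Alt_6$ with $P$ smooth on $X$. The plan for the final contradiction is to invoke the explicit classification of canonical Gorenstein Fano threefolds whose anticanonical system has an isolated base point: by the classical results of Iskovskikh--Shokurov on anticanonical systems, $X$ must have Fano index $\iota(X)\ge 2$, and then Corollary~\ref{corollary:iota=2G} identifies $(X,G)$ with Example~\ref{ex:3A6}\ref{ex:3A6-V1}, the sextic $X_6\subset\PP(1,1,1,2,3)$ of equation $z^2+y^3+\phi_6(x_1,x_2,x_3)=0$. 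There, $|-K_X|=|2A|$ is generated by the weight-$2$ coordinate $y$ together with the monomials $x_ix_j$, and one checks directly from the defining equation that these sections have no common zero on $X$; hence $|-K_X|$ is base-point-free, contradicting the presence of $P$. The main obstacle is precisely this last step: it rests on the non-trivial classification input that isolated anticanonical base points do not occur on canonical Gorenstein Fano threefolds of index $1$, and on its compatibility with the $3.\Alt_6$-equivariant analysis of Proposition~\ref{Proposition-iota=2} and Corollary~\ref{corollary:iota=2G}.
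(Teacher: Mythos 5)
Your treatment of the one-dimensional case is essentially the paper's argument: triviality of the $G$-action on the rational base curve, then Lemma~\ref{lemma:fixed-point} plus the Gorenstein hypothesis to force a smooth fixed point with $G\simeq 3.\Alt_6$, then the invariant line $T_PC$ inside the irreducible $3$-dimensional representation. That part is fine.

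The isolated-base-point case, however, has a genuine gap, and you have located it yourself: the assertion that ``isolated anticanonical base points do not occur on canonical Gorenstein Fano threefolds of index $1$'' is not a consequence of the cited classical results and is not what you need. Worse, the route through Corollary~\ref{corollary:iota=2G} is not available as stated, since that corollary is about $G$-Fano threefolds (it uses $\rk\Pic(X)^G=1$ via Proposition~\ref{Proposition-rho>1}), an assumption absent from the present lemma. The step you are missing is the precise form of Shin's theorem \cite{Shin1989}: when $\dim\Bs|-K_X|=0$, the base locus is a single point $P$ at which $X$ has a terminal Gorenstein singularity of type $cA_1$ --- in particular $P$ is a \emph{singular} Gorenstein point. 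Since $P$ is necessarily $G$-fixed, Lemma~\ref{lemma:fixed-point} gives an immediate contradiction: a $G$-fixed point for a group in the list \eqref{eq:list} must be either smooth or a non-Gorenstein point of type $\frac12(1,1,1)$, and a $cA_1$ point is neither. So the correct argument never reaches the situation ``$P$ smooth, $G\simeq 3.\Alt_6$'' that your proof tries (and fails) to exclude by classification; your deduction that $P$ is smooth directly contradicts the input from \cite{Shin1989} that drives the paper's two-line proof of this case.
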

\begin{proof}
Assume that $\dim \Bs|-K_X|=0$, then by \cite{Shin1989} $\Bs|-K_X|$
is a single point, say $P$, and $X$ has at $P$ terminal Gorenstein singularity of type $cA_1$.
By Lemma~\ref{lemma:fixed-point} this is impossible.

Thus $\dim \Bs|-K_X|>0$, then by \cite{Shin1989} $\Bs|-K_X|$ is a smooth rational curve $C$
contained in the smooth locus of $X$.
The action of $G$ on $C$ must be trivial and we obtain a contradiction
as above.
\end{proof}

\begin{lemma}\label{lemma:very-ample}
Let $X$ be a Fano threefold with at worst
canonical Gorenstein singularities.
Assume that $\Aut(X)$ contains a subgroup $G$ from the list \eqref{eq:list}.
If the linear system $|-K_X |$ is not very ample,
then $(X,G)$ is as in Examples~\xref{ex:sl27}\xref{ex:sl27:double6}, \xref{ex:3A6}\xref{ex:3A6-V1} or
\xref{ex:3A6}\xref{ex:3A6-sextic-dc}.
\end{lemma}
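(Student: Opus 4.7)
The plan is to study the anticanonical morphism $\Phi := \Phi_{|-K_X|}\colon X\to \PP^N$, which by Lemma \ref{lemma:base-points} is genuinely a morphism. I separate cases according to the Fano index $\iota(X)$. If $\iota(X)\ge 3$, then $X$ is $\PP^3$ or a smooth quadric, so $|-K_X|$ is very ample and there is nothing to prove. If $\iota(X)=2$, Corollary \ref{corollary:iota=2G} shows that $(X,G)$ is precisely the one in Example \xref{ex:3A6}\xref{ex:3A6-V1}. Hence the main content lies in the case $\iota(X)=1$, which I treat below.

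Write $g := \tfrac12(-K_X)^3 + 1$ for the genus. The classical structure theorem for Fano threefolds of index one with canonical Gorenstein singularities, extending Iskovskikh's result from the smooth case, asserts that if $|-K_X|$ is not very ample then $\Phi$ factors through a degree-two finite morphism $\pi\colon X\to W$ onto a three-dimensional non-degenerate variety $W\subset \PP^{g+1}$ of minimal degree $g-1$. Theorem \ref{th:min-degree} then restricts $W$ to the following list: $\PP^3$ (with $g = 2$); a smooth quadric in $\PP^4$ (with $g=3$); a smooth three-dimensional rational normal scroll in $\PP^{g+1}$; or a cone over a smooth quadric surface, over a two-dimensional rational normal scroll, or over the Veronese surface $F_4\subset \PP^5$. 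The group $G$ acts on $W$ with kernel contained in the group $\langle \tau \rangle$ generated by the Galois involution $\tau$ of $\pi$, and hence in $\z(G)$.

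The next step is to eliminate every option for $W$ except $\PP^3$. A smooth quadric in $\PP^4$ is ruled out directly by Lemma \xref{Lemma-P3-Q}\xref{Lemma-P3-Q:hyp}, since the only irreducible $G$-invariant hypersurface of degree at most four in $\PP^4$ allowed by that lemma is the quartic associated to $\SL_2(\bF_7)$. For a cone $W$, the vertex is $G$-fixed on $W$, and its preimage under $\pi$ is a $G$-invariant subscheme of $X$ of length at most two; because every group in \eqref{eq:list} is perfect and therefore has no subgroup of index two, this yields an actual $G$-fixed point of $X$. Lemma \ref{lemma:fixed-point} then forces $G = 3.\Alt_6$ with a very restricted local model, which a case-by-case local analysis shows to be incompatible with $X$ being a double cover of the given cone. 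For a smooth three-dimensional rational scroll $W$ other than $\PP^3$, the canonical projection $W\to \PP^1$ is $G$-equivariant, and the image of $G$ in $\PGL_2$ is either trivial (giving a $G$-invariant pencil of divisors on $X$ and thus contradicting $\rk\Pic(X)^G = 1$ from Proposition \ref{Proposition-rho>1}) or equal to the unique simple non-cyclic finite subgroup $\Alt_5$; in the latter case, fixed points of large cyclic subgroups on $\PP^1$ produce fixed fibres in $W$ whose preimages in $X$ are again handled via Lemma \ref{lemma:fixed-point}.

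It remains to treat $W = \PP^3$. Then $\pi\colon X\to \PP^3$ is a double cover branched along a sextic $B$ and $(-K_X)^3 = 2$. By Lemma \xref{Lemma-P3-Q}\xref{Lemma-P3-Q:1}, among the groups in \eqref{eq:list} only $3.\Alt_6$ acts faithfully on $\PP^3$, through the representation $V_1\oplus V_3$; the unique invariant sextic for this action yields precisely Example \xref{ex:3A6}\xref{ex:3A6-sextic-dc}. If instead the $G$-action on $\PP^3$ has non-trivial kernel, the kernel is the Galois $\langle\tau\rangle \subset \z(G)$ and $G/\langle\tau\rangle$ acts faithfully on $\PP^3$; scanning \eqref{eq:list} for groups whose quotient by a central $\mumu_2$ embeds into $\PGL_4(\CC)$ and admits an invariant sextic singles out $G=\SL_2(\bF_7)$, where $\PSL_2(\bF_7)$ acts through its four-dimensional irreducible representation, giving Example \xref{ex:sl27}\xref{ex:sl27:double6}; Lemma \ref{lemma:DI} then certifies that the corresponding central extension on $X$ is non-split, recovering the full $\SL_2(\bF_7)$-action. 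The main obstacle I expect is the exhaustive cone and scroll eliminations, where the $G$-fixed-point data on $W$ has to be combined with the ramification of $\pi$ and the local structure of $X$ to contradict either Lemma \ref{lemma:fixed-point} or the Picard-rank constraint of Proposition \ref{Proposition-rho>1}.
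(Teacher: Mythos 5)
Your overall strategy is the paper's: by Lemma~\xref{lemma:base-points} the anticanonical map is a morphism, and when it is not an embedding it is a double cover onto a threefold $Y$ of minimal degree, which is then classified by Theorem~\xref{th:min-degree} and analysed case by case (the Veronese cone feeding back into Proposition~\xref{Proposition-iota=2}, the case $Y=\PP^3$ producing Examples~\xref{ex:sl27}\xref{ex:sl27:double6} and \xref{ex:3A6}\xref{ex:3A6-sextic-dc}). However, your eliminations of the remaining cases have genuine gaps. The most serious is the scroll case. You dismiss the branch where $G$ acts trivially on the base $\PP^1$ by appealing to $\rk\Pic(X)^G=1$ via Proposition~\xref{Proposition-rho>1}; but the lemma does not assume $X$ is a $G$-Fano (indeed in Section~\xref{section:3A7} it is applied to a threefold with $\rk\Pic(\cdot)^G=2$), so no minimality is available, and in any case for the groups of \eqref{eq:list} the quotient acting on $\PP^1$ is automatically trivial, so this is the \emph{only} branch. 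It is not vacuous: it forces $Y\simeq\PP^2\times\PP^1$ with $\bar G\simeq\PSL_2(\bF_7)$ and branch divisor of bidegree $(4,0)$ or $(4,2)$, and the resulting threefolds ($\PP^1$ times a del Pezzo surface of degree $2$, and the threefold $X_1$ of \cite{Krylov2016}) are honest Fano threefolds acted on by $\mumu_2\times\PSL_2(\bF_7)$. They are excluded only because Lemma~\xref{lemma:DI} shows the relevant central extension splits, so the non-split group $\SL_2(\bF_7)$ does not act; your proposal contains no substitute for this linearization argument.

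The same omission affects the quadric case: Lemma~\xref{Lemma-P3-Q}\xref{Lemma-P3-Q:hyp} only rules out a \emph{faithful} action of $G$ on a quadric threefold, whereas when the Galois involution is the kernel the quotient $\bar G=G/\mumu_2$ (for instance $\Alt_6$ through its five-dimensional representation) does preserve a quadric in $\PP^5$, and the paper needs the additional input of \cite[Lemma~3.6]{Prokhorov2012} there. Finally, your cone elimination via fixed points is not sound as stated: Lemma~\xref{lemma:fixed-point} requires \emph{terminal} singularities while only canonical Gorenstein ones are assumed here; for a cone over a rational normal curve the vertex is a line rather than a point, so its preimage is a curve and produces no fixed point; and in the surviving case the conclusion ``$G\simeq 3.\Alt_6$ fixing a smooth point'' is not an immediate contradiction --- the promised ``case-by-case local analysis'' is exactly the missing content. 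The paper avoids all of this by treating scrolls, cones over scrolls and cones over rational normal curves uniformly through the equivariant resolution $\PP_{\PP^1}(\EEE)\to Y$ and the induced action on the fibres $\PP^2$.
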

\begin{proof}[Proof $($cf. {\cite[Lemma~5.3]{Prokhorov2012}}$)$]
Assume that the linear system \mbox{$|-K_X|$} defines a morphism $\varphi : X
\to \PP^{g+1}$ which is not an embedding. Let $Y = \varphi(X)$ and let $\bar G$ be the
image of $G$ in $\Aut(X)$. Then $\varphi$ is a
double cover and $Y \subset \PP^{g+1}$ is a subvariety of degree $g - 1$ (see \cite{Iskovskikh-1980-Anticanonical}
and \cite{Przhiyalkovskij-Cheltsov-Shramov-2005en}).
Hence either $\bar G\simeq G$ or $\bar G$ is the quotient of $G$ by a subgroup of order $2$.
Let $H$ be the class of a hyperplane section of $Y$ and let $B\subset Y$ be the branch divisor. Then
\begin{equation}
\label{eq:hyp-B}
-K_X=\varphi^* H,\quad K_X=\varphi^*\left(K_Y+\textstyle \frac12 B\right),\quad K_Y+H+\textstyle \frac12 B=0.
\end{equation}
Apply Theorem~\ref{th:min-degree}.
The case where $Y$ is a quadric (case~\ref{th:min-degree}\ref{th:Enriques-Q})
does not occur by Lemma~\ref{Lemma-P3-Q}\ref{Lemma-P3-Q:hyp} and \cite[Lemma~3.6]{Prokhorov2012}.

If $Y\simeq \PP^3$ (the case~\ref{th:min-degree}\ref{th:Enriques-P3}), then the morphism $\varphi : X \to \PP^3$
is a double cover with branch divisor
$B \subset \PP^3$ of degree $6$ by \eqref{eq:hyp-B}.
The groups $\Alt_7$ and $\PSp_4(\bF_3)$ have no non-trivial invariant hypersurfaces of degree 6. If $\bar G\simeq \PSL_2(\bF_7)$,
then we get Example~\xref{ex:sl27}\xref{ex:sl27:double6}.
Likewise, if $\bar G\simeq \Alt_6$, we get the case \xref{ex:3A6}\xref{ex:3A6-sextic-dc}.

If $Y$ is a cone over the Veronese surface (case~\ref{th:min-degree}\ref{th:Enriques-Veronese}), then
 $Y\simeq \PP(1,1,1,2)$ and $\OOO_{\PP}(B)=\OOO_{\PP}(6)$ by \eqref{eq:hyp-B}.
 Hence $\iota(X)=2$ and $X$ is a del Pezzo threefold of degree $1$.
This case was considered in Proposition~\ref{Proposition-iota=2}.

Finally consider the case where $Y$ is either a rational scroll,
a cone over a rational scroll, or a cone over a rational normal curve.
Then $Y$ is the image of $\hat Y:=\PP_{\PP^1} (\EEE)$, where $\EEE$ is a nef rank $n$ vector bundle on $\PP^1$,
under the map defined by the linear system $|\OOO(1)|$.
Thus $\nu:\hat Y\to Y$ is the ``minimal'' resolution of singularities
which is given by the blowup of the maximal ideal of $\Sing(Y)$.
In particular, $\nu$ is $G$-equivariant.
Then we have the following equivariant commutative diagram
\begin{equation*}
\xymatrix@R=15pt@C=50pt{
\hat X\ar[d]_{\eta}\ar[r]^{\hat\varphi} & \hat Y\ar[d]^{\nu}\ar@/^5.0pt/[dr]^{\pi}&
\\
X\ar[r]^{\varphi}& Y & \PP^1
}
\end{equation*}
where $\hat X$ is the normalization of the fiber product.
The morphism $\eta$ is a crepant contraction and $\hat X$ has at worst canonical Gorenstein singularities
\cite[Lemma~3.6]{Przhiyalkovskij-Cheltsov-Shramov-2005en}.
The group $\bar G$ trivially acts on $\PP^1$, so it non-trivially acts on each
fiber $F\simeq \PP^2$ of $\pi$.
Write $\EEE=\OOO_{\PP^1}(d_1)\oplus \OOO_{\PP^1}(d_2)\oplus \OOO_{\PP^1}(d_3)$, where $d_3\ge d_2\ge d_1\ge 0$.
If $d_1<d_2$, then the surjection $\EEE \to \OOO_{\PP^1}(d_2)\oplus \OOO_{\PP^1}(d_3)$
defines an invariant subscroll $\hat Y_2\subset \hat Y$ such that $Y_2\cap F$ is a line.
According to Theorem~\ref{th:GL3} this is impossible.
Thus we may assume that $d_1=d_2$ and, similarly, $d_2=d_3$.
So, $\hat Y\simeq \PP^2\times \PP^1\simeq Y$.
Note that $B$ intersects $F$ along a quartic curve
which must be invariant by \eqref{eq:hyp-B}.
Then the only possibility is that $B$ is a divisor of bidegree $(4,0)$ or a (reducible) divisor of bidegree $(4,2)$,
where $\bar G\simeq \PSL_2(\bF_7)$.
In the former case $X$ is the product of $\PP^1$ and del Pezzo surface of degree $2$.\
In the latter case $X$ is described in \cite[Example~1.8]{Krylov2016} as the threefold $X_1$.
In both cases the group $G$ splits by Lemma \ref{lemma:DI}, a contradiction.
\end{proof}

\begin{sremark}
Assume that $-K_X$ is very ample.
Then, by Proposition~\ref{prop:canLinearizable},
our group $G$ acts faithfully on the space $H^0(X,-K_X)^\vee$ so that
the induced action on its projectivization $\PP(H^0(X,-K_X)^\vee)=\PP^{g+1}$
is also faithful.
This implies that the representation $H^0(X,-K_X)^\vee$ of $G$ is reducible.
\end{sremark}

\begin{lemma}\label{lemma:trigonal}
Let $X$ be a Fano threefold with at worst
canonical Gorenstein singularities.
Assume that $\Aut(X)$ contains a subgroup $G$ as in the list \eqref{eq:list}.
Assume that the linear system $|-K_X |$ is very ample but the image $X=X_{2g-2}\subset \PP^{g+1}$
is not an intersection of quadrics.
Then $(X,G)$ is as in Example~\xref{ex:sl27}\xref{ex:sl27-quartic}.
\end{lemma}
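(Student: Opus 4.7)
The plan is to invoke the classical dichotomy for Fano threefolds whose anticanonical image $X_{2g-2}\subset \PP^{g+1}$ fails to be an intersection of quadrics: either $X$ is a quartic hypersurface in $\PP^4$ (so $g=3$), or $X$ is trigonal, meaning $X$ sits inside a four-dimensional variety $W\subset \PP^{g+1}$ of minimal degree in the sense of Theorem~\ref{th:min-degree}. This is Iskovskikh's theorem~\cite{Iskovskikh-1980-Anticanonical}, extended to the canonical Gorenstein setting in \cite{Przhiyalkovskij-Cheltsov-Shramov-2005en}. Because $W$ (resp.\ $\PP^4$) may be recovered canonically as the intersection of all quadrics containing $X$, it is $G$-invariant.

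I would first treat the quartic case. Here $X=X_4\subset \PP^4$ is an irreducible $G$-invariant quartic, so Lemma~\ref{Lemma-P3-Q}\ref{Lemma-P3-Q:hyp} immediately gives $G\simeq \SL_2(\bF_7)$, together with the decomposition $V=V_1\oplus V_4$ of the underlying $5$-dimensional representation, $\dim V_1=1$ and $V_4$ faithful irreducible. Choosing a coordinate $y$ on $V_1$ and coordinates $x_1,\ldots,x_4$ on $V_4$, the defining quartic is a $G$-invariant polynomial of degree $4$, hence a linear combination of terms $y^{4-j}\phi_j(x)$ where $\phi_j$ is an $\SL_2(\bF_7)$-invariant of degree $j$ on $V_4$. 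From Table~\ref{table}, $V_4$ carries no $\SL_2(\bF_7)$-invariant in degree $1$, $2$ or $3$, so the equation collapses to $\alpha y^4 + \beta \phi_4(x)=0$; since $X$ is irreducible, $\alpha\beta\ne 0$, and after rescaling we recover Example~\xref{ex:sl27}\xref{ex:sl27-quartic}.

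For the trigonal case I would enumerate $W$ via Theorem~\ref{th:min-degree}: $W$ is a rational normal scroll $\PP_{\PP^1}(\EEE)$ for some nef rank-$4$ bundle $\EEE=\bigoplus_{i=1}^{4}\OOO_{\PP^1}(d_i)$, or a cone over such a scroll or over a rational normal curve. Suppose first that $W$ is a smooth scroll. The structure map $\pi\colon W\to \PP^1$ is $G$-equivariant; however $\PGL_2(\CC)$ has no non-abelian finite simple subgroup other than $\Alt_5$, so none of the groups in \eqref{eq:list} acts nontrivially on $\PP^1$. Therefore $G$ fixes the base and acts linearly, and faithfully, on each fibre $F\simeq \PP^3$ through the representation on $\EEE_p$. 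The divisor $X\cap F$ is then a $G$-invariant hypersurface in $\PP^3$, of degree equal to the relative degree $a$ of $X$ over $\PP^1$ (which is $3$ in the classical trigonal setting). By Lemma~\ref{Lemma-P3-Q}\ref{Lemma-P3-Q:1} this forces $G\simeq 3.\Alt_6$ acting on $\PP^3=\PP(V_1\oplus V_3)$; since the central $\mumu_3$ acts on $V_3$ by a nontrivial character and the minimal positive degree of a $3.\Alt_6$-invariant on $V_3$ equals $6$ (Table~\ref{table}), every $G$-invariant polynomial of degree $\le 5$ in the coordinates of $V_1\oplus V_3$ is a power of $y$, so $X\cap F$ is supported on a single hyperplane, contradicting its reducedness for generic $F$. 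When $W$ is a cone, its apex is a $G$-fixed point of $W$: if it lies on $X$, Lemma~\ref{lemma:fixed-point} rules out the resulting singularity type; if not, we pass to the $G$-equivariant minimal resolution of $W$ (the blowup of the apex), which is a smooth scroll over $\PP^1$ containing a lift of $X$, and we reduce to the previous case.

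The chief obstacle lies in the trigonal subcase: one must pin down the relative degree $a$ of $X$ in the scroll $W$ to ensure that $a$ is small enough for the invariant-theoretic argument on $\PP^3$ to bite, and must verify the minimal-degree assertions for invariants on $V_3$ and $V_4$ from Table~\ref{table}. A secondary technical point is to check that the lifting of the $G$-action to the minimal resolution of a cone-type $W$ remains faithful and preserves the strict transform of $X$ as a Weil divisor of the expected class.
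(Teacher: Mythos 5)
Your overall skeleton matches the paper's: reduce to the quartic case $g=3$ (handled via Lemma~\ref{Lemma-P3-Q}\ref{Lemma-P3-Q:hyp} and the absence of low-degree $\SL_2(\bF_7)$-invariants on $V_4$, exactly as in the paper, and your write-up of this step is actually more complete than the paper's one-line version), and otherwise pass to the intersection $Y$ of all quadrics through $X$, which is a variety of minimal degree, and analyse it via Theorem~\ref{th:min-degree}. Your endgame for the scroll case is genuinely different: the paper extracts a degree-$3$ del Pezzo fibration $\hat X\to\PP^1$ via \cite[Lemma~4.7]{Przhiyalkovskij-Cheltsov-Shramov-2005en} and kills it with Theorem~\ref{th:Cr2simple}, whereas you restrict to a fibre $F\simeq\PP^3$, invoke Lemma~\ref{Lemma-P3-Q}\ref{Lemma-P3-Q:1} to force $G\simeq 3.\Alt_6$ acting through $V_1\oplus V_3$, and observe that the only invariant cubic is $y^3=0$, contradicting reducedness of the general fibre of $X\to\PP^1$. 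That argument is sound, \emph{provided} the relative degree of $X$ over $\PP^1$ really is $3$ — the point you flag yourself. This is not a gap so much as an uncited fact: it is exactly the content of the trigonal structure theory in \cite{Iskovskikh-1980-Anticanonical} and \cite{Przhiyalkovskij-Cheltsov-Shramov-2005en}, which is what the paper cites at this step.

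The genuine gap is the case $g=4$, which your case division silently drops. When $g=4$ the space of quadrics through $X$ is $\frac12(g-2)(g-3)=1$-dimensional, so $Y$ is a single quadric fourfold in $\PP^5$ and $X=Y\cap Z$ for a cubic $Z$; a quadric hypersurface is a variety of minimal degree but is neither a rational normal scroll nor a cone over a scroll or a rational normal curve, so it does not appear in your enumeration of $W$, and this configuration is \emph{always} ``not an intersection of quadrics'', so it must be excluded. The paper does this with a separate argument: the lift $\widetilde{G}$ acting on the $6$-dimensional space $H^0(X,-K_X)^\vee$ must have (relative) invariants in both degree $2$ and degree $3$, which forces $\z(G)$ to have order $6$, hence $G\supseteq 6.\Alt_6$; but $6.\Alt_6$ has no faithful reducible $6$-dimensional representation by Table~\ref{table}. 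Without some argument of this kind (or an appeal to Lemma~\ref{Lemma-P3-Q}\ref{Lemma-P3-Q:Q} to rule out the invariant quadric directly, which would still need the corank bound checked), your proof is incomplete.
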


\begin{proof}
By our assumption $g\ge 3$. If $g=3$, then
$X=X_4\subset \PP^4$ is a quartic. Inspecting the list \eqref{eq:list}
one can see that the only possibility is $G\simeq \SL_2(\bF_{7})$,
which implies that $(X,G)$ is as in Example~\xref{ex:sl27}\xref{ex:sl27:double6}.

Now assume that $g>3$.
Since $X=X_{2g-2}\subset \PP^{g+1}$ is projectively normal \cite{Iskovskikh-1980-Anticanonical},
the restriction map
\begin{equation*}
H^0(\PP^{g+1},\OOO_{\PP^{g+1}}(2)) \longrightarrow H^0(X,\OOO_X(2))
\end{equation*}
is surjective. This allows us to compute that the number of linear independent quadrics passing through
$X$ is equal to
\begin{equation*}
\textstyle \frac12(g - 2)(g - 3)>0.
\end{equation*}
Let $Y\subset \PP^{g+1}$ be the intersection of all quadrics containing $X$.
It is known that $Y$ is a reduced irreducible variety of minimal degree
(see \cite{Iskovskikh-1980-Anticanonical}
and \cite{Przhiyalkovskij-Cheltsov-Shramov-2005en}). Thus $Y$ is described by Theorem~\ref{th:min-degree}.

If $g=4$, then $Y$ is a (unique) quadric passing through $X$ and $X$
is cut out on $Y$ by a cubic, say $Z$. We may assume that $Z$ is $G$-invariant.
Thus our group $G$ has invariants of degrees $2$ and $3$.
Hence, $\z(G)$ is of order $6$ and so $G\supset 6.\Alt_6$.
But then $G$ has no faithful reducible representations of dimension $6$ by Table~\ref{table}.

Thus it remains to consider the case where $Y$ is either a rational scroll,
a cone over a rational scroll, or a cone over a rational normal curve.
Arguing as in the proof of Lemma~\ref{lemma:very-ample} and using \cite[Lemma~4.7]{Przhiyalkovskij-Cheltsov-Shramov-2005en},
we conclude that there exists a $G$-equivariant crepant extraction $\eta:\hat X\to X$ and a degree $3$ del Pezzo
fibration $\hat X\to \PP^1$. This gives us a contradiction (see Theorem~\ref{th:Cr2simple}).
\end{proof}

\begin{lemma}
\label{lemma:inv-hyp}
Let $X$ be a Fano threefold with at worst canonical singularities
and $G\subset \Aut(X)$ be a finite group contained in the list \eqref{eq:list}.
Assume that there exists a $G$-invariant
divisor $S\in |-K_X|$ such that
the pair $(X,S)$ is not plt and either $S$ is irreducible or
$(X,S)$ is not lc. Then
$G$ has a fixed point $P\in S$ such that $(X,S)$ is not plt at $P$.
\end{lemma}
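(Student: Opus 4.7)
The plan is to identify, inside the non-plt locus of $(X,S)$, a $G$-invariant irreducible subvariety of dimension at most $1$ and of Fano type, and then to argue, using the constraints on the groups in~\eqref{eq:list}, that $G$ must fix a point on it.

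First I would manufacture a $G$-invariant effective $\QQ$-divisor $B\qsim c(-K_X)$ with $0<c<1$ such that $(X,B)$ is lc, $-(K_X+B)$ is ample, the coefficient of $S$ in $B$ is strictly less than~$1$, and the non-klt locus of $(X,B)$ is contained in the non-plt locus of $(X,S)$. In the case where $(X,S)$ is not lc, I would simply set $c=\mathrm{lct}(X;S)<1$ and $B=cS$: then $-(K_X+B)=(1-c)(-K_X)$ is ample, and every lc place $E$ of $(X,B)$ has centre in $S$ and discrepancy $a(E,X,S)=-1-(1-c)\,\mathrm{ord}_E S<-1$ for $(X,S)$, so its centre lies in the non-lc, hence non-plt, locus of $(X,S)$. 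In the case where $(X,S)$ is lc, not plt, and $S$ is irreducible, I would use a tie-breaking perturbation: pick a $G$-invariant divisor $D\in|{-mK_X}|$ (not containing $S$) vanishing to sufficiently high order along the non-plt locus of $(X,S)$---available since $|-K_X|$ is base-point free by Lemma~\ref{lemma:base-points} together with an averaging construction---and set $B=(1-\epsilon)S+\tfrac{\delta}{m}D$ for $\epsilon,\delta>0$ chosen so that $(X,B)$ is lc, $-(K_X+B)=(\epsilon-\delta)(-K_X)$ remains ample, and the exceptional lc places of $(X,S)$ with centre in the non-plt locus acquire discrepancy $\le-1$ for $(X,B)$.

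Once $B$ is in hand, the Kollár--Shokurov connectedness theorem gives that $N:=\mathrm{Nklt}(X,B)$ is connected, and Kawamata's theorem on log canonical centres yields a unique minimal lc centre $Z\subseteq N$, which is $G$-invariant by its uniqueness. Since the coefficient of $S$ in $B$ is $<1$, no component of $S$ is an lc centre of $(X,B)$, so $\dim Z\le 1$, and Kawamata's subadjunction endows $Z$ with the structure of a klt log Fano variety; in particular $Z$ is normal. If $\dim Z=0$, then $Z$ is itself a $G$-fixed point in the non-plt locus of $(X,S)$. If $\dim Z=1$, then $Z\simeq\PP^1$ (a one-dimensional klt log Fano is rational), and the induced homomorphism $G\to\Aut(Z)=\PGL_2(\CC)$ has kernel normal in the quasi-simple group $G$, hence equal to $\z(G)$ or to $G$. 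In the first case $G/\z(G)$ would embed as a simple non-abelian subgroup of $\PGL_2(\CC)$, forcing $G/\z(G)\simeq\Alt_5$, which contradicts the list~\eqref{eq:list}. Consequently the kernel is all of $G$, every point of $Z$ is $G$-fixed, and any such point lies in the non-plt locus of $(X,S)$.

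The main obstacle I anticipate is the tie-breaking step in the lc-not-plt case: one has to simultaneously produce a $G$-invariant anti-pluri-canonical divisor $D$ vanishing to large enough order along the non-plt locus (while avoiding $S$) and choose $\epsilon,\delta$ so that $(X,B)$ is lc with $-(K_X+B)$ ample and all exceptional lc places of $(X,S)$ over the non-plt locus survive inside $\mathrm{Nklt}(X,B)$.
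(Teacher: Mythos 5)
Your first case ($(X,S)$ not lc) is essentially the paper's own argument: take $c=\mathrm{lct}(X;S)<1$, use ampleness of $-(K_X+cS)$ to get connectedness of the non-klt locus, pass to a $G$-invariant minimal lc centre, and apply subadjunction together with the fact that no group in \eqref{eq:list} maps onto a non-trivial subgroup of $\PGL_2(\CC)$. One imprecision there: connectedness of the non-klt locus does \emph{not} by itself give a unique minimal lc centre (a connected one-dimensional non-klt locus may carry several zero-dimensional centres permuted by $G$), so ``$G$-invariant by uniqueness'' is not yet justified; you need the invariant perturbation $(c-\epsilon)S+\Delta$, which the paper performs explicitly, before uniqueness is available. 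This is routine and fixable.

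The genuine gap is in the second case ($(X,S)$ lc, not plt, $S$ irreducible), and it sits exactly where you anticipated trouble. Since $K_X+S\sim 0$ there is no positivity to spare: for $B=(1-\epsilon)S+\frac{\delta}{m}D$ one has $-(K_X+B)\qsim(\epsilon-\delta)(-K_X)$, so ampleness forces $\delta<\epsilon$, while non-kltness of $(X,B)$ at a divisor $E$ over $X$ requires $\frac{\delta}{m}\operatorname{ord}_E D\geq a(E;X,(1-\epsilon)S)+1\geq\epsilon\operatorname{ord}_E S$ (the last inequality because $(X,S)$ is lc), hence $\operatorname{ord}_E D>m\operatorname{ord}_E S$. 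You therefore need a member of $|-mK_X|$ strictly more singular along $E$ than $mS$, even though $mS$ itself lies in that linear system and computes the lc threshold; such a $D$ need not exist. For example, if $S$ is generically non-normal along a curve $C$ in the non-plt locus, the blow-up of $C$ gives an lc place with $\operatorname{ord}_E S=2$, so you would need $\mult_C D>2m$, which is ruled out by intersecting $D$ with low-degree curves meeting $C$ whenever $(-K_X)^3$ is small --- and the lemma is applied in Section~\ref{section:3A7} to Fano threefolds with $(-K_X)^3\le\frac72$. This is precisely why the paper abandons perturbation here and instead passes to the normalization $\nu\colon S'\to S$, writes $K_{S'}+D'=\nu^*(K_X+S)|_S\sim 0$ with the different $D'\neq 0$ supported over the non-plt locus, runs a $G$-equivariant MMP on the minimal resolution of $S'$, and uses Theorem~\ref{th:Cr2simple} to force $G$ to act trivially on a component of $D'$. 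To close your proof you would have to replace the tie-breaking step by an argument of this kind exploiting the log Calabi--Yau structure of $(S',D')$.
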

\begin{proof}
Let $c$ be the log canonical threshold of $(X,S)$, that is,
the pair $(X,cS)$ is maximally lc. Then
$c \le 1$ and $-(K_X + cS)$ is nef.

Consider the case $c<1$. Then $-(K_X + cS)$ is ample.
Let $\Lambda\subset X$ be the locus of lc singularities of $(X,cS)$.
By Shokurov's connectedness principle
(see \cite{Shokurov-1992-e-ba} and \cite[ch. 17]{Utah}), $\Lambda$
is connected (and clearly $G$-invariant). If $\dim \Lambda=0$, then
$\Lambda$ must be an invariant point.
Suppose $\dim \Lambda=1$. If there exists a zero-dimensional center of lc singularities,
then replacing $cS$ with small invariant perturbation $(c-\epsilon)S+\Delta$
we get a zero-dimensional locus of lc singularities and may argue as above
(see \cite[Claim 4.7.1]{Prokhorov2012}).
Otherwise $\Lambda$ must be a minimal center of lc singularities and by Kawamata subadjunction theorem \cite[Theorem~1]{Kawamata-1998}
$\Lambda$ is a smooth rational curve, because $-(K_X + cS)$ is ample.
Since $G$ cannot act non-trivially on $\PP^1$,
the action of $G$ on $\Lambda$ is trivial.

Now consider the case $c=1$ and $S$ is irreducible. Then $(X,S)$ is lc.
Let $\nu : S' \to S$ be the normalization.
Write
\[
0 \sim \nu^* (K_X + S)|_ S = K_ {S'} + D',
\]
where $D'$ is the different, an effective integral
Weil divisor on $S'$ such that the pair $(S', D')$ is lc (see
\cite[\S 3]{Shokurov-1992-e-ba}, \cite[ch. 16]{Utah}, and \cite{Kawakita2007}).
The group $G$ acts naturally on $S'$ and $\nu$ is $G$-equivariant.
Now consider the minimal resolution
$\mu : \tilde S \to S'$ and let $\tilde D$ be a uniquely
defined) divisor such that
\begin{equation*}
K_{\tilde S}+\tilde D=\mu^* (K_{S'}+D')\sim 0,\qquad \mu_*\tilde D=D'.
\end{equation*}
is usually called the log crepant pull-back of $D'$. Here $\tilde D$
is again an effective reduced divisor.
Run $G$-equivariant MMP on $\tilde S$.
Clearly, the whole $\tilde D$ cannot be contracted.
We get a model $(S_{\min}, D_{\min})$ such that $(S_{\min}, D_{\min})$ is lc,
$(K_{S_{\min}}+D_{\min})\sim 0$, and $D_{\min}\neq 0$.
Assume that $S_{\min}$ has an equivariant conic bundle structure $\pi: S_{\min}\to B$.
Then $D_{\min}$ has one or two horizontal components which must be $G$-invariant.
By adjunction any horizontal component of $D_{\min}$ is either rational or elliptic curve.
Such a curve does not admit a non-trivial action of $G$, so the action of $G$ on
the corresponding component $\tilde D_1\subset \tilde D$ and
$\nu(\mu (\tilde D_1))$ must be trivial.
Similarly, if $S_{\min}$ is a del Pezzo surface with $\rk\Pic(S_{\min})^G=1$, then
$K_{S_{\min}}^2=2$ or $9$ by Theorem~\ref{th:Cr2simple} and so
$\tilde D$ has at most $3$ components. Arguing as above we get that the action of $G$ on
some component $\tilde D_1\subset \tilde D$ is trivial.
\end{proof}

\section{Proof of main result}
\label{section:G-rigid}

In this section, we prove Theorem~\ref{theorem:main} omitting the proof that $3.\Alt_7$ cannot act faithfully
on a rationally connected threefold. The case of $3.\Alt_7$ will be dealt with in Section~\ref{section:3A7} later.

\subsection{Singularities of quotients}
\label{subsection:quotients}

First, we need two auxiliary local results.

\begin{slemma}
\label{lemma:quotients-involuion}
Let $(X\ni P)$ be a threefold terminal singularity of index $1$.
Suppose that a group $A$ of order $2$ acts on $(X\ni P)$
so that either the action is free in codimension $1$ or the fixed point
locus is a $\QQ$-Cartier divisor.
Then the quotient $(X\ni P)/A$ is canonical.
\end{slemma}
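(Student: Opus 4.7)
The plan is to combine Reid's classification of Gorenstein terminal threefold singularities with the standard discrepancy calculus for finite quotients. Write $\pi\colon X\to Y:=X/A$ for the quotient morphism and let $\sigma$ generate $A$. By Reid's classification the terminal index-one point $(X\ni P)$ is either smooth or a compound Du Val point; in particular $K_X$ is Cartier and $X$ itself has canonical (even terminal) singularities. The proof then splits into the two cases singled out by the hypothesis.

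If $A$ acts freely in codimension one, then $\pi$ is étale in codimension one, so the ramification divisor of $\pi$ is empty and $\pi^*K_Y=K_X$. A standard discrepancy comparison (a special case of the Koll\'ar--Mori criterion for finite quotients of canonical singularities) then shows that every exceptional divisor $E$ over $Y$ has the same discrepancy as any component of its preimage on a suitable $A$-equivariant resolution of $X$; since $X$ is canonical, so is $Y$.

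If instead the fixed locus $D\subset X$ is a $\QQ$-Cartier divisor, then $D$ coincides with the ramification divisor of $\pi$, and the Riemann--Hurwitz formula yields $K_X=\pi^*K_Y+D$. Setting $\bar D:=\pi(D)$, the identity $\pi^*\bar D=2D$ forces $\bar D$ to be $\QQ$-Cartier and rewrites the formula as
\[
\pi^*\bigl(K_Y+\tfrac{1}{2}\bar D\bigr)=K_X.
\]
Applying the log version of the Koll\'ar--Mori finite-quotient criterion, canonicity of the trivial pair $(X,0)$ transfers to canonicity of the log pair $(Y,\tfrac{1}{2}\bar D)$. Since $\tfrac{1}{2}\bar D$ is effective, canonicity of the pair implies that $Y$ itself has canonical singularities.

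The conceptually delicate step is Case 2, where one must carefully verify that $\bar D$ is $\QQ$-Cartier on $Y$ (which uses the $\QQ$-Cartier hypothesis on $D$ via $\pi^*\bar D=2D$) and then invoke the discrepancy comparison for log pairs through the ramified double cover. Both are routine given the hypotheses, but they must be organised cleanly so that the boundary $\tfrac{1}{2}\bar D$ does not spoil the transfer of canonicity from $X$ to $Y$.
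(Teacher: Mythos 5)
Your route is genuinely different from the paper's: the paper takes an $A$-equivariant embedding $(X\ni P)\subset(\CC^4\ni 0)$, diagonalizes the involution, uses the $\QQ$-Cartier hypothesis to rule out the type $\tfrac12(1,1,0,0)$ (whose fixed divisor $\{x_1=x_2=0\}$ is not $\QQ$-Cartier), and in the remaining type $\tfrac12(1,0,0,0)$ observes that $Y$ is a hypersurface in the smooth $\CC^4/A$, hence Gorenstein with rational singularities and therefore canonical; the free-in-codimension-one case is delegated to \cite[Proposition~6.12]{Kollar-ShB-1988}. Your attempt to run the discrepancy calculus directly could in principle work, but its central step is wrong as stated.

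The gap is this: canonicity does \emph{not} descend along a finite morphism \'etale in codimension one, and an exceptional divisor $E$ over $Y$ does \emph{not} have the same discrepancy as a component $E'$ of its preimage. The cover can ramify along exceptional divisors of a resolution of $Y$ even when $X\to Y$ is \'etale in codimension one, and the correct relation is $a(E',X,\Delta_X)+1=r\bigl(a(E,Y,\Delta_Y)+1\bigr)$, where $r\in\{1,2\}$ is the ramification index along $E'$ (the cited Koll\'ar--Mori statement transfers klt/lc in both directions but canonical/terminal only \emph{upward}, from base to cover). With only ``$X$ canonical'' --- which is all you ever use --- the conclusion is false: the index-one canonical threefold $X=\{x^2+y^2+z^2=0\}\subset\CC^4$ with the involution $(x,y,z,w)\mapsto(-x,-y,-z,w)$ acts freely in codimension one, yet the quotient is $\tfrac14(1,1)\times\CC$, whose exceptional divisor has discrepancy $-\tfrac12$. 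What rescues the lemma is precisely the hypothesis you discard: $X$ is \emph{terminal} of index one, so $a(E',X)$ is a positive integer, i.e.\ $a(E',X)\geqslant 1$, and since $r\leqslant 2$ the formula gives $a(E,Y,\Delta_Y)\geqslant\frac{1+1}{2}-1=0$ in both of your cases (with $\Delta_Y=0$, respectively $\Delta_Y=\tfrac12\bar D$, after which effectivity of $\bar D$ gives $a(E,Y)\geqslant 0$). You need to insert this computation explicitly; without it the argument proves a false statement.
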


\begin{proof}
Denote $(Y\ni Q):= (X\ni P)/A$.
If $A$ acts freely in codimension one, then the assertion is
a consequence of \cite[Proposition~6.12]{Kollar-ShB-1988}.
Let $\Fix(A,X)$ contain a divisor, say $D$.
We have an $A$-equivariant embedding $(X\ni P)\subset (\CC^4\ni 0)$
and we may assume that the action on $\CC^4$ is diagonalizable.
If this action is of type $\frac12 (1,0,0,0)$, then $\CC^4/A$
is smooth and so $Y$ is Gorenstein. Since the quotient singularities
are always rational \cite[Proposition~5.13]{Kollar-Mori-1988}, this implies that $(Y\ni Q)$ is canonical \cite[Corollary~5.24]{Kollar-Mori-1988}.
Thus we may assume that the action is of type $\frac12 (1,1,0,0)$
and the equation of $X$ is of the form
\begin{equation*}
\phi=x_1\phi_1(x_1,\dots,x_4)+ x_2\phi_2(x_1,\dots,x_4)=0.
\end{equation*}
But then the fixed point locus is not a $\QQ$-Cartier divisor.
\end{proof}

\begin{slemma}
\label{lemma:quotients-index-2}
Let $(X\ni P)$ be a threefold terminal cyclic quotient singularity of index $2$
acted by a finite group $G_P$ such that $\z(G_P)$ contains a subgroup $A\simeq \mumu_2$.
Suppose that the center of any extension of $G_P$ by $\mumu_2$ does not contain
an element of order $4$.
Then the quotient $(X\ni P)/A$ is canonical.
\end{slemma}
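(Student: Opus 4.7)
The plan is to analyze the action on the index-one cover. By Reid's classification, any three-dimensional terminal cyclic quotient singularity of index $2$ is analytically of type $\frac{1}{2}(1,1,1)$, so $(X\ni P)\cong(\CC^3/\mumu_2\ni 0)$ where $\mumu_2$ acts via $\tau=-I$. Let $\pi\colon X^\sharp=\CC^3\to X$ be the index-one cover; the $G_P$-action lifts to a group $\tilde G_P$ acting faithfully on $X^\sharp$, which is an extension of $G_P$ by $\langle\tau\rangle\simeq\mumu_2$. By Cartan's lemma (as in the proof of Lemma~\ref{lemma:fixed-point}), the lifted action on $(X^\sharp, 0)$ is analytically linearizable, so we may regard $\tilde G_P\subset\GL_3(\CC)$. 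Let $\tilde A\subset\tilde G_P$ be the preimage of $A$; then $\tilde A$ is abelian of order $4$, and since $X/A=X^\sharp/\tilde A$ it suffices to show that $\CC^3/\tilde A$ has canonical singularities.

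I would then split into cases according to the isomorphism type of $\tilde A$. Suppose first $\tilde A\simeq\mumu_4$, with a generator $\tilde a$ satisfying $\tilde a^2=\tau=-I$; then the eigenvalues of $\tilde a$ lie in $\{\pm i\}$. The key observation is that if $\tilde a$ is a scalar matrix, it is central in $\GL_3(\CC)$, hence in $\tilde G_P$, so $\z(\tilde G_P)$ would contain an element of order $4$, contradicting the hypothesis. Hence, after simultaneously diagonalizing $\tilde A$ (which is abelian), permuting coordinates, and possibly replacing $\tilde a$ by $\tau\tilde a\in\tilde A$, we may assume $\tilde a=\diag(i,i,-i)$; the quotient $\CC^3/\tilde A$ is then the singularity $\frac{1}{4}(1,1,3)$, which is terminal by Reid's classification and in particular canonical.

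Suppose instead $\tilde A\simeq\mumu_2\times\mumu_2$, and pick a lift $\tilde a\in\tilde A$ of a generator of $A$, so $\tilde a^2=1$. Since $\tilde a\notin\langle\tau\rangle=\{\pm I\}$, $\tilde a$ is a non-scalar involution, hence after diagonalization and reordering $\tilde a=\diag(1,1,-1)$ or $\diag(1,-1,-1)$. In either sub-case a direct invariant-theory computation using the generators $\tilde a$ and $\tau=-I$ of $\tilde A$ shows that the invariant ring $\CC[x,y,z]^{\tilde A}$ is generated by four monomials satisfying a single quadratic relation, so that $\CC^3/\tilde A$ is analytically the product of a Du Val $A_1$ singularity with a line, which is canonical. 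The main subtlety of the argument is the $\mumu_4$ case: the scalar configuration $\tilde a=\pm iI$ would produce the singularity $\frac{1}{4}(1,1,1)$, which has discrepancy $-1/4$ and so fails to be canonical, and the hypothesis on order-$4$ central elements is precisely what is needed to exclude this configuration.
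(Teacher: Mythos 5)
Your proof is correct and follows essentially the same route as the paper: pass to the index-one cover $\CC^3$, linearize, and split according to whether the preimage $\tilde A$ of $A$ is $\mumu_4$ (where the hypothesis on order-$4$ central elements rules out the scalar lift $\pm iI$, i.e.\ the non-canonical type $\frac14(1,1,1)$, leaving the terminal type $\frac14(1,1,3)$) or $\mumu_2\times\mumu_2$ (where the quotient is canonical Gorenstein). You merely spell out two steps the paper leaves implicit, namely why the hypothesis excludes the scalar case and the invariant-ring computation identifying the Klein-four quotient as $A_1\times\CC$.
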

\begin{proof}
Let $\pi: (X^\sharp\ni P^\sharp)\to (X\ni P)$ be the index-one cover,
so that $(X\ni P)=(X^\sharp\ni P^\sharp)/\mumu_2$ where the action of $\mumu_2$
is free outside $P^\sharp$ \cite[Definition~5.19]{Kollar-Mori-1988}.
By our assumption $(X^\sharp\ni P^\sharp)\simeq (\CC^3,0)$
and the action of $A$ is of type $\frac12(1,1,1)$.
The action of $G_P$ lifts to an action of $G_P^\sharp$ on $(X^\sharp\ni P^\sharp)$, where
$G_P^\sharp$ is an extension of $G_P$ by $\mumu_2$.
Let $A^\sharp\subset G_P^\sharp$ be the preimage of $A$ (a group of order $4$).

If $A^\sharp\simeq \mumu_2\times \mumu_2$, then the elements of this group act as follows:
$\frac12(1,1,1)$, $\frac12(1,1,0)$, $\frac12(0,0,1)$.
It is easy to see that in this case the quotient is canonical Gorenstein.

Assume that the extension $A^\sharp\simeq \mumu_4$. Then
the action on $\CC^3$ is of type $\frac14(1,1,1)$ or $\frac14(1,1,-1)$.
By our assumption the former case does not occur.
In the latter case the quotient is terminal.
\end{proof}

\subsection{$G$-birationally superrigid Fano threefolds}
\label{subsection:G-rigid}

Second, we need the following global result.

\begin{theorem}
\label{theorem:Fano-super-rigid}\label{cases:groups}
Let $X$ be a Fano threefold with terminal Gorenstein singularities, and let $G$ be a finite subgroup in $\Aut(X)$.
Suppose that $X$ and $G$ fit one of the following seven cases:
\begin{enumerate}
\item \label{cases:groups:A7-23}
$G\simeq\Alt_7$, and $X$ is the unique smooth intersection of a quadric and a cubic in $\PP^5$ that admits a faithful action of the group $\Alt_7$;

\item \label{cases:groups:A7-P3}
$G\simeq\Alt_7$ and $X=\PP^3$;

\item\label{cases:groups:PSp43-P3}
$G\simeq\PSp_4(\bF_3)$ and $X=\PP^3$;

\item\label{cases:groups:PSp43-B}
$G\simeq\PSp_4(\bF_3)$ and $X$ is the Burkhardt quartic in $\PP^4$;

\item\label{cases:groups:SL2-11-K}
$G\simeq\PSL_2(\bF_{11})$ and $X$ is the Klein cubic threefold in $\PP^4$;

\item\label{cases:groups:SL2-11-8}
$G\simeq\PSL_2(\bF_{11})$ and $X$ is the unique smooth Fano threefold of Picard rank $1$ and genus $8$
that admits a faithful action of the group $\PSL_2(\bF_{11})$.
\end{enumerate}
Let $\rho\colon X\dasharrow V$ be a $G$-birational map such that $V$ is a Fano variety with at most canonical singularities.
Then $\rho$ is biregular.
\end{theorem}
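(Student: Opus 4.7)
The plan is to run the $G$-equivariant Noether--Fano--Iskovskikh method. Suppose for contradiction that $\rho$ is not biregular. Pick a $G$-linearized ample divisor $H_V$ on $V$ (e.g.\ a suitable multiple of $-K_V$, using Proposition~\ref{prop:canLinearizable}) and let $\MMM$ be the proper transform on $X$ of the linear system $|H_V|$. Then $\MMM$ is a $G$-invariant mobile linear system. Since $\rk\Pic(X)^G=1$ by Proposition~\ref{Proposition-rho>1} (and $X$ is $\QQ$-factorial in each of the listed cases), we have $\MMM\subset|-nK_X|$ for some positive integer $n$, and the standard $G$-equivariant Noether--Fano criterion gives that the pair $(X,\frac{1}{n}\MMM)$ is not canonical. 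Let $c:=\operatorname{lct}(X,\MMM)\le\frac{1}{n}$.

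Next, study the non-empty $G$-invariant non-klt locus $\Lambda$ of $(X,c\MMM)$. Because $-(K_X+c\MMM)$ is ample, Shokurov's connectedness principle and Kawamata's subadjunction (used exactly as in the proof of Lemma~\xref{lemma:inv-hyp}) allow us to replace $\Lambda$ by a minimal center $Z$ whose $G$-orbit is still $G$-invariant; and as in \cite[Claim~4.7.1]{Prokhorov2012}, by a small $G$-invariant perturbation of $c\MMM$ we may assume either $\dim Z=0$ or $\dim Z=1$.

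If $\dim Z=1$, then by subadjunction $Z$ is a smooth rational curve. None of the groups in cases \xref{cases:groups:A7-23}--\xref{cases:groups:SL2-11-8} embeds into $\PGL_2(\CC)$, so $G$ (or the relevant stabilizer, after passing to a component of the $G$-orbit) acts trivially on $Z$. Then Lemma~\xref{lemma:stabilizer-faithful} forces $G\hookrightarrow\GL_3(\CC)$ via the action on $T_{P,X}$ for $P\in Z$, which by Theorem~\xref{th:GL3} contradicts each of the seven group possibilities. If $\dim Z=0$, then $Z$ is either a $G$-fixed point or a non-trivial $G$-orbit. The fixed-point case is excluded by Lemma~\xref{lemma:fixed-point}, since none of our $G$ is isomorphic to $3.\Alt_6$. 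For a non-trivial orbit $G\cdot P$, the Pukhlikov--Corti $4n^2$-inequality at each point of the orbit yields
\[
|G\cdot P|\cdot 4n^2\ \le\ \MMM_1\cdot\MMM_2\cdot(-K_X)\ \le\ n^2(-K_X)^3,
\]
bounding $|G\cdot P|$ from above in terms of $(-K_X)^3$. One then checks, case by case using the known orbit structure of each $(X,G)$ (the character tables in Table~\ref{table}, together with the explicit descriptions of $\PP^3$, the Burkhardt quartic, the Klein cubic, the genus $8$ $\PSL_2(\bF_{11})$-Fano, and the $\Alt_7$-intersection of a quadric and cubic), that no $G$-orbit on $X$ has size $\le\frac{(-K_X)^3}{4}$.

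The main obstacle is precisely the last step: producing a uniform, clean multiplicity/orbit-size contradiction in the zero-dimensional case for all six explicit varieties. Each geometry requires its own small-orbit enumeration, and in a few cases (e.g.\ the Burkhardt quartic with its $40$ nodes, or the Klein cubic with its $\PSL_2(\bF_{11})$-orbits of length $55$) a naive $4n^2$-bound is tight enough to rule out zero-dimensional centers only after a more delicate log-canonical analysis along the centers in question, which I would carry out following the techniques in \cite{Prokhorov2012} and in the birational-rigidity literature cited therein.
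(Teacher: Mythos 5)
There is a genuine gap, and it occurs right at the start of your argument: from the Noether--Fano criterion you only get that $(X,\frac{1}{n}\MMM)$ is \emph{not canonical}, and this does \emph{not} imply $\operatorname{lct}(X,\MMM)\le\frac{1}{n}$ --- a pair can be non-canonical yet klt, so your threshold $c$ may well exceed $\frac{1}{n}$. The paper's way around this is \cite[Lemma~2.2]{CheltsovShramovTAMS}: because $-K_X$ is Cartier the discrepancies are integers, so non-terminality of $(X,\lambda\MMM)$ forces $(X,2\lambda\MMM)$ to be non-klt, and one works with a strictly lc pair $(X,\mu\MMM)$ for some $\mu\le 2\lambda$. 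This factor of $2$ is not cosmetic: since $K_X+\mu\MMM\qsim(\frac{\mu}{\lambda}-1)(-K_X)$ need not be anti-ample, Kawamata subadjunction no longer forces a one-dimensional minimal center to be rational. It only yields $2g-2\le nd$ together with $rd\le n^2H^3$ from Corti's inequality, so in case \xref{cases:groups:A7-P3}, for instance, one must contend with invariant curves of genus up to $33$; the paper eliminates these with the Hurwitz bound $84(g-1)\ge\lvert G\rvert$ and the refined Riemann--Hurwitz analysis of Lemma~\ref{lem:Hurwitz}. Your claim that the curve case reduces to a rational curve and then to Theorem~\ref{th:GL3} therefore does not go through.

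The second, and equally essential, missing ingredient is the mechanism for the zero-dimensional case, which you explicitly leave open. The paper does not rely on the $4n^2$-inequality plus ad hoc orbit enumeration for points. Instead, Proposition~\ref{proposition:Fano-super-rigid-technical} applies Nadel vanishing to the perturbed boundary to conclude that
\[
|\Sigma|=h^0\bigl(\OOO_X((n+1)H)\bigr)-h^0\bigl(\OOO_X((n+1)H)\otimes\III_{\Sigma}\bigr),
\]
and Remark~\ref{remark:Fano-super-rigid-technical} upgrades this to the statement that $|\Sigma|$ is the dimension of a $\widetilde{G}$-subrepresentation of $H^0(\OOO_X((n+1)H))$ for a central extension $\widetilde{G}$ of $G$. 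Combined with the requirement that the stabilizer of a point of $\Sigma$ fix a point of $\PP(H^0(\OOO_X(H))^\vee)$ --- i.e.\ that its lift admit a one-dimensional subrepresentation of $H^0(\OOO_X(H))$ --- this kills every candidate orbit by pure character theory, including the delicate configurations (the $36$-point orbit on $\PP^3$ for $\PSp_4(\bF_3)$, the length-$12$ orbits for $\PSL_2(\bF_{11})$, etc.) where a multiplicity bound alone is insufficient. Without this representation-theoretic constraint your outline cannot be completed as stated; I would recommend restructuring the proof around the paper's Proposition~\ref{proposition:Fano-super-rigid-technical} rather than the classical $4n^2$ method.
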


The proof of this result is based on the following technical result, which originated in \cite{CheltsovShramov2012TG,CheltsovShramovTAMS,CheltsovShramov2015CRC,CheltsovShramov2017}.

\begin{proposition}
\label{proposition:Fano-super-rigid-technical}
Let $X$ be a Fano threefold with terminal Gorenstein singularities, and let $G$ be a finite subgroup in $\Aut(X)$.
Write $-K_{X}\sim nH$, where $H$ is a Cartier divisor on $X$, and $n=\iota(X)$ is the Fano index of the threefold $X$.
Let $\MMM$ be a linear system on the threefold $X$ that does not have fixed components,
and let $\lambda$ be a positive rational number such that
\begin{equation*}
\lambda\MMM\qsim -K_{X}.
\end{equation*}
Suppose that $(X,\lambda\MMM)$ does not have terminal singularities.
Then one of the following (non-exclusive) possibilities holds:
\begin{itemize}
\item there exists a $G$-orbit $\Sigma\subset X$ such that
\begin{equation*}
|\Sigma|=h^0\left(\OOO_X\big((n+1)H\big)\right)-h^0\left(\OOO_X\big((n+1)H\big)\otimes\III_{\Sigma}\right),
\end{equation*}
where $\III_{\Sigma}$ is the ideal sheaf of $\Sigma$;

\item there exists a $G$-irreducible reduced curve $C$ that consists of $r\geqslant 1$
pairwise disjoint smooth isomorphic irreducible components $C_1,\ldots,C_r$ such that
$2g-2\leqslant nd$, $rd\leqslant H^3n^2$ and
\begin{equation*}
r\left((n+1)d-g+1\right)=h^0\left(\OOO_X\big((n+1)H\big)\right)-h^0\left(\OOO_X\big((n+1)H\big)\otimes\III_{C}\right),
\end{equation*}
where $d=H\cdot C_i$, $g$ is the genus of any curve $C_i$, and $\III_{C}$ is the ideal sheaf of $C$.
\end{itemize}
\end{proposition}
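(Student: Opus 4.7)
The strategy is the equivariant Cheltsov--Shramov maximal-singularity machinery together with Nadel vanishing. Since $(X, \lambda\MMM)$ is not terminal, pick a divisorial valuation $E$ over $X$ with $a(E, X, \lambda\MMM) \le 0$ and let $Z = c_X(E)$. Because $\MMM$ has no fixed components, $Z$ cannot be a divisor, so $\dim Z \in \{0,1\}$. The $G$-orbit of $Z$ is a $G$-invariant union of non-canonical centers; if some orbit is zero-dimensional, set $\Sigma$ to be that orbit (point case), otherwise every such center is one-dimensional and we let $C$ be a $G$-irreducible reduced curve obtained as the union of such one-dimensional centers lying in a single $G$-orbit (curve case).

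In the curve case I would first verify the numerical bounds. For general $M_1, M_2 \in \MMM$, the effective $1$-cycle $M_1 \cdot M_2$ has $H$-degree $\lambda^{-2}(-K_X)^2 \cdot H = \lambda^{-2}n^2 H^3$. Non-canonicality along each $C_i$ forces $\mult_{C_i}(\MMM) \ge \lambda^{-1}$, hence $\mult_{C_i}(M_1 \cdot M_2) \ge \lambda^{-2}$; summing and intersecting with $H$ gives $rd \le n^2 H^3$. Pairwise disjointness of the $C_i$, their smoothness, the transitive $G$-action giving isomorphism, and the adjunction bound $2g - 2 \le nd$ come from Kawamata's subadjunction theorem applied to $C$ viewed as a minimal center of log-canonical singularities of a klt perturbation of $(X, \lambda\MMM)$.

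For the cohomological conclusion, I would perform an equivariant Shokurov--Iskovskikh tie-breaking: perturb $\lambda\MMM$ by a small multiple of a general $G$-invariant mobile boundary in $|-mK_X|$ to obtain $D \qsim -K_X$ whose multiplier ideal sheaf satisfies $\JJJ(X,D) = \III_\Sigma$ (point case) or $\JJJ(X,D) = \III_C$ (curve case), with klt singularities off $Z$. Since $(n+1)H - (K_X + D) \qsim (n+1)H$ is ample, Nadel vanishing yields $H^1(X, \III_Z \otimes \OOO_X((n+1)H)) = 0$ for $Z \in \{\Sigma, C\}$, and the long exact sequence of
\[
0 \to \III_Z \otimes \OOO_X((n+1)H) \to \OOO_X((n+1)H) \to \OOO_Z \otimes \OOO_X((n+1)H) \to 0
\]
gives surjectivity of the restriction map on $H^0$. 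This directly yields the point-case equality, while in the curve case it combines with Riemann--Roch on each $C_i$ (valid because $(n+1)d \ge 2g - 1$, which follows from $2g - 2 \le nd$ and $d = H \cdot C_i \ge 1$) to produce $h^0(C, \OOO_C \otimes \OOO_X((n+1)H)) = r\bigl((n+1)d - g + 1\bigr)$, as required.

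The main obstacle will be the $G$-equivariant tie-breaking step: producing a $G$-invariant auxiliary divisor which collapses the non-klt locus to precisely $\Sigma$ (respectively $C$) without creating new centers, while preserving $\QQ$-linear equivalence to $-K_X$. This requires a sufficiently rich $G$-invariant mobile sub-linear system of $|-mK_X|$ for some $m$, whose existence rests on the Fano structure of $X$ together with some representation-theoretic control over the action of $G$ on the anticanonical sections.
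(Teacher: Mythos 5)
Your overall architecture coincides with the paper's: minimal centres of log canonical singularities, the Kawamata--Shokurov perturbation trick, Nadel vanishing against the twist $(n+1)H$, a Corti-type multiplicity bound for $rd\leqslant H^3n^2$, Kawamata subadjunction for smoothness and the genus bound, and Riemann--Roch on the $C_i$. However, there is a genuine gap at the very first step. From the hypothesis that $(X,\lambda\MMM)$ is not terminal you cannot produce a boundary $D\qsim -K_X$ with multiplier ideal $\III_\Sigma$ (resp.\ $\III_C$): the pair $(X,\lambda\MMM)$ may perfectly well be klt, and tie-breaking by a small multiple of a \emph{general} mobile $G$-invariant divisor never creates new non-klt points --- it only shrinks an existing non-klt locus. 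One must first scale the boundary up to the log canonical threshold $\mu$ of $\MMM$, and the whole proof hinges on bounding $\mu$. This is the paper's opening move (citing \cite[Lemma~2.2]{CheltsovShramovTAMS}): since $-K_X$ is Cartier and $X$ is terminal, every discrepancy satisfies $a(E,X)\geqslant 1$, so $a(E,X,\lambda\MMM)\leqslant 0$ forces $a(E,X,2\lambda\MMM)\leqslant -a(E,X)\leqslant -1$; hence $(X,2\lambda\MMM)$ is not klt and $\mu\leqslant 2\lambda$. The perturbed boundary is then $B_X\qsim(\tfrac{\mu}{\lambda}n+\epsilon)H$ with $\tfrac{\mu}{\lambda}n+\epsilon<2n+1$, not $\qsim nH$.

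This factor of two is not cosmetic. It is precisely what makes $(n+1)H-(K_X+B_X)$ ample, so that Nadel vanishing applies with the twist $(n+1)H$; and it is what subadjunction converts into the stated inequality $2g-2\leqslant nd$. With your normalization $K_X+D\qsim 0$, subadjunction would instead give $2g-2\leqslant 0$, i.e.\ $g\leqslant 1$ --- a much stronger statement that is not provable (and would trivialize the case analysis in the proof of Theorem~\ref{theorem:Fano-super-rigid}, where curves of genus up to $33$ must be excluded by other means); this is a sign the computation behind your claimed bound was not actually carried out. Your derivation of $rd\leqslant n^2H^3$ is essentially correct and matches the paper's application of \cite[Theorem~3.1]{Corti2000} once the $C_i$ are correctly identified as non-klt centres of $(X,\mu\MMM)$ with $\mu\leqslant 2\lambda$. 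Finally, the ``main obstacle'' you single out --- constructing a rich $G$-invariant mobile system for equivariant tie-breaking --- is not where the difficulty lies: minimal lc centres are pairwise disjoint or equal by \cite[Proposition~1.5]{Kawamata1997}, the union of the $G$-translates of one such centre is automatically $G$-invariant, and the standard perturbation suffices. The genuinely indispensable ingredient you are missing is the bound $\mu\leqslant 2\lambda$ coming from the Cartier-ness of $-K_X$.
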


\begin{proof}
Since $-K_X$ is Cartier, the log pair $(X,2\lambda\MMM)$ does not have klt singularities by \cite[Lemma~2.2]{CheltsovShramovTAMS}.
Choose $\mu\leqslant 2\lambda$ such that $(X,\mu\MMM)$ is strictly lc.
Let $Z$ be a minimal center of lc singularities of the log pair $(X,\mu\MMM)$,
see \cite{Kawamata1997,Kawamata-1998} for a precise definition.
Then $\theta(Z)$ is also a minimal center of lc singularities of
this log pair for every $\theta\in G$.
Moreover, we have
\begin{equation*}
Z\cap \theta(Z)\ne \emptyset\iff Z=\theta(Z)
\end{equation*}
by \cite[Proposition~1.5]{Kawamata1997}.

Since $\MMM$ does not have fixed components, the center $Z$ is either a curve or a point.
Observe that
\begin{equation*}
\mu\MMM\qsim \frac{\mu}{\lambda} nH,
\end{equation*}
where $\frac{\mu}{\lambda}\leqslant 2$.
It would be easier to work with $(X,\mu\MMM)$ if it did not have centers
of lc singularities that are different from $\theta(Z)$ for $\theta\in G$.
This is possible to achieve if we replace the boundary $\mu\MMM$ by (a slightly more complicated) effective boundary $B_X$ such that
\begin{equation*}
B_X\qsim \left(\frac{\mu}{\lambda}n+\epsilon\right)H
\end{equation*}
for some positive rational number $\epsilon$ that can be chosen arbitrary small.
This is known as the Kawamata--Shokurov trick or the perturbation trick (see \cite[Lemma~2.4.10]{CheltsovShramov2015CRC}
and the~proofs of \cite[Theorem~1.10]{Kawamata1997} and \cite[Theorem~1]{Kawamata-1998}).
By construction, we may assume that
\begin{equation} \label{eq:Bcondition}
\frac{\mu}{\lambda}n+\epsilon\leqslant 2n+\epsilon<2n+1.
\end{equation}
Note that the coefficients of $B_X$ depend on $\epsilon$.
But we can chose $\epsilon$ as small as we wish,
so that the number $\frac{\mu}{\lambda}n+\epsilon$ can be as close to $2n$ as we need.

Let $\Sigma$ be the union of all log canonical centers $\theta(Z)$ for $\theta\in G$.
Then $\Sigma$ is either a $G$-orbit or a disjoint union of irreducible isomorphic curves,
which are transitively permuted by $G$.
In both cases, we have an exact sequence of vector spaces
\begin{multline}
\label{eqation:exact-sequence}
0\longrightarrow H^0\left(\OOO_X\big((n+1)H\big)\otimes\III_{\Sigma}\right)
\longrightarrow H^0\left(\OOO_X\big((n+1)H\big)\right)\longrightarrow
\\
\longrightarrow H^0\left(\OOO_{\Sigma}\otimes\OOO_X\big((n+1)H\big)\right)\longrightarrow H^1\left(\OOO_X\big((n+1)H\big)\otimes\III_{\Sigma}\right),
\end{multline}
where $\III_{\Sigma}$ is an ideal sheaf of the locus $\Sigma$, and $\OOO_{\Sigma}$ is its structure sheaf.
Note that $\III_{\Sigma}$ is the multiplier ideal sheaf of the log pair $(X,B_X)$.
Since
\begin{equation*}
K_X+B_X\qsim \left(\left(\frac{\mu}{\lambda}-1\right)n+\epsilon\right)H,
\end{equation*}
we can apply Nadel's vanishing (see \cite[Theorem~9.4.17]{Lazarsfeld2004})
to deduce that
$$
h^1\left(\OOO_X((n+1)H)\otimes\III_{\Sigma}\right)=0.
$$
In particular, if $Z$ is a point, it follows from \eqref{eqation:exact-sequence} that
$$
|\Sigma|=h^0\left(\OOO_X\big((n+1)H\big)\right)-h^0\left(\OOO_X\big((n+1)H\big)\otimes\III_{\Sigma}\right).
$$

To complete the proof of the proposition, we may assume that $\Sigma$ is disjoint union of irreducible isomorphic curves
$C_1=Z,C_2,\ldots,C_r$, which are transitively permuted by $G$.
In particular, if $r=1$, then $\Sigma=C_1=Z$ is a $G$-invariant irreducible curve in $X$.

Let $d=H\cdot C_i$. Then $rd\leqslant H^3n^2$.
This immediately follows from Corti's \cite[Theorem~3.1]{Corti2000}.
Namely, observe that $(X,\mu\MMM)$ is not klt at general points of every curve $C_i$.
Let $M$ and $M^{\prime}$ be general surfaces in $\MMM$.
Then, applying \cite[Theorem~3.1]{Corti2000} to the log pair $(X,\mu\MMM)$ at general point of the curve $C_i$,
we obtain
\begin{equation*}
\mult_{C_i}\left(M\cdot M^{\prime}\right)\geqslant\frac{4}{\mu^2}
\end{equation*}
Then
\begin{equation*}
\frac{n^2}{\lambda^2}H^3=H\cdot M\cdot M^{\prime}\geqslant
\sum_{i=1}^rH\cdot C_i\mult_{C_i}\left(M\cdot M^{\prime}\right)\geqslant rd\frac{4}{\mu^2},
\end{equation*}
so that $rd\leqslant H^3n^2\frac{\mu^2}{4\lambda^2}\leqslant H^3n^2$ as claimed.

By Kawamata's subadjunction \cite[Theorem~1]{Kawamata-1998}, each curve $C_i$ is smooth.
Let $g$ be its genus.
Moreover, for every ample $\QQ$-Cartier $\QQ$-divisor $A$ on $X$, it
follows from \cite[Theorem~1]{Kawamata-1998} that
\begin{equation*}
\left(K_{X}+B_{X}+A\right)\Big\vert_{C_i}\qsim K_{C_i}+B_{C_i}
\end{equation*}
for some effective $\QQ$-divisor $B_{C_i}$ on the~curve $C_i$.
Computing the degrees of the left hand side and the right hand side in this $\QQ$-linear equivalence, we see that $2g-2\leqslant nd$.
In particular, the divisor $(n+1)H\vert_{C_i}$ is non-special on $C_i$, so that
\begin{equation*}
h^0\left(\OOO_{C_i}\big((n+1)H\vert_{C_i}\big)\right)=(n+1)d-g+1
\end{equation*}
by the Riemann--Roch formula. Now using \eqref{eqation:exact-sequence}, we get
$$
r\left((n+1)d-g+1\right)=h^0\left(\OOO_X\big((n+1)H\big)\right)-h^0\left(\OOO_X\big((n+1)H\big)\otimes\III_{\Sigma}\right),
$$
which complete the proof of the proposition.
\end{proof}

\begin{remark}
\label{remark:Fano-super-rigid-technical}
In the notations and assumptions of Proposition~\ref{proposition:Fano-super-rigid-technical},
there exists a central extension $\widetilde{G}$ of the group $G$ such that the line bundle $H$ is $\widetilde{G}$-linearizable.
Recall the Riemann-Roch theorem for a divisor $D$ on a smooth threefold $X$:
\begin{equation*}
 \chi(\OOO_X(D)) = \frac 16 D^3-\frac 14 D^2\cdot K_X +\frac1 {12} D\cdot K_X^2 +\frac1 {12} D\cdot c_2(X)+ \chi(\OOO_X).
\end{equation*}
Thus, the vector space $H^0(\OOO_X((n+1)H))$ is a representation of the group $\widetilde{G}$ of dimension
\begin{equation*}
h^0(\OOO_X((n+1)H))=\frac{(n+1)(2n+1)(3n+2)}{12}H^3+\frac{2n+2}{n}+1.
\end{equation*}
Then the exact sequence \eqref{eqation:exact-sequence} in the proof of Proposition~\ref{proposition:Fano-super-rigid-technical}
is an exact sequence of $\widetilde{G}$-representations.
\end{remark}

\begin{proof}[Proof of Theorem~\ref{theorem:Fano-super-rigid}]
Suppose that there exists a non-biregular $G$-birational map $\rho\colon X\dasharrow V$
such that $V$ is a Fano variety with at most canonical singularities.
Applying \cite[Theorem~3.2.1]{CheltsovShramov2015CRC}, we see that
there exists a $G$-invariant linear system $\MMM$ on the threefold $X$ such that $\MMM$ does not have fixed components,
and the singularities of the log pair $(X,\lambda\MMM)$ are not terminal,
where $\lambda$ is a positive rational number such that $\lambda\MMM\qsim -K_{X}$.
We will obtain a contradiction using Proposition~\ref{proposition:Fano-super-rigid-technical} and Remark~\ref{remark:Fano-super-rigid-technical}.

Let $m=\textstyle\frac{(n+1)(n+2)}{12}H^3+\frac{2}{n}$.
Note that the divisor $H$ is very ample in each of our cases, and $h^0(\OOO_X(H))=m+1$.
Thus, we may identify $X$ with its image in $\PP^m$.
Then $G$ is a subgroup in $\PGL_{m+1}(\CC)$.
Let $\widetilde{G}$ be a finite subgroup in $\GL_{m+1}(\CC)$ that maps surjectively to $G$ by the natural projection.
We may assume that $\widetilde{G}$ is the smallest group with this property.

The vector space $H^0(\OOO_X(H))$ is an irreducible representation of the group $\widetilde{G}$.
From Table~\ref{table}, this is immediate
in all cases except \xref{cases:groups}\xref{cases:groups:SL2-11-8}.
In this remaining case, $G\simeq\PSL_2(\bF_{11})$ and $X$ the unique smooth Fano threefold of Picard rank $1$ and genus $8$
that admits a faithful action of the group $\PSL_2(\bF_{11})$
(see Example~2.9~of~\cite{Prokhorov2012}).
The space $H^0(\OOO_X(H))$ is isomorphic to the representation $\bigwedge^2 V$
where $V$ is a $5$-dimensional faithful representation of $G$.
Recall that
\begin{equation*}
\chi_{\bigwedge^2V}(g) = \frac{1}{2}\left( \chi_V(g)^2 - \chi_V(g^2) \right)
\end{equation*}
where $g \in G$ and $\chi_{\bigwedge^2V}$ (resp.~$\chi_V$) is the
character of $\bigwedge^2V$ (resp.~$V$).
Evaluating $g$ at any element of order $2$ in
$\PSL_2(\bF_{11})$, we conclude that $\bigwedge^2V$ is irreducible
from the character table~\cite{atlas}.

Since $H^0(\OOO_X(H))$ is an irreducible representation of the group $\widetilde{G}$,
the threefold $X$ does not contain $G$-invariant subvarieties contained in a proper linear subspace of $\PP^m$.

Applying Proposition~\ref{proposition:Fano-super-rigid-technical},
we see that either $X$ contains a $G$-orbit $\Sigma$ such that
\begin{equation*}
m<|\Sigma|=h^0\left(\OOO_X\big((n+1)H\big)\right)-h^0\left(\OOO_X\big((n+1)H\big)\otimes\III_{\Sigma}\right),
\end{equation*}
where $\III_{\Sigma}$ is the ideal sheaf of $\Sigma$,
or there exists a $G$-irreducible reduced curve $C$ that is a disjoint
union of smooth irreducible curves $C_1,\ldots,C_r$ of genus $g$ and degree $d=H\cdot C_i$ such that
$rd\leqslant H^3n^2$, $2g-2\leqslant nd$ and
\begin{equation*}
r\left((n+1)d-g+1\right)=h^0\left(\OOO_X\big((n+1)H\big)\right)-h^0\left(\OOO_X\big((n+1)H\big)\otimes\III_{C}\right),
\end{equation*}
where $\III_{C}$ is the ideal sheaf of the curve $C$.
In the former case, by Remark~\ref{remark:Fano-super-rigid-technical},
the number $|\Sigma|$ is the dimension of some $\widetilde{G}$-subrepresentation in $H^0(\OOO_X((n+1)H))$.
Likewise, in the latter case, the number $r((n+1)d-g+1)$ is also the dimension of some $\widetilde{G}$-subrepresentation in $H^0(\OOO_X((n+1)H))$.
Moreover, if $r=1$, then the natural homomorphism $G\to\Aut(C)$ is injective,
because $C$ is not contained in a hyperplane in this case.
Thus, if $r=1$, then
\begin{equation}
\label{equation:Hurwitz}
84(g-1)\geqslant \lvert G\rvert
\end{equation}
by Hurwitz's automorphisms theorem.

\par\medskip\noindent
{\bf Case \xref{cases:groups}\xref{cases:groups:A7-23}.}
Here $G\simeq\Alt_7$ and $X$ is the unique smooth intersection of a quadric and a cubic in $\PP^5$ that admits a faithful action of the group $\Alt_7$.
We have $n=1$, $H^3=6$, $m=5$, $\widetilde{G}\simeq\Alt_7$ and $h^0(\OOO_X(2H))=20$.
Note that $H^0(\OOO_X(H))$ is the irreducible $\Alt_7$-representation
obtained as the quotient of the standard permutation representation by
the trivial representation.

Suppose that there exists a $G$-orbit $\Sigma$ in $X$ such that $5<|\Sigma|\leqslant 20$.
Using Table~\ref{table:maximal},
we see that $|\Sigma|$ is either $7$ or $15$.
In the case $|\Sigma|=15$, a stabilizer of a point in $\Sigma$ is isomorphic to
$\PSL_2(\bF_7)$.
The restriction of the representation $H^0(\OOO_X(H))$ to $\PSL_2(\bF_7)$
is the quotient of a transitive permutation representation so it has no
trivial subrepresentations.
Since $\PSL_2(\bF_7)$ is simple, it therefore has no one-dimensional
subrepresentations. The action cannot fix a point so this case is impossible.
In the case $|\Sigma|=7$, the stabilizer is isomorphic to $\Alt_6$.
In this case, there is a fixed point in the ambient space $\PP^5$
corresponding to the fixed point of the natural permutation action.
One checks that this point does not lie on $X$ by explicitly checking
the defining equations, which are just elementary symmetric functions.

Thus, the threefold $X$ contains a $G$-irreducible reduced curve $C$ that is a disjoint
union of smooth irreducible curves $C_1,\ldots,C_r$ of genus $g$ and degree $d$ such that
$rd\leqslant 6$ and $2g-2\leqslant d$.
As above, this shows that $r=1$, so that $d\leqslant 6$ and $g\leqslant 4$,
which contradicts \eqref{equation:Hurwitz}.

\par\medskip\noindent
{\bf Case \xref{cases:groups}\xref{cases:groups:A7-P3}.}
Here $G\simeq\Alt_7$ and $X=\PP^3$.
We have $n=4$, $H^3=1$, $m=3$, $\widetilde{G}\simeq 2.\Alt_7$ and $h^0(\OOO_X(5H))=56$.
Note that $H^0(\OOO_X(H))$ is an irreducible four-dimensional representation of the group $\widetilde{G}$.

Suppose that $\PP^3$ contains a $G$-orbit $\Sigma$ such that $3<|\Sigma|\leqslant 56$.
Going through the list of subgroups in $G$ of index $\leqslant 56$, we see that
\begin{equation*}
|\Sigma|\in\big\{7,15,21,35,42\big\}.
\end{equation*}
Let $G_P$ be the stabilizer of a point $P\in\Sigma$.
Using Table~\ref{table:maximal},
we conclude $G_P$ is isomorphic to one of the following groups:
$\Alt_6$, $\PSL_2(\bF_7)$, $\Sym_5$, $(\Alt_4\times\mumu_3)\rtimes\mumu_2$,
or $\Alt_5$.
Let $\widetilde{G}_P$ be a subgroup in $\widetilde{G}$ that is mapped to $G_P$.
We claim that the restriction of the representation $V=H^0(\OOO_X(H))$ to
$\widetilde{G}_P$ does not contain one-dimensional subrepresentations,
which contradicts the fact that $G_P$ fixes the point $P\in\PP^3$.
Let $\chi$ be the $4$-dimensional representation of $\widetilde{G}$;
we will consider restricted characters of $\chi$ (see \cite{atlas}).
We have $\chi(g)=-\frac{1}{2}(1\pm\sqrt{-7})$ when
$g \in \widetilde{G}$ has order $7$;
if $G_P \simeq \PSL_2(\bF_7)$ then the only possibility is that
$\widetilde{G}_P \simeq \PSL_2(\bF_7)$ and $V|_{\widetilde{G}_P}$ is irreducible.
We have $\chi(g)=-1$ when $g$ has order $5$, which means that
$V|_{\widetilde{G}_P}$ is irreducible if $G_P \cong \Alt_5$
(and a fortiori $\Sym_5$ and $\Alt_6$).
It remains to consider $G_P \cong (\Alt_4\times\mumu_3)\rtimes\mumu_2$,
which contains a $3$-Sylow subgroup $H \subseteq G$.
For some elements $g$ of order $3$, we have $\chi(g)=-2$
meaning that $\chi|_H$ does not have any trivial subrepresentations.
Since the group $G_P$ has no non-trivial maps to $\mumu_3$,
we conclude that there are no one-dimensional subrepresentations.
With the claim proved, we see this case is impossible.

Thus, there is a $G$-irreducible reduced curve $C$ in $\PP^3$ with the following properties:
$C$ is union of smooth irreducible curves $C_1,\ldots,C_r$ of genus $g$ and degree $d$,
$rd\leqslant 16$, $2g-2\leqslant 4d$ and
\begin{equation*}
r\big(5d-g+1\big)\leqslant 56.
\end{equation*}
As above, we see that $r\in\{1,7,15\}$.
If $r=15$, then $d=1$, so that $g=0$ and
\begin{equation*}
90=r\big(5d-g+1\big)\leqslant 56,
\end{equation*}
which is absurd. Likewise, if $r=7$,
then $d\leqslant 2$, so that $g=0$ and
\begin{equation*}
35d+7=7\big(5d+1\big)=r\big(5d-g+1\big)\leqslant 56,
\end{equation*}
so that $d=1$.
In this case, the stabilizer of the line $C_1$ is isomorphic to $\Alt_6$,
which is impossible, since the restriction of the representation $H^0(\OOO_X(H))$ to the subgroup $2.\Alt_6$ is irreducible.
Thus, we see that $r=1$, so that $C$ is irreducible.
Using \eqref{equation:Hurwitz}, we see that $g\in\{31,32,33\}$.
By Lemma~\ref{lem:Hurwitz}, one of the expressions
\[
\frac{169}{2^2\cdot 3 \cdot 7}, \quad
\frac{5071}{2^3\cdot 3^2 \cdot 5 \cdot 7}, \quad
\frac{634}{3^2 \cdot 5 \cdot 7}
\]
is a non-negative integer combination of expressions of the form
$1-\frac{1}{r}$ for $r=1,2,3,4,5,6,7,8,10,12,14$.
Since $\frac{1}{2}+\frac{2}{3}+\frac{6}{7}$ is larger than these
expressions, this is impossible.

\par\medskip\noindent
{\bf Case \xref{cases:groups}\xref{cases:groups:PSp43-P3}.}
Here $G\simeq\PSp_4(\bF_3)$ and $X=\PP^3$.
We have $n=4$, $H^3=1$, $m=3$, $\widetilde{G}\simeq \Sp_4(\bF_3)$ and $h^0(\OOO_X(5H))=56$.
We will see that $H^0(\OOO_X(5H))$ is a direct sum of irreducible
representations of $\widetilde{G}$ of dimensions $20$ and $36$.
Indeed, $h^0(\OOO_X(H))$ is an irreducible $4$-dimensional
representation $V$ of $\widetilde{G}$ with character $\chi$.
Note that every summand of $S^5V$ must be a faithful representation
of $\widetilde{G}$ since $5$ is coprime to $2$.
Via the Newton identities, we have the standard formula for the $5$th
symmetric power
\begin{multline*}
S^5\chi(g) =
\frac{1}{120}\Big[\chi(g)^5 + 10\chi(g^2)\chi(g)^3 +
15\chi(g^2)^2\chi(g)\\
+ 20\chi(g^3)\chi(g)^2 + 20\chi(g^3)\chi(g^2) + 30\chi(g^4)\chi(g)
+ 24\chi(g^5)\Big]
\end{multline*}
where $g$ is an element of $\widetilde{G}$.
From the character table \cite{atlas}, we see that $\chi(g)=-1$ for any
element $g$ of order $5$. We compute that $S^5\chi(g)=1$ and conclude
from the character table that the only possibility is a sum of
characters of degree $20$ and $36$ as desired.

Suppose that $\PP^3$ contains a $G$-orbit $\Sigma$ such that $|\Sigma|$
is the dimension of some $\widetilde{G}$-subrepresentation in $H^0(\OOO_X(5H))$.
Then
\begin{equation*}
|\Sigma|\in\{20,36,56\}.
\end{equation*}
Using Table~\ref{table:maximal}, we see that $|\Sigma|=36$.
Let $G_P$ be the stabilizer of a point $P\in\Sigma$.
Then $G_P\simeq\Sym_6$. The group $G$ contains one such subgroup up to conjugation.
Let $\widetilde{G}_P$ be a subgroup in $\widetilde{G}$ that is mapped to $G_P$.
Then $\widetilde{G}_P\simeq2.\Sym_6$,
and the restriction of the representation $H^0(\OOO_X(H))$ to
$\widetilde{G}_P$ does not contain one-dimensional subrepresentations.
This contradicts the fact that $G_P$ fixes the point $P\in\PP^3$.

Thus, there is a $G$-irreducible reduced curve $C$ in $\PP^3$ that is a union of smooth irreducible curves $C_1,\ldots,C_r$ of genus $g$ and degree $d$ such that $rd\leqslant 16$, $2g-2\leqslant 4d$ and
\begin{equation*}
r\big(5d-g+1\big)\leqslant 56.
\end{equation*}
Arguing as above, we see that $r=1$, so that $d\leqslant 16$ and $g\leqslant 33$,
which is impossible by \eqref{equation:Hurwitz}.

\par\medskip\noindent
{\bf Case \xref{cases:groups}\xref{cases:groups:PSp43-B}.}
Here $G\simeq\PSp_4(\bF_3)$ and $X$ is the Burkhardt quartic in $\PP^4$.
We have $n=1$, $H^3=4$, $m=4$, $\widetilde{G}\simeq G$ and $h^0(\OOO_X(2H))=15$.
Note that $\PP^3$ does not contain a $G$-orbit $\Sigma$ such that $4<|\Sigma|\leqslant 15$,
because $G$ does not contain subgroups of such index.
Thus, there is a $G$-irreducible reduced curve $C$ in $\PP^3$ that is union of smooth irreducible curves $C_1,\ldots,C_r$ of genus $g$ and degree $d$
such that $rd\leqslant 4$ and $2g-2\leqslant d$.
Since there are no subgroups of index $2$, $3$, or $4$, we have $r=1$.
Thus $d\leqslant 4$ and $g\leqslant 3$, which contradicts \eqref{equation:Hurwitz}.

\par\medskip\noindent
{\bf Case \xref{cases:groups}\xref{cases:groups:SL2-11-K}.}
Here $G\simeq\PSL_2(\bF_{11})$ and $X$ is the Klein cubic threefold in $\PP^4$.
We have $n=2$, $H^3=3$, $m=4$, $\widetilde{G}\simeq G$ and $h^0(\OOO_X(3H))=34$.
If $\chi$ is the character of the representation $V=h^0(\OOO_X(3H))$ of $G$,
then the character of the third symmetric power is given by
\begin{equation*}
S^3\chi(g) =
\frac{1}{6}\Big[\chi(g)^3 + 3\chi(g^2)\chi(g) + 2\chi(g^3) \Big]
\end{equation*}
for $g \in G$.
The trivial character occurs exactly once in $S^3V$ by Table~\ref{table}.
From the character table~\cite{atlas},
for any element $g \in G$ of order $3$ we have $\chi(g)=-1$ and thus
$S^3\chi(g)=2$.
Note that all irreducible characters $\rho$ satisfy $\rho(g) \in \RR$,
but only the trivial and $10$-dimensional ones have $\rho(g) > 0$.
Since there is only one trivial subrepresentation,
this forces the existence of at least one 10-dimensional irreducible
subrepresentation.
The possible irreducible characters have degrees $1,5,10,11,12$,
thus $1+10+12+12$ is the only possibility for $S^3V$.
For any $h \in G$ of order $5$, $\chi(h)=0$ and $S^3\chi(h)=0$
while $\rho(h) \ne 0$ for the $12$-dimensional irreducible
representations.
We conclude that the vector space $H^0(\OOO_X(3H))$ splits
as a sum of two non-isomorphic twelve-dimensional representations,
and one ten-dimensional representation.

Suppose that $X$ contains a $G$-orbit $\Sigma$ such that $|\Sigma|$
is the dimension of some $G$-subrepresentation in $H^0(\OOO_X(3H))$.
Going through the list of subgroups in $G$ of index $\leqslant 34$, we see that $|\Sigma|=12$.
Let $G_P$ be the stabilizer of a point $P\in\Sigma$.
Then $G_P\simeq\mumu_{11}\rtimes\mumu_5$,
and the restriction of the representation $H^0(\OOO_X(H))$ to $G_P$ is irreducible.
This contradicts to the fact that $G_P$ fixes a point in $\PP^4$.

Thus, there is a $G$-irreducible reduced curve $C$ in $\PP^4$ that is union of smooth irreducible curves $C_1,\ldots,C_r$ of genus $g$ and degree $d$
such that $rd\leqslant 12$, $2g-2\leqslant 2d$ and $r\big(3d-g+1\big)\leqslant 34$.
Going through the list of subgroups in $G$ of index $\leqslant 12$, we see that $r\in\{1,11,12\}$.
If $r=12$ or $r=11$, then $d=1$, so that $g=0$, which gives
\begin{equation*}
11\times 4\leqslant 4r=r\big(3d+1\big)=r\big(3d-g+1\big)\leqslant 34,
\end{equation*}
which is absurd. Thus, we have $r=1$, so that $C$ is irreducible.
Then $d\leqslant 12$ and $g\leqslant 13$.
Using \eqref{equation:Hurwitz}, we see that $g\geqslant 9$, so that $g\in\{9,10,11,12,13\}$.
Since $\lvert G\rvert=25920=2^6\cdot 3^4 \cdot 5$ and $2g-2$ is never divisible by
$2^5$ or $3^3$, the expression on the left hand side from
Lemma~\ref{lem:Hurwitz} has $2^2 \cdot 3^2$ in the denominator when
written in lowest terms.
However, it is a non-negative integer combination of expressions of the form
$1-\frac{1}{r}$ for $r=1,2,3,4,5,6,9,12$.
Thus, this case is impossible.

\par\medskip\noindent
{\bf Case \xref{cases:groups}\xref{cases:groups:SL2-11-8}.}
Here $G\simeq\PSL_2(\bF_{11})$ and $X$ is the unique smooth Fano threefold of Picard rank $1$ and genus $8$
that admits a faithful action of the group $\PSL_2(\bF_{11})$.
We have $n=1$, $H^3=14$, $m=9$, $\widetilde{G}\simeq G$ and $h^0(\OOO_X(3H))=40$.
Going through the list of subgroups in $G$ of index $\leqslant 40$,
we see that either $G_P\simeq\Alt_5$ or $G_P\simeq\mumu_{11}\rtimes\mumu_5$.
Since $X$ is smooth, the embedded tangent space at $P$ is $3$-dimensional and
has a faithful $G_P$ action.
This means that the restriction of the representation $H^0(\OOO_X(H))$ to $G_P$
has a trivial representation and a $3$-dimensional faithful subrepresentation.
This is impossible if $G_P\simeq\mumu_{11}\rtimes\mumu_5$,
so $G_P \simeq \Alt_5$.
From the character table~\cite{atlas} of $\PSL_2(\bF_{11}$,
we see that any faithful irreducible character $\chi$ of degree $\le 10$
must have $\chi(g)=0$ for all $g \in G$ of order $5$.
However, we have $\rho(g) = \frac{1}{2}(1\pm \sqrt{5})$ for an irreducible
character $\rho$ of $\Alt_5$ of degree $3$.
Thus there must be two $3$-dimensional faithful $\Alt_5$-subrepresentations of
$H^0(\OOO_X(H))$ along with a trivial subrepresentation.
The remaining character must be a character $\sigma$ of $\Alt_5$ of
degree $2$ such that $\sigma(g)=-2$.
No such characters exist, so this case is impossible.

Thus, there is a $G$-irreducible reduced curve $C$ in $X$ that is a union of smooth irreducible curves $C_1,\ldots,C_r$ of genus $g$ and degree $d$
such that $rd\leqslant 14$, $2g-2\leqslant d$ and $r(2d-g+1)\leqslant 40$.
Arguing as above, we see that $r\in\{1,11,12\}$.
As above, denote by $G_1$ the stabilizer of the curve $C_1$.
If $r=12$, then $d=1$, so that $C_1$ is a line,
which implies that the restriction of the representation $H^0(\OOO_X(H))$ to $G_1$ contains a two-dimensional subrepresentation.
But we already checked that this is not the case, so that $r\ne 12$.
Similarly, we see that $r\ne 11$, because $G_1\simeq\Alt_5$ in this case, and
the restriction of the representation $H^0(\OOO_X(H))$ to $G_1$
does not have two-dimensional subrepresentations either.
Hence, we see that $r=1$, so that $C$ is irreducible.
Then $d\leqslant 14$ and $g\leqslant 8$, which is impossible by \eqref{equation:Hurwitz}.
This completes the proof of Theorem~\ref{theorem:Fano-super-rigid}.
\end{proof}

\subsection{The proof}
\label{subsection:proof}

Now we are ready to prove

\begin{proposition}\label{prop:not-A7}
Let $X$ be a rationally connected threefold.
Then the group $\Aut(X)$ does not contain a subgroup isomorphic to
$\SL_2(\bF_{11})$, $\Sp_4(\bF_{3})$, $2.\Alt_7$ or $6.\Alt_7$.
\end{proposition}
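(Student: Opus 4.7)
\begin{par}
The plan is to reduce to a $G$-Mori fibre space via equivariant MMP, rule out the fibred cases by elementary group-theoretic obstructions, then pass to the quotient $\bar X := X/\z(G)$ to bring Theorems~\ref{theorem:Prokhorov-2} and~\ref{theorem:Fano-super-rigid} to bear on the simple quotient action, and finally obstruct the lift of the $\bar G$-action to a faithful $G$-action on the resulting specific Fano threefold.
\end{par}

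\begin{par}
First, take a $G$-equivariant resolution of the assumed rationally connected threefold and run the $G$-equivariant MMP, reducing to a $G$-Mori fibre space $\phi\colon X \to Z$ with $X$ terminal and $\rk\Pic(X/Z)^G=1$. The groups $\SL_2(\bF_{11})$, $\Sp_4(\bF_3)$, $2.\Alt_7$, $6.\Alt_7$ are perfect and none admits $\Alt_5$ as a quotient, so every homomorphism $G \to \Aut(\PP^1) = \PGL_2(\CC)$ is trivial. Hence if $Z = \PP^1$, the group $G$ acts faithfully on a general smooth del Pezzo fibre, embedding $G \hookrightarrow \Cr_2(\CC)$; this contradicts Theorem~\ref{th:Cr2simple}. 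If $Z$ is a rational surface, the kernel $K$ of $G \to \Bir(Z)$ is normal, hence one of $1$, $\z(G)$, $G$; the case $K = G$ again forces a faithful $G$-action on $\PP^1$, and the other two inject either $G$ or the simple quotient $\bar G := G/\z(G)$ into $\Cr_2(\CC)$, contradicting Theorem~\ref{th:Cr2simple} together with the classification (cited in the introduction) of finite non-abelian simple subgroups of $\Cr_2(\CC)$ as $\Alt_5$, $\PSL_2(\bF_7)$, $\Alt_6$. Thus $Z$ must be a point and $X$ is a $G\QQ$-Fano threefold.
\end{par}

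\begin{par}
For $G = 6.\Alt_7$ the subgroup $3.\Alt_7 \subset G$ already acts faithfully on $X$, and one invokes the independent result (established in Section~\ref{section:3A7}) that $3.\Alt_7$ cannot act on any rationally connected threefold. In the remaining three cases $\z(G) \simeq \mumu_2$; let $\iota$ denote the central involution. Lemmas~\ref{lemma:quotients-involuion} and~\ref{lemma:quotients-index-2} show that $\bar X := X/\langle \iota \rangle$ has at worst canonical singularities: at a Gorenstein point, the divisorial part of $\Fix(\iota)$ is $G$-invariant and therefore $\QQ$-Cartier by the Mori-fibre-space property, so Lemma~\ref{lemma:quotients-involuion} applies; at an index-$2$ terminal point, Lemma~\ref{lemma:quotients-index-2} applies since the Schur multipliers of $\SL_2(\bF_{11})$, $\Sp_4(\bF_3)$, and $2.\Alt_7$ do not introduce elements of order $4$ in the centres of the relevant extensions of point stabilizers. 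Moreover $-K_{\bar X}$ descends from the ample $\z(G)$-invariant $-K_X$, so $\bar X$ is a canonical Fano threefold with a faithful action of the simple group $\bar G$.
\end{par}

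\begin{par}
Running the $\bar G$-equivariant MMP on $\bar X$ and repeating the Step~1 analysis (with $\bar G$ in place of $G$, using that $\PSL_2(\bF_{11})$, $\PSp_4(\bF_3)$, $\Alt_7$ do not embed in $\Cr_2(\CC)$) produces a $\bar G\QQ$-Fano threefold $\bar Y$. By Theorem~\ref{theorem:Prokhorov-2}, $\bar Y$ is one of the cases~\ref{cases:groups:A7-23}--\ref{cases:groups:SL2-11-8} covered by Theorem~\ref{theorem:Fano-super-rigid}. Applying that theorem to the $\bar G$-birational map $\bar Y \dasharrow \bar X$ between canonical Fano varieties yields an isomorphism $\bar X \simeq \bar Y$. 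This reduces the problem to showing that the double cover $X \to \bar Y$ does not carry a faithful $G$-action extending the $\bar G$-action on $\bar Y$. By Lemma~\ref{lemma:DI} the cover is determined by a $\bar G$-linearized line bundle $L$ on $\bar Y$ together with a $\bar G$-invariant section of $L^{\otimes 2}$ cutting out the branch divisor, and the extension $G \to \bar G$ is non-split precisely when this data is incompatible with the canonical linearization coming from the anticanonical embedding of $\bar Y$. This final step, carried out case by case using Table~\ref{table}, Lemma~\ref{Lemma-P3-Q}, and the character tables of the covering groups, is the main obstacle: in each of $(\bar G, \bar Y) \in \{(\Alt_7, \PP^3), (\Alt_7, X_{2,3}), (\PSp_4(\bF_3), \PP^3), (\PSp_4(\bF_3), B_4), (\PSL_2(\bF_{11}), X_3), (\PSL_2(\bF_{11}), V_{14})\}$ the ambient projective embedding forces the relevant central element of $G$ to act as a scalar, making its action on $X$ factor through $\bar G$ and contradicting faithfulness.
\end{par}
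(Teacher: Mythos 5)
Your overall strategy (reduce to a $G\QQ$-Fano threefold, quotient by the centre, invoke Theorems~\ref{theorem:Prokhorov-2} and~\ref{theorem:Fano-super-rigid} to identify the quotient, then obstruct the double cover) is the same as the paper's, but there are two genuine problems.

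First, your reduction for $6.\Alt_7$ is wrong: $6.\Alt_7$ contains no subgroup isomorphic to $3.\Alt_7$. Such a subgroup would have index $2$, and a perfect group has no subgroup of index $2$; for the same reason $6.\Alt_7$ contains neither $2.\Alt_7$ nor $\Alt_7$ as a subgroup. The correct reduction, which the paper uses, is to pass to the quotient threefold $X/\mumu_3$ by the order-$3$ central subgroup; this is again rationally connected and carries a faithful action of $6.\Alt_7/\mumu_3\simeq 2.\Alt_7$, so the problem reduces to the $2.\Alt_7$ case on a different threefold, not to a subgroup acting on the same one.

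Second, your final step --- the only place a contradiction can actually be produced --- is not carried out, and the mechanism you propose is not the one that works. You retain all six pairs $(\bar G,\bar Y)$ and assert that ``the ambient projective embedding forces the relevant central element to act as a scalar''; this is unsubstantiated, and it is not how the contradiction arises. The paper's argument is numerical: since $\bar Y$ is simply connected, the quotient map $\pi\colon X\to\bar Y$ must be branched along a non-zero divisor $B$; since $\Cl(\bar Y)^{\bar G}=\Pic(\bar Y)$ one has $B\sim 2D$ with $D$ Cartier; and ampleness of $-K_X=\pi^*\bigl(-K_{\bar Y}-D\bigr)$ forces $-K_{\bar Y}-D$ to be ample. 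This at once eliminates the three cases of Theorem~\ref{theorem:Fano-super-rigid} in which $\bar Y$ has Fano index one, and in the remaining three cases it bounds $\deg B$ (by $6$ on $\PP^3$, by $2$ on the Klein cubic) strictly below the minimal degree of a $\bar G$-invariant hypersurface recorded in Tables~\ref{table} and~\ref{table2}. Without this degree bound, Lemma~\ref{lemma:DI} and linearization considerations give you nothing: the real point is that no admissible branch divisor exists at all, not that an existing cover fails to carry the action. You also leave unverified the hypotheses of Lemma~\ref{lemma:quotients-index-2} at non-Gorenstein points (that they are cyclic quotient points of type $\frac12(1,1,1)$, together with the stabilizer analysis of Claim~\ref{claim:c}), though this is a smaller omission.
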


\begin{proof}
Let $X$ be a rationally connected threefold.
Let $G$ be a subgroup in $\Aut(X)$.
Suppose that $G$ is one of the following groups $\SL_2(\bF_{11})$, $\Sp_4(\bF_{3})$, $2.\Alt_7$ or $6.\Alt_7$.
We seek a contradiction.
We may assume that $G\not\simeq 6.\Alt_7$ because in this case taking
the quotient by a subgroup of order $3$ in the center
reduces the problem to $2.\Alt_7$.
Thus $\z(G)\simeq\mumu_2$.
We may assume that $X$ has a structure of $G$-Mori fiber space $\pi\colon X\to S$.
By Theorem~\ref{th:Cr2simple} the base $S$ is a point, i.e.~$X$ is a $G\QQ$-Fano threefold.

Let $Y=X/\z(G)$, let $\pi: X\to Y$ be the quotient map and let $\bar G:=G/\z(G)$.
Then $Y$ has canonical singularities by Lemmas~\ref{lemma:quotients-involuion}, \ref{lemma:quotients-index-2} and Claim~\ref{claim:c} below.
Using Theorem~\ref{theorem:Prokhorov-2}, we see that $Y$ is $\bar G$-birational to one of the Fano threefolds listed in Theorem~\ref{theorem:Fano-super-rigid}
and by this theorem $Y$, in fact, is $\bar G$-isomorphic to one of the varieties in the list.
In all our cases $Y$ is a $\bar G$-Fano
with at worst isolated ordinary double points (in fact, $Y$ is smooth except for the case
\ref{cases:groups}\ref{cases:groups:PSp43-B}).
Moreover, $\Cl(Y)^{\bar G}=\Pic(Y)$ by \cite[Lemma~2.2]{CPS-BurBar}.
The Hurwitz formula gives
\begin{equation}
\label{eq:Hurwitz}
 K_X=\pi^*\left(K_Y+\textstyle\frac12 B\right),
\end{equation}
where $B$ is the branch divisor.
Thus $B$ is non-zero $\bar G$-invariant and there exists a Cartier divisor $D$ such that $2D\sim B$.
In particular, the Fano index of $Y$ is even.
Therefore, we are left with the cases
\ref{cases:groups}\ref{cases:groups:A7-P3}, \ref{cases:groups}\ref{cases:groups:PSp43-P3}, \ref{cases:groups}\ref{cases:groups:SL2-11-K}.
But in the case \ref{cases:groups}\ref{cases:groups:A7-P3} we have $Y\simeq \PP^3$, $\bar G\simeq \Alt_7$
and $\deg B\le 6$ by \eqref{eq:Hurwitz}
 which impossible because the minimal degree of invariants in this case
is at least $8$ (see Table~\ref{table}). Likewise we obtain a contradiction in the cases
\ref{cases:groups}\ref{cases:groups:PSp43-P3} and \ref{cases:groups}\ref{cases:groups:SL2-11-K}.
\end{proof}

\begin{sclaim}
\label{claim:c}
Any $\z(G)$-fixed point of $X$ satisfies conditions of
Lemma~\xref{lemma:quotients-involuion} or Lemma~\xref{lemma:quotients-index-2}.
\end{sclaim}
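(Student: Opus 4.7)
The plan is as follows. Let $P \in X$ be a $\z(G)$-fixed point; write $A = \z(G) \simeq \mumu_2$, let $G_P \supseteq A$ denote the stabilizer of $P$ (so $A \subseteq \z(G_P)$), and let $r$ be the index of the terminal singularity $(X,P)$. A uniform ingredient is the following: because $A$ is central in $G$, the fixed locus $\Fix(A,X)$ is $G$-stable, and hence its divisorial part is a $G$-invariant Weil divisor; by the $G\QQ$-factoriality of the $G\QQ$-Fano threefold $X$, this divisor is $\QQ$-Cartier.

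In the Gorenstein case $r = 1$, I would apply Lemma~\ref{lemma:stabilizer-faithful} to obtain a faithful $G_P$-action on $T_{P,X}$ (of dimension $3$ if $P$ is smooth and $4$ if $P$ is $cDV$), choose a $G_P$-equivariant analytic embedding $(X,P) \subseteq (\CC^n, 0)$ linearizing the action, and decompose $\CC^n = V_+ \oplus V_-$ into $A$-eigenspaces. The local $A$-fixed locus in $X$ is then $V_+ \cap X$. If the defining equation $\phi$ of $X$ does not vanish identically on $V_+$, this intersection has codimension at least two and $A$ acts freely in codimension one near $P$; otherwise $V_+ \subseteq X$, and $V_+$ is the unique divisorial fixed component of $A$ through $P$, which coincides locally with the $G$-invariant $\QQ$-Cartier divisor produced above, hence is $\QQ$-Cartier at $P$. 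Either way the hypothesis of Lemma~\ref{lemma:quotients-involuion} is satisfied.

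For $r \geq 2$, I would pass to the index-one cover $\pi \colon (X^\sharp, P^\sharp) \to (X,P)$ with Galois group $\mumu_r$. The stabilizer lifts to a central $\mumu_r$-extension $G_P^\sharp$ of $G_P$, and the preimage $A^\sharp \subseteq \z(G_P^\sharp)$ of $A$ is abelian of order $2r$ acting faithfully on $T_{P^\sharp, X^\sharp} \simeq \CC^3$. The classification of terminal threefold singularities together with the existence of such a central $A^\sharp$ forces $r = 2$, $A^\sharp \simeq \mumu_2 \times \mumu_2$, and $(X,P) \simeq \tfrac{1}{2}(1,1,1)$ — higher indices $r \geq 3$ and non-cyclic index-$2$ types ($cA/2$, $cAx/2$, etc.) are ruled out by incompatibility with the abelian structure of $A^\sharp$ together with the constraint $G_P \subseteq G$. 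It remains to verify the condition of Lemma~\ref{lemma:quotients-index-2} that no central $\mumu_2$-extension of $G_P$ has an element of order $4$ in its center; this is a Schur-multiplier computation, carried out case by case for the possible subgroups $G_P \subseteq G$ containing $A$, using the Atlas data and the known Schur multipliers of $\PSL_2(\bF_{11})$, $\PSp_4(\bF_3)$, and $\Alt_7$.

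The main obstacle is the higher-index case: one must rule out a number of non-cyclic-quotient terminal singularity types before identifying $(X,P)$ as $\tfrac{1}{2}(1,1,1)$, and then perform the Schur-multiplier verification for each candidate stabilizer. The Gorenstein case, by contrast, is handled uniformly by the single global observation about $G\QQ$-factoriality.
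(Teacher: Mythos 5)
Your Gorenstein case is exactly the paper's argument: the fixed locus of $\z(G)$ is $G$-invariant, so its divisorial part is $\QQ$-Cartier on the $G\QQ$-factorial threefold $X$, and Lemma~\ref{lemma:quotients-involuion} applies. The non-Gorenstein case, however, contains a genuine gap. Your stated mechanism for forcing $r=2$ and type $\frac12(1,1,1)$ --- ``the existence of such a central $A^\sharp$'' of order $2r$ acting faithfully on the tangent space of the index-one cover --- does not work: a central $\mumu_6$ acting on $\CC^3$ with its $\mumu_3$-part of type $\frac13(1,1,2)$ is a perfectly consistent local picture, and likewise $cA/2$, $cAx/2$ points admit abelian groups of order $4$ acting on the $4$-dimensional ambient space of the cover. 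Nothing purely local about $A^\sharp$ rules these out.

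What the paper actually uses, and what is missing from your proposal, is the global input $0<-K_X\cdot c_2=24-\sum(r_i-1/r_i)$ (Bogomolov--Miyaoka, as in \cite[Lemma~6.1]{Prokhorov2012} and Lemma~\ref{lemma:1frac2}): it bounds the length of the $G$-orbit of $P$ by $15$, hence forces the stabilizer $G_P$ to be one of a short explicit list of large subgroups ($2.\Alt_6$, $\SL_2(\bF_7)$, $2.\Alt_5$, $\mumu_{22}\rtimes\mumu_5$ read off from Table~\ref{table:maximal}). It is the representation theory of these large groups on $T_{P^\sharp,X^\sharp}$, not the structure of $A^\sharp$ alone, that pins down the singularity as $\frac12(1,1,1)$; and the same short list is what makes your final ``Schur-multiplier computation'' a finite, checkable case division (the paper splits the extension in the first three cases and, in the fourth, excludes a central element of order $4$ because $\mumu_{44}\rtimes\mumu_5$ with cyclic kernel has no faithful $3$-dimensional representation). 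Without the orbit-length bound, $G_P$ could a priori be as small as $\z(G)$ itself, and both halves of your non-Gorenstein argument collapse.
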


\begin{proof}
Note that the fixed point locus of $\z(G)$ on $X$ is $G$-invariant and so its divisorial part must be a $\QQ$-Cartier divisor.
Hence any Gorenstein point of $X$ satisfies conditions of Lemma~\ref{lemma:quotients-involuion}.
Assume that $X$ is a non-Gorenstein Fano threefold.
 Let $P\in X$ be a non-Gorenstein point
and let $G_P\subset G$ be its stabilizer.

Arguing as in the proof of \cite[Lemma~6.1]{Prokhorov2012} or Lemma~\ref{lemma:1frac2} below
one can show that $P\in X$ is a cyclic quotient
singularity of type $\frac12(1,1,1)$.
As in Lemma~\xref{lemma:quotients-index-2}, consider the index-one cover
$\pi: (X^\sharp\ni P^\sharp)\to (X\ni P)$
and the lifting $G_P^\sharp$ to $\Aut(X^\sharp\ni P^\sharp)$.
Since the length of the orbit of $P$ is at most $15$
(see \cite[Lemma~6.1]{Prokhorov2012} or Lemma~\ref{lemma:1frac2}), we
see that there are only the following possibilities \cite{atlas}, \cite{GAP4}:
\begin{enumerate}
\item\label{case:quotients-involuion-1}
$G\simeq 2.\Alt_7$, $G_P\simeq 2.\Alt_6$;
\item\label{case:quotients-involuion-2}
$G\simeq 2.\Alt_7$, $G_P\simeq \SL_2(\bF_7)$;
\item\label{case:quotients-involuion-3}
$G\simeq \SL_{2}(\bF_{11})$, $G_P\simeq 2.\Alt_5$;
\item\label{case:quotients-involuion-4}
$G\simeq \SL_{2}(\bF_{11})$, $G_P\simeq \mumu_{22}\rtimes \mumu_{5}$.
\end{enumerate}
Now one can see that in the cases \ref{case:quotients-involuion-1}-\ref{case:quotients-involuion-3} any extension $G_P^\sharp$ of
$G_P$ by $\mumu_2$ splits. Consider the case \ref{case:quotients-involuion-4}.
Assume that the center of $G_P^\sharp$ contains an element $z$ of order $4$.
Then the kernel of the homomorphism $G_P^\sharp\to \mumu_{5}$ must be a cyclic group $\mumu_{44}$.
But then $G_P^\sharp$ has no faithful $3$-dimensional representation, a contradiction.
\end{proof}

\section{$3.\Alt_7$}\label{section:3A7}
The aim of this section is to prove the following.
\begin{proposition}\label{prop:A7}
Let $X$ be a rationally connected threefold.
Then the group $\Aut(X)$ does not contain a subgroup isomorphic to $3.\Alt_7$.
\end{proposition}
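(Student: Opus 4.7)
The plan is to follow the strategy of Proposition~\ref{prop:not-A7}: after running a $G$-equivariant MMP one may assume that $X$ is a $G\QQ$-Fano threefold, since any structure of $G$-conic bundle or $G$-del Pezzo fibration would yield a faithful action of $\bar G := G/\z(G) \simeq \Alt_7$ on a rationally connected variety of dimension $\leq 2$, contradicting Theorem~\ref{th:Cr2simple}. Let $\pi : X \to Y := X/\z(G)$ be the quotient by the central $\mumu_3$, so that $\bar G$ acts faithfully on $Y$.

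The first substantive task is to show that $Y$ has canonical singularities, by analogs of Lemmas~\ref{lemma:quotients-involuion} and~\ref{lemma:quotients-index-2} adapted to $\mumu_3$-quotients. At a smooth Gorenstein fixed point $P$ of $\z(G)$, one examines the linearized $\mumu_3$-action on $T_{P,X}$: among the weight types $\tfrac{1}{3}(a_1,a_2,a_3)$, those such as $\tfrac{1}{3}(1,1,0)$ fail the Reid--Tai canonicity criterion and must be excluded by exploiting the $G_P$-representation structure on $T_{P,X}$, which is a faithful representation of a subgroup of $3.\Alt_7$ on which the central $\mumu_3$ acts by a specific character; for such subgroups the available faithful low-dimensional representations are severely restricted. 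At a non-Gorenstein terminal fixed point one lifts the action to the index-one cover in the style of Claim~\ref{claim:c} and analyzes the resulting central extension of $G_P$; a bound on the orbit length of $P$ restricts the possibilities for $G_P$ to a short list that can be checked case by case.

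Once canonicity of $Y$ is in hand, applying Theorems~\ref{theorem:Prokhorov-2} and~\ref{theorem:Fano-super-rigid} exactly as in the proof of Proposition~\ref{prop:not-A7} identifies $Y$ up to $\bar G$-isomorphism with either $\PP^3$ (case~\ref{cases:groups:A7-P3}) or the unique smooth $\Alt_7$-invariant intersection of a quadric and a cubic in $\PP^5$ (case~\ref{cases:groups:A7-23}), these being the only cases of Theorem~\ref{theorem:Fano-super-rigid} with $\bar G \simeq \Alt_7$. The Hurwitz formula for the degree-$3$ cyclic cover $\pi$ reads $K_X = \pi^*(K_Y + \tfrac{2}{3} B)$ with $B = 3D$ for some effective divisor $D$ on $Y$ coming from the line bundle defining the cover, and hence $-K_Y - 2D$ is $\QQ$-ample. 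In the quadric-cubic case, $\Pic(Y) = \ZZ\cdot H$ with $-K_Y = H$, which forces $D = 0$. In the $\PP^3$ case, $-K_Y = 4H$ forces $D = dH$ with $d \in \{0,1\}$; the option $d = 1$ would require an $\Alt_7$-invariant divisor of degree $3$ on $\PP^3$, which does not exist since the minimal degree of an $\Alt_7$-invariant on $\PP^3$ is at least $8$ (see Table~\ref{table}). The remaining subcase $D = 0$ means that $\pi$ is étale in codimension one; since $Y$ is smooth and simply connected, no nontrivial $\mumu_3$-torsor exists on the complement of a codimension-$\geq 2$ subset, so $X$ would have to be a disconnected trivial $\mumu_3$-cover of $Y$, contradicting irreducibility of $X$.

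The main obstacle is the canonicity analysis in Step~2. Unlike the $\mumu_2$-case, where Lemmas~\ref{lemma:quotients-involuion} and~\ref{lemma:quotients-index-2} cleanly dispatch every $\tfrac12$-action at a terminal point, $\mumu_3$-actions admit weight types such as $\tfrac{1}{3}(1,1,0)$ whose quotients fail to be canonical, so representation-theoretic input about the subgroups of $3.\Alt_7$ that can appear as point-stabilizers is essential to exclude those configurations. This careful enumeration of stabilizers and of their faithful low-dimensional representations is what reduces the problem to the clean Hurwitz-formula argument of the preceding paragraph.
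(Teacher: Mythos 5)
Your proposal takes a genuinely different route from the paper: the paper does \emph{not} pass to the quotient $X/\z(G)$ for $G\simeq 3.\Alt_7$, but instead devotes all of Section~\ref{section:3A7} to a direct analysis of the $3.\Alt_7$-action on the $G\QQ$-Fano $X$ itself (in the Gorenstein case, showing $H^0(X,-K_X)^\vee$ splits as a sum of two $6$-dimensional irreducibles, so $g=10$, and deriving a contradiction from the decomposition of $S^2V$ and the quadrics through the anticanonical model via Lemma~\ref{lemma:6dim}; in the non-Gorenstein case, a separate study of $|-K_X|$, $|-2K_X|$ and the blow-ups of the $\frac12(1,1,1)$ points). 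Your endgame — once $Y$ is pinned down as $\PP^3$ or the quadric--cubic intersection, the branch divisor must vanish for degree/invariant-theoretic reasons, and purity of the branch locus plus simple connectedness of $Y$ forces $X$ to be disconnected — is correct and clean, and mirrors the final step of the proof of Proposition~\ref{prop:not-A7}.

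However, there is a genuine gap at the central step, and you have located it yourself without closing it: the canonicity of $Y=X/\mumu_3$. This is not a routine adaptation of Lemmas~\ref{lemma:quotients-involuion} and~\ref{lemma:quotients-index-2}. For an involution, \emph{every} quotient of a smooth threefold point is automatically canonical by Reid--Tai ($\frac12(1,0,0)$ is smooth, $\frac12(1,1,0)$ is $A_1\times\CC$, $\frac12(1,1,1)$ is terminal), so in the order-$2$ case only the singular points of $X$ require attention. For the central element $z$ of order $3$ the situation is genuinely worse: a curve $C\subset\Fix(z)$ along which $z$ acts on the normal bundle by scalar multiplication (local type $\frac13(1,1,0)$) has age $\frac23<1$ and yields a non-canonical quotient. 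Your proposal offers no actual argument excluding such fixed curves, only the assertion that the representation theory of the stabilizers will do it; but at a general point $P$ of such a curve the stabilizer need only contain $\z(G)$ acting by scalars on the $2$-dimensional normal space, and no contradiction with Lemma~\ref{lemma:stabilizer-faithful} or with the subgroup structure of $3.\Alt_7$ is visible. Without canonicity of $Y$ you cannot invoke Theorem~\ref{theorem:Fano-super-rigid} to upgrade ``$\bar G$-birational to one of the two rigid models'' to ``$\bar G$-isomorphic'', and the Hurwitz-formula computation is then performed on the wrong variety, so the gap is essential rather than cosmetic. (A minor additional point: the branch divisor of a $\mumu_3$-quotient need not be of the form $3D$ with $D$ effective; but since $\Cl(Y)^{\bar G}$ has rank one, the ampleness of $-K_Y-\frac23 B$ together with the absence of $\Alt_7$-invariants of degree $\le 7$ on $\PP^3$ already forces $B=0$, so this is harmless.) The fact that the authors treat $3.\Alt_7$ by an entirely different and much longer method is strong evidence that the quotient route does not close up without substantial new input precisely at this point.
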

Let $G=3.\Alt_7$.
Assume that $G\subset \Aut(X)$ where $X$ is a rationally connected threefold.
We may assume that $X$ has the structure of a $G$-Mori fiber space
$\pi: X\to S$. By Theorem~\ref{th:Cr2simple}, the base $S$ is a point, i.e.~$X$ is a $G\QQ$-Fano threefold.
We distinguish two cases: \ref{subsection:Gorenstein} and~\ref{subsection:non-Gorenstein}.

\subsection{Actions on Gorenstein Fano threefolds}
\label{subsection:Gorenstein}
First we consider the case where $K_X$ is Cartier, i.e.~the singularities of $X$ are
at worst terminal Gorenstein.
By Propositions~\ref{Proposition-rho>1} and~\ref{Proposition-iota=2} we have
$\Pic(X)=\ZZ\cdot K_X$. Let $g=\g(X)$ be the genus of $X$.
Thus $(-K_X)^3=2g-2$.
By Lemmas~\ref{lemma:base-points} and~\ref{lemma:very-ample}
the linear system $|-K_X|$ defines an embedding to $\PP^{g+1}$.
By \cite{Prokhorov-v22} we have $g\neq 12$.
Recall that any Fano threefold $X$ with terminal Gorenstein singularities admits
a smoothing, i.e. a deformation $\mathfrak X\to \mathfrak D\ni 0$ over a disk $\mathfrak D\subset \CC$ such that
the central fiber $\mathfrak X_0$ is isomorphic to $X$ and a general fiber is smooth \cite{Namikawa-1997}.
The numerical invariants such as the degree, the Picard number, and the Fano index are constant in
such a family $\mathfrak X/\mathfrak D$.
Now by the classification of
smooth Fano threefolds
\cite{Iskovskikh-1980-Anticanonical} or \cite{IP99}
we conclude that $g\le 10$.
By Lemma~\ref{lemma:fixed-point} the group $G$ has no fixed points.

\begin{sclaim}
 $X$ has no $G$-invariant hyperplane sections.
\end{sclaim}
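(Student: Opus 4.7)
The plan is to assume that a $G$-invariant divisor $S\in|-K_X|$ exists and to derive a contradiction by combining Lemma~\ref{lemma:inv-hyp}, adjunction on $(X,S)$, and Mukai's classification of quasi-simple groups acting on K3 surfaces. Since $G$ has already been shown to have no fixed points on $X$, Lemma~\ref{lemma:inv-hyp} immediately rules out every configuration in which $(X,S)$ fails to be plt while either $S$ is irreducible or $(X,S)$ is not lc: any such configuration would produce a $G$-fixed point on $S$. Only two possibilities remain: either (a) $(X,S)$ is plt, or (b) $S$ is reduced and reducible with $(X,S)$ lc but not plt.

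In case (a), I first argue that $S$ is irreducible. A plt pair with reduced boundary must have its boundary components pairwise disjoint, so each component $S_j$ of $S$ is Cartier; then $\Pic(X)=\ZZ\cdot K_X$ forces $S_j\sim -n_j K_X$ with $n_j\ge 1$ an integer, and $\sum n_j = 1$ is impossible unless there is a single component. With $S$ irreducible, adjunction yields $K_S\sim(K_X+S)|_S\sim 0$, and plt combined with $X$ being Gorenstein makes $S$ a normal surface with at worst Du Val singularities, hence a K3 surface. The kernel of $G\to\Aut(S)$ is a normal subgroup of $G=3.\Alt_7$, so equals $\{1\}$, $\mumu_3=\z(G)$, or $G$; the last is excluded because it would render $S$ pointwise $G$-fixed. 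Thus $3.\Alt_7$ or $\Alt_7$ acts faithfully on $S$; since the acting group is quasi-simple and not isomorphic to $\Alt_5$, Corollary~\ref{cor:actions-K3} forces $S$ smooth, and Mukai's Theorem~\ref{th:actions-K3} restricts the acting quasi-simple group to $\Alt_5$, $\Alt_6$, or $\PSL_2(\bF_7)$---contradiction.

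In case (b), write $S=\sum_{i=1}^r S_i$. Since $\Alt_7$ has no nontrivial permutation representation of degree less than $7$, every $G$-orbit of components has size at least $7$; combined with $S\cdot(-K_X)^2=2g-2\le 18$ and $S_i\cdot(-K_X)^2\ge 1$, this gives $r\le 18$ and restricts the possible orbit sizes to $7$ (stabilizer $3.\Alt_6$) and $15$ (stabilizer $\mumu_3\times\PSL_2(\bF_7)$). Adjunction on each component yields an lc log-Calabi-Yau pair on the normalization with nonzero boundary, hence a rational surface on which the stabilizer acts. Theorem~\ref{th:Cr2simple} rules out a faithful embedding of $3.\Alt_6$ into $\Cr_2(\CC)$, forcing $\mumu_3$ to lie in the kernel of the stabilizer's action on the corresponding component; handling the remaining $\PSL_2(\bF_7)$-case and analysing the resulting $\mumu_3$-fixed divisor inside the anticanonical representation $H^0(-K_X)^\vee$---using the character-theoretic information in Table~\ref{table}---completes the contradiction. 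The main obstacle is precisely this reducible-lc case: the plt case falls to Mukai in a single stroke, whereas the reducible case requires a delicate interplay between orbit combinatorics, the Cremona classification, and a character-theoretic study of the central $\mumu_3$-action on the anticanonical space.
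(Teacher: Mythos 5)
Your reduction of the problem to two cases via Lemma~\ref{lemma:inv-hyp} and the absence of $G$-fixed points (Lemma~\ref{lemma:fixed-point}) is exactly the paper's, and your treatment of the plt case --- adjunction, Corollary~\ref{cor:actions-K3}, Mukai's Theorem~\ref{th:actions-K3} --- is correct and matches the paper's appeal to Theorem~\ref{th:actions-K3}. The genuine gap is in the reduced--reducible--lc case, which you yourself flag as ``the main obstacle'': the contradiction there is announced but never derived. Your plan (adjunction on each component, then Theorem~\ref{th:Cr2simple} to push $\z(G)\simeq\mumu_3$ into the kernel of the stabilizer action, then ``analysing the resulting $\mumu_3$-fixed divisor'') fails at two concrete points. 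First, for an orbit of $15$ components the stabilizer is $\mumu_3\times\PSL_2(\bF_7)$, and $\PSL_2(\bF_7)$ \emph{does} act faithfully on rational surfaces by Theorem~\ref{th:Cr2simple}, so the Cremona argument yields nothing and you offer no substitute. Second, even where $\mumu_3$ is forced to fix a component pointwise, the contradiction is only asserted: one would still have to argue, say, that $\z(G)$ then fixes all of $S=\bigcup S_i$ pointwise while $S$ is connected and reducible, hence singular where components meet, against the smoothness of fixed loci coming from Lemma~\ref{lemma:stabilizer-faithful}. You also drop the possibility, allowed by $K_{S'}+D'\sim 0$ with $D'\neq 0$, that the normalization of a component is birationally ruled over an elliptic curve rather than rational.

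The paper dispatches this case by a short numerical argument that is absent from your proposal. Since every orbit of components has length at least $7$ (Table~\ref{table:maximal}) and $\sum(-K_X)^2\cdot S_i=2g-2\le 18$ with each summand a positive integer, the component count is $7$ or $14$, forcing $2g-2=14$, i.e.\ $g=8$ and $h^0(\OOO_X(-K_X))=10$. The canonical linearization then makes $H^0(\OOO_X(-K_X))^\vee$ a faithful $10$-dimensional representation of $3.\Alt_7$ on which the centre does not act by scalars; by Table~\ref{table} the only faithful irreducible constituent of dimension at most $10$ is the $6$-dimensional one, so the remaining four dimensions could only be trivial summands, which is incompatible with the uniqueness (via Bertini) of the $G$-invariant anticanonical member. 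This degree-count-plus-character-table step is the actual content of the reducible case; without it, or a completed version of your component-by-component analysis, the claim is not proved.
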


\begin{proof}
Assume that there exists a $G$-invariant
divisor $S\in |-K_X|$. By Theorem~\ref{th:actions-K3} the pair $(X,S)$ is not plt.
By Lemma~\ref{lemma:inv-hyp} the surface $S$ is reducible and reduced.
Since $\Pic(X)=\ZZ\cdot [S]$, the linear system $|-K_X|=|S|$ has no fixed components.
By Bertini's theorem this $G$-invariant surface $S\in |-K_X|$ is unique.
Write $S=\sum _{i=1}^m S_i$.
Then
\[
\sum (-K_X)^2\cdot S_i = 2g-2\le 18.
\]
So the cardinality of the orbit of $S_1$ equals $7$ or $14$ by
Table~\ref{table:maximal}.
In both cases $g=8$ so that $\dim H^0(X,\OOO_X(-K_X))=10$.
We get a contradiction with Table~\ref{table}.
\end{proof}

Let $V:=H^0(X,\OOO_X(-K_X))^\vee$.
The group $G$ faithfully acts on $V$ and $\PP(V)$.
Hence the representation $V$ of $G$ is reducible.
Since $\dim V=g+2\le 12$, we have $V= V'\oplus V''$ (as a $G$-module), where $V'$ and $V''$ are irreducible
representations with $\dim V''=\dim V'=6$. Thus $g=10$.

\begin{slemma}\label{lemma:6dim}
Let $U$ be an irreducible $6$-dimensional representation of $3.\Alt_7$
and let $Q\subset S^2U^\vee$ be a $6$-dimensional subrepresentation.
Then the base locus of $Q$ on $\PP(U)$ is empty.
\end{slemma}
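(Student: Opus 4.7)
The plan is to argue by contradiction: assume $B := \Bs(Q) \subset \PP(U) = \PP^5$ is nonempty, with $G := 3.\Alt_7$; then $B$ is a $G$-invariant closed subvariety.

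First I would identify $Q$ as a $G$-module. Let $\omega$ be the character by which the centre $\z(G) \simeq \mumu_3$ acts on $U$; then $\z(G)$ acts on $S^2 U^\vee$ by $\omega^{-2}$. By inspection of the character table of $3.\Alt_7$, $U$ is the unique irreducible $6$-dimensional $G$-module with this central character, so $Q \simeq U$ and the embedding $Q \hookrightarrow S^2 U^\vee$ is given up to a scalar by a nonzero element $T \in \Hom_G(U, S^2 U^\vee)$. The tensor $T$ is equivalent to a $G$-invariant trilinear form on $U$ symmetric in its last two arguments, and setting $\Phi(v)(u) := T(u)(v,v)$ produces a $G$-equivariant polynomial map $\Phi \colon U \to U^\vee$ of degree $2$ with projective zero locus equal to $B$. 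Thus the claim is equivalent to showing that $\Phi(v) \neq 0$ for every $v \in U \setminus \{0\}$.

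Next I would reduce to an orbit analysis. Any $[v_0] \in B$ has proper stabiliser $G_0 \leq G$ (since $U$ is irreducible of dimension $> 1$), and $G_0$ acts on the line $\CC v_0$ via a one-dimensional character $\chi$ whose restriction to $\z(G)$ equals $\omega$. Since every proper subgroup of $G$ is contained in the preimage of a maximal subgroup of $\Alt_7 = G/\z(G)$, the possibilities for this preimage, with their indices in $G$, are
\[
3.\Alt_6\ (7), \quad \mumu_3 \times \PSL_2(\bF_7)\ (15), \quad \mumu_3 \times \Sym_5\ (21), \quad \widetilde H\ (35),
\]
where $\widetilde H$ is the preimage of the maximal subgroup $(\Alt_4 \times \mumu_3){:}\mumu_2$. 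The group $3.\Alt_6$ is perfect, so admits no nontrivial one-dimensional character, which rules out the index-$7$ orbit. For each of the remaining three preimages I would decompose $U$ as a module over that preimage via character restriction, identify any eigenlines with central character equal to $\omega$, and verify that $\Phi$ does not vanish on such an eigenline using the uniqueness of $T$.

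The main obstacle is this case-by-case verification. A potentially cleaner alternative is to separate $T$ into its symmetric and antisymmetric components under the $\GL(U)$-decomposition $U^\vee \otimes S^2 U^\vee = S^3 U^\vee \oplus S^{2,1}(U^\vee)$: if the symmetric part $T_s$ is nonzero, it is the polarisation of a $G$-invariant cubic $F \in (S^3 U^\vee)^G$ on $U$, and then the corresponding $\Phi_s = \tfrac{1}{3} dF$ so that $B \subseteq \Sing\{F = 0\}$; in that case the proof reduces to establishing smoothness of the $G$-invariant cubic hypersurface in $\PP^5$. If the antisymmetric component $T_a \in (S^{2,1}(U^\vee))^G$ is also nonzero, it must be handled by a separate argument. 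Either route ultimately depends on either an explicit model of the $6$-dimensional representation of $3.\Alt_7$ (for example its realisation via a $4$-dimensional unitary module over $\bF_9$) or a careful character-theoretic computation drawn from the ATLAS.
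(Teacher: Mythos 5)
Your ``cleaner alternative'' is in fact the paper's proof: one shows that $Q$ must coincide with the span of the partial derivatives of the unique $G$-invariant cubic $f$ on $U$, so that $\Bs(Q)=\Sing\{f=0\}$, and then checks that this cubic hypersurface is smooth (its explicit equation appears in Remark~\ref{remark:KE-cubic}). However, as written your argument has two genuine gaps. First, everything hinges on $U$ occurring with multiplicity one in $S^2U^\vee$, and you never establish this: the central character only tells you that a $6$-dimensional \emph{irreducible} constituent must be isomorphic to $U$, not that there is a single copy of it, nor that an arbitrary $6$-dimensional subrepresentation is irreducible. The paper settles this by computing the full decomposition $S^2U^\vee\simeq U\oplus U_{15}$ (faithful case) or $\CC\oplus U\oplus W_{14}$ (non-faithful case) from the character table. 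Once multiplicity one is known, the antisymmetric component you are worried about disappears for free: polarization embeds the one-dimensional space $(S^3U^\vee)^G$ injectively into $\Hom_G(U,S^2U^\vee)$, which is itself one-dimensional, so $T=T_s$ and no ``separate argument'' for $T_a$ is needed. Second, neither of your routes is actually carried out: the case-by-case analysis of eigenlines for the preimages of the maximal subgroups is left undone (and would be considerably more laborious), and the smoothness of the invariant cubic --- the one computational input the paper does supply --- is only named as a dependency, not verified. As it stands the proposal is a correct plan rather than a proof.

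Your first route (stabilizer/orbit analysis) is a genuinely different and in principle viable strategy, and your observation that the index-$7$ orbit is excluded because $3.\Alt_6$ is perfect is correct; but to be usable it would still require decomposing $U$ over $\mumu_3\times\PSL_2(\bF_7)$, $\mumu_3\times\Sym_5$ and the preimage of $(\Alt_4\times\mumu_3)\rtimes\mumu_2$, locating all eigenlines with the correct central character, and evaluating $\Phi$ on each --- which amounts to the same explicit model of $U$ that the smoothness check needs, with none of its economy.
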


\begin{proof}
Observe from Table~\ref{table}, that all $6$-dimensional irreducible
representations $U$ have a unique invariant cubic in $\PP(U)$
defined by a polynomial $f$.
Then the quadratic polynomials $\partial f/\partial x_i$ generate
a $6$-dimensional irreducible subrepresentation
$Q \subseteq S^2U^\vee$ which is isomorphic to $U$.
The complement $U'$ of $Q$ in $S^2U^\vee$ is $15$-dimensional.
If $U$ is faithful, then so must be $U'$ and we conclude immediately
from the character table~\cite{atlas} that
$S^2U^\vee\simeq U\oplus U_{15}$, where $U_{15}$
is a faithful irreducible representation of dimension $15$.
If $U$ is not faithful, then from Table~\ref{table} we
see that $U'$ contains a unique trivial subrepresentation.
In this case,
$S^2U^\vee\simeq \CC\oplus U\oplus W_{14}$, where $W_{14}$
is a non-faithful irreducible representation of dimension $14$.
From the character table, the only possibility is that $U'$
is a direct sum of an irreducible $14$-dimensional representation
and a trivial representation.
In either case, $Q$ is the the unique $6$-dimensional irreducible
subrepresentation.
One can check that the hypersurface $f=0$ in $\PP(U)$ is smooth.
Hence quadrics from $Q$ have no common zeros in $\PP(U)$.
\end{proof}

\begin{sremark}
\label{remark:KE-cubic}
Let $X_3$ a cubic fourfold in $\PP^5$ that admits a faithful action of the group $\Alt_7$.
Then $X_3$ is one of two hypersurfaces that (implicitly) appear in the proof of Lemma~\ref{lemma:6dim}.
For one of them, the action of $\Alt_7$ is given by the standard irreducible six-dimensional representation of the group $\Alt_7$.
For the other, the action is given by an irreducible faithful six-dimensional representation of the group $3.\Alt_7$.
In the former case, one has $\alpha_{\Alt_7}(X_3)\leqslant\frac{2}{3}$, because $\PP^4$ contains an $\Alt_7$-invariant quadric hypersurface.
Here, $\alpha_{\Alt_7}(X_3)$ is the $\Alt_7$-invariant $\alpha$-invariant of Tian defined in \cite{Tian}.
However, in the latter case, one has $\alpha_{\Alt_7}(X_3)\geqslant 1$.
Indeed, suppose that $\alpha_{\Alt_7}(X_3)<1$.
Then there exists an effective $\Alt_7$-invariant divisor $D$ on $X_3$ such that $D\qsim -K_{X_3}$ and $(X_3,D)$ is not lc.
This follows from the algebraic formula for $\alpha_{\Alt_7}(X_3)$ given in \cite[Appendix~A]{Cheltsov-Shramov}.
Choose positive rational number $\mu<1$ such that $(X_3,\mu D)$ is strictly lc.
Let $Z$ be a minimal center of lc singularities of the log pair $(X_3,\mu D)$.
Then $\dim(Z)\leqslant 2$, because $\PP^6$ does not contain $\Alt_7$-invariant hyperplanes and quadric hypersurfaces.
Using the perturbation trick
(see the~proofs of \cite[Theorem~1.10]{Kawamata1997} and \cite[Theorem~1]{Kawamata-1998}),
we may assume that all log canonical centers of the log pair $(X_3,\mu D)$ are of the form $\theta(Z)$ for some $\theta\in G$.
Moreover, by \cite[Proposition~1.5]{Kawamata1997}, either $Z\cap\theta(Z)=\emptyset$ or $Z=\theta(Z)$ for every $\theta\in G$.
On the other hand, it follows from \cite[Theorem~9.4.17]{Lazarsfeld2004} that the union of all log canonical centers of the log pair $(X_3,\mu D)$
is connected, which implies that $Z$ is $\Alt_7$-invariant.
In particular, since $\PP^6$ does not have $\Alt_7$-fixed points, the center $Z$ is not a point,
and $\Alt_7$ acts faithfully on $Z$.
However, Kawamata's subadjunction \cite[Theorem~1]{Kawamata-1998} implies that $Z$ is a normal Fano type subvariety,
so that either it is a smooth rational curve or a rational surface with at most quotient singularities.
This is impossible, because $\Alt_7$ is not contained in $\Cr_2(\CC)$ by \cite{DolgachevIskovskikh}.
Thus, we see that $\alpha_{\Alt_7}(X_3)\geqslant 1$ in the case when
the action of $\Alt_7$ on $\PP^6$ is given by an irreducible faithful six-dimensional representation of the group $3.\Alt_7$.
In particular, this hypersurface admits a K\"ahler--Einstein metric by \cite{Tian}.
Note that there are other smooth cubic fourfolds that are known to be K\"ahler--Einstein.
They are described in \cite{Arezzo}.
Our $\Alt_7$-invariant cubic fourfold is not one of them:
in appropriate homogeneous coordinates on $\PP^6$ it is given by
\begin{multline*}
x_{1}^3+x_{2}^3+x_{3}^3+x_{4}^3+x_{5}^3+x_{6}^3+x_{1}x_{2}x_{3}+x_{1}x_{2}x_{4}+\\
+x_{1}x_{2}x_{5}+x_{1}x_{4}x_{5}+x_{1}x_{5}x_{6}+x_{2}x_{4}x_{5}+x_{2}x_{4}x_{6}+x_{3}x_{4}x_{5}+\\
\omega^2x_{1}x_{3}x_{4}+\omega^2x_{1}x_{4}x_{6}+\omega^2x_{2}x_{3}x_{5}+\omega^2x_{2}x_{5}x_{6}+\omega x_{1}x_{2}x_{6}+\omega x_{1}x_{3}x_{5}+\\
+\omega x_{1}x_{3}x_{6}+\omega x_{2}x_{3}x_{4}+\omega x_{2}x_{3}x_{6}+\omega x_{3}x_{4}x_{6}+\omega x_{3}x_{5}x_{6}+\omega x_{4}x_{5}x_{6}=0,
\end{multline*}
where $\omega$ is a primitive cubic root of unity.
This implies that it does not contain planes, while K\"ahler--Einstein
smooth cubic fourfolds found in \cite{Arezzo} always contain many planes.
\end{sremark}

Let $Q:=H^0(X,\JJJ_X(2))\subset S^2V^\vee$ be the space of quadrics passing through $X$. Then $\dim Q=28$.
Consider the decomposition
\begin{equation}
\label{eq:A7:S2}
S^2V=(S^2V') \oplus (S^2 V'') \oplus (V'\otimes V'').
\end{equation}
Since $X\not\supset \PP(V')$ and $X\not\supset \PP(V'')$ we have
\begin{equation*}
Q\cap S^2V'\neq 0, \qquad Q\cap S^2V''\neq 0.
\end{equation*}
If both representations $V'$ and $V''$ are faithful, then $V'\not \simeq V''$ and \eqref{eq:A7:S2}
has the form
\begin{equation*}
\begin{array}{cccccc}
S^2V=&S^2V' &\oplus& S^2 V''& \oplus& V'\otimes V''
\\
&\lsimeq&&\lsimeq&&\lsimeq
\\
& V''\oplus U_{15}
&& V'\oplus U_{15}'
&&\CC\oplus W_{14}\oplus W_{21}
\end{array}
\end{equation*}
where $U_{15}$, $U_{15}'$ are different faithful $15$-dimensional irreducible representations, and
$W_{k}$, $k=14$, $21$ are irreducible representations of $\Alt_7$ with $\dim W_k=k$.
The symmetric powers follow from the proof of Lemma~\ref{lemma:6dim}.
Since $V'$ and $V''$ are dual, the trace shows there is a trivial
subrepresentation of $V'\otimes V''$.
If $\chi$ is the character of the product $V'\otimes V''$,
we observe that $\chi(g)=0$ for all non-central $g$ of order $3$ and $4$.
Since the trivial representation is already a summand,
we require (non-faithful) subrepresentations with characters whose
values are not $\ge 0$ for such $g$.
Looking at the character table
$V'\otimes V'' \simeq \CC\oplus W_{14}\oplus W_{21}$
is the only possibility.

If $V''$ is not faithful, then similarly \eqref{eq:A7:S2}
has the form
\begin{equation*}
\begin{array}{cccccc}
S^2V=&S^2V' &\oplus& S^2 V''& \oplus& V'\otimes V''
\\
&\lsimeq&&\lsimeq&&\lsimeq
\\
& U_6\oplus U_{15}
&&\CC\oplus W_6 \oplus W_{14}
&&U_{15}'\oplus U_{21}
\end{array}
\end{equation*}
In this case, that $V'\otimes V'' \simeq U_{15}'\oplus U_{21}$
is the only possibility can be seen by considering the values of
faithful characters on the trivial element and an involution.

In both these decompositions all the irreducible summands are pairwise non-isomorphic.
Counting dimensions one can see that either $Q\cap S^2V'$ or $Q\cap S^2V''$ contains a $6$-dimensional subrepresentation.
Suppose that this holds, for example, for $Q\cap S^2V'$.
This implies that $X\subset \PP(V'')$, a contradiction.

\subsection{Actions on non-Gorenstein Fano threefolds}
\label{subsection:non-Gorenstein}
Now we consider the case where $K_X$ is not Cartier.
\begin{slemma}[{\cite{Kawamata-1992bF}}, {\cite{Reid-YPG1987}}]
Let $X$ be a (terminal) $\QQ$-Fano threefold whose non-Gorenstein singularities
are exactly $N$ cyclic quotient points of type $\frac12(1,1,1)$.
Then we have
\begin{eqnarray}
-K_X\cdot c_2&=& \textstyle 24 - \frac{3N}2,
\\
\dim |-K_X| &=&\textstyle \frac12(-K_X)^3 -\frac14N + 2,
\\
\dim |-2K_X|&=&\textstyle \frac52(-K_X)^3 -\frac14N + 4.
\end{eqnarray}
\end{slemma}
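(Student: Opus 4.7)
The plan is to combine Reid's singular Riemann--Roch theorem from \cite{Reid-YPG1987} with Kawamata--Viehweg vanishing and the identity $\chi(\OOO_X)=1$. Reid's formula reads
$$\chi(\OOO_X(D))=\chi(\OOO_X)+\frac{1}{12}D(D-K_X)(2D-K_X)+\frac{1}{12}D\cdot c_2(X)+\sum_{P}c_P(D)$$
for any Weil divisor $D$ on a threefold with only terminal singularities, where the sum runs over the non-Gorenstein points and $c_P(D)$ is a local correction depending on the class of $D$ modulo the local class group; the companion Noether-type formula is
$$\chi(\OOO_X)=\frac{1}{24}(-K_X)\cdot c_2(X)+\sum_{P}c_P^{\chi}.$$
I will substitute $D=-K_X$ and $D=-2K_X$ into the first and combine with the second.

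For a $\frac12(1,1,1)$ singularity, Reid's character-theoretic prescription gives $c_P^{\chi}=\tfrac{1}{16}$, $c_P(-K_X)=-\tfrac18$, and $c_P(-2K_X)=0$ (the last because $-2K_X$ is locally Cartier, hence trivial in the local class group $\ZZ/2$). Since $X$ is a Fano threefold with rational singularities, $h^i(\OOO_X)=0$ for $i>0$, so $\chi(\OOO_X)=1$. Substituting into the Noether-type formula gives $1=\tfrac{1}{24}(-K_X)\cdot c_2(X)+\tfrac{N}{16}$, which rearranges to the first claim $(-K_X)\cdot c_2(X)=24-\tfrac{3N}{2}$.

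For the two dimension formulas, Kawamata--Viehweg vanishing applied to the ample divisors $-K_X$ and $-2K_X$ yields $h^i(\OOO_X(-mK_X))=0$ for $i>0$ and $m=1,2$, hence $\dim|-mK_X|=\chi(\OOO_X(-mK_X))-1$. Setting $D=-K_X$ in Reid's formula, the cubic term simplifies to $\tfrac12(-K_X)^3$ and one obtains
$$\dim|-K_X|=\tfrac12(-K_X)^3+\tfrac{1}{12}(-K_X)\cdot c_2(X)-\tfrac{N}{8};$$
inserting the first formula gives $\dim|-K_X|=\tfrac12(-K_X)^3-\tfrac{N}{4}+2$. The parallel computation with $D=-2K_X$, whose cubic term equals $\tfrac52(-K_X)^3$ and whose local correction vanishes, produces $\dim|-2K_X|=\tfrac52(-K_X)^3-\tfrac{N}{4}+4$.

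The only real obstacle, if one does not wish to simply quote \cite{Reid-YPG1987} and \cite{Kawamata-1992bF}, is the explicit character-theoretic evaluation of the three local contributions $c_P^{\chi}$, $c_P(-K_X)$, and $c_P(-2K_X)$ at a $\frac12(1,1,1)$ point; once these rational numbers are in hand, the three equalities follow by routine arithmetic.
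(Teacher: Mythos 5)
Your proposal is correct and is precisely the derivation that the paper's citation of \cite{Reid-YPG1987} and \cite{Kawamata-1992bF} is standing in for: the paper gives no argument of its own, and your computation via Reid's singular Riemann--Roch with the local contributions $c_P^{\chi}=\frac{1}{24}\bigl(r-\frac1r\bigr)=\frac{1}{16}$, $c_P(-K_X)=-\frac18$, $c_P(-2K_X)=0$ at a $\frac12(1,1,1)$ point, combined with Kawamata--Viehweg vanishing and $\chi(\OOO_X)=1$, reproduces all three formulas exactly.
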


\begin{assumption}
Let $G=3.\Alt_7$ and let $X$ be a non-Gorenstein $G\QQ$-Fano threefold.
Let $\Sing'(X)$ be the set of non-Gorenstein points and let $N$ be its cardinality.
\end{assumption}

\begin{lemma}\label{lemma:1frac2}
The following assertions hold.
\begin{enumerate}
\item
$G$ has no fixed points on $X$;
\item
$G$ acts transitively on $\Sing'(X)$;
\item
every non-Gorenstein point $P\in X$ is cyclic quotient singularity of type $\frac12(1,1,1)$;
\item
for the stabilizer $G_P$ of $P\in \Sing'(X)$
there are the following possibilities:
\begin{enumerate}
\item
$G_P\simeq \Alt_6$, $N=7$ or $14$,
\item
$G_P\simeq \PSL_2(\bF_7)\times \mumu_3$, $N=15$.
\end{enumerate}
\end{enumerate}
\end{lemma}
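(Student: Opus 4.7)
The overall plan is to adapt the proof of \cite[Lemma~6.1]{Prokhorov2012} to the case $G=3.\Alt_7$, combining fixed-point analysis, Kawamata's terminal plurigenus calculation together with the Miyaoka-type inequality $-K_X\cdot c_2(X)\ge 0$, and a case-by-case study of the stabilizer at each non-Gorenstein point via its lifted action on the index-one cover. Assertion (i) is immediate from Lemma~\ref{lemma:fixed-point}: any $G$-fixed point on a threefold with terminal singularities forces $G\simeq 3.\Alt_6$, contradicting $G=3.\Alt_7$.

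For (ii) and (iv), I would combine Kawamata's identity $-K_X\cdot c_2(X)=24-\tfrac{3N}{2}$ with $-K_X\cdot c_2(X)\ge 0$ to conclude $N\le 16$. Hence every $G$-orbit in $\Sing'(X)$ has length at most $16$, so its stabilizer is a subgroup of $G=3.\Alt_7$ of index at most $16$. The relevant group-theoretic enumeration is the following: the smallest indices of proper subgroups of $\Alt_7$ are $7$ (realized by $\Alt_6$) and $15$ (realized by $\PSL_2(\bF_7)$), and the restriction of the central extension $3.\Alt_7\to\Alt_7$ is non-split over $\Alt_6$ but splits over $\PSL_2(\bF_7)$ since the Schur multiplier of $\PSL_2(\bF_7)$ is $2$-primary. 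Consequently, the only subgroups of $G$ of index at most $16$ are $3.\Alt_6$ (index $7$) and $\PSL_2(\bF_7)\times\mumu_3$ (index $15$). As $7+15>16$, the orbit-length multiset is one of $\{7\}$, $\{7,7\}$, or $\{15\}$, which, upon identifying the stabilizers with the two subgroups above, yields (ii) and (iv) as soon as (iii) is established.

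To prove (iii), fix a non-Gorenstein $P\in\Sing'(X)$ of index $r\ge 2$, let $\pi\colon(X^\sharp,P^\sharp)\to(X,P)$ be the index-one cover, and let $G_P^\sharp$ denote the central extension of $G_P$ by $\mumu_r$ lifting the stabilizer action. By Lemma~\ref{lemma:stabilizer-faithful}, $G_P^\sharp$ acts faithfully on $T:=T_{P^\sharp,X^\sharp}$. If $\dim T=4$, then $(X^\sharp,P^\sharp)$ is a Gorenstein terminal hypersurface in $T$ whose defining equation starts with a nonzero quadratic form $\phi_2$ by \cite{Reid-YPG1987}. From Theorem~\ref{th:GL3} and Table~\ref{table}, any faithful $4$-dimensional representation of a central extension of $3.\Alt_6$ or $\PSL_2(\bF_7)\times\mumu_3$ must split as $\CC_\chi\oplus V_3$ with $V_3$ an irreducible faithful $3$-dimensional summand, and the space of invariants in $S^2(\CC_\chi\oplus V_3)$ is at most one-dimensional and supported on the $\CC_\chi$ factor (since neither $V_3$ nor the cross term $\CC_\chi\otimes V_3$ carries an invariant quadric). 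The minimal invariant on $V_3$ has degree at least $4$, so the resulting defining equation cannot define a terminal cDV singularity---contradiction.

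Thus $\dim T=3$ and $(X^\sharp,P^\sharp)$ is smooth, so $(X,P)$ is a cyclic quotient of type $\frac{1}{r}(a_1,a_2,a_3)$. If the faithful representation $G_P^\sharp\hookrightarrow\GL(T)$ were reducible, the perfectness of $3.\Alt_6$ and of the derived subgroup of $\PSL_2(\bF_7)\times\mumu_3$, combined with the absence of faithful $2$-dimensional representations (Theorem~\ref{th:GL3}), would force a trivial summand whose complement is faithful of dimension $\le 2$, a contradiction. Hence the representation is irreducible, Schur's lemma forces $\mumu_r\subset G_P^\sharp$ to act by scalars on $T$, and the cyclic quotient type becomes $\frac{1}{r}(1,1,1)$, which by \cite{Reid-YPG1987} is terminal only for $r=2$. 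The main obstacle is precisely the quadratic-form analysis in the $\dim T=4$ case: one has to translate the classification of terminal cDV equations into an explicit incompatibility with the invariant theory of the two candidate stabilizers, keeping track of all possible one-dimensional characters $\chi$ of the $\mumu_3$ factor.
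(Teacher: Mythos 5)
Your overall strategy coincides with the paper's: assertion (i) from Lemma~\ref{lemma:fixed-point}, the bound on $N$ from the Bogomolov--Miyaoka inequality $0<-K_X\cdot c_2=24-\sum(r_i-1/r_i)$, the identification of the stabilizers via the subgroup indices $7$ and $15$ of $\Alt_7$ together with the (non-)splitting of the triple cover over $\Alt_6$ and $\PSL_2(\bF_7)$, and the index-one-cover analysis for assertion (iii). The paper's own proof is much terser (it defers the last step to the argument of \cite[Lemma~6.1]{Prokhorov2012}), so most of your write-up is a legitimate filling-in of details, and your treatment of the $\dim T=3$ case (irreducibility, Schur's lemma, $\frac1r(1,1,1)$ terminal only for $r=2$) is sound.

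There is, however, one step that is wrong as stated. In the $\dim T=4$ case you claim that \emph{any} faithful $4$-dimensional representation of a central extension of $3.\Alt_6$ or of $\PSL_2(\bF_7)\times\mumu_3$ splits as $\CC_\chi\oplus V_3$ with $V_3$ three-dimensional. This is true for the $3.\Alt_6$ stabilizer (the perfect central extensions $3.\Alt_6$ and $6.\Alt_6$ have no $4$-dimensional faithful irreducible representations), but it fails for $\PSL_2(\bF_7)\times\mumu_3$: a central extension of this group by $\mumu_2$ may contain $\SL_2(\bF_7)$ as its derived subgroup (e.g.\ $G_P^\sharp\simeq\SL_2(\bF_7)\times\mumu_3$), and $\SL_2(\bF_7)$ has a faithful \emph{irreducible} $4$-dimensional representation, so no trivial summand is forced and your invariant-theoretic dichotomy does not apply. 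The conclusion you want survives, but by a different mechanism: since $\SL_2(\bF_7)$ is perfect, every semi-invariant of $G_P^\sharp$ restricts to an honest $\SL_2(\bF_7)$-invariant, and Table~\ref{table} shows the $4$-dimensional representation of $\SL_2(\bF_7)$ has no invariant of degree $2$ (or $3$); hence $\phi_2=0$, again contradicting the classification of terminal hypersurface singularities. You need to add this irreducible subcase explicitly. A smaller point: your orbit-length count allows the multiset $\{7,7\}$, i.e.\ $N=14$ with two orbits, which proves (iv) but not the transitivity asserted in (ii); this tension is already present in the statement of the lemma itself (which lists both ``transitive'' and ``$N=14$''), so it is not a defect introduced by your argument, but you should not claim that (ii) follows from the orbit computation alone.
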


\begin{proof}
Take a point $P\in \Sing'(X)$.
Let $r$ be the index of $P$, let $\Omega$ be its orbit, and let
and $n:=|\Omega|$.
Bogomolov--Miyaoka inequality \cite{Kawamata-1992bF}, \cite{KMMT-2000} gives us
\begin{equation*}
0< -K_X\cdot c_2=24 -\sum (r_i-1/r_i)=24 -3N/2\le 24 -3n/2.
\end{equation*}
Then using the list of maximal subgroups from Table~\ref{table:maximal}
one can obtain the following possibilities:
\begin{enumerate}
\item
$G_P\simeq 3.\Alt_6$, $n=7$,
\item
$G_P\simeq \PSL_2(\bF_7)\times \mumu_3$, $n=15$.
\end{enumerate}
Then one can proceed similarly to \cite[Lemma~6.1]{Prokhorov2012}.
\end{proof}

\begin{corollary}\label{cor:A7}
Let $\sigma: X_P\to X$ be the blowup of $P\in \Sing'(X)$ and let $E_P=\sigma^{-1}(P)$
be the exceptional divisor. Then $X_P$ is smooth along $E_P$, $E_P\simeq\PP^2$,
$\OOO_{E_P}(E_P)\simeq \OOO_{\PP^2}(-2)$, and the action of $G_P$ on $E_P$
has no fixed points.
\end{corollary}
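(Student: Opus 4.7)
By Lemma~\xref{lemma:1frac2}, $P\in X$ is a cyclic quotient singularity of type $\frac12(1,1,1)$, so analytically $(X,P)\simeq(\CC^3,0)/\mumu_2$ with $\mumu_2=\langle -\mathrm{Id}\rangle$, and $\sigma\colon X_P\to X$ may be realised as the $\mumu_2$-quotient of the ordinary blowup $\tilde\sigma\colon\widetilde{\CC^3}\to\CC^3$ of the origin. My plan is to read off the three assertions about $E_P$ from a local affine-chart description of this model, and then to rule out a $G_P$-fixed point on $E_P$ via Lemma~\xref{lemma:stabilizer-faithful}.

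In one of the standard charts of $\widetilde{\CC^3}$, with coordinates $(u,v,w)$ and $(x_1,x_2,x_3)=(u,uv,uw)$, the lifted action of $\mumu_2$ sends $(u,v,w)$ to $(-u,v,w)$, whose invariant ring is the polynomial ring $\CC[u^2,v,w]$. This realises the corresponding chart of $X_P$ as a copy of $\CC^3$, which is smooth; the two remaining charts are handled symmetrically, giving smoothness of $X_P$ along $E_P$. Because $-\mathrm{Id}$ acts as a scalar, it acts trivially on the exceptional $\PP^2\subset\widetilde{\CC^3}$, so $E_P\simeq\PP^2$. Locally $E_P$ is cut out by $u^2=0$; since $u$ is a local generator of the ideal sheaf of the exceptional divisor of $\tilde\sigma$, i.e.\ a local trivialisation of $\OOO_{\PP^2}(-1)$, it follows that $\OOO_{E_P}(E_P)\simeq\OOO_{\PP^2}(-2)$.

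For the fixed-point assertion, suppose for contradiction that $Q\in E_P$ is a $G_P$-fixed point. Since $E_P\simeq\PP^2$ is smooth, Lemma~\xref{lemma:stabilizer-faithful} yields an injective homomorphism $G_P\hookrightarrow\GL(T_Q E_P)\simeq\GL_2(\CC)$. But by Lemma~\xref{lemma:1frac2}, $G_P$ contains a subgroup isomorphic either to $\Alt_6$ or to $\PSL_2(\bF_7)$, and the smallest faithful complex representation of each of these simple groups has dimension strictly greater than two (as can be read off from their character tables, or from the fact that the only non-abelian simple subgroup of $\PGL_2(\CC)$ is $\Alt_5$). This gives the desired contradiction.

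The only delicate point in the plan is the identification of $\sigma$ with the $\mumu_2$-quotient of the blowup of $(\CC^3,0)$; once this local model is pinned down the chart computations and the faithfulness argument are both routine.
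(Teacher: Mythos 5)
The paper states this corollary with no proof at all, so the intended argument is precisely the standard local analysis you carry out: by Lemma~\ref{lemma:1frac2} the point is a $\frac12(1,1,1)$ singularity, and $\sigma$ is the quotient of $\mathrm{Bl}_0\,\CC^3$ by the lifted involution. Your chart computation (smoothness of $\Spec\CC[u^2,v,w]$, triviality of $-\mathrm{Id}$ on the exceptional $\PP^2$, and $\OOO_{E_P}(E_P)\simeq\OOO_{\PP^2}(-2)$ because the local equation of $E_P$ downstairs is the square of the one upstairs) is correct, and the identification of $\sigma$ with this quotient is legitimate since the pullback of the maximal ideal to each quotient chart is the principal ideal $(u^2)$. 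So the first three assertions are fine.

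The gap is in the fixed-point step. Lemma~\ref{lemma:stabilizer-faithful} requires the group to act \emph{faithfully} on the variety to which it is applied, and you apply it to the pair $(E_P,G_P)$ without checking that $G_P$ acts faithfully on $E_P$. In fact it need not: in the case $G_P\simeq\PSL_2(\bF_7)\times\mumu_3$ of Lemma~\ref{lemma:1frac2}, the $\PSL_2(\bF_7)$ factor acts irreducibly on the tangent space $T\simeq\CC^3$ of the index-one cover (it must act through the $3$-dimensional representation of $\PSL_2(\bF_7)$, as $\SL_2(\bF_7)$ has no faithful $3$-dimensional one), so by Schur's lemma the commuting central $\mumu_3$ acts by scalars on $T$ and hence trivially on $E_P=\PP(T)$. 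Thus the step ``$G_P\hookrightarrow\GL(T_QE_P)$'' does not follow from the lemma as you have invoked it. The repair is short and uses only ingredients you already have: let $S\subseteq G_P$ be the simple subgroup $\Alt_6$ or $\PSL_2(\bF_7)$. If $S$ acts faithfully on $E_P$, then Lemma~\ref{lemma:stabilizer-faithful} applied to $(E_P,S)$ at the fixed point $Q$ gives $S\hookrightarrow\GL_2(\CC)$, which is impossible as you note. If instead $S$ acts trivially on $E_P$, then $S$ fixes $Q$ and acts trivially on the $2$-dimensional subspace $T_QE_P\subset T_QX_P$; applying Lemma~\ref{lemma:stabilizer-faithful} to $(X_P,S)$ at $Q$ (faithfulness is clear here since $\sigma$ is birational) and decomposing $T_QX_P=T_QE_P\oplus L$ by complete reducibility, $S$ must act faithfully on the line $L$, i.e.\ embed in $\CC^\times$, which is absurd for a non-abelian group. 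With this patch the proof is complete.
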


\subsection{}
First we consider the case $|-K_X|=\emptyset$.

\begin{slemma}
In the above notation, $N=14$ or $15$,
\begin{eqnarray}
\label{eq:A7-K3}
(-K_X)^3 &=&\textstyle \frac12N -6\ge 1,
\\
\label{eq:A7-dim2K}
\dim |-2K_X|&=&\textstyle N -11\ge 3.
\end{eqnarray}
\end{slemma}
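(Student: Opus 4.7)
The plan is straightforward: both displayed formulas are direct substitutions into the Riemann--Roch-type identities stated just above, and the restriction on $N$ comes from combining Lemma~\ref{lemma:1frac2} with the ampleness of $-K_X$.

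First, I would translate the hypothesis $|-K_X|=\emptyset$ into $\dim|-K_X|=-1$ and plug this into the second formula from the preceding lemma,
$$\dim|-K_X|=\tfrac{1}{2}(-K_X)^3-\tfrac{1}{4}N+2,$$
to read off \eqref{eq:A7-K3}, namely $(-K_X)^3=\tfrac{N}{2}-6$.

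Next, since $-K_X$ is an ample $\QQ$-Cartier divisor, $(-K_X)^3>0$, so this identity forces $N>12$. By Lemma~\ref{lemma:1frac2} the only a priori possibilities are $N\in\{7,14,15\}$, so this rules out $N=7$ and leaves $N=14$ (giving $(-K_X)^3=1$) or $N=15$ (giving $(-K_X)^3=\tfrac{3}{2}$); in both cases $(-K_X)^3\geq 1$, as claimed.

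Finally, to obtain \eqref{eq:A7-dim2K}, I would substitute $(-K_X)^3=\tfrac{N}{2}-6$ into
$$\dim|-2K_X|=\tfrac{5}{2}(-K_X)^3-\tfrac{1}{4}N+4,$$
which collapses to $\dim|-2K_X|=N-11$, equal to $3$ or $4$ in the two surviving cases. The whole argument is a short numerical verification once the lemmas above are in hand, so there is no real obstacle; the only point that needs a moment's thought is the exclusion of the value $N=7$, which is immediate from the positivity of $(-K_X)^3$.
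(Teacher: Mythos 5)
Your proposal is correct and is exactly the intended argument: the paper states this lemma without proof precisely because it is the immediate substitution of $\dim|-K_X|=-1$ into the Riemann--Roch-type formulas of the preceding lemma, with $N=7$ excluded by positivity of $(-K_X)^3$ and the remaining values $N\in\{14,15\}$ supplied by Lemma~\ref{lemma:1frac2}. Nothing is missing.
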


\begin{slemma}\label{lemma:A7-fco}
The linear system $|-2K_X|$ has no fixed components.
\end{slemma}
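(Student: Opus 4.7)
The plan is to assume for contradiction that $|-2K_X|$ has a non-zero fixed part $F$, and to write $-2K_X\sim F+M$ with $M$ the movable part, so $\dim|M|=\dim|-2K_X|=N-11\ge 3$. First, I would pin down the class of $F$ in $\Cl(X)$. The group $G=3.\Alt_7$ has no non-trivial representation of dimension less than $6$, so arguing as in Proposition~\ref{Proposition-rho>1} one obtains $\rk\Cl(X)=1$. All torsion in $\Cl(X)$ is $2$-torsion, coming from the $\tfrac12(1,1,1)$ points. Writing $[F]=n\cdot[-K_X]+T$ with $T\in\Cl(X)_{\mathrm{tors}}$: effectivity of $F$ and ampleness of $-K_X$ give $n\ge 1$, and effectivity of $M$ gives $n\le 1$ (the case $n=2$ would force $M$ to have torsion class, hence $M\cdot(-K_X)^2=0$, incompatible with $\dim|M|\ge 3$). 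Thus $[F]=[-K_X]+T$, and since $|-K_X|=\emptyset$ by our standing assumption, $T\ne 0$.

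Next I would show that $F$ is irreducible and reduced. Because $\rk\Cl(X)=1$, the group $G$ acts trivially on $\Cl(X)\otimes\QQ$, so the prime components of $F$ lying in one $G$-orbit are all linearly equivalent to a fixed class $[F_0]$, and an orbit of size $s$ contributes $s\cdot[F_0]$ to $[F]$. Combining with $(-K_X)^3\le 3/2$, the bound $A^3\in\tfrac18\ZZ$ for the primitive ample generator $A$, and the non-existence of subgroups of index $2$ in the perfect group $3.\Alt_7$, I would conclude that the only allowed orbit size is $s=1$. A small multiplicity analysis using $2$-torsion of $\Cl(X)$ then rules out $F=aF_0$ with $a\ge 2$. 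So $F=F_0$ is a single $G$-invariant prime divisor with $[F_0]\sim -K_X+T$.

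Finally I would derive a contradiction from the existence of such $F_0$. Let $\nu:F_0'\to F_0$ be the normalization and $D'$ the different; Kawamata's subadjunction gives
\[
K_{F_0'}+D'\sim\nu^*\!\bigl((K_X+F_0)|_{F_0}\bigr)\sim\nu^*(T|_{F_0}),
\]
a $2$-torsion class. Hence $2K_{F_0'}\sim -2D'$, so $\kappa(F_0')\le 0$, with $D'=0$ in the $\kappa=0$ case. An argument as in Lemma~\ref{lemma:inv-hyp} shows $G$ acts faithfully on $F_0'$. If $\kappa(F_0')=0$, then $F_0'$ is a K3, Enriques, abelian, or bielliptic surface (with Du Val singularities): Mukai's Theorem~\ref{th:actions-K3} excludes K3 since $3.\Alt_7$ is not simple; Enriques reduces to K3 via the canonical double cover, yielding either $3.\Alt_7$ or $6.\Alt_7$ on the K3 cover, both non-simple; abelian and bielliptic are excluded because $|3.\Alt_7|=7560$ is too large to fit in their automorphism groups. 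If $\kappa(F_0')=-\infty$, then $F_0'$ is rational and $3.\Alt_7$ embeds in $\Cr_2(\CC)$, contradicting Theorem~\ref{th:Cr2simple}.

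The main obstacle is the final step: rigorously ruling out every minimal model of $\kappa=0$ surfaces, including their singular incarnations and the Enriques double-cover lift, while keeping track of the way the torsion class $T$ behaves under $\nu^*$ and under the equivariant MMP. A secondary technical point is justifying the reduction $\rk\Cl(X)=1$ in the non-Gorenstein setting, which is needed both for the class-theoretic analysis and for the orbit argument.
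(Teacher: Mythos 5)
There is a genuine gap: your argument is not completed, and the missing ingredient is precisely the fact that makes the paper's proof short. The paper invokes \cite[Claim~6.8.1]{Prokhorov2012} to show that $\Cl(X)$ is \emph{torsion-free}, so $\Cl(X)^G\simeq\ZZ\cdot A$ for an ample generator $A$; writing $-K_X=aA$, the integer $a$ must be odd because $K_X$ is non-Cartier at the $\frac12(1,1,1)$ points while the local class group there is $\ZZ/2$, and then $a^3A^3=(-K_X)^3\in\{1,\frac32\}$ together with $aA^3\in\frac12\ZZ$ forces $a=1$. Hence the fixed part of $|-2K_X|$, being $G$-invariant, would be linearly equivalent to $-K_X$ or $-2K_X$, both impossible since $|-K_X|=\emptyset$ and $\dim|-2K_X|\ge 3$. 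Because you do not have torsion-freeness, you are driven into the endgame of ruling out a $G$-invariant prime divisor $F_0$ with $K_X+F_0$ torsion via subadjunction and the classification of $\kappa\le 0$ surfaces; you yourself flag this as the main obstacle and do not carry it out, so as written the proposal does not constitute a proof of the lemma.

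Two of your intermediate assertions are also unsupported. First, $\rk\Cl(X)=1$ for the full class group does not follow by ``arguing as in Proposition~\ref{Proposition-rho>1}'': that argument rests on the bound $\rk\Pic(X)\le 4$ from \cite{Prokhorov-GFano-2}, which is specific to Gorenstein $G$-Fano threefolds and gives no control on $\Cl(X)$ of a non-Gorenstein $\QQ$-Fano. What is actually needed (and all the paper uses) is $\rk\Cl(X)^G=1$, which is automatic from the $G\QQ$-Fano structure since every $G$-invariant Weil divisor is $\QQ$-Cartier and $\rk\Pic(X)^G=1$. Second, the claim that all torsion of $\Cl(X)$ is $2$-torsion ``coming from the $\frac12(1,1,1)$ points'' is not justified; torsion in $\Cl(X)$ is governed by the fundamental group of the smooth locus, not merely by the local class groups, and this is exactly the point where the citation of \cite[Claim~6.8.1]{Prokhorov2012} is doing real work. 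Your parity and orbit-size observations (no subgroups of small index in $3.\Alt_7$, $F_0\cdot K_X^2\in\frac14\ZZ$ versus $(-K_X)^3\le\frac32$) are sound and in the spirit of the paper's numerology, but they do not substitute for the missing torsion-freeness.
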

\begin{proof}
As in \cite[Claim~6.8.1]{Prokhorov2012} one can show that the Weil divisor class group $\Cl(X)$
is torsion free. Thus $\Cl(X)^G\simeq \ZZ$. Let $A$ be the ample generator of this group.
Write $-K_X=aA$. Since $K_X$ is not Cartier, $a$ is odd. On the other hand,
$a^3A^3=(-K_X)^3=1$ or $3/2$. Since $aA^3\in \frac12 \ZZ$,
this implies $a=1$, i.e.~$\Cl(X)^G\simeq \ZZ\cdot K_X$.
Since $|-K_X|=\emptyset$ the assertion follows.
\end{proof}

\begin{lemma}
\label{lemma:2K-lc}\label{lemma:A7-lc}
Let $\MMM=|-2K_X|$. Then the log pair $(X,\frac{3}{2}\MMM)$ is lc.
\end{lemma}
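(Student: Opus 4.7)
The plan is a standard Nadel-vanishing/Kawamata-subadjunction analysis, tailored to the very tight numerical situation $(-K_X)^3 \in \{1, 3/2\}$.

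Suppose, for contradiction, that $(X,\tfrac{3}{2}\MMM)$ is not lc. Let $c \leq \tfrac{3}{2}$ be the lc threshold of $\MMM$ on $X$; then $c < \tfrac{3}{2}$ and $(X,c\MMM)$ is strictly lc. Since $\MMM \qsim -2K_X$, we have $K_X + c\MMM \qsim (1-2c)K_X$, so $-(K_X + c\MMM)$ is ample for $c > \tfrac{1}{2}$, which is certainly satisfied here. Applying the Kawamata--Shokurov perturbation trick (as in the proof of Proposition~\ref{proposition:Fano-super-rigid-technical}) we produce a $\QQ$-boundary $B_X \qsim (2c-1+\epsilon)(-K_X)$ with a unique minimal lc center $Z$ up to $G$-translation; by Shokurov connectedness (using that $-(K_X+B_X)$ is ample) and Kawamata's disjointness for minimal lc centers, the union of $G$-translates of $Z$ is connected, hence $Z$ itself is $G$-invariant.

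Next I would rule out the three possibilities for $\dim Z$. If $Z$ is a divisor, it is a fixed component of $\MMM$, contradicting Lemma~\ref{lemma:A7-fco}. If $Z$ is a curve, Kawamata subadjunction forces $Z$ to be a smooth rational curve (the elliptic case being excluded by ampleness of $-(K_X+B_X)$). Apply Corti's inequality at a general smooth point of $Z$ to general $M, M' \in \MMM$: one gets $\mult_Z(M \cdot M') > 4/c^2 > 16/9$. Since $M \cdot M' \qsim 4K_X^2$, the $(-K_X)$-degree of the 1-cycle $M\cdot M'$ is $4(-K_X)^3 \leq 6$, so
\[
\frac{16}{9}\bigl(-K_X\cdot Z\bigr) \;<\; 4(-K_X)^3 \;\leq\; 6,
\]
forcing $-K_X \cdot Z \leq 3$. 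But then $\Alt_7 = G/\z(G)$ would act faithfully on the smooth rational curve $Z$ (the center $\z(G)=\mumu_3$ must act trivially on $\PP^1$), which is impossible. Finally, if $Z$ is a point, then $Z$ lies in a $G$-orbit $\Sigma$; by Lemma~\ref{lemma:1frac2}, $G$ has no fixed point, so $|\Sigma|\geq 7$. At each $P \in \Sigma$ smooth in $X$ the same Corti inequality yields $\mult_P(M \cdot M') > 16/9$, and summing over $\Sigma$ gives
\[
\frac{16}{9}\,|\Sigma| \;\leq\; (-K_X)\cdot M \cdot M' \;=\; 4(-K_X)^3 \;\leq\; 6,
\]
contradicting $|\Sigma|\geq 7$. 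If instead some $P \in \Sigma$ is a non-Gorenstein point of type $\tfrac12(1,1,1)$, then by Lemma~\ref{lemma:1frac2} $|\Sigma|\in\{14,15\}$, and the same inequality (with the appropriate index-$r$ correction of Corti's bound on the index-one cover) is even more violated.

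The main obstacle I expect is the point case at non-Gorenstein singularities: Corti's multiplicity inequality must be applied on the index-one cover and carefully pushed down with the factor of $2$ coming from the quotient, and one must verify that the resulting local contribution to $M\cdot M'$ still exceeds $16/(9r)$ per non-Gorenstein point. Once this is in place, the numerical squeeze together with the orbit-size constraints coming from Lemma~\ref{lemma:1frac2} closes out every configuration.
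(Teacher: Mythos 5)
Your overall strategy (perturbation trick, Kawamata disjointness, then a case split on $\dim Z$) matches the paper's, but your key numerical tool is Corti's $4/c^2$ inequality, whereas the paper uses Nadel vanishing together with the bound $h^0(\OOO_X(-2K_X))\leqslant 5$. The curve case of your argument can be made to work along your lines (and is close in spirit to how the paper's Proposition~\ref{proposition:Fano-super-rigid-technical} treats curves), but there are two problems, one of which is fatal.

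First, a sign error: from $\MMM\qsim -2K_X$ one gets $K_X+c\MMM\qsim(2c-1)(-K_X)$, so for $c>\frac12$ it is $K_X+c\MMM$ that is \emph{ample}, not $-(K_X+c\MMM)$. Hence Shokurov connectedness does not apply and you cannot conclude that $Z$ is $G$-invariant, nor that a curve center is rational; Kawamata subadjunction only gives $2g-2<(2c-1)(-2K_X\cdot Z)$. This is reparable (the resulting genus bound still kills any nontrivial action of $3.\Alt_7$, and disjointness plus the absence of subgroups of index $\leqslant 6$ handles the orbit of curves), but the reasons you give are not the right ones. Second, and fatally, the zero-dimensional case: the inequality $\frac{16}{9}\lvert\Sigma\rvert\leqslant(-K_X)\cdot M\cdot M'$ obtained by ``summing over $\Sigma$'' is not valid. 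For a $1$-cycle $\Gamma=M\cdot M'$ and a finite set of points, $\sum_{P\in\Sigma}\mult_P(\Gamma)$ is not bounded by $\deg\Gamma$ (already a smooth curve of degree $12$ contributes multiplicity $1$ at each of its infinitely many points). To convert local multiplicities at several points into a global degree bound one must cut with a member of a linear system passing through all of $\Sigma$ and still meeting $\Gamma$ properly; here the only system available is $|-2K_X|$ with $\dim|-2K_X|=N-11\leqslant 4$, so one cannot impose $7$, $14$ or $15$ point conditions, and no contradiction results. (The non-Gorenstein points, which you defer, are precisely where $\Sigma$ is most likely to sit, since the orbit sizes $7$, $14$, $15$ coincide with those of $\Sing'(X)$, and there the local Corti constant is further weakened by the index.) The paper closes this case instead by Nadel vanishing applied to $\OOO_X(-2K_X)\otimes\III_\Sigma$, which yields $\lvert\Sigma\rvert=h^0(\OOO_X(-2K_X))-h^0(\OOO_X(-2K_X)\otimes\III_\Sigma)\leqslant 5$; since $3.\Alt_7$ has no proper subgroup of index $\leqslant 5$, $\Sigma$ is a fixed point, contradicting Lemma~\ref{lemma:fixed-point}. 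You need this (or an equivalent cohomological count) to finish the point case.
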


\begin{proof}
Suppose that $(X,\frac{3}{2}\MMM)$ is not lc.
We seek a contradiction.
Choose $\mu<\frac{3}{2}$ such that $(X,\mu\MMM)$ is strictly lc.
Let $Z$ be a minimal center of lc singularities of the log pair $(X,\mu\MMM)$.
Then $Z$ is either a point or a curve, because the base locus of $|-2K_X|$ does not have surfaces by
Lemma~\ref{lemma:A7-fco}.

Observe that $\theta (Z)$ is also a minimal center of lc singularities of this log pair for every $\theta \in G$.
Moreover, for every $\theta \in G$, either $Z\cap \theta (Z)=\emptyset$ or $Z=\theta (Z)$ by \cite[Proposition~1.5]{Kawamata1997}.
Using the perturbation trick
(see \cite[Lemma~2.4.10]{CheltsovShramov2015CRC} or the~proofs of \cite[Theorem~1.10]{Kawamata1997} and \cite[Theorem~1]{Kawamata-1998}),
for every sufficiently small $\epsilon>0$,
we can replace the boundary $\mu\MMM$ by an effective boundary $B_X$ such that
\begin{equation*}
B_X\qsim -2(\mu+\epsilon) K_X,
\end{equation*}
the log pair $(X,B_X)$ is strictly lc, and all its (not necessarily minimal) centers of lc singularities
are the subvarieties $\theta (Z)$ for $\theta \in G$.
In particular, there are minimal log canonical centers of the log pair $(X,B_X)$.
We may assume that $\mu+\epsilon<\frac{3}{2}$, since $\mu<\frac{3}{2}$.

Let $\Sigma$ be the union of all log canonical centers $\theta (Z)$ for $\theta \in G$.
Then $\Sigma$ is either a $G$-orbit or a disjoint union of irreducible isomorphic curves, which are transitively permuted by $G$.
In the latter case, each such curve is smooth by Kawamata's \cite[Theorem~1]{Kawamata-1998}.
Let $\III_{\Sigma}$ be the ideal sheaf of the locus $\Sigma$. Then
\begin{equation*}
h^1\left(\OOO_X\big(-2K_X\big)\otimes\III_{\Sigma}\right)=0,
\end{equation*}
by \cite[Theorem~9.4.17]{Lazarsfeld2004} or \cite[Theorem~2.16]{KollarPairs}, because $-2K_X-(K_X+B_X)$ is ample.
In particular, if $Z$ is a point, we see that
\begin{multline*}
|\Sigma|=h^0\left(\OOO_{\Sigma}\otimes\OOO_X\big(-2K_X\big)\right)=\\
=h^0\left(\OOO_X\big(-2K_X\big)\right)-h^0\left(\OOO_X\big(-2K_X\big)\otimes\III_{\Sigma}\right)\leqslant 5,
\end{multline*}
which must be a point since $3.\Alt_7$ does not have nontrivial subgroups of
index $\le 5$.
This is impossible by Lemma~\ref{lemma:fixed-point}.

Thus, we see that $\Sigma$ is a disjoint union of irreducible isomorphic smooth curves.
Denote them by $C_1=Z,C_2,\ldots,C_r$.
Let $d=-2K_X\cdot C_i$ and let $g$ be the genus of the curve $C_1$.
By Kawamata's subadjunction \cite[Theorem~1]{Kawamata-1998}, for every ample $\QQ$-divisor $A$ on $X$, we have
\begin{equation*}
\left(K_{X}+B_{X}+A\right)\Big\vert_{C_i}\qsim K_{C_i}+B_{C_i}
\end{equation*}
for some effective $\QQ$-divisor $B_{C_i}$ on the~curve $C_i$. This gives $d>2g-2$, so that $d\geqslant 2g-1$.
Thus, we have
\begin{multline*}
r(d-g+1)=h^0\left(\OOO_{\Sigma}\otimes\OOO_X\big(-2K_X\big)\right)=\\
=h^0\left(\OOO_X\big(-2K_X\big)\right)-h^0\left(\OOO_X\big(-2K_X\big)\otimes\III_{\Sigma}\right)\leqslant 5
\end{multline*}
by the Riemann--Roch formula applied to each curve $C_i$.
Now, using $r(d-g+1)\leqslant 5$ and $d\geqslant 2g-1$, we deduce that $r=1$ and $g\leqslant 5$.
This implies that $Z=C_1$ is pointwise fixed by $G$,
since $G$ cannot act non-trivially on a smooth curve of genus $\leqslant 5$.
This is a contradiction, since $G$ does not fix a point in $X$ by Lemma~\ref{lemma:fixed-point}.
\end{proof}

By \eqref{eq:A7-dim2K} we have that the dimension of $H^0(X,\OOO_X(-2K_X))$ is at most $5$.
Hence the action of $G=3.\Alt_7$ on this space is trivial.

Let $S$ be a general surface in $|-2K_X|$.
We claim that $S$ is normal.
Indeed, it follows from Lemma~\ref{lemma:A7-lc} that $(X,\MMM)$ is lc.
Then, by \cite[Theorem~4.8]{KollarPairs}, the log pair $(X,S)$ is also lc,
so that $S$ has lc singularities by \cite[Theorem~7.5]{KollarPairs}.
In particular, the surface $S$ is normal.
Take another general surface $S'\in |-2K_X|$ and consider the invariant curve $S\cap S'$. Write
\begin{equation*}
S\cap S'= \sum m_i C_i.
\end{equation*}
Put $d:=-2K_X\cdot C_i$. Since $-2K_X$ is an ample Cartier divisor, the numbers $d_i$ are integral and positive. Then
by \eqref{eq:A7-K3}
\begin{equation*}
\sum m_i d_i= (-2K_X)\cdot (S\cap S')= (-2K_X)^3=4(N -12)= 8 \quad \text{or $12$}.
\end{equation*}
From Table~\ref{table:maximal}, we see that $G=3.\Alt_7$ has at least one invariant component, say $C_1$.
We have
\begin{equation*}
-2K_S\cdot C_1= \left (\sum m_i C_i\right)\cdot C_1\le (-2K_X)\cdot \left (\sum m_i C_i\right) \le (-2K_X)^3\le 12.
\end{equation*}
In particular, $C_1^2\le 12$ and $K_S\cdot C_1\le 6$. Then by the genus formula
\begin{equation*}
2p_a(C_1)-2= (K_S+C_1)\cdot C_1 \le 18,\quad p_a(C_1)\le 10.
\end{equation*}
But the according to the Hurwitz bound the action of $G=3.\Alt_7$ on $C_1$ must be trivial.
This contradicts Lemma~\ref{lemma:fixed-point}.
Thus the case $|-K_X|=\emptyset$ does not occur.

\subsection{}
Consider the case $\dim |-K_X|=0$. Then $(-K_X)^3=\frac12 N-4\le \frac72$.
Let $S\in |-K_X|$ be the unique anticanonical member.
By Theorem~\ref{th:actions-K3} the singularities of $S$ are worse than Du Val.
Since $G$ has no fixed points, by Lemma~\ref{lemma:inv-hyp} the pair $(X,S)$ is lc and
$S$ is reducible: $S=\sum _{i=1}^m S_i$.
Then $\sum (-2K_X)^2\cdot S_i \le 14$.
So the cardinality of the orbit of $S_1$ equals $7$.
Let $\nu: S'\to S_1$ be the normalization.
Then by the adjunction $K_{S'}+D'=\nu ^*(K_X+S)|_{S_1}\sim0$ and the pair $(S',D')$ is lc.
Since $D'\neq 0$, the surface $S'$ is either rational or birationally
equivalent to a ruled surface over an elliptic curve. On the other hand,
The pair $(S', D')$ has a faithful action
of the stabilizer $3.\Alt_6\subset 3.\Alt_7$.
This is impossible.

\subsection{}
Now we consider the case $\dim |-K_X|>0$.

\begin{slemma}[{\cite[Lemma~6.6]{Prokhorov2012}}]
The pair $(X, |-K_X |)$ is canonical and therefore
a general member $S\in |-K_X|$ is a K3 surface with Du Val singularities.
\end{slemma}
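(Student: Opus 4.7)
The plan is to mirror the strategy of \cite[Lemma~6.6]{Prokhorov2012}, which closely parallels the proof of Lemma~\ref{lemma:A7-lc} above. The second assertion (general $S \in |-K_X|$ is a K3 surface with Du Val singularities) follows essentially automatically once canonicity is established: for general $S \in |-K_X|$ the pair $(X, S)$ is canonical, so by Bertini and the standard inversion of adjunction argument $S$ is a normal surface with at worst Du Val singularities, and $K_S \sim (K_X + S)|_S \sim 0$ then makes $S$ a K3 surface. So the task is to prove canonicity.

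Suppose, for contradiction, that the pair $(X, \MMM)$ with $\MMM := |-K_X|$ is not canonical. Then one can choose a rational number $\mu \le 1$ so that $(X, \mu\MMM)$ is strictly log canonical, with a minimal non-divisorial log canonical center $Z$. The first move is to apply the Kawamata--Shokurov perturbation trick (exactly as in the proofs of Proposition~\ref{proposition:Fano-super-rigid-technical} and Lemma~\ref{lemma:A7-lc}) to replace $\mu\MMM$ by a $G$-invariant effective $\QQ$-boundary $B_X \qsim (\mu+\epsilon)(-K_X)$, for arbitrarily small $\epsilon > 0$, such that $(X, B_X)$ is strictly lc and all its log canonical centers are $G$-translates of $Z$. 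By \cite[Proposition~1.5]{Kawamata1997} these translates are pairwise equal or disjoint, and by Kawamata subadjunction \cite[Theorem~1]{Kawamata-1998} each translate is smooth. Let $\Sigma \subset X$ denote their union.

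Next, since $-2K_X$ is Cartier, Nadel vanishing is applied to the multiplier ideal $\III_\Sigma$ of $(X, B_X)$: the divisor
\begin{equation*}
-2K_X - (K_X + B_X) \qsim (3 - \mu - \epsilon)(-K_X)
\end{equation*}
is ample, so $h^1(\OOO_X(-2K_X) \otimes \III_\Sigma) = 0$, whence
\begin{equation*}
h^0(\OOO_\Sigma \otimes \OOO_X(-2K_X)) \le h^0(\OOO_X(-2K_X)) = \tfrac{5}{2}(-K_X)^3 - \tfrac{N}{4} + 5.
\end{equation*}
The right-hand side is a modest integer, given the restrictions $N \in \{7, 14, 15\}$ from Lemma~\ref{lemma:1frac2} and the constraint that $(-K_X)^3$ is not too large (since $\dim|-K_X| = \tfrac{1}{2}(-K_X)^3 - \tfrac{N}{4} + 2$ and $-K_X \cdot c_2 = 24 - \tfrac{3N}{2} > 0$).

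Finally, one exploits the group-theoretic richness of $G = 3.\Alt_7$. If $\Sigma$ is zero-dimensional, then $|\Sigma|$ is a single $G$-orbit; by Lemma~\ref{lemma:fixed-point} it cannot consist of a fixed point, and inspection of Table~\ref{table:maximal} shows that every admissible orbit length exceeds the Nadel bound. If $\Sigma$ consists of $r$ smooth curves of genus $g$ and degree $d := -2K_X \cdot Z$, then subadjunction gives $d \ge 2g-1$, Riemann--Roch on each component gives $r(d - g + 1) \le h^0(\OOO_X(-2K_X))$, and this combined with the Hurwitz bound (Lemma~\ref{lem:Hurwitz})---which forces large genus whenever $G$ acts faithfully on a curve---yields a contradiction. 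The main obstacle is the numerical bookkeeping in this last step: the various combinations of $N$, orbit length (or $(r, d, g)$), and the stabilizer structure from Lemma~\ref{lemma:1frac2} must be ruled out one by one, and the non-Gorenstein nature of $X$ forces the switch to the Cartier divisor $-2K_X$ so that Nadel vanishing can be applied cleanly.
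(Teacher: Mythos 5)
The paper does not actually prove this statement: it is imported wholesale from \cite[Lemma~6.6]{Prokhorov2012}, so there is no internal argument to compare yours against. Your plan is the natural adaptation of the minimal-centre technique used in Lemma~\xref{lemma:2K-lc} and Proposition~\xref{proposition:Fano-super-rigid-technical}, and the deduction of the second assertion from canonicity via adjunction is fine. But as a proof of canonicity the plan has a concrete gap in the zero-dimensional case. Your Nadel bound reads $|\Sigma|\leqslant h^0(\OOO_X(-2K_X))=\frac52(-K_X)^3-\frac14N+5$; in the regime $\dim|-K_X|>0$ this quantity ranges from $7$ (when $N=7$, $\dim|-K_X|=1$) up to about $65$, so orbits of length $7$ and $15$ always fit under it. Lemma~\xref{lemma:fixed-point} does not forbid fixed points of the corresponding stabilizers $3.\Alt_6$ and $\PSL_2(\bF_7)\times\mumu_3$ (it only constrains full $G$-fixed points, and $3.\Alt_6$ does embed in $\GL_3(\CC)$); indeed Lemma~\xref{lemma:1frac2} shows the non-Gorenstein points of $X$ form exactly such orbits, so they are genuine candidates for log canonical centres that a count of orbit lengths against $h^0(-2K_X)$ cannot exclude. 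Closing this case needs a separate local argument (multiplicity estimates at the $\frac12(1,1,1)$ points, or passing to the blow-up of $\Sing'(X)$), and this is precisely the non-trivial content of the cited lemma.

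A second, smaller issue is the very first reduction. From ``$(X,\MMM)$ is not canonical'' you pass to ``there is $\mu\leqslant1$ with $(X,\mu\MMM)$ strictly lc.'' Over a point of type $\frac12(1,1,1)$ the relevant discrepancy equals $\frac12$, so non-canonicity only gives $\mult_E\MMM>\frac12$ and the log canonical threshold of $(X,\MMM)$ can exceed $1$; the doubling trick of \cite[Lemma~2.2]{CheltsovShramovTAMS} invoked in Proposition~\xref{proposition:Fano-super-rigid-technical} explicitly uses that $-K_X$ is Cartier, which fails here. You either have to work with the canonical (rather than lc) threshold and justify that the resulting centre still supports a Nadel-type vanishing with an ample twist, or treat the Gorenstein locus and the points of $\Sing'(X)$ separately. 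The curve case of your plan is essentially sound and matches Lemma~\xref{lemma:2K-lc}.
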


Let $\sigma: Y\to X$ be the blowup of all non-Gorenstein points
and let $E=\sum E_i$ be the exceptional divisor.
Thus $Y$ has at worst terminal Gorenstein singularities
and it is smooth near $E$.
Since $(X, |-K_X |)$ is canonical, the linear system $|-K_Y|$ is the birational transform of
$|-K_X|$.
Put
\begin{equation*}
g:=\dim |-K_X|-1 =\textstyle \frac12(-K_X)^3 -\frac14N + 1,
\end{equation*}

\begin{slemma}[{\cite[Lemma~6.7]{Prokhorov2012}}]
\label{lemma:big}
The image of the ($G$-equivariant) rational map $\Phi: X\to \PP^{g+1}$
given by the linear system $|-K_X|$ is three-dimensional.
\end{slemma}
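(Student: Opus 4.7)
The strategy is to argue by contradiction: assume $d := \dim \Phi(X) \leq 2$, and exploit the fact that $G = 3.\Alt_7$ is too large to act on rationally connected varieties of dimension $\leq 2$. Since $\dim|-K_X| \geq 1$, we have $d \in \{1,2\}$.

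First, I would note that $\Phi(X)$ is rationally connected: indeed, it is the image of the rationally connected threefold $X$ under a dominant rational map (resolve indeterminacy if necessary; rational connectedness is a birational invariant, and the image of a rationally connected variety under a dominant morphism is again rationally connected). Being of dimension $d \leq 2$, the variety $\Phi(X)$ is therefore either a rational curve (with normalization $\PP^1$) or a rational surface. The group $G$ acts on $\Phi(X)$ via the $G$-equivariance of $\Phi$, and the kernel of this action is a normal subgroup of $G$; since $\Alt_7 = G/\z(G)$ is simple and $\z(G) \simeq \mumu_3$, the only possibilities for this kernel are $\{1\}$, $\mumu_3$, or $G$.

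Suppose first that the kernel is proper, so that $\Alt_7$ acts faithfully on $\Phi(X)$. If $d = 2$, this produces an embedding $\Alt_7 \hookrightarrow \Bir(\Phi(X)) = \Cr_2(\CC)$, contradicting Theorem~\ref{th:Cr2simple}, which classifies the quasi-simple subgroups of $\Cr_2(\CC)$ and does not include $\Alt_7$. If $d = 1$, then $\Alt_7$ acts faithfully on the normalization $\PP^1$ of $\Phi(X)$, yielding $\Alt_7 \hookrightarrow \PGL_2(\CC)$, which is impossible because the only finite non-abelian simple subgroup of $\PGL_2(\CC)$ is $\Alt_5$.

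It remains to exclude the case where $G$ acts trivially on $\Phi(X)$, i.e., preserves every fiber. Passing to a $G$-equivariant resolution of indeterminacy, we may assume $\Phi$ is a morphism onto a smooth model of $\Phi(X)$. The general fiber $F$ has dimension $3-d \geq 1$ and carries an induced $G$-action whose kernel $K_F$ is again normal in $G$. If $K_F = G$, then $G$ acts trivially on every general fiber, hence on a dense open subset of $X$, hence on $X$—contradicting faithfulness. Thus $\Alt_7$ acts faithfully on $F$. Since $X$ is rationally connected and the base $\Phi(X)$ is rationally connected, a general fiber is itself rationally connected (a consequence of Graber--Harris--Starr together with standard properties of the MRC fibration). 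When $d = 1$, the fiber $F$ is therefore a rational surface, giving $\Alt_7 \hookrightarrow \Cr_2(\CC)$, which contradicts Theorem~\ref{th:Cr2simple}. When $d = 2$, the fiber $F$ is forced to be $\PP^1$, yielding $\Alt_7 \hookrightarrow \PGL_2(\CC)$, again impossible.

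The main obstacle is the final step: justifying that the general fiber of the $G$-equivariant fibration (after resolution) from $X$ to $\Phi(X)$ is rationally connected. This relies on the Graber--Harris--Starr circle of ideas. Once this is granted, the rest of the argument is a uniform application of the two non-embedding facts that $\Alt_7 \not\hookrightarrow \Cr_2(\CC)$ and $\Alt_7 \not\hookrightarrow \PGL_2(\CC)$.
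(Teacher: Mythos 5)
There is a genuine gap at precisely the point you flag as the ``main obstacle'': the general fiber of a fibration of a rationally connected variety over a rationally connected base need \emph{not} be rationally connected, and Graber--Harris--Starr does not give this. GHS and the MRC fibration go in the opposite direction (rationally connected base and general fiber imply rationally connected total space; the base of the MRC fibration is not uniruled); they say nothing about the fibers of an arbitrary fibration of a rationally connected $X$. A concrete counterexample to the claim you need is the pencil of cubics $\PP^2\dasharrow\PP^1$, whose general fiber is an elliptic curve. Worse, in the situation at hand the failure is not hypothetical: the fibers of $\Phi$ are cut out by members of $|-K_X|$, and by the lemma immediately preceding this one a general member of $|-K_X|$ is a K3 surface with Du Val singularities; so for $d=1$ the general fiber is (a union of pieces of) a K3 surface and is certainly not rationally connected. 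Hence the case where $G$ acts trivially on $\Phi(X)$ cannot be closed by your fiberwise argument.

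The first half of your proof (ruling out a faithful action of $\Alt_7$ on $\Phi(X)$ via Theorem~\ref{th:Cr2simple} for $d=2$ and via $\Alt_7\not\hookrightarrow\PGL_2(\CC)$ for $d=1$, using that the kernel is normal in $3.\Alt_7$) is correct and is exactly what the paper's one-line proof implicitly uses. The paper then closes the remaining case differently: once $G$ acts trivially on $\Phi(X)$, the fact that $\Phi(X)$ is linearly nondegenerate in $\PP^{g+1}=\PP(H^0(X,\OOO_X(-K_X))^\vee)$ forces the induced projective action of $G$ on $\PP^{g+1}$ to be trivial, so \emph{every} member of $|-K_X|$ is $G$-invariant. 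A general member is a K3 surface with Du Val singularities, and $G$ acts on it; the kernel of this action is normal, so it is trivial, $\mumu_3$, or all of $G$. The last option makes $G$ act trivially on a dense subset of $X$, and the first two give a faithful action of $3.\Alt_7$ or $\Alt_7$ on a K3 surface, contradicting Theorem~\ref{th:actions-K3}. Replacing your final fiber argument by this one repairs the proof.
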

\begin{proof}
Suppose that $\dim \Phi(X)<3$.
Since $X$ is rationally connected, $G$ acts trivially on
$\Phi(X)$ and on $\PP^{g+1}$, this contradicts Theorem~\ref{th:actions-K3}.
\end{proof}

\begin{slemma}
The divisor $-K_{Y}$ is nef and big.
\end{slemma}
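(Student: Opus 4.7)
The plan is to exploit the explicit form of $\sigma\colon Y\to X$. By Lemma~\ref{lemma:1frac2} every non-Gorenstein point $P_i\in X$ is a cyclic quotient singularity of type $\frac12(1,1,1)$, so the discrepancy formula for its ordinary blowup gives $K_Y=\sigma^*K_X+\tfrac12\sum_i E_i$, equivalently
\[
-K_Y=\sigma^*(-K_X)-\tfrac12\sum_i E_i.
\]
By Corollary~\ref{cor:A7} we have $E_i\simeq\PP^2$ and $\OOO_{E_i}(E_i)\simeq\OOO_{\PP^2}(-2)$, so $E_i\cdot E_j=0$ for $i\neq j$ and $E_i^3=4$. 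Since $\sigma^*(-K_X)|_{E_i}$ is trivial (pulled back from the point $P_i$), the projection formula kills every mixed triple intersection $(\sigma^*(-K_X))^a\cdot E_{i_1}\cdots E_{i_{3-a}}$ with $0<a<3$.

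For bigness I would simply expand the cube, obtaining
\[
(-K_Y)^3=(-K_X)^3-\tfrac18\bigl(\textstyle\sum_i E_i\bigr)^3=(-K_X)^3-\tfrac N2,
\]
and substitute the Riemann--Roch-type formula $\dim|-K_X|=\tfrac12(-K_X)^3-\tfrac N4+2$ from the preceding lemma: setting $g=\dim|-K_X|-1$ this rewrites as $(-K_Y)^3=2g-2$. Lemma~\ref{lemma:big} says the image of $\Phi$ is three-dimensional, forcing $g\geq 2$ and hence $(-K_Y)^3\geq 2>0$. Once nefness is in hand this gives bigness, but it can also be read off Lemma~\ref{lemma:big} directly: since $|-K_Y|$ is the birational transform of $|-K_X|$, the induced rational map $Y\dashrightarrow\PP^{g+1}$ inherits the three-dimensional image, so $-K_Y$ has Iitaka dimension $3$.

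For nefness I would handle an arbitrary irreducible curve $C\subset Y$ in two cases. If $C\subset E_i$ for some $i$, then $C$ is a plane curve of degree $d\geq 1$; the intersection data above yield $-K_Y\cdot C=-\tfrac12(E_i|_{E_i})\cdot C=d>0$. If $C$ meets no $E_i$ as a subset, I would use that a general $\tilde S\in|-K_Y|$ is the proper transform of a general $S\in|-K_X|$ and that, by the preceding lemma, $(X,|-K_X|)$ is canonical, so that $K_Y+\tilde S\qsim 0$; provided $C$ does not lie in the base locus of $|-K_Y|$, a generic choice of $\tilde S$ avoids $C$ and gives $-K_Y\cdot C=\tilde S\cdot C\geq 0$. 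It remains to rule out a $G$-invariant base curve $\Gamma$ of $|-K_Y|$ with $-K_Y\cdot\Gamma<0$: such a $\Gamma$ descends to a $G$-invariant base curve of $|-K_X|$ on $X$, and the induced faithful action of $3.\Alt_7$ on it excludes the rational and elliptic cases (since $3.\Alt_7$ embeds into neither $\Aut(\PP^1)$ nor the automorphism group of an elliptic curve), while the higher-genus case is blocked by Lemma~\ref{lem:Hurwitz} combined with the bounded value of $(-K_X)^3$.

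The main obstacle is this last point: excluding $G$-invariant base curves of the birational transform $|-K_Y|$ outside the exceptional locus that could carry negative intersection with $-K_Y$. The self-intersection computation and the treatment of $C\subset E_i$ are routine; the delicate step is converting the group-theoretic smallness of $3.\Alt_7$ (no faithful action on low-genus curves, no faithful representations of dimension $<3$) together with the canonicality of $(X,|-K_X|)$ into an effective bound on the multiplicity of any such hypothetical base component.
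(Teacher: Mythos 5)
Your setup is correct: the discrepancy formula $-K_Y=\sigma^*(-K_X)-\tfrac12\sum_i E_i$, the intersection numbers on the $E_i$, the positivity of $-K_Y\cdot C$ for curves $C\subset E_i$, and the computation $(-K_Y)^3=2g-2>0$ all check out (the paper itself simply deduces bigness from nefness together with Lemma~\ref{lemma:big}). But the proof is not complete, and you have correctly located where: the only way a curve can satisfy $-K_Y\cdot C<0$ is if it meets some $E_i$ without being contained in it (otherwise either ampleness of $-K_X$ or your degree computation gives positivity), and such a curve necessarily lies in $\Bs|-K_Y|$. Your proposed exclusion of this case does not work as stated. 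First, an irreducible base component need not be $G$-invariant --- only its $G$-orbit is --- so the group acting on it is a priori only the stabilizer of that component, not all of $3.\Alt_7$. Second, there is no evident bound on the genus of a base curve of $|-K_X|$ in terms of $(-K_X)^3$, so the Hurwitz step has nothing to bite on. You flag this yourself as the unresolved ``main obstacle,'' so the argument stops short of a proof.

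The paper closes this gap from the exceptional-divisor side rather than from the curve downstairs. Let $C$ be the $G$-orbit of a curve $C'$ with $-K_Y\cdot C'<0$; then $C$ is $G$-invariant and contained in $\Bs|-K_Y|$, hence $C\cap E_i\subset\Bs\bigl(|-K_Y|\big|_{E_i}\bigr)$. Since $-K_Y|_{E_i}\simeq\OOO_{\PP^2}(1)$, the restricted system is a linear system of lines on $E_i\simeq\PP^2$, and the finite nonempty $G_{E_i}$-invariant set $C\cap E_i$ inside its base locus forces a $G_{E_i}$-fixed point of $E_i$. This contradicts Corollary~\ref{cor:A7}, which asserts that the stabilizer acts on $E_i$ without fixed points. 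That is the missing idea: the group-theoretic input is applied on $E_i\simeq\PP^2$, where Corollary~\ref{cor:A7} delivers exactly what is needed, rather than on the hypothetical base curve.
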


\begin{proof}
Assume that $-K_Y$ is not nef. Then $-K_Y\cdot C'<0$ for some
curve $C'$. Let $C$ be the $G$-orbit of $C'$. Then $-K_Y\cdot C<0$
and $C$ is $G$-invariant.
Note that $C\cap E_i$ is contained in
the base locus of the restricted linear system
$|-K_Y||_{E_i}$ which is a linear system of lines.
Thus $C\cap E_i$ is a $G_{E_i}$-invariant point on $E_i$.
This contradicts Corollary~\ref{cor:A7}.
Thus $-K_Y$ is nef. By Lemma~\ref{lemma:big} it is big.
\end{proof}

\begin{slemma}
The linear system $|-K_Y|$ is base point free and defines a crepant
birational morphism
\begin{equation*}
\Phi: Y \longrightarrow \bar Y\subset \PP^{g+1}
\end{equation*}
whose image $\bar Y$ is a Fano threefold with at worst canonical Gorenstein
singularities. Moreover, $\bar Y$ is an intersection of quadrics.
\end{slemma}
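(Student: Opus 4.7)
The plan is to follow the template of \cite[Lemmas~6.8--6.9]{Prokhorov2012}, handling base-point freeness, birationality and crepancy of the anticanonical morphism, and finally applying the classification already established in Section~3 to the image $\bar Y$.

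First I would show that $|-K_Y|$ is base-point free. No surface can be a fixed component because $|-K_Y|$ is the birational transform of $|-K_X|$ and $\Cl(X)^G = \ZZ\cdot(-K_X)$ (as in the proof of Lemma~\ref{lemma:A7-fco}). A base curve in $Y$ whose image avoids the non-Gorenstein locus would yield a $G$-invariant base curve of $|-K_X|$ in the smooth locus of $X$, which is excluded by the argument of Lemma~\ref{lemma:base-points}, since $G$ has no nontrivial action on a smooth rational curve. A base curve meeting an exceptional divisor $E_i$ is controlled by restricting to $E_i\simeq\PP^2$, where adjunction combined with $\OOO_{E_i}(E_i)\simeq\OOO_{\PP^2}(-2)$ from Corollary~\ref{cor:A7} gives $(-K_Y)|_{E_i}\simeq\OOO_{\PP^2}(1)$. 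Base-point freeness on $E_i$ follows by combining the base-point freeness of $\OOO_{\PP^2}(1)$ with surjectivity of the restriction $H^0(Y,-K_Y)\to H^0(E_i,\OOO_{\PP^2}(1))$, which comes from Kawamata--Viehweg vanishing applied to the nef and big divisor $-K_Y-E_i$ (using that the $E_i$ are pairwise disjoint).

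Next I would identify $\bar Y$ as the anticanonical model. Since $|-K_Y|$ is base-point free and $-K_Y$ is big, the induced morphism $\Phi\colon Y\to\bar Y\subset\PP^{g+1}$ is birational onto its image. Grauert--Riemenschneider vanishing together with $\Phi_*\OOO_Y=\OOO_{\bar Y}$ gives $\omega_{\bar Y}\simeq\OOO_{\bar Y}(-1)$, so $\bar Y$ is Gorenstein with Cartier $K_{\bar Y}$ and $\Phi$ is crepant. Canonical singularities of $\bar Y$ then follow from terminality of $Y$ and crepancy of $\Phi$ by the standard discrepancy comparison.

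Finally, $\bar Y$ is a Gorenstein canonical Fano threefold admitting a faithful action of $G=3.\Alt_7$, a group in the list \eqref{eq:list}. By Lemma~\ref{lemma:very-ample} the linear system $|-K_{\bar Y}|$ must be very ample, because none of the three exceptional pairs listed there admits a faithful $3.\Alt_7$-action. By Lemma~\ref{lemma:trigonal} the anticanonical image is then cut out by quadrics, since the only exception would require $G\simeq\SL_2(\bF_7)$. The main obstacle is the first step: $Y$ is only weak Fano, so the classical base-point freeness results for Fano threefolds do not apply verbatim, and one must run vanishing arguments tailored to the $\PP^2$-configurations. Once this is done, steps two and three reduce to invoking the preceding classification.
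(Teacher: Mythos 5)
Your overall strategy matches the paper's: the paper's entire proof is the one line ``Follows from Lemmas~\ref{lemma:base-points}, \ref{lemma:very-ample}, and~\ref{lemma:trigonal}'', and you correctly identify these three lemmas as supplying base-point freeness, very ampleness (hence birationality) and the intersection-of-quadrics property, using that $3.\Alt_7$ does not occur among the exceptional pairs. The implicit step in the paper is that one first contracts the weak Fano $Y$ to its anticanonical model $Y'=\operatorname{Proj}\bigoplus_m H^0(Y,-mK_Y)$, which exists by the base point free theorem since $-K_Y$ is nef and big and $Y$ has terminal Gorenstein singularities; $Y'$ is then a Fano threefold with canonical Gorenstein singularities carrying a faithful $G$-action, all three lemmas apply to $Y'$ verbatim, and $|-K_Y|=\Phi^*|-K_{Y'}|$ pulls everything back.

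Where your write-up goes astray is precisely in trying to avoid this reduction. First, your direct base-point-freeness argument on $Y$ only treats base loci meeting the exceptional divisors $E_i$; for the part of $\Bs|-K_Y|$ away from $E$ you appeal to ``the argument of Lemma~\ref{lemma:base-points}'', but that argument rests on Shin's structure theorem for the anticanonical base locus of a Fano threefold with canonical \emph{Gorenstein} singularities, which applies neither to the non-Gorenstein $X$ nor to the merely weak Fano $Y$; moreover you do not rule out a zero-dimensional base locus away from $E$, and a base \emph{curve} need not be individually $G$-invariant or rational without Shin's theorem. Second, the claim that base-point freeness plus bigness of $-K_Y$ makes $\Phi$ birational onto its image is false: the anticanonical morphism of a canonical Gorenstein Fano can be a double cover of a variety of minimal degree, and excluding exactly this is the content of Lemma~\ref{lemma:very-ample}; as written, your derivation of $\Phi_*\OOO_Y=\OOO_{\bar Y}$ and of the Gorenstein property of $\bar Y$ presupposes what Lemma~\ref{lemma:very-ample} is later invoked to prove. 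Both defects disappear if you reorder the argument as the paper intends: pass to the anticanonical model first, then quote the three lemmas.
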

\begin{proof}
Follows from Lemmas~\ref{lemma:base-points}, \ref{lemma:very-ample}, and~\ref {lemma:trigonal}.
\end{proof}

Let $\Pi_i:=\Phi(E_i)$. Then $\Pi_1,\dots, \Pi_N$ are planes in $\PP^{g+1}$. Fix a plane, say $\Pi_1$
and let $G_1\subset G$ be its stabilizer.
Suppose that $\Pi_1\cap \Pi_i:=l$ is a line for some $i$.
Then the $G_1$-orbit of $l$ is given on $\Pi_1\simeq \PP^2$ by an invariant
polynomial, say $\phi$, which is a product of linear terms.
By \cite[P.~412]{Cohen-1976} one can see that $\deg \phi\ge 45$
if $G_1\simeq 3.\Alt_6$ and $\deg \phi\ge 21$
if $G_1\simeq \PSL_2(\bF_7)\times \mumu_3$. But this implies that $N\ge 21$,
a contradiction.

If $\Pi_1\cap \Pi_i:=p$ is a point for some $i$, then we can argue as above because
by duality the $G_1$-orbit of $p$ has at least 21 elements.

Therefore, the planes $\Pi_1,\dots, \Pi_N$ are disjoint.
Then $\Phi$ is an isomorphism and $Y$ is a Fano threefold with terminal
Gorenstein singularities and $\rk \Pic(Y)\ge 8$
because the divisors $E_i$ are linear independent elements of $\Pic(Y)$.
Moreover, $Y$ is $G\QQ$-factorial and $\rk \Pic(Y)^G=2$.
There exists an $G$-extremal Mori contraction $\varphi: Y\to Z$
which is different from $\sigma: Y\to X$.
Now $\varphi$ is birational and not small.
But then the $\varphi$-exceptional divisor $D$ meets $E$ and so
none of the components of $D$ are contracted to points.
Therefore, $Z$ is a Fano threefolds with $G\QQ$-factorial terminal Gorenstein singularities and
$\rk \Pic(Y)^G=1$.
This contradicts the above considered case~\ref{subsection:Gorenstein}.

\section{Amitsur Subgroup}

Here we review linearizations of line bundles and define a useful
equivariant birational invariant. Much of this simply mirrors known
results in the arithmetic setting, but our proofs have a more geometric
flavor. First, we review some facts about linearization of line bundles;
see \S{1}~and~\S{2}~of~\cite{Dolgachev97} for a more thorough discussion.

Let $X$ be a proper complex variety with a faithful action of a finite
group $G$.
One defines a morphism of $G$-linearized line bundles to be a morphism
of line bundles such that the map on the total spaces is equivariant.
We denote the group of isomorphism classes of $G$-linearized line
bundles by $\Pic(X,G)$.
Note that a line bundle $\LLL$ is $G$-invariant if and only if
$[\LLL] \in \Pic(X)^G$.
There is an evident group homomorphism
$\Pic(X,G) \to \Pic(X)^G$ obtained by forgetting the linearization.

Given a $G$-invariant line bundle $\LLL$, one constructs a cohomology
class $\delta(\LLL) \in H^2(G,\CC^\times)$ as follows.
Select an arbitrary isomorphism
$\phi_g : g^\ast \LLL \to \LLL$ for each $g \in G$.
Recall that any automorphism of a line bundle corresponds to
muliplication by a non-zero scalar since $X$ is proper.
Define a function $c : G \times G \to \CC^\times$
via
\begin{equation*}
c(g,h) := \phi(gh) \left( \phi_h \circ h^\ast( \phi_g) \right)^{-1}
\end{equation*}
for all $g,h \in G$.
One checks that $c$ is a $2$-cocycle and its cohomology class is independent
of the isomorphism class of the line bundle.

We have the following exact sequence of abelian groups:
\begin{equation*}
1 \to \Hom(G,\CC^\times) \to \Pic(X,G) \to \Pic(X)^G \to
H^2(G,\CC^\times) \ .
\end{equation*}
We define the \emph{Amitsur subgroup} as the group
\begin{equation*}
\Am(X,G)
:= \operatorname{im}( \Pic(X)^G \to H^2(G,\CC^\times) ) \ .
\end{equation*}
This is the name used for the arithmetic version in \cite{Liedtke}.

Note that $\Am(X,G)$ is a contravariant functor in $X$ via pullback of
line bundles. In fact, it is actually a birational invariant of smooth
projective $G$-varieties. This is well known in the arithmetic case
(see, for example, \S{5}~of~\cite{CTKM}).

\begin{theorem} \label{thm:Aminvariant}
If $X$ and $Y$ are smooth projective $G$-varieties that are
$G$-equivariantly birationally equivalent, then
$\Am(X,G) = \Am(Y,G)$.
\end{theorem}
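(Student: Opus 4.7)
The plan is to reduce the statement to the case of a $G$-equivariant birational \emph{morphism} via a common resolution, and then show that the Amitsur class depends only on the non-exceptional part of the Picard group.

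First I would apply $G$-equivariant resolution of singularities to the closure of the graph of a given $G$-birational map $X \dashrightarrow Y$. This produces a smooth projective $G$-variety $Z$ together with $G$-equivariant birational morphisms $Z \to X$ and $Z \to Y$. It therefore suffices to prove that $\Am(Z,G) = \Am(X,G)$ for every $G$-equivariant birational morphism $f : Z \to X$ of smooth projective $G$-varieties.

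The inclusion $\Am(X,G)\subseteq\Am(Z,G)$ is the easy direction: pullback is functorial, so any family of isomorphisms $\phi_g : g^*\LLL \to \LLL$ on $X$ pulls back to isomorphisms $f^*\phi_g : g^*(f^*\LLL) \to f^*\LLL$ on $Z$, and the resulting $2$-cocycle on $Z$ is the pullback of the one on $X$. Hence $\delta(f^*\LLL) = \delta(\LLL)$ in $H^2(G,\CC^\times)$.

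For the reverse inclusion I would use the standard fact that for a birational morphism between smooth projective varieties one has a splitting
$$\Pic(Z) = f^*\Pic(X) \oplus \bigoplus_{i=1}^n \ZZ[E_i],$$
where $E_1,\dots,E_n$ are the $f$-exceptional prime divisors. Indeed, $f^*$ is injective with left inverse $f_*$, and any element of $\ker f_*$ is supported on the exceptional locus. Both summands are $G$-stable ($f$ is $G$-equivariant and $G$ permutes the $E_i$), so taking $G$-invariants yields
$$\Pic(Z)^G = f^*\Pic(X)^G \oplus \Bigl(\bigoplus_{i=1}^n \ZZ[E_i]\Bigr)^G,$$
and the second summand is freely generated by the orbit sums $[E_{\mathcal{O}}] := \sum_{i\in\mathcal{O}}[E_i]$. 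Each $E_{\mathcal{O}}$ is an effective $G$-invariant Cartier divisor, so $\OOO_Z(E_{\mathcal{O}})$ can be realized as the $G$-invariant subsheaf of the sheaf of rational functions consisting of $\phi$ with $\mathrm{div}(\phi) + E_{\mathcal{O}} \geq 0$; the induced $G$-action is a linearization, so $\delta([E_{\mathcal{O}}]) = 0$. Writing any $L \in \Pic(Z)^G$ uniquely as $L = f^*M + \sum_{\mathcal{O}} a_{\mathcal{O}}[E_{\mathcal{O}}]$ with $M \in \Pic(X)^G$, we then conclude $\delta(L) = \delta(f^*M) = \delta(M) \in \Am(X,G)$.

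The main technical obstacle is the Picard group splitting and its $G$-equivariance, together with verifying that the exceptional orbit sums carry the canonical linearization sketched above; each is standard, but the bookkeeping must be done carefully to ensure that the decomposition of a $G$-invariant class really has a $G$-invariant $f^*\Pic(X)$-component and a $G$-invariant exceptional component.
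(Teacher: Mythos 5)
Your proof is correct, and the key step is handled by a genuinely different and more elementary argument than the paper's. The paper uses Reichstein--Youssin to factor the birational map into blow-ups along smooth $G$-invariant centers, so that only a single exceptional class $[\OOO_X(E)]$ must be linearized, and it then linearizes $E$ by realizing the center as a linear section of a $G$-linearized projective embedding and carrying out an explicit Cox-ring computation on the blow-up of $\PP^n$ along a linear subspace (Lemma~\ref{lem:blowupLinearizable}). You instead pass to a common smooth resolution, invoke the splitting $\Pic(Z)=f^*\Pic(X)\oplus\bigoplus_i\ZZ[E_i]$ for an arbitrary birational morphism of smooth projective varieties, and observe that each orbit sum $E_{\mathcal O}$ is an honest $G$-invariant divisor, so that $\OOO_Z(E_{\mathcal O})$ carries the canonical linearization induced by the $G$-action on the function field (the action $g\cdot\phi=\phi\circ g^{-1}$ preserves the subsheaf of rational functions $\phi$ with $\operatorname{div}(\phi)+E_{\mathcal O}\ge 0$ precisely because $g(E_{\mathcal O})=E_{\mathcal O}$ as a divisor, not merely as a class). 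This is exactly the point where the paper's concluding remark observes that the quick surface argument via $K_X=\pi^*K_Y+E$ breaks down in higher dimensions because the exceptional divisor appears with multiplicity; your construction avoids the canonical bundle altogether and works in all dimensions. What your route requires in exchange is the directness of the sum $\bigoplus_i\ZZ[E_i]$ inside $\Pic(Z)$ (which needs the negativity lemma, or restriction to the fibers of $f$, to rule out nonzero exceptional combinations that are linearly trivial) and equivariant resolution of singularities to produce the smooth model $Z$; both are standard, and with these points spelled out your argument is complete and arguably cleaner than the published proof.
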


\begin{proof}
First, assume that the theorem holds in the case where
$X \to Y$ is a blow-up of a smooth $G$-invariant subvariety.

Let $f : X \dasharrow Y$ be a $G$-equivariant birational map.
By \cite{ReichsteinYoussin}, we may resolve indeterminacies
and obtain a sequence of equivariant blow-ups $Z \to X$ with smooth
$G$-invariant centers
and an equivariant birational morphism $Z \to Y$.
By functoriality, we have an inclusion $\Am(Y,G) \subseteq \Am(Z,G)$.
By assumption, we have equality $\Am(Z,G)=\Am(X,G)$.
Thus $\Am(Y,G)$ is naturally a subset of $\Am(X,G)$.
Repeating the same argument for $f^{-1}$ shows that the two sets are
equal.

We now may assume that $\pi : X \to Y$ is a blow up of a
smooth $G$-invariant subvariety $C$.
Let $E$ be the exceptional divisor on $X$.
We have $\Pic(X)^G = \pi^\ast\Pic(Y)^G \oplus \ZZ [\OOO_X(E)]$
and so $\Am(X,G) = \Am(Y,G) + \ZZ\delta([\OOO_X(E)])$.
To complete the proof, we show that $\OOO_X(E)$ is $G$-linearizable.

Let $\LLL$ be a very ample line bundle on $X$ giving an embedding
$X \subseteq \PP^n$ for some $n$.
Since $H^2(G,\CC^\times)$ is torsion,
by replacing $\LLL$ by a sufficiently divisible power $\LLL^{\otimes n}$
we may assume that $\LLL$ is $G$-linearizable and that
$C = X \cap L$ where $L$ is a linear subspace of $\PP^n$.
We obtain $E$ as the pullback of the exceptional divisor of the blow-up
of $\PP^n$ along $L$.

Thus, the theorem reduces to showing that the exceptional divisor $E$
on the blow-up $X$ of a linear subspace $L$ of $\PP^n$ with dimension $m$, is
$G$-linearizable where $G$ has a linearizable action on $\PP^n$.
This follows from Lemma~\ref{lem:blowupLinearizable} below.
\end{proof}

\begin{lemma} \label{lem:blowupLinearizable}
Let $X$ be the blow-up of $\PP^n$ along a linear subspace $L$ of
dimension $m$.
Let $G$ be a finite group acting faithfully on $X$.
Then the line bundle associated to the exceptional divisor is
$G$-linearizable.
\end{lemma}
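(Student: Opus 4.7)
Write $\PP^n = \PP(V)$ and $L = \PP(W)$ for a subspace $W \subseteq V$ of dimension $m+1$; we may assume $0 \le m \le n-2$, since otherwise the blow-up is trivial. In this range, $X$ is a smooth toric Fano of Picard number two with exactly two extremal Mori contractions: the blow-down $\pi \colon X \to \PP^n$ (divisorial, contracting $E$) and a fiber-type contraction $q \colon X \to \PP(V/W) \simeq \PP^{n-m-1}$ that resolves the linear projection from $L$ and exhibits $X$ as a $\PP^{m+1}$-bundle. Since $G$ acts on $\Pic(X)$ preserving the Mori cone, and the two extremal rays are of different types (divisorial vs.\ fiber type), $G$ cannot permute them; hence both $\pi$ and $q$ are $G$-equivariant, and the $G$-action descends to $\PP^n$ (preserving $L$) and to $\PP(V/W)$ (via the quotient representation).

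The key intermediate step is the identity
\begin{equation*}
\OOO_X(E) \;\simeq\; \pi^{*}\OOO_{\PP^n}(1) \otimes q^{*}\OOO_{\PP^{n-m-1}}(-1),
\end{equation*}
which I would verify by pairing with two curves spanning $N_1(X)$: a line $F$ in a fiber of $q$ (the strict transform of a line of $\PP^n$ through $L$) satisfies $\pi^{*}\OOO(1)\cdot F = 1$, $q^{*}\OOO(1)\cdot F = 0$, $E\cdot F = 1$, while a line $\ell$ in a ruling of $E \simeq L \times \PP^{n-m-1}$ along the $\PP^{n-m-1}$-factor satisfies $\pi^{*}\OOO(1)\cdot \ell = 0$, $q^{*}\OOO(1)\cdot \ell = 1$, $E\cdot \ell = -1$. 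Equivalently, any basis $\ell_0,\ldots,\ell_{n-m-1}$ of the subspace of $V^{*}$ annihilating $W$ lifts to sections of $\pi^{*}\OOO(1)$ vanishing on $E$ and defining $q$, making this decomposition manifest.

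To conclude, let $\tilde G \subseteq \GL(V)$ be the preimage of $G \subseteq \PGL(V)$, a central extension of $G$ by $\CC^{*}$. Because $\tilde G$ preserves $W$, it acts also on $V/W$, and the tautological subbundles $\OOO_{\PP(V)}(-1) \hookrightarrow \OOO \otimes V$ and $\OOO_{\PP(V/W)}(-1) \hookrightarrow \OOO \otimes V/W$ furnish $\tilde G$-linearizations of $\OOO_{\PP^n}(1)$ and $\OOO_{\PP^{n-m-1}}(1)$ on which the central $\CC^{*}$ acts by the inverse character $\lambda \mapsto \lambda^{-1}$. Pulling back via $\pi$ and $q$ and combining through the displayed isomorphism produces a $\tilde G$-linearization of $\OOO_X(E)$ on which the central $\CC^{*}$ acts by $\lambda^{-1}\cdot \lambda = 1$, so the linearization factors through $G$. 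The main obstacle is really the first step --- ensuring that the auxiliary projection $q$ must be $G$-equivariant from the sole hypothesis of a faithful $G$-action on $X$; once that is in place, the rest is a formal manipulation with the common central extension $\tilde G$.
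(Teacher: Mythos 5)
Your argument is correct, but it takes a genuinely different route from the paper's. The paper works with the Cox ring $R=\CC[x_0,\dots,x_m,y_1,\dots,y_{n-m},z]$ of the toric variety $X$ and Cox's description of $\Aut(X)$ as a quotient of $\widetilde{\Aut}(X)$ by the N\'eron--Severi torus $S$: it lifts $G$ to a subgroup $\tilde G$ of $\SL_{m+1}(\CC)\times\SL_{n-m}(\CC)\subset\widetilde{\Aut}(X)$, computes $\tilde G\cap S\simeq\mumu_d$ with $d=\gcd(m+1,n-m)$, and checks that this intersection acts trivially on the graded piece of $R$ of degree $[E+d(n+1)H]$, so that $G$ itself acts on $H^0(X,\OOO_X(E+d(n+1)H))$; since $(n+1)H=-\pi^*K_{\PP^n}$ is linearizable, so is $E$. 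You instead descend the action to $\PP^n$ and to $\PP^{n-m-1}$ by noting that the two extremal rays of the Mori cone are of different types (divisorial versus fiber type) and hence cannot be permuted, identify $\OOO_X(E)$ with $\pi^*\OOO(1)\otimes q^*\OOO(-1)$, and cancel the central characters of the two tautological linearizations inside the full preimage $\tilde G\subset\GL(V)$. Both arguments are sound; yours is more self-contained (no appeal to Cox's theorem on automorphisms of toric varieties) and makes explicit the one genuinely global input, the equivariance of the auxiliary projection $q$, which the paper absorbs into the structure of $\widetilde{\Aut}(X)$, while the paper's method adapts more readily to other toric blow-ups. A minor quibble: for $m=n-1$ the blow-up morphism is an isomorphism but $E=L$ is a hyperplane, so the statement is not vacuous there; it still holds (the lift of $G$ killing the character on $V/W$ lands in $\GL(V)$ and linearizes $\OOO(1)$), and this degenerate case is irrelevant for the application in Theorem~\ref{thm:Aminvariant}.
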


\begin{proof}
In this case, $X$ is a toric variety whose Cox ring $R$ may be described as
the polynomial ring
\begin{equation*}
R := \CC[x_0,\ldots,x_m,y_1,\ldots,y_{n-m},z]
\end{equation*}
where the monomials are graded by the Picard group.
If $H$ is the pullback of the hyperplane section of $X$ and
and $E$ is the exceptional divisor, then
$\deg(x_i)=[H]$, $\deg(y_i)=[H-E]$ and $\deg(z)=[E]$.
Let $S$ be the Neron-Severi torus dual to $\Pic(X)$.
For a multihomogeneous element $m \in R$ with grading $aH+bE$,
then $(\lambda,\mu) \in S$ acts on $S$ via
$(\lambda,\mu)\cdot (m) = \lambda^a\mu^b$.
The variety $X$ is obtained as the quotient of $\Spec(R)\setminus C$ by
$S$ where $C$ is the union of the subspaces defined by
$x_0=\cdots=x_m=0$ and $y_1=\cdots=y_{n-m}=z=0$.

From \S{4}~of~\cite{Cox},
the group of invertible elements among the graded ring
of endomorphisms of $R$ form a group $\widetilde{\Aut}(X)$
normalizing $S$ with quotient $\Aut(X)$.
The group $\widetilde{\Aut}(X)$ is isomorphic to
$U \rtimes (\GL_{m+1}(\CC) \times \GL_{n-m}(\CC) \times \CC^\times)$
where $\CC^\times$ acts by scalar multiplication on $z$,
$\GL_{m+1}$ acts linearly on $x_0,\ldots, x_m$,
$\GL_{n-m}$ acts linearly on $y_1,\ldots, y_{n-m}$
and elements $u \in U$ are all of the form $\operatorname{id}+n$
where $n$ is a linear map from
the span of $x_0, \ldots, x_m$ to $zy_1, \ldots, zy_{n-m}$.
Since $G$ is finite, we may assume that $G$ has a preimage in
$\tilde{G}$ in the subgroup $\SL_{m+1}(\CC) \times \SL_{n-m}(\CC)$
of $\widetilde{\Aut}(X)$.
Thus $\tilde{G} \cap S = (\mu_d,1)$ where $d=\gcd(m+1,n-m)$.

Recall that the canonical bundle on $\PP^n$ is always linearizable,
so to show that $E$ is linearizable it suffices to show that
that $G$ has an action on the global sections of the
very ample line bundle $\OOO_X(E+d(n+1)H)$.
From \cite{Cox}, the vector space $H^0(X,\OOO_X(E+d(n+1)H))$
is isomorphic to vector subspace of $R$ with
grading $[E+d(n+1)H]$.
This is spanned by monomials $x_i^ay_j^bz^c$
where $a+b=d(n+1)$ and $c=b+1$.
Now $s=(\lambda,\mu) \in S$ acts via $s(m)=\lambda^{d(n+1)}\mu m$ on every such
monomial, thus $\tilde{G} \cap S$ acts trivially.
We conclude the action $\tilde{G}$ factors through $G$ as desired.
\end{proof}

\begin{remark}
Note that there is much shorter proof of
Theorem~\ref{thm:Aminvariant} when $X$ and $Y$ are surfaces
(which is actually the only case we need).
One reduces to the case where $\pi : X \to Y$ is a blow-up of
a $G$-orbit of points. We only need to show that the exceptional
divisor $E$ is $G$-linearizable as before.
This is immediate since $K_X = \pi^\ast K_Y + E$ and
both $K_X$ and $K_Y$ are $G$-linearizable.
This proof fails in higher dimensions since the exceptional divisor
may appear with multiplicity.
\end{remark}

We have the following due to Proposition~2.2~of~\cite{Dolgachev97}:

\begin{proposition} \label{prop:AMcurve}
If $X$ is a smooth curve with a faithful action of $G$,
then $\Am(X,G)=H^2(G,\CC^\times)$ for all subgroups $G \subseteq \Aut(X)$.
\end{proposition}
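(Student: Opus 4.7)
The plan is to exhibit surjectivity of the map $\Pic(X)^G \to H^2(G,\CC^\times)$ directly, using the fact that on a smooth projective curve every line bundle is a class of Cartier divisors. Concretely, I would work with the four-term exact sequence of $G$-modules
\[
0 \to \CC^\times \to \CC(X)^\times \to \operatorname{Div}(X) \to \Pic(X) \to 0
\]
and split it into the two short exact sequences
\[
1 \to \CC^\times \to \CC(X)^\times \to \CC(X)^\times/\CC^\times \to 1, \qquad
0 \to \CC(X)^\times/\CC^\times \to \operatorname{Div}(X) \to \Pic(X) \to 0.
\]
The composition of the two resulting connecting homomorphisms in group cohomology agrees (up to sign) with the $d_2$-differential of the Hochschild--Serre spectral sequence $H^p(G,H^q(X,\OOO_X^\times))\Rightarrow H^{p+q}_G(X,\OOO_X^\times)$, which is precisely the map whose image defines $\Am(X,G)$.

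First, I would extract from the leftmost sequence the key vanishing. By Hilbert's Theorem~90 applied to the Galois extension $\CC(X)/\CC(X/G)$ (of group $G$, since the action is faithful), one has $H^1(G,\CC(X)^\times)=0$. Moreover, $\CC(X/G)$ is the function field of a smooth curve over $\CC$ and hence a $C_1$-field, so Tsen's theorem gives $\operatorname{Br}(\CC(X/G))=0$; via the standard identification $H^2(G,\CC(X)^\times)=\operatorname{Br}(\CC(X)/\CC(X/G))$ this yields $H^2(G,\CC(X)^\times)=0$. The long exact sequence therefore collapses to an isomorphism
\[
H^1\bigl(G,\CC(X)^\times/\CC^\times\bigr) \xrightarrow{\;\sim\;} H^2(G,\CC^\times).
\]

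Second, I would analyse the rightmost sequence. As a $G$-module, $\operatorname{Div}(X)$ decomposes as $\bigoplus_{\mathcal{O}}\operatorname{Ind}_{G_P}^G \ZZ$, where $\mathcal{O}=G\cdot P$ runs over $G$-orbits of closed points of $X$. Shapiro's lemma together with $H^1(G_P,\ZZ)=\Hom(G_P,\ZZ)=0$ (the group $G_P$ being finite) gives $H^1(G,\operatorname{Div}(X))=0$. The long exact sequence therefore forces the surjectivity of $\Pic(X)^G \twoheadrightarrow H^1(G,\CC(X)^\times/\CC^\times)$, and composing with the isomorphism above yields $\Pic(X)^G \twoheadrightarrow H^2(G,\CC^\times)$, that is, $\Am(X,G)=H^2(G,\CC^\times)$.

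The main obstacle is not the cohomological calculation itself, which is clean, but verifying that the cocycle description of $\delta\colon\Pic(X)^G\to H^2(G,\CC^\times)$ given in the text coincides with the composed connecting map used above. This is standard homological algebra — both compute the $d_2$ of the same spectral sequence — but it should be checked explicitly by tracing a $G$-invariant Cartier divisor, lifting it locally to a rational function, and comparing the resulting $2$-cocycle on $G$ with the scalars $c(g,h)$ produced by comparing the isomorphisms $\phi_g\colon g^\ast\LLL\to\LLL$.
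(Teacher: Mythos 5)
The paper does not actually prove this statement: it simply quotes Proposition~2.2 of \cite{Dolgachev97}, so there is no in-paper argument to compare against. Your proof is correct and is essentially the standard argument (and, as far as the cited source goes, essentially Dolgachev's): split the four-term sequence $1\to\CC^\times\to\CC(X)^\times\to\operatorname{Div}(X)\to\Pic(X)\to 0$, kill $H^1$ and $H^2$ of $\CC(X)^\times$ by Hilbert~90 and Tsen (using that the action is faithful, so $\CC(X)/\CC(X/G)$ is Galois with group $G$ and $X/G$ is again a smooth curve over $\CC$), kill $H^1(G,\operatorname{Div}(X))$ by Shapiro since $\Hom(G_P,\ZZ)=0$ for the finite stabilizers, and conclude surjectivity of $\Pic(X)^G\to H^2(G,\CC^\times)$. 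Two small points. First, you should record that $X$ is proper (hence projective) and irreducible; this is built into the paper's definition of $\Am(X,G)$ (properness is what makes line-bundle automorphisms scalars and makes $\CC^\times$ the kernel of $\operatorname{div}$), and for finite $G$ the cohomology of the infinite direct sum $\operatorname{Div}(X)$ does commute with the sum because $\ZZ$ has a resolution by finitely generated free $\ZZ[G]$-modules. Second, the compatibility you flag as the ``main obstacle'' is easier than invoking the spectral sequence: writing $\LLL=\OOO_X(D)$ and choosing $f_g$ with $\operatorname{div}(f_g)=g^{-1}(D)-D$, multiplication by $f_g$ \emph{is} an isomorphism $\phi_g\colon g^\ast\LLL\to\LLL$, and the scalar $c(g,h)$ comparing $\phi_{gh}$ with $\phi_h\circ h^\ast(\phi_g)$ is literally the $2$-cocycle $f_{gh}\,\bigl(f_h\cdot(f_g\comp h)\bigr)^{-1}$ produced by the two connecting maps; a two-line computation with divisors closes this gap without any spectral-sequence bookkeeping.
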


Since the canonical bundle is always linearizable, we have the following:

\begin{proposition} \label{prop:AMFanoIndex1}
If $X$ is a smooth Fano variety of index $1$ with $\rk \Pic(X)^G =1$,
then $\Am(X,G)$ is trivial for all subgroups $G \subseteq \Aut(X)$.
\end{proposition}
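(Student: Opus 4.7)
The plan is as follows. The key input is Proposition~\ref{prop:canLinearizable}: the canonical bundle $K_X$ admits a canonical $G$-linearization, and hence so does $-K_X$. Once one knows that $-K_X$ generates the group $\Pic(X)^G$, every $G$-invariant line bundle is a tensor power of a $G$-linearizable bundle, hence itself $G$-linearizable, so the forgetful map $\Pic(X,G)\to\Pic(X)^G$ is surjective. The exact sequence
\[
1 \to \Hom(G,\CC^\times) \to \Pic(X,G) \to \Pic(X)^G \to H^2(G,\CC^\times)
\]
displayed just before the definition of $\Am(X,G)$ then forces the connecting map $\Pic(X)^G\to H^2(G,\CC^\times)$ to vanish, giving $\Am(X,G)=0$ by definition.

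The remaining task is therefore to verify that $-K_X$ really does generate $\Pic(X)^G$. Here I would use two standard facts: (a) smooth Fano varieties over $\CC$ are simply connected (Campana / Koll\'ar--Miyaoka--Mori), so in particular $\Pic(X)=H^2(X,\ZZ)$ is torsion-free; (b) the Fano index of $X$ being $1$ means, by definition, that $-K_X$ is primitive in $\Pic(X)$, i.e.\ not of the form $mL$ for any $L\in\Pic(X)$ and any integer $m\ge 2$. Since $\Pic(X)^G\subseteq\Pic(X)$ is a rank-one subgroup of a torsion-free abelian group, it is isomorphic to $\ZZ$; picking a generator $L_0$ and writing $-K_X=n L_0$, primitivity of $-K_X$ in $\Pic(X)$ forces $n=\pm 1$, and ampleness of $-K_X$ forces $n=1$. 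Thus $[-K_X]$ generates $\Pic(X)^G$, and the argument outlined in the previous paragraph applies.

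The proof is entirely formal and presents no real obstacle: the proposition is simply the combination of Proposition~\ref{prop:canLinearizable} with the primitivity of $-K_X$. The only step worth being careful about is the torsion-freeness of $\Pic(X)$, which rules out the possibility that $\Pic(X)^G$ has a generator differing from $-K_X$ by torsion and for which $G$-linearizability would not be automatic.
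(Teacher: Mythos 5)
Your argument is correct and is exactly the reasoning the paper intends: the paper gives no written proof beyond the remark that the canonical bundle is always linearizable (Proposition~\ref{prop:canLinearizable}), and your unpacking---torsion-freeness of $\Pic(X)$ via simple connectedness, primitivity of $-K_X$ from index $1$, hence $-K_X$ generates $\Pic(X)^G\simeq\ZZ$, so the connecting map to $H^2(G,\CC^\times)$ vanishes---is precisely the omitted detail. No gaps.
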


\begin{lemma} \label{lem:AMsection}
If $f : X \to Y$ is a $G$-equivariant morphism,
then the morphism $f^\ast : \Am(Y,G) \to \Am(X,G)$ is injective.
If moreover, $f$ has a $G$-equivariant section, then
$\Am(Y,G)$ is a direct summand of $\Am(X,G)$.
\end{lemma}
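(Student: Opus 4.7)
The plan is to exploit the functorial nature of the connecting map $\delta\colon \Pic(-)^G \to H^2(G,\CC^\times)$ whose image defines the Amitsur subgroup. Recall that for a $G$-invariant line bundle $\LLL$, the class $\delta(\LLL)$ is computed by choosing arbitrary isomorphisms $\phi_g\colon g^\ast\LLL \to \LLL$ and taking the cohomology class of the scalar 2-cocycle
\[
c(g,h) = \phi_{gh} \circ \bigl(\phi_h \circ h^\ast(\phi_g)\bigr)^{-1} \in \CC^\times.
\]

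First I would verify that $\delta$ is natural with respect to the $G$-equivariant morphism $f$. Because $f$ is $G$-equivariant, there are canonical identifications $f^\ast g^\ast\LLL = g^\ast f^\ast\LLL$, and we may pull back each $\phi_g$ to obtain an isomorphism $f^\ast\phi_g\colon g^\ast(f^\ast\LLL) \to f^\ast\LLL$. Since $X$ is proper, automorphisms of line bundles on $X$ are given by multiplication by elements of $\CC^\times$, and these scalars are preserved by pullback; hence the cocycle computed from the $f^\ast\phi_g$ is the same scalar cocycle $c(g,h)$. This shows $\delta(f^\ast\LLL) = f^\ast\delta(\LLL)$ in $H^2(G,\CC^\times)$, where $f^\ast$ on cohomology is the identity (the cohomology of $G$ with values in $\CC^\times$ does not involve $X$ or $Y$). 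Consequently, if the class of $f^\ast\LLL$ in $\Am(X,G)$ is trivial, so is the class of $\LLL$ in $\Am(Y,G)$, proving injectivity of $f^\ast\colon \Am(Y,G) \to \Am(X,G)$.

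For the splitting, suppose $s\colon Y \to X$ is a $G$-equivariant section of $f$, so that $f \circ s = \operatorname{id}_Y$. By the same naturality argument applied to $s$, pullback along $s$ induces a well-defined group homomorphism $s^\ast\colon \Am(X,G) \to \Am(Y,G)$, and
\[
s^\ast \circ f^\ast = (f\circ s)^\ast = \operatorname{id}_{\Am(Y,G)}.
\]
Thus $f^\ast$ is a split monomorphism of abelian groups, exhibiting $\Am(Y,G)$ as a direct summand of $\Am(X,G)$.

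There is essentially no serious obstacle: the only point that requires care is the naturality step, where one must check that the scalar cocycle representing $\delta(\LLL)$ is genuinely transported by $f^\ast$ and is not altered by a choice of isomorphism. This is a direct consequence of the fact that $f$ commutes with the $G$-action and that pulling back the multiplication-by-$\lambda$ automorphism of a line bundle remains multiplication by the same scalar $\lambda$. Everything else is formal bookkeeping in group cohomology.
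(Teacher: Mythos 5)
Your proposal is correct and follows essentially the same route as the paper: naturality of the obstruction class $\delta$ under $G$-equivariant pullback gives injectivity, and the section provides a retraction $s^\ast \circ f^\ast = \operatorname{id}$ for the splitting. The only cosmetic difference is that the paper phrases injectivity via the extension of $G$ acting on the total space of $\LLL$ (linearizable if and only if the extension splits), while you verify the cocycle identity directly; these are the two standard descriptions of the same class in $H^2(G,\CC^\times)$.
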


\begin{proof}
Let $\LLL$ be an element of $\Pic(Y)^G$ and suppose $E$ is the extension
of $G$ that acts on the total space of $\LLL$.
Then $E$ also acts on $f^\ast \LLL$. If $f^\ast \LLL$ is
$G$-linearizable, then $E$ splits; thus $\LLL$ must also be
linearizable.
Now suppose $s : Y \to X$ is a section.
By definition $f \circ s = \operatorname{id}_Y$, so the map induced by
functoriality $\Am(Y,G) \to \Am(X,G) \to \Am(Y,G)$ is the identity.
\end{proof}

It is easy to determine the possible values of the invariant
for rational surfaces
(c.f. Proposition~5.3~of~\cite{CTKM} for the arithmetic case).

\begin{proposition}
Suppose $X$ is a rational surface with $G \subseteq \Aut(X)$.
We have the following possibilities:
\begin{enumerate}
\item
$X$ is $G$-equivariantly birationally equivalent to $\PP^2$
and $\Am(X,G)$ is isomorphic to $\ZZ/3\ZZ$ or $0$.
\item
$X$ is $G$-equivariantly birationally equivalent to $\PP^1 \times
\PP^1$,
and $\Am(X,G)$ is isomorphic to
$\ZZ/2\ZZ \times \ZZ/2\ZZ$, $\ZZ/2\ZZ$ or $0$.
\item
$X$ is $G$-equivariantly birationally equivalent to $\bF_n$.
If $n$ is odd then $\Am(X,G)=0$. If $n \ge 2$ is even,
then $\Am(X,G)$ is isomorphic to $\ZZ/2\ZZ$ or $0$.
\item
If $X$ is $G$-equivariantly birationally equivalent to a
minimal conic bundle surface with singular fibers,
then $\Am(X,G)$ is isomorphic to $\ZZ/2\ZZ$ or $0$.
\item
Otherwise, $X$ is isomorphic to a del Pezzo surface of degree
$\le 6$ and $\Am(X,G)$ is trivial.
\end{enumerate}
\end{proposition}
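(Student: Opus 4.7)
My plan is to run the $G$-equivariant minimal model program on the rational surface $X$ and, using the birational invariance provided by Theorem~\ref{thm:Aminvariant}, compute $\Am$ on a minimal model. By the classification of minimal rational $G$-surfaces (Iskovskikh, Manin, Dolgachev--Iskovskikh), these minimal models are exactly the five types appearing in the statement. The computation for each one uses three facts: (i) the defining exact sequence, which identifies $\Am(X,G)$ with the image of $\delta:\Pic(X)^G\to H^2(G,\CC^\times)$; (ii) $\delta(-K_X)=0$ by Proposition~\ref{prop:canLinearizable}; and (iii) pullback functoriality, together with the split injectivity of $\pi^\ast$ when $\pi$ has a $G$-equivariant section (Lemma~\ref{lem:AMsection}). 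Combined with Proposition~\ref{prop:AMcurve}, (iii) yields that $\Am(\PP^1,G)$ is $2$-torsion, since $\OOO_{\PP^1}(2)=-K_{\PP^1}$ is linearizable.

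The direct cases go as follows. For $X=\PP^2$, $\Pic^G=\ZZ H$ and $-K=3H$ give $\Am\subseteq\ZZ/3\ZZ$. For $X=\PP^1\times\PP^1$, either $G$ swaps the rulings (so $\Pic^G=\ZZ(H_1+H_2)$ and $-K=2(H_1+H_2)$, giving $\Am\subseteq\ZZ/2\ZZ$) or $G$ preserves them (so $\Pic^G=\ZZ H_1\oplus \ZZ H_2$, and each $\delta(H_i)$ is $2$-torsion by (iii), giving $\Am\subseteq(\ZZ/2\ZZ)^2$). For a del Pezzo surface of degree $\le 6$ with $\rk\Pic^G=1$, the Fano index is $1$, so $-K_X$ is primitive in $\Pic(X)$ and therefore $\Pic^G=\ZZ\cdot(-K_X)$; Proposition~\ref{prop:AMFanoIndex1} gives $\Am=0$.

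The Hirzebruch case $\bF_n$ is the crux. Here $\Pic^G=\ZZ s_0\oplus\ZZ F$, with $s_0$ the unique $(-n)$-section and $F$ the fiber class of the ruling $\pi:\bF_n\to\PP^1$. Writing $\bF_n=\PP(E)$ with $E=\OOO\oplus\OOO(n)$, one has the identity $\OOO_{\bF_n}(s_0)=\OOO_{\PP(E)}(1)\otimes\pi^\ast\OOO_{\PP^1}(-n)$. A faithful $G$-action on $\bF_n$ forces $E$ to admit a $G$-equivariant structure (after a possibly needed character twist), so $\OOO_{\PP(E)}(1)$ is $G$-linearizable; when $n$ is even, $\OOO_{\PP^1}(-n)$ is automatically linearizable, so $\delta(s_0)=0$, and $\Am$ is generated only by $\delta(F)$, which is $2$-torsion, yielding $\Am\subseteq\ZZ/2\ZZ$. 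When $n$ is odd, the equivariance of $E$ forces $\OOO(n)$ and hence $\OOO(1)$ to be linearizable on the base (using that $\OOO(2)$ always is), whereupon both $\delta(F)=\pi^\ast\delta(\OOO(1))=0$ and $\delta(s_0)=0$, giving $\Am=0$. Minimal conic bundles with singular fibers are treated in the same spirit: a $G$-equivariant section (or multisection) of the ruling and Lemma~\ref{lem:AMsection} split $\Am$ as a pullback from $\Am(\PP^1,G)$ plus a complement, the anticanonical relation together with the linearizability of the discriminant divisor pins down the complement, and one obtains the bound $\Am\subseteq\ZZ/2\ZZ$.

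The main obstacle will be the Hirzebruch/conic bundle analysis, and specifically the bundle-theoretic lemma asserting that a faithful $G$-action on $\bF_n$ upgrades to a $G$-linearization of $E$ (possibly after a character twist). This requires checking that the obstruction to simultaneously linearizing both summands of $E=\OOO\oplus\OOO(n)$ vanishes, and showing that for $n$ odd this automatically descends a linearization of $\OOO_{\PP^1}(1)$ on the base. In the singular-fiber conic bundle case the analogous analysis is more delicate since no clean product structure is available; identifying the correct equivariant section and absorbing the discriminant contribution into the anticanonical relation is the subtlest technical point.
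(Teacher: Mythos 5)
Your overall strategy is the same as the paper's: reduce to a $G$-minimal model via Theorem~\ref{thm:Aminvariant}, then run through the Iskovskikh--Dolgachev classification. Your treatments of $\PP^2$, of $\PP^1\times\PP^1$ (both the ruling-swapping and ruling-preserving cases), and of minimal del Pezzo surfaces of index $1$ coincide with the paper's and are fine; note that the proposition as stated only requires the upper bounds on $\Am(X,G)$, so you need not worry about realizing the nontrivial values.

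The gap is in the $\bF_n$ case, which you correctly identify as the crux but do not actually close. Your key claim --- that a faithful finite action on $\bF_n$ forces $E=\OOO\oplus\OOO(n)$ (up to a character twist) to admit a $G$-equivariant structure --- is, for $n$ odd, \emph{equivalent} to the conclusion you want: a twist $E\otimes\LLL=\LLL\oplus\LLL(n)$ is equivariant only if both summands are linearizable, and since $n$ is odd one of the two has odd degree, so an equivariant structure exists if and only if $\OOO_{\PP^1}(1)$ is already linearizable for the induced action on the base. So the ``bundle-theoretic lemma'' is not an auxiliary step whose obstruction you can check locally; it is the theorem. The honest input needed is the structure of $\Aut(\bF_n)$: a finite subgroup is conjugate into the reductive part, which is $\GL_2(\CC)/\mumu_n$ (isomorphic to $\CC^\times\rtimes\SL_2(\CC)$ for $n$ odd and $\CC^\times\rtimes\PSL_2(\CC)$ for $n$ even), and for $n$ odd the induced action on $\PP^1$ therefore lifts to $\SL_2(\CC)$, linearizing $\OOO_{\PP^1}(1)$. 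This is exactly what the paper does, citing Dolgachev--Iskovskikh, after first reducing to the base via the invariant negative section and Lemma~\ref{lem:AMsection} (so that $\Am(\bF_n,G)=\Am(\PP^1,G)$). A secondary issue: in the singular-fiber conic bundle case you invoke a ``$G$-equivariant section (or multisection)'' of the ruling, but a minimal conic bundle with singular fibers need not have one, and none is needed --- the paper simply uses that $\Pic(X)^G$ is generated by $K_X$ and the fiber class $F$, with $\delta(K_X)=0$ and $2\delta(F)=\delta(\pi^\ast(-K_{\PP^1}))=0$, which gives $\Am(X,G)\subseteq\ZZ/2\ZZ$ directly.
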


\begin{proof}
It suffices to assume $X$ is a $G$-minimal surface.

If $X$ is a $G$-minimal del Pezzo surface, then either $X=\PP^2$,
$X=\PP^1 \times \PP^1$, or it has Fano index $1$.
In the last case, $\Am(X,G)=0$ by Proposition~\ref{prop:AMFanoIndex1}.

For $\PP^2$, the index is $3$ and there exist non-linearizable line
bundles. Thus $\Am(X,G)=0$ or $\ZZ/3\ZZ$.

For $\PP^1 \times \PP^1$, we consider two cases.
First, assume the fibers are interchanged by $G$.
Then $\Pic(X)^G \simeq \ZZ$ and $-K_X$ has index $2$.
Not every group is linearizable, so we have $\Am(X,G)=0$ or
$\ZZ/2\ZZ$.

Now suppose the fibers are not interchanged.
Taking $G=G_1 \times G_2$ with each $G_i$ acting on each $\PP^1$
separately, one may have $\OOO(1,0)$ and $\OOO(0,1)$ be non-linearizable
with distinct classes in $H^2(G,\CC^\times)$.
Thus $\ZZ/2\ZZ \times \ZZ/2\ZZ$ and all its subgroups are a possibility
for $\Am(X,G)$.

Suppose $X$ is a ruled surface $\bF_n$.
Since we have a $G$-invariant section, by Lemma~\ref{lem:AMsection}
$\Am(X,G)=\Am(\PP^1,G)$ so it only depends on the group $G$.
From \cite{DolgachevIskovskikh}, we see that the reductive part of $\Aut(X)$
is isomorphic to $\CC^\times \rtimes \PSL_2(\CC)$ if $n$ is even and
$\CC^\times \rtimes \SL_2(\CC)$ if $n$ is odd.
The subgroup $\CC^\times$ acts trivially on the base,
so $\Am(X,G)=0$ if $n$ is odd, but can be $\Am(X,G)=\ZZ/2\ZZ$ if $n$ is
even.

If $X \to \PP^1$ is a minimal $G$-conic bundle,
then $\Pic(X)^G$ is generated by $-K_X$ and $\pi^\ast \Pic(\PP^1,G)$.
Since $-K_X$ is always linearizable, $\Am(X)$ is either trivial
or $\ZZ/2\ZZ$.
Note that, for example, $\ZZ/2\ZZ \times \Alt_5$ acts on an exceptional
conic bundle (see Proposition~5.3~of~\cite{DolgachevIskovskikh}).
Thus $\Am(X)$ can be non-trivial.
\end{proof}

Note that the Amitsur subgroup can distinguish between equivariant
birational equivalence classes quickly that might be more involved
using other methods.

\begin{example}
The action of the group $\Alt_5$ on $\PP^2$ has trivial Amitsur subgroup
$\Am(\PP^2,\Alt_5)$ since $\Alt_5 \subseteq \PGL_3(\CC)$ lifts to
$\GL_3(\CC)$.
The action of $\Alt_5$ on $\PP^1 \times \PP^1$ is not linearizable
since $\Alt_5 \subseteq \PGL_2(\CC)$ does not lift to $\GL_2(\CC)$.
Thus $\Am(\PP^2,\Alt_5)$ and $\Am(\PP^1 \times \PP^1,\Alt_5)$ are not
equal.
Thus $\PP^2$ and $\PP^1 \times \PP^1$ are not $\Alt_5$-birationally
equivalent.
This also follows from \cite[Theorem~6.6.1]{CheltsovShramov2015CRC}.
\end{example}

\section{Conic bundles}
\label{section:conic-bundles}

Let $X$ be a smooth projective variety, and let $G$ be a finite subgroup in $\Aut(X)$.
Suppose that there exists a $G$-equivariant conic bundle $\eta\colon X\to Y$ such that $Y$ is smooth, and the morphism $\eta$ is flat.
These assumptions mean that $\eta$ is regular conic bundle in the sense of \cite[Definition~1.4]{Sarkisov1982}.
Note that $G$ naturally acts on $Y$. But this action is not necessarily faithful.
In general, we have an exact sequence of groups
\begin{equation*}
1\longrightarrow G_{\eta}\longrightarrow G\longrightarrow G_Y\longrightarrow 1,
\end{equation*}
where $G_Y$ is a subgroup in $\Aut(Y)$, and the subgroup $G_\eta$ acts trivially on $Y$.
If
\begin{equation*}
\Pic(X)^G=\eta^*\Pic(Y)^{G_Y}\oplus\ZZ,
\end{equation*}
we say that the conic bundle $\eta\colon X\to Y$ is a $G$-minimal or $G$-standard (cf. \cite[\S1]{Prokhorov2018} and \cite[Definition~1.12]{Sarkisov1982}).
In this case, the conic bundle $\eta\colon X\to Y$ is a $G$-Mori fiber space.

\begin{lemma}
\label{lemma:conic-bundle-zenter-action-smooth-fibers}
Let $G_1\subset G_\eta$ be a non-trivial subgroup and let $\Fix(G_1)$ be its fixed point set.
Then $\Fix(G_1)$ does not contain any component of a reduced fiber.
\end{lemma}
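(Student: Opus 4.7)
I proceed by contradiction. Suppose some non-trivial $g\in G_1$ fixes pointwise a component $C$ of a reduced fiber $F=\eta^{-1}(y)$. The strategy is to show that $g$ must act trivially on the Zariski tangent space $T_{p,X}$ at a well-chosen point $p\in C$, and then invoke Lemma~\ref{lemma:stabilizer-faithful} to deduce that $g$ is trivial on $X$, a contradiction.

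I choose $p\in C$ so that $p$ is a smooth point of $F$: since $F$ is reduced, either $F=C$ is a smooth conic (and every point of $C$ works) or $F$ is a nodal union $C\cup C'$ of two lines (and I take any $p\in C\setminus C'$). Because $X$ is smooth and $\eta$ is flat with one-dimensional fibers that are smooth at $p$, the differential $d\eta_p\colon T_{p,X}\to T_{y,Y}$ is surjective with kernel $T_{p,F}=T_{p,C}$. This yields a short exact sequence of $\langle g\rangle$-representations
\begin{equation*}
0\longrightarrow T_{p,C}\longrightarrow T_{p,X}\xrightarrow{d\eta_p} T_{y,Y}\longrightarrow 0.
\end{equation*}

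Since $g|_C=\mathrm{id}_C$, the induced action on $T_{p,C}$ is trivial; since $g\in G_\eta$ acts trivially on $Y$, the induced action on $T_{y,Y}$ is trivial. Hence every eigenvalue of $g$ on $T_{p,X}$ equals $1$, and because $g$ has finite order its action is semisimple, so $g$ acts as the identity on $T_{p,X}$. Lemma~\ref{lemma:stabilizer-faithful} then forces $g$ to act trivially on the irreducible variety $X$, contradicting $g\neq 1$. The only mildly delicate point is the identification $T_{p,F}=T_{p,C}$, which relies precisely on the hypothesis that $F$ is \emph{reduced} along $C$; for a multiple component the tangent space $T_{p,F}$ would have dimension $2$ and the eigenvalue computation would no longer rule out nontrivial $g$.
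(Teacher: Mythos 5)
Your proof is correct and follows essentially the same route as the paper's: the same short exact sequence of tangent spaces at a smooth point of the fiber, triviality of the action on the sub and quotient, and an appeal to Lemma~\ref{lemma:stabilizer-faithful}. You make explicit (via semisimplicity of a finite-order automorphism) the step the paper leaves implicit, namely why unipotence on $T_{p,X}$ forces triviality, and you correctly flag where reducedness of the fiber is used.
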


\begin{proof}
Let $C=\eta^{-1}(Q)$ be a reduced fiber and let $C_1\subset C$ be an irreducible component
and let $P\in C$ be a smooth point
Assume that $\Fix(G_1)\supset C_1$.
Consider the exact sequence
\begin{equation}\label{equation:exact-sequence-C1-P}
0\longrightarrow T_{P,C}\longrightarrow T_{P,X}\longrightarrow \eta^*\big(T_{Q,Y}\big)\longrightarrow 0.
\end{equation}
If $G_1$ does not act faithfully on the curve $C_1$,
then it does not act faithfully on the tangent space $T_{P,C}$,
and it also acts trivially on the tangent space $T_{P,Y}$.
Thus, in this case, $G_1$ does not act faithfully on $T_{P,X}$, which is impossible by Lemma~\ref{lemma:stabilizer-faithful},
since $G_1$ acts faithfully on $X$ by assumption.
\end{proof}

Suppose, in addition, that $G_{\eta}\simeq\mumu_n$ for $n\geqslant 2$.
Let $B$ be the union of codimension one subvarieties in $X$ that are pointwise fixed by the subgroup $G_{\eta}$.
Then $B$ is smooth (see Lemma~\ref{lemma:stabilizer-faithful}).
Since all smooth fibers of the conic bundle $\eta$ are isomorphic to $\PP^1$,
we see that the subgroup $G_{\eta}$ fixes exactly two points in each (see Lemma~\ref{lemma:conic-bundle-zenter-action-smooth-fibers}).
Thus, if $C$ is a general fiber of $\eta$, then the intersection $B\cap C$ consists of two distinct points.
This implies that $B\cdot C=2$ for every fiber $C$ of the conic bundle $\eta$.
Hence, the morphism $\eta$ induces a generically two-to-one morphism $\phi\colon B\to Y$.

\begin{lemma}
\label{lemma:conic-bundle-zenter-action}
Let $C$ be a reduced fiber of the conic bundle $\eta$.
If $n=|G_{\eta}|\geqslant 3$, then $G_{\eta}$ leaves invariant every irreducible component of $C$.
\end{lemma}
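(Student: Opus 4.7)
If $C$ is irreducible there is nothing to prove, so I will assume $C=C_1\cup C_2$ is a union of two smooth rational components meeting transversally at a single node $P$. Since $P$ is the unique singular point of the reduced conic $C$, it must be fixed by $G_\eta$.

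The plan is to exploit the fact that $G_\eta$ acts trivially on the base $Y$ by choosing a good $G_\eta$-equivariant local model of $\eta$ at $P$. Because $G_\eta$ is finite and fixes $P$, the usual averaging (Cartan linearization) argument provides a $G_\eta$-equivariant analytic isomorphism of a neighborhood of $P$ in $X$ with a neighborhood of $0$ in $T_{P,X}$. Combining this with the standard local normal form of a conic bundle at a node, I will pick analytic coordinates $x,y,w_1,\ldots,w_{d-2}$ on $X$ near $P$ (with $d=\dim X$) and coordinates $z_1,\ldots,z_{d-1}$ on $Y$ near $Q=\eta(P)$ such that $\eta^{*}z_1=xy$, $\eta^{*}z_i=w_{i-1}$ for $i\geq 2$, and the components are $C_1=\{x=0\}$ and $C_2=\{y=0\}$.

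With these coordinates in hand, let $g$ be a generator of the cyclic group $G_\eta\simeq\mumu_n$. Since $g$ acts trivially on $Y$, it fixes each $w_i$ and also fixes the function $xy$. There are only two possibilities: either $g$ preserves both components, in which case we are done, or $g$ exchanges them, so that $g(x)=\alpha y$ and $g(y)=\beta x$ for some scalars $\alpha,\beta\in\CC^{*}$. In the second case the invariance $g(xy)=xy$ forces $\alpha\beta=1$, and then
\begin{equation*}
g^{2}(x)=\alpha g(y)=\alpha\beta x=x,\qquad g^{2}(y)=\beta g(x)=\alpha\beta y=y,
\end{equation*}
so $g^{2}$ fixes a full neighborhood of $P$, and hence is the identity on $X$ by rigidity of finite group actions on an irreducible variety. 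This contradicts $|g|=n\geq 3$, so $g$ must preserve each $C_i$, and therefore so does $G_\eta=\langle g\rangle$.

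The main obstacle is establishing the $G_\eta$-equivariant local normal form described above; once the coordinates are set up, the proof reduces to the two-line computation displayed. An alternative approach would apply Lemma~\ref{lemma:conic-bundle-zenter-action-smooth-fibers} to the index-two subgroup $\langle g^{2}\rangle\subset G_\eta$ in the exchange case, but this would require separating the parities of $n$ and would be less uniform than the direct local argument.
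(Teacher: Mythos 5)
Your route is genuinely different from the one in the paper, and its core computation is sound, but as written it hinges on the equivariant local normal form $\eta^*z_1=xy$ together with a \emph{linear} $G_\eta$-action in the same coordinates, which you assert and correctly flag as ``the main obstacle'' without establishing. This is a real gap: Cartan linearization gives coordinates in which $g$ acts linearly, and the standard local model of a conic bundle gives coordinates in which $\eta^*z_1=xy$, but you need both \emph{simultaneously}, which requires an equivariant Morse-type normalization that is not automatic. Fortunately you do not need the exact normal form: everything you use survives at the level of $2$-jets at the node $O=C_1\cap C_2$. Choose local coordinates $(z_1,z_2)$ at $Q=\eta(O)$ so that $d(\eta^*z_1)|_O=0$ and $d(\eta^*z_2)|_O\neq 0$; this is possible because $d\eta|_O$ has rank exactly one, its kernel being the plane $\Pi=T_{O}C_1+T_{O}C_2$, and both pullbacks are $g$-invariant since $G_\eta$ acts trivially on $Y$. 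The Hessian of $\eta^*z_1$ at $O$ is then a canonically defined $g$-invariant quadratic form whose restriction to $\Pi$ equals $c\,\ell_1\ell_2$ with $c\neq 0$, where $\ell_i$ cuts out $T_OC_i$ in $\Pi$ (here reducedness of the fiber is used). If $dg$ swaps the two tangent lines, say $dg(e_1)=\alpha e_2$ and $dg(e_2)=\beta e_1$, invariance of this form forces $\alpha\beta=1$, so $(dg)^2|_\Pi=\operatorname{id}$; since $dg$ also acts trivially on $T_{O,X}/\Pi$ (the invariant functional $d(\eta^*z_2)|_O$ has kernel $\Pi$) and is semisimple, $(dg)^2=\operatorname{id}$ on all of $T_{O,X}$. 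Lemma~\ref{lemma:stabilizer-faithful} then gives $g^2=\operatorname{id}$, contradicting $n\geqslant 3$. With this repair your proof is complete; your final appeal to ``rigidity'' is exactly Lemma~\ref{lemma:stabilizer-faithful}.

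For comparison, the paper argues globally rather than locally: for odd $n$ the claim is immediate (a swap of components would give a surjection $G_\eta\to\mumu_2$), and for even $n$ it assumes the generator $z$ swaps the components, deduces that the divisorial fixed locus $B$ meets $C$ only at $O$, uses Lemma~\ref{lemma:conic-bundle-zenter-action-smooth-fibers} and the Bia{\l}ynicki--Birula theorem to produce a second surface $B_1\not\subset B$ pointwise fixed by $z^2$, and derives a contradiction because $z^2$ would then fix at least three points on a general fiber $\PP^1$. Your local argument avoids the fixed-point divisors and the Bia{\l}ynicki--Birula input and treats all $n\geqslant 3$ uniformly, which is a genuine simplification once the $2$-jet formulation replaces the unproved normal form.
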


\begin{proof}
We may assume that $C$ is reducible.
Then $C=C_1\cup C_2$, where $C_1$ and $C_2$ are smooth irreducible rational curves that intersects transversally at one point.
Denote this point by $O$.
Then $O$ is fixed by $G_{\eta}$.

If $n=|G_{\eta}|$ is odd, then both $C_1$ and $C_2$ are $G_{\eta}$-invariant.
Thus, to complete the proof, we may assume that $n$ is even and neither $C_1$ nor $C_2$ is $G_{\eta}$-invariant.
Let us seek for a contradiction.

Let $z$ be a generator of the group $G_{\eta}$.
Then $z$ swaps the curves $C_1$ and $C_2$.
In particular, this shows that
\begin{equation*}
B\cap C=B\cap C_1=B\cap C_2=C_1\cap C_2=O.
\end{equation*}
On the other hand, the element $z^2$ leaves both curves $C_1$ and $C_2$ invariant.
By Lemma~\ref{lemma:conic-bundle-zenter-action-smooth-fibers},
we see that $z^2$ acts faithfully on both these curves.
Since $C_1\simeq\PP^1$, the element $z^2$ fixes a point $P\in C_1$ such that $P\ne O$.
Moreover, using \eqref{equation:exact-sequence-C1-P}, we see that there exists a two-dimensional subspace
in the tangent space $T_{P,X}$ consisting of zero eigenvalues of the element $z^2$.
By \cite[Theorem~2.1]{Birula}
this shows that $z^2$ pointwise fixes a surface $B_1$ in $X$ such that $P\in B_1$.

Since $P\not\in B$, we see that $B_1$ is not an irreducible component of the surface $B$.
On the other hand, the surface $B$ is also pointwise fixed by $z^2$, because it is pointwise fixed by $z$.
Let $C^\prime$ be a general fiber of the conic bundle $\eta$.
Then $G_{\eta}$ acts faithfully on $C^\prime$ by Lemma~\ref{lemma:conic-bundle-zenter-action-smooth-fibers},
so that $z^2$ fixes exactly two points in $C^\prime$.
On the other hand, it also fixes all points of the intersection $B_1\cap C^\prime$ and all points of the intersection $B\cap C^\prime$.
This shows that $z^2$ fixes at least three points in $C^\prime$, which is absurd.
The obtained contradiction shows that $z$ does not swap the curves $C_1$ and $C_2$,
which completes the proof of the lemma.
\end{proof}

Let $\Delta\subset Y$ be the discriminant locus of $\eta$, i.e.~the locus consisting of points $P\in Y$ such that
the scheme fiber $\eta^{-1}(P)$ is not isomorphic to $\PP^1$.
Then $\Delta$ is a (possibly reducible) reduced $G_Y$-invariant divisor that has at most normal crossing singularities in codimension $2$.
If $P$ is a smooth point of $\Delta$, then the fiber of $\eta$ over $P$ is isomorphic to a reducible reduced conic in $\PP^2$.
If $P$ is a singular point of $\Delta$, then $F$ is isomorphic to a non-reduced conic in $\PP^2$.

\begin{lemma}
\label{lemma:conic-bundle-B-Y-unramified}
Suppose that $G_{\eta}$ leaves invariant every irreducible component of each reduced fiber of the conic bundle $\eta$.
Then $\phi\colon B\to Y$ is an \'etale double cover over $Y\setminus\Sing(\Delta)$.
\end{lemma}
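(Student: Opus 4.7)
The plan is to work analytically-locally on $Y$ above each $P\in Y\setminus\Sing(\Delta)$, splitting into the cases where $\eta^{-1}(P)$ is smooth ($P\notin\Delta$) or a reduced reducible conic ($P\in\Delta\setminus\Sing(\Delta)$), and to produce in each case a trivialization of $\eta$ near every preimage $Q\in\phi^{-1}(P)$ in which $B$ becomes the zero section. Together with the count $|\phi^{-1}(P)|=2$ this will yield both finiteness of degree $2$ and unramifiedness.

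For $P\notin\Delta$ the fiber $C=\eta^{-1}(P)\simeq\PP^1$ is smooth; by Lemma~\ref{lemma:conic-bundle-zenter-action-smooth-fibers}, $G_\eta$ acts faithfully on $C$ and therefore fixes exactly two points $Q_1,Q_2$. At each $Q_i$ the morphism $\eta$ is smooth, so analytically $(X,Q_i)\simeq(Y,P)\times(\CC,0)$ with $G_\eta=\mumu_n$ acting on the disk factor through a faithful character (by Lemma~\ref{lemma:stabilizer-faithful}); the fixed locus is the zero section, which coincides with $B$ locally, and $\phi$ is an isomorphism over a neighborhood of $P$.

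The reduced reducible case is the heart of the proof. Now $\eta^{-1}(P)=C_1\cup C_2$ meet transversally at a node $O$. The hypothesis forces $G_\eta$ to preserve each $C_i$ and therefore to fix $O$; Lemma~\ref{lemma:conic-bundle-zenter-action-smooth-fibers} then implies that $G_\eta$ acts faithfully on each $C_i\simeq\PP^1$, fixing one further point $P_i\in C_i\setminus\{O\}$. The decisive substep will be to show $O\notin B$. At $O$ the threefold $X$ is smooth but $\eta$ is not: $\ker d\eta_O$ is the $2$-dimensional Zariski tangent to the nodal fiber and splits as $T_{O,C_1}\oplus T_{O,C_2}$ precisely because $G_\eta$ preserves each branch. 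Complete reducibility of $\mumu_n$-representations then gives $T_{O,X}\simeq T_{O,C_1}\oplus T_{O,C_2}\oplus\CC$ as $G_\eta$-modules, where the third summand, isomorphic to $\mathrm{im}(d\eta_O)\subseteq T_{P,Y}$, carries the trivial action. Applying Lemma~\ref{lemma:stabilizer-faithful} to each smooth curve $C_i$ at the fixed point $O$ forces $G_\eta$ to act on $T_{O,C_i}$ by a character of full order $n$; hence the trivial eigenspace of $T_{O,X}$ is exactly $1$-dimensional, the $G_\eta$-fixed locus is a smooth curve at $O$, and its divisorial part $B$ avoids a neighborhood of $O$. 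The two remaining fixed points $P_1,P_2$ lie on smooth parts of the fiber, so the smooth-fiber argument applies verbatim to show $P_i\in B$ with $\phi$ an isomorphism near $P_i$, giving $\phi^{-1}(P)=\{P_1,P_2\}$ and étaleness of $\phi$ over $P$.

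The main obstacle is exactly this exclusion of $O$ from $B$, and the invariance hypothesis is used essentially only here: it is what splits $\ker d\eta_O$ into one-dimensional $G_\eta$-stable branch tangents and lets Lemma~\ref{lemma:stabilizer-faithful} pin down the eigenvalues. Without the hypothesis $G_\eta$ could swap the branches (compare the $n=2$ subtlety handled in the proof of Lemma~\ref{lemma:conic-bundle-zenter-action}), leaving a $2$-dimensional trivial eigenspace at $O$ and forcing a surface component of the fixed locus through $O$; then $B$ would meet every neighborhood of $O$ as a divisor and $\phi$ would fail to be étale over $\eta(O)$.
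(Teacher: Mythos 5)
Your proof is correct, and it rests on the same essential toolkit as the paper's --- faithfulness of $G_\eta$ on the components of reduced fibers (via Lemma~\ref{lemma:conic-bundle-zenter-action-smooth-fibers}), Lemma~\ref{lemma:stabilizer-faithful}, the tangent-space exact sequence for $\eta$, and the smoothness of fixed loci of finite groups \cite{Birula} --- but the logical route is genuinely different. The paper argues by contradiction from the intersection number: since $C\not\subset B$ and $B\cdot C=2$, either $|B\cap C|=2$ (done) or $B\cap C$ is a single point; a tangency argument ($T_{P,C_i}\subset T_{P,B}$ would make $G_\eta$ act trivially on $T_{P,C_i}$) forces that point to be the node $O$ of a degenerate fiber, and the contradiction is then extracted at the \emph{other} fixed point $P'\in C_1\setminus\{O\}$, where the two-dimensional trivial eigenspace of $T_{P',X}$ produces a pointwise-fixed surface through $P'\notin B$, violating the definition of $B$. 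You instead compute directly at every $G_\eta$-fixed point of the fiber: at the node, the branch-invariance hypothesis splits $\ker d\eta_O$ as $T_{O,C_1}\oplus T_{O,C_2}$ with faithful characters on each summand, so the fixed locus there is only a curve and $O\notin B$; at $P_1,P_2$ your analysis is essentially the paper's analysis at $P'$, now used positively to conclude $P_i\in B$ with $\phi$ \'etale there. The computation at the node is the one step the paper never performs, and it has the virtue of making completely explicit where the invariance hypothesis enters; the paper's route buys a slightly shorter argument at the cost of the case split on $|B\cap C|$ and an indirect conclusion.
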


\begin{proof}
Fix a point $Q\in Y$ such that $Q\not\in \Sing(\Delta)$.
Let $C$ be the fiber of the conic bundle of $\eta$ over the point $Q$.
By Lemma~\ref{lemma:conic-bundle-zenter-action}, the center $G_{\eta}$ acts faithfully on every irreducible component of the fiber $C$.
In particular, we see that $C\not\subset B$.
Since $B\cdot C=2$, we see that either $|B\cap C|$ consists of two distinct points or $|B\cap C|$ consists of a single point.
In the former case, the morphism $\phi$ is \'etale over $Q$.
Thus, we may assume that the intersection $B\cap C$ consists of a single point.
Denote this point by~$P$.

Suppose first that $C$ is smooth. Then $C$ is tangent to $B$ at the point $P$.
Then $T_{P,C}\subset T_{P,B}$,
so that $G_{\eta}$ acts trivially on $T_{P,C}$.
This is impossible by Lemma~\ref{lemma:stabilizer-faithful}, since $G_{\eta}$ acts faithfully on $C$.

We see that $C=C_1\cup C_2$, where $C_1$ and $C_2$ are smooth irreducible rational curves that intersects transversally at one point.
Denote this point by $O$.
If $P\ne O$, then $T_{P,C_1}\subset T_{P,B}$.
As above, this leads to a contradiction, since $G_{\eta}$ acts faithfully on the curve $C_1$.
Thus, we have $O=P$.

Recall that $G_{\eta}$ is cyclic by assumption.
Denote by $z$ its generator.
Then $z$ must fix a point $P^\prime\in C_1$ that is different from $P$.
Using the exact sequence
\begin{equation*}
0\longrightarrow T_{P^\prime,C_1}\longrightarrow T_{P^\prime,X}\longrightarrow \eta^*\big(T_{Q,Y}\big)\longrightarrow 0,
\end{equation*}
we see that there exists a two-dimensional subspace in the tangent space $T_{P',X}$ consisting of zero eigenvalues of the element $z$.
By \cite[Theorem~2.1]{Birula} this shows that $z$ pointwise fixes a surface $B^\prime$ in $X$ such that $P^\prime\in B^\prime$.

Since $P=B\cap C$, we see that $P^\prime\not\in B$, so that $B^\prime$ is not an irreducible component of the surface $B$.
On the other hand, the surface $B^\prime$ is also pointwise fixed by $z$.
This contradicts the definition of the surface $B$.
\end{proof}

\begin{corollary}
\label{corollary:conic-bundle-B-Y-unramified}
If $|G_{\eta}|\geqslant 3$, then $\phi\colon B\to Y$ is an \'etale double cover over $Y\setminus\Sing(\Delta)$.
\end{corollary}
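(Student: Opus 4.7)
The plan is to derive this corollary as an immediate consequence of the two preceding lemmas. Under the standing assumption of the subsection that $G_\eta \simeq \mumu_n$ with $n\geqslant 2$, the hypothesis $|G_\eta|\geqslant 3$ is simply $n\geqslant 3$. First I would invoke Lemma \ref{lemma:conic-bundle-zenter-action}, which guarantees that for such $n$ every irreducible component of every reduced fiber of $\eta$ is $G_\eta$-invariant. Second, I would feed this conclusion into the hypothesis of Lemma \ref{lemma:conic-bundle-B-Y-unramified}, whose conclusion is precisely the desired statement that $\phi\colon B\to Y$ is an \'etale double cover over $Y\setminus\Sing(\Delta)$.

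No further geometric work is required: over $Y\setminus\Sing(\Delta)$ every fiber of $\eta$ is reduced (either a smooth conic $\simeq \PP^1$ or a pair of lines meeting transversally at a point), so Lemma \ref{lemma:conic-bundle-B-Y-unramified} applies without additional restriction, and in particular no analysis of the non-reduced (double line) fibers over $\Sing(\Delta)$ is needed. The only conceptual difficulty that one might a priori worry about at this step --- whether a generator of $G_\eta$ could permute the two components of a reducible reduced fiber --- has already been ruled out in the proof of Lemma \ref{lemma:conic-bundle-zenter-action} via a tangent space and fixed-locus argument invoking \cite[Theorem~2.1]{Birula}, which exploits the fact that on a general fiber $C'\simeq\PP^1$ the element $z^2$ can fix at most two points and therefore cannot pointwise fix an additional surface disjoint from $B$. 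Consequently the present corollary amounts purely to chaining the two lemmas together, and there is no substantive obstacle to overcome beyond recording this implication.
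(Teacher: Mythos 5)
Your proposal is correct and matches the paper's intended argument exactly: the corollary is stated without proof precisely because it is the immediate concatenation of Lemma~\ref{lemma:conic-bundle-zenter-action} (which, for $|G_{\eta}|=n\geqslant 3$, supplies the invariance of the components of every reduced fiber) with Lemma~\ref{lemma:conic-bundle-B-Y-unramified}. Your additional remark that only reduced fibers occur over $Y\setminus\Sing(\Delta)$ is also the right observation for why nothing more is needed.
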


\begin{corollary}
\label{corollary:conic-bundle-main}
Suppose that $|G_{\eta}|\geqslant 3$ or $G_{\eta}$ leaves invariant every irreducible component of each reduced fiber of the conic bundle $\eta$.
Suppose also that $\eta\colon X\to Y$ is $G$-minimal, and $Y$ is simply connected.
Then $\Delta=0$, and there exists a central extension $\widetilde{G}_Y$ of the group $G_Y$ such that
\begin{equation*}
X\simeq\PP\big(\LLL_1\oplus\LLL_2\big)
\end{equation*}
for some $\widetilde{G}_Y$-linearizable line bundles $\LLL_1$ and $\LLL_2$ on $Y$,
where the splitting $\LLL_1\oplus\LLL_2$ is also $\widetilde{G}_Y$-invariant,
and $\eta\colon X\to Y$ is a natural projection.
\end{corollary}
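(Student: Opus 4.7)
My plan is: (i) extend the \'etale cover $\phi\colon B\to Y$ to all of $Y$ by purity of branch locus, (ii) split it using $\pi_{1}(Y)=1$, (iii) force $\Delta=\emptyset$ from $G$-minimality, and (iv) read off the split $\PP^{1}$-bundle structure from the $G_{\eta}$-action.

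For (i)-(ii), under the corollary's hypotheses Lemma~\ref{lemma:conic-bundle-B-Y-unramified} (respectively Corollary~\ref{corollary:conic-bundle-B-Y-unramified}) gives that $\phi$ is \'etale over $Y\setminus\Sing(\Delta)$. Since $\Sing(\Delta)$ has codimension at least two in the smooth variety $Y$, Zariski--Nagata purity of the branch locus upgrades $\phi$ to an \'etale cover of all of $Y$; combined with $\pi_{1}(Y)=1$ this forces $B=B_{1}\sqcup B_{2}$ with each $B_{i}\to Y$ an isomorphism, producing two disjoint sections of $\eta$ pointwise fixed by $G_{\eta}$.

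For (iii), suppose for contradiction that $\Delta\neq\emptyset$. The group $G$ acts on the set $\mathcal{S}$ of irreducible components of $\eta^{-1}(\Delta)$, and for each $G$-orbit $\mathcal{O}\subseteq\mathcal{S}$ the divisor $D_{\mathcal{O}}:=\sum_{C\in\mathcal{O}}C$ lies in $\Pic(X)^{G}$ with fibrewise degree zero. I would show case by case that some $[D_{\mathcal{O}}]$ falls outside $\eta^{\ast}\Pic(Y)^{G_{Y}}$: when the stabiliser $G'\subseteq G$ of $B_{1}$ equals $G$ one takes the orbit of components through $B_{1}$, whose class satisfies $[D_{1}]\cdot C_{1}^{(k)}=-1$ while any pullback $\eta^{\ast}L$ intersects each $C_{1}^{(k)}$ trivially (so $[D_{1}]\notin\eta^{\ast}\Pic(Y)^{G_{Y}}$); when $[G:G']=2$ and $G_{Y}$ permutes the reducible fibres non-trivially, one instead picks an orbit containing components $C_{1}^{(k)}$ and $C_{2}^{(k')}$ in distinct fibres, whose sum is again not a full-fibre pullback. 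Together with $[B_{1}+B_{2}]$ (fibrewise degree two), this produces a rank-two sub-lattice of $\Pic(X)^{G}/\eta^{\ast}\Pic(Y)^{G_{Y}}$, contradicting the $G$-minimality identity $\Pic(X)^{G}=\eta^{\ast}\Pic(Y)^{G_{Y}}\oplus\ZZ$; a residual degenerate sub-case (where $G$ exchanges $B_{1}$ and $B_{2}$ and fixes every reducible fibre individually) is ruled out by a direct Picard-rank computation showing $\Pic(X)^{G}/\eta^{\ast}\Pic(Y)^{G_{Y}}=0$, so that $\eta$ fails to be a Mori fibre space.

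For (iv), with $\Delta=\emptyset$ the morphism $\eta$ is a $\PP^{1}$-bundle and one may write $X\simeq\PP(\EEE)$ with $\EEE=\eta_{\ast}\OOO_{X}(B_{1})$, a rank-two vector bundle on $Y$ (by direct computation $h^{0}(F,\OOO_{F}(B_{1}\cap F))=2$ on every fibre). Lifting the $G_{\eta}=\muu_{n}$-action through a central $\CC^{\times}$-extension to an action on $\EEE$, the two $\muu_{n}$-weight characters realised on the fibre tangent directions at $B_{1}\cap F$ and $B_{2}\cap F$ are distinct, so the corresponding weight eigen-sub-line-bundles give a $\muu_{n}$-invariant splitting $\EEE=\LLL_{1}\oplus\LLL_{2}$ with $B_{i}=\PP(\LLL_{i})$. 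The $G_{Y}$-action on $Y$ lifts through the same central extension to a central $\CC^{\times}$-extension $\widetilde{G}_{Y}$ of $G_{Y}$ acting on $\LLL_{1}\oplus\LLL_{2}$ and preserving the splitting (possibly swapping the summands when $G$ swaps the sections), so each $\LLL_{i}$ is $\widetilde{G}_{Y}$-linearizable. The chief difficulty is step~(iii): carrying out the case analysis on how $G$ acts on $\{B_{1},B_{2}\}$ and on reducible fibres so as to exhibit the violating class uniformly in every non-trivial situation.
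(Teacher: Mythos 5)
Your route coincides with the paper's: its (very terse) proof likewise invokes Lemma~\ref{lemma:conic-bundle-B-Y-unramified} and Corollary~\ref{corollary:conic-bundle-B-Y-unramified} to get the \'etale double cover of $Y\setminus\Sing(\Delta)$, splits it into two sections using simple connectedness, asserts in one line that $G$-minimality forces $\Delta=0$, and then reads off the split $\PP^1$-bundle. Your steps (i), (ii) and (iv) are correct, more detailed versions of this (for (ii) one can also just note that $\pi_1(Y\setminus\Sing(\Delta))=\pi_1(Y)$ since $\Sing(\Delta)$ has codimension at least two, so purity is not strictly needed).

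The gap is in step (iii), exactly in the ``residual degenerate sub-case'', which you correctly identify as the delicate point but then dispatch incorrectly. The quotient $\Pic(X)^{G}/\eta^{\ast}\Pic(Y)^{G_{Y}}$ can never be $0$: the class of $-K_X$ (or of $B_1+B_2$) is $G$-invariant and has degree $2$ on a general fibre, whereas every pullback has fibre degree $0$; so the computation you describe cannot yield the contradiction you want. Worse, in that sub-case the argument genuinely fails to exclude $\Delta\neq 0$: writing $\eta^{-1}(\Delta_i)=E_i^{(1)}+E_i^{(2)}$ with $E_i^{(j)}$ the union of components meeting $B_j$, an element $g$ swapping $B_1$ and $B_2$ sends $[E_i^{(1)}]$ to $[E_{\sigma(i)}^{(2)}]\equiv -[E_{\sigma(i)}^{(1)}]$ modulo $\eta^{\ast}\Pic(Y)$, so if every component of $\Delta$ is stabilized by such a $g$ the fibral classes span a signed permutation representation with no invariants, the invariant Picard lattice has exactly the rank required by minimality, and no contradiction arises. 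What rescues the statement in the situation it is actually used in (Theorem~\ref{theorem:conic-bundle-6-A-6}, where $G_\eta$ is the centre of $6.\Alt_6$) is that any $g$ interchanging $B_1$ and $B_2$ must invert $G_\eta$ by conjugation, since it exchanges the two mutually inverse eigencharacters of $G_\eta$ on the fibre directions along $B_1$ and $B_2$; hence when $G_\eta$ is central of order at least $3$ no such $g$ exists, $G$ preserves each $B_i$, and your main case (the $G$-orbit of fibre components meeting $B_1$ gives an invariant class of fibre degree $0$ with intersection $-1$ against one of its own components, hence not a pullback) finishes the proof. You should add this observation (or an equivalent hypothesis) to make step (iii) close; the paper's own one-sentence assertion that ``$G$-minimality implies $\Delta=0$'' silently passes over the same point.
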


\begin{proof}
By Lemma~\ref{lemma:conic-bundle-B-Y-unramified} and Corollary~\ref{corollary:conic-bundle-B-Y-unramified},
the morphism $\phi\colon B\to Y$ is an \'etale double cover over $Y\setminus\Sing(\Delta)$.
Since $Y$ is simply connected, we see that
\begin{equation*}
B=B_1\cup B_2,
\end{equation*}
where $B_1$ and $B_2$ are rational sections of the conic bundle $\eta$.
Now $G$-minimality implies that $\Delta=0$, so that $\eta$ is a $\PP^1$-bundle.
Since $\phi\colon B\to Y$ is an \'etale double cover, we see that
$B$ is a disjoint union of the divisors $B_1$ and $B_2$,
and both $B_1$ and $B_2$ are sections of the $\PP^1$-bundle $\eta$.
This  implies the remaining assertions of the corollary.
\end{proof}

Now we are ready to prove

\begin{theorem}
\label{theorem:conic-bundle-6-A-6}
Let $X$ be a rationally connected threefold that is faithfully acted by the group $6.\Alt_6$.
Then there exists no $6.\Alt_6$-equivariant dominant rational map $\pi\colon X\dasharrow S$ such that
its general fibers are irreducible rational curves irreducible rational surfaces.
\end{theorem}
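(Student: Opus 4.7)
The plan is to rule out separately the del Pezzo fibration case ($\dim S = 1$) and the conic bundle case ($\dim S = 2$) that would result from $G$-equivariant resolution of $\pi$ followed by $G$-equivariant MMP relative to the base; this reduces the problem to the case where $X$ is a $G$-Mori fibre space $\eta\colon X\to Y$, which in the conic bundle case may be taken to be regular with $X$, $Y$ smooth by Avilov's theorem.

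If $\dim Y = 1$, then $Y\simeq\PP^1$. Every nontrivial quotient of $6.\Alt_6$ surjects onto the simple group $\Alt_6$, and no finite subgroup of $\PGL_2(\CC)$ admits $\Alt_6$ as a subquotient, so the induced action of $G$ on $Y$ is trivial. Hence $G$ acts faithfully on a general fibre, a rational surface, contradicting Theorem~\ref{th:Cr2simple}.

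If $\dim Y = 2$, the image $G_Y$ of $G$ in $\Aut(Y)\subset\Cr_2(\CC)$ is, by the same application of Theorem~\ref{th:Cr2simple}, either trivial or equal to $\Alt_6$; the trivial case forces $G\hookrightarrow\PGL_2(\CC)$ via the generic fibre, which is impossible. So $G_Y=\Alt_6$ and $G_\eta=\z(G)=\mumu_6$. Since $|G_\eta|=6\ge 3$ and the smooth rational surface $Y$ is simply connected, Corollary~\ref{corollary:conic-bundle-main} gives $X\simeq\PP(\LLL_1\oplus\LLL_2)$ together with two disjoint sections $B_1,B_2$ of $\eta$ fixed by $\mumu_6$. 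Because $G$ is perfect it has no subgroup of index $2$, so each $B_i$ is $G$-invariant.

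The crux is to derive a contradiction from the existence of a $G$-invariant section, which I propose to do by a two-way computation of the character $\chi\colon\mumu_6\to\CC^*$ by which $\mumu_6$ acts on the $G$-equivariant normal line bundle $\mathcal{N}:=\mathcal{N}_{B_1/X}$ on $B_1\simeq Y$. Geometrically, at a $\mumu_6$-fixed point of any fibre the group $\mumu_6$ acts on the tangent direction by a primitive character of order $6$, so $\chi$ is faithful, and in particular $\chi|_{\mumu_2}\neq 1$. For the cohomological computation, Theorem~\ref{th:Cr2simple} yields that $Y$ is $\Alt_6$-birational to $\PP^2$, and the birational invariance of the Amitsur subgroup (Theorem~\ref{thm:Aminvariant}) gives
\begin{equation*}
\Am(Y,\Alt_6)=\Am(\PP^2,\Alt_6)=\ZZ/3\ZZ\subset H^2(\Alt_6,\CC^*)=\ZZ/6\ZZ,
\end{equation*}
whence $\delta(\mathcal{N})\in\ZZ/3$. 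The key identification is that $\chi$ corresponds to $\delta(\mathcal{N})$ under the transgression $d\colon\Hom(\mumu_6,\CC^*)\xrightarrow{\sim}H^2(\Alt_6,\CC^*)$, $\psi\mapsto\psi_*[G]$, of the five-term exact sequence of $1\to\mumu_6\to G\to\Alt_6\to 1$ (which is an isomorphism because $H^2(G,\CC^*)=0$, as $6.\Alt_6$ is the universal central extension of $\Alt_6$); under the canonical splittings $\mumu_6=\mumu_2\times\mumu_3$ and $H^2(\Alt_6,\CC^*)=\ZZ/2\oplus\ZZ/3$, the subgroup $\ZZ/3$ corresponds precisely to characters trivial on $\mumu_2$. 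Since $G$ is perfect the $G$-linearization of $\mathcal{N}$ is unique, pinning down $\chi=d^{-1}(\delta(\mathcal{N}))$, so $\chi|_{\mumu_2}=1$, contradicting the geometric computation. The main obstacle will be carefully establishing the identification of $\chi$ with $\delta(\mathcal{N})$ via transgression.
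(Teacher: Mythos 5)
Your proposal is correct and follows essentially the same route as the paper: reduction via equivariant resolution and Avilov's theorem to a $G$-minimal standard conic bundle with $G_\eta=\z(G)\simeq\mumu_6$, application of Corollary~\ref{corollary:conic-bundle-main} to split $X$ as $\PP(\LLL_1\oplus\LLL_2)$, and then the clash between the birational invariance of the Amitsur subgroup (Theorem~\ref{thm:Aminvariant}) and $\Am(\PP^2,\Alt_6)\simeq\mumu_3$. Your final step --- computing the central character on the normal bundle of a section and matching it against $\delta(\mathcal{N})$ via the transgression isomorphism $\Hom(\mumu_6,\CC^*)\xrightarrow{\sim}H^2(\Alt_6,\CC^*)$ --- is just a more explicit justification of the paper's terser assertion that the linearization data forces $\Am(S,\Alt_6)\simeq\mumu_6$.
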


\begin{proof}
Using $6.\Alt_6$-equivariant resolution of singularities and indeterminacies, we may assume that both $X$ and $S$ are smooth,
and $\pi$ is a morphism.
If $S$ is a curve, then we immediately obtain a contradiction, since $\Alt_6$ cannot faithfully act on a rational curve,
and $6.\Alt_6$ cannot faithfully act on a rational surface.
Thus, we may assume that $S$ is a surface, and general fiber of $\pi$ is $\PP^1$.
Using \cite[Theorem~1]{Avilov}, we may assume that $\pi\colon X\to S$ is $6.\Alt_6$-minimal standard conic bundle.

Since $X$ is rationally connected, we see that $S$ is rational.
Thus, none of the groups $6.\Alt_6$, $3.\Alt_6$ and $2.\Alt_6$ can faithfully act on $S$.
This shows that there exists an exact sequence of groups
\begin{equation*}
1\longrightarrow G_{\pi}\longrightarrow 6.\Alt_6\longrightarrow G_S\longrightarrow 1,
\end{equation*}
where $G_S$ is a subgroup in $\Aut(S)$ that is isomorphic to $\Alt_6$,
and $G_\pi$ is the center of the group $6.\Alt_6$ that acts trivially on $S$.
Applying Corollary~\ref{corollary:conic-bundle-main}, we see that
there exists a central extension $\widetilde{G}_S$ of the group $G_S\simeq\Alt_6$ such that
\begin{equation*}
X\simeq\PP\big(\LLL_1\oplus\LLL_2\big)
\end{equation*}
for some $\widetilde{G}_S$-linearizable line bundles $\LLL_1$ and $\LLL_2$ on $S$,
where the splitting $\LLL_1\oplus\LLL_2$ is also $\widetilde{G}_S$-invariant.

A priori, we know that $\widetilde{G}_S$ is one of the following groups $\Alt_6$, $2.\Alt_6$, $3.\Alt_6$ or $6.\Alt_6$.
On the other hand, the induced action of $\widetilde{G}_S$ on $X$ gives our action of $6.\Alt_6$ on the threefold $X$.
This shows that $\Am(S,\Alt_6) \simeq \mumu_6$.
On the other hand, it follows from \cite{DolgachevIskovskikh} that $S$ is $\Alt_6$-birational to $\PP^2$,
where $\Am(\PP^2,\Alt_6) \simeq \mumu_3$, contradicting Lemma~\ref{thm:Aminvariant}.
\end{proof}

\newcommand{\etalchar}[1]{$^{#1}$}


\begin{table}
{\scriptsize
\begin{tabular}{|c|c|cccccccccc|}
\hline
Group & Dim & \multicolumn{10}{c|}{Invariants in Low Degrees} \\
 & & 1 & 2 & 3 & 4 & 5 & 6 & 7 & 8 & 9 & 10 \\
\hline
\hline
\multirow{4}{*}{$\PSL_2(\bF_7)$}
& 3 & & & & 1 & & 1 & & 1 & & 1\\
& 6 & & 1 & 2 & 3 & 4 & 8 & 10 & 15 & 22 & 30\\
& 7 & & 1 & 1 & 4 & 2 & 10 & 10 & 25 & 28 & 58\\
& 8 & & 1 & 2 & 3 & 5 & 15 & 19 & 44 & 72 & 120\\
\hline
\multirow{3}{*}{$\SL_2(\bF_7)$}
& 4 & & & & 1 & & 1 & & 3 & & 2\\
& 6 & & & & 1 & & 2 & & 10 & & 16\\
& 8 & & & & 2 & & 10 & & 44 & & 106\\
\hline
\multirow{5}{*}{$\PSL_2(\bF_{11})$}
& 5 & & & 1 & & 1 & 2 & 1 & 2 & 3 & 3\\
& 10 & & 1 & & 4 & 1 & 16 & 10 & 54 & 56 & 176\\
& 10 & & 1 & 2 & 4 & 8 & 16 & 28 & 54 & 98 & 176\\
& 11 & & 1 & 1 & 3 & 4 & 20 & 24 & 78 & 134 & 300\\
& 12 & & 1 & 1 & 4 & 8 & 25 & 49 & 124 & 258 & 558\\
\hline
\multirow{4}{*}{$\SL_2(\bF_{11})$}
& 6 & & & & 1 & & 1 & & 4 & & 4\\
& 10 & & & & 1 & & 6 & & 44 & & 124\\
& 10 & & & & 3 & & 6 & & 44 & & 134\\
& 12 & & & & 4 & & 15 & & 124 & & 516\\
\hline
\multirow{4}{*}{$\Alt_6$}
& 5 & & 1 & 1 & 2 & 2 & 4 & 3 & 6 & 6 & 9\\
& 8 & & 1 & 1 & 2 & 3 & 9 & 9 & 23 & 34 & 60\\
& 9 & & 1 & 2 & 4 & 7 & 14 & 23 & 46 & 80 & 140\\
& 10 & & 1 & & 7 & 2 & 25 & 20 & 94 & 108 & 308\\
\hline
\multirow{3}{*}{$2.\Alt_6$}
& 4 & & & & & & & & 2 & & \\
& 8 & & & & 1 & & 4 & & 23 & & 46\\
& 10 & & & & 4 & & 11 & & 80 & & 248\\
\hline
\multirow{4}{*}{$3.\Alt_6$}
& 3 & & & & & & 1 & & & & \\
& 6 & & & 2 & & & 7 & & & 16 & \\
& 9 & & & 2 & & & 14 & & & 80 & \\
& 15 & & & 2 & & & 126 & & & 2234 & \\
\hline
\multirow{2}{*}{$6.\Alt_6$}
& 6 & & & & & & 1 & & & & \\
& 12 & & & & & & 29 & & & & \\
\hline
\multirow{7}{*}{$\Alt_7$}
& 6 & & 1 & 1 & 2 & 2 & 4 & 4 & 6 & 7 & 10\\
& 10 & & & & 3 & & 6 & 2 & 20 & 11 & 54\\
& 14 & & 1 & 2 & 4 & 8 & 21 & 42 & 105 & 233 & 506\\
& 14 & & 1 & 2 & 5 & 9 & 22 & 46 & 109 & 237 & 518\\
& 15 & & 1 & 1 & 5 & 4 & 25 & 45 & 150 & 320 & 826\\
& 21 & & 1 & 2 & 8 & 24 & 110 & 362 & 1284 & 4023 & 12046\\
& 35 & & 1 & 4 & 37 & 225 & 1582 & 8864 & 47098 & 223591 & 985678\\
\hline
\multirow{5}{*}{$2.\Alt_7$}
& 4 & & & & & & & & 1 & & \\
& 14 & & & & 1 & & 8 & & 94 & & 438\\
& 20 & & & & 3 & & 64 & & 919 & & 7845\\
& 20 & & & & 7 & & 68 & & 929 & & 7905\\
& 36 & & & & 38 & & 1749 & & 57807 & & 1264859\\
\hline
\multirow{6}{*}{$3.\Alt_7$}
& 6 & & & 1 & & & 3 & & & 5 & \\
& 15 & & & 1 & & & 23 & & & 314 & \\
& 15 & & & 3 & & & 33 & & & 404 & \\
& 21 & & & 2 & & & 110 & & & 4023 & \\
& 21 & & & & & & 120 & & & 3806 & \\
& 24 & & & 2 & & & 208 & & & 11146 & \\
\hline
\multirow{3}{*}{$6.\Alt_7$}
& 6 & & & & & & & & & & \\
& 24 & & & & & & 177 & & & & \\
& 36 & & & & & & 1749 & & & & \\
\hline
\end{tabular}
}
\caption{Representations and invariants of low degrees.}
\label{table}
\vspace{1in}
\end{table}

\begin{table}
{\scriptsize
\begin{tabular}{|c|c|cccccccccc|}
\hline
Group & Dim & \multicolumn{10}{c|}{Invariants in Low Degrees} \\
 & & 1 & 2 & 3 & 4 & 5 & 6 & 7 & 8 & 9 & 10 \\
\hline
\hline
\multirow{14}{*}{$\PSp_4(\bF_3)$}
& 5 & & & & 1 & & 1 & & 1 & & 2\\
& 6 & & 1 & & 1 & 1 & 2 & 1 & 3 & 2 & 4\\
& 10 & & & & 1 & & 2 & & 5 & 2 & 8\\
& 15 & & 1 & 1 & 3 & 6 & 13 & 21 & 48 & 90 & 180\\
& 15 & & 1 & 1 & 3 & 2 & 9 & 9 & 30 & 44 & 115\\
& 20 & & 1 & 2 & 5 & 10 & 26 & 56 & 151 & 380 & 980\\
& 24 & & 1 & 2 & 6 & 12 & 41 & 117 & 409 & 1268 & 4006\\
& 30 & & 1 & & 9 & 7 & 108 & 267 & 1785 & 5816 & 26198\\
& 30 & & & 2 & 5 & 15 & 89 & 361 & 1560 & 6526 & 25024\\
& 40 & & & & 9 & 38 & 361 & 1987 & 12432 & 64242 & 318717\\
& 45 & & & & 14 & 65 & 655 & 4365 & 29347 & 170291 & 925070\\
& 60 & & 1 & 4 & 34 & 320 & 3316 & 30266 & 252784 & 1903375 & 13127051\\
& 64 & & 1 & 2 & 38 & 409 & 4706 & 46253 & 411176 & 3283749 & 23975553\\
& 81 & & 1 & 4 & 94 & 1258 & 18430 & 225424 & 2483426 & 24523546 & 220742112\\
\hline
\multirow{9}{*}{$\Sp_4(\bF_3)$}
& 4 & & & & & & & & & & \\
& 20 & & & & 1 & & 7 & & 103 & & 765\\
& 20 & & & & 2 & & 9 & & 106 & & 783\\
& 20 & & & & & & 6 & & 96 & & 755\\
& 36 & & & & 3 & & 184 & & 5631 & & 122657\\
& 60 & & & & 26 & & 3148 & & 252182 & & 13116046\\
& 60 & & & & 29 & & 3153 & & 252155 & & 13116416\\
& 64 & & & & 34 & & 4579 & & 411176 & & 23967693\\
& 80 & & & & 79 & & 16801 & & 2256083 & & 196172329\\
\hline
\end{tabular}
}
\caption{Representations and invariants of low degrees (continued).}
\label{table2}
\vspace{1in}
\end{table}

\begin{table}
{\scriptsize
\begin{tabular}{|lr|lr|lr|lr|lr|lr|}
\hline
\multicolumn{2}{|c|}{$\Alt_5$} &
\multicolumn{2}{c|}{$\Alt_6$} &
\multicolumn{2}{c|}{$\Alt_7$} &
\multicolumn{2}{c|}{$\PSL_2(\bF_7)$} &
\multicolumn{2}{c|}{$\PSL_2(\bF_{11})$} &
\multicolumn{2}{c|}{$\PSp_4(\bF_3)$} \\
\hline
$\Alt_4$ & 5 &
$\Alt_5$ & 6 &
$\Alt_6$ & 7 &
$\Sym_4$ & 7 &
$\Alt_5$ & 11 &
$\mumu_2^4 \rtimes \Alt_5$ & 27 \\
$D_{10}$ & 6 &
$\Alt_5$ & 6 &
$\PSL_2(\bF_7)$ & 15 &
$\Sym_4$ & 7 &
$\Alt_5$ & 11 &
$\Sym_6$ & 36 \\
$\Sym_3$ & 10 &
$\mumu_3^2 \rtimes \mumu_4$ & 10 &
$\PSL_2(\bF_7)$ & 15 &
$\mumu_7 \rtimes \mumu_3$ & 8 &
$\mumu_{11} \rtimes \mumu_5$ & 12 &
$\operatorname{SU}_3(\bF_2) \rtimes \mumu_3$ & 40 \\
 & &
$\Sym_4$ & 15 &
$\Sym_5$ & 21 &
 & &
$D_{12}$ & 55 &
$\mumu_3^3 \rtimes \Sym_4$ & 40 \\
 & &
$\Sym_4$ & 15 &
$(\Alt_4 \times \mumu_3) \rtimes \mumu_2$ & 35 &
 & &
 & &
$\mumu_2 \cdot (\Alt_4 \wr \mumu _2)$ & 45 \\
\hline
\end{tabular}}
\caption{Conjugacy classes of maximal subgroups with indices.}
\label{table:maximal}
\end{table}


\end{document}